\newtheorem{thm}{Theorem}
\newtheorem{lma}[thm]{Lemma}
\newtheorem{lmacomp}[thm]{Lemma-Computation}
\newtheorem{cly}[thm]{Corollary}
\newtheorem*{cly*}{Corollary}
\newtheorem*{lma*}{Lemma}
\newtheorem*{thm*}{Theorem}
\newcommand{\GL}{\mathrm{GL}}
\newcommand{\AAA}{\mathcal{A}}
\newcommand{\CCC}{\mathcal{C}}
\newcommand{\DDD}{\mathcal{D}}
\newcommand{\MMM}{\mathcal{M}}
\newcommand{\OOO}{\mathcal{O}}
\newcommand{\LLL}{\mathcal{L}}
\renewcommand{\AA}{\mathbb{A}}
\newcommand{\CC}{\mathbb{C}}
\newcommand{\RR}{\mathbb{R}}
\newcommand{\ZZ}{\mathbb{Z}}
\newcommand{\PP}{\mathbb{P}}
\newcommand{\QQ}{\mathbb{Q}}
\newcommand{\tto}[1]{\xrightarrow{#1}}
\newcommand{\pq}[1]{\left(#1\right)}
\newcommand{\aq}[1]{\left|#1\right|}
\newcommand{\cq}[1]{\left\{#1\right\}}
\newcommand{\bq}[1]{\left[#1\right]}
\newcommand{\ol}[1]{\overline{#1}}
\newcommand{\del}{\partial}
\newcommand{\III}{\mathcal{I}}
\newcommand{\inv}{^{-1}}
\newcommand{\f}[2]{\frac{#1}{#2}}
\newcommand{\eps}{\varepsilon}
\newcommand{\sm}{\setminus}
\newcommand{\du}{^{*}}
\DeclareMathOperator{\Hom}{Hom}
\DeclareMathOperator{\Ext}{Ext}
\DeclareMathOperator{\Spec}{Spec}
\newcommand{\CLie}{\mathcal{CL}}
\newcommand{\simplecap}[2]{\refstepcounter{figure}\vskip1emFigure \ref{#1}. #2\label{#1}}
\newcommand{\Cor}{\text{\rm Cor}}
\newcommand{\QSh}{{\rm QSh}}
\newcommand{\Sh}{{\rm Sh}}
\newcommand{\pg}[2]{\big(#1\;|\;#2\big)}
\newcommand{\pqg}[2]{\left(#1\mid #2\right)}
\newcommand{\ldbr}{\left[\hspace{-0.02in}\left[\,}
\newcommand{\rdbr}{\,\right]\hspace{-0.02in}\right]}
\newcommand{\bbq}[1]{\ldbr#1\rdbr}
\newcommand{\HHH}{\mathcal{H}}
\DeclareMathOperator{\wt}{wt}
\newcommand{\HGF}{\mathbf{\Lambda}}
\newcommand{\GF}{\mathbf{\Lambda}^*}
\newcommand{\nil}{\text{\rm nil}}
\newcommand{\Hod}{\text{\rm Hod}}
\newcommand{\HT}{\text{\rm HT}}
\newcommand{\MT}{\text{\rm MT}}
\newcommand{\gr}{\text{\rm gr}}
\DeclareMathOperator{\sgn}{sgn}
\newcommand{\Li}{\text{\rm Li}}
\newcommand{\Lie}{\text{\rm Lie}}
\newcommand{\Der}{\text{\rm Der}}
\newcommand{\reg}{\text{\rm reg}}
\newcommand{\Mot}{\text{\rm Mot}}
\newcommand{\End}{\text{\rm End}}
\newcommand{\MiM}{\mathcal{MM}}
\newcommand{\PM}{\mathcal{PM}}
\newcommand{\MTM}{\mathcal{MTM}}
\newcommand{\MTF}{{\text{\rm MT}/F}}
\newcommand{\HS}{\mathrm{HS}}
\newcommand{\HTS}{\mathrm{HT}}
\newcommand{\MHS}{\mathrm{MH}}
\newcommand{\MHTS}{\mathrm{MHT}}
\newcommand{\todo}[1]{\relax}
\title{Shuffle relations for Hodge and motivic correlators}
\author{Nikolay Malkin}
\begin{document}

\maketitle

 \begin{abstract}
    The Hodge correlators $\Cor_\HHH(z_0,z_1,\dots,z_n)$ are functions of several complex variables, defined by (Goncharov, 2008) by an explicit integral formula. They satisfy some linear relations: dihedral symmetry relations, distribution relations, and the shuffle relations. 
    
    We found new \emph{second shuffle relations}. When $z_i\in\cq0\cup\mu_N$, where $\mu_N$ are the $N$-th roots of unity, they are expected to give almost all relations.
    
    When $z_i$ run through a finite subset $S$ of $\CC$, the Hodge correlators describe the real mixed Hodge-Tate structure on the pronilpotent completion of the fundamental group $\pi_1^\nil(\CC\PP^1\sm(S\cup\cq\infty),v_\infty)$. The latter is a Lie algebra in the category of mixed $\QQ$-Hodge-Tate structures. The Hodge correlators are lifted to canonical elements $\Cor_\Hod(z_0,\dots,z_n)$ in the Tannakian Lie coalgebra $\Lie_\HT^\vee$ of this category. We prove that these elements satisfy the second shuffle relations.
    
    Let $S\subset\ol\QQ$. The pronilpotent fundamental group is the Betti realization of the motivic fundamental group, which is a Lie algebra in the category of mixed Tate motives over $\ol\QQ$. The Hodge correlators are lifted to elements $\Cor_\Mot(z_0,\dots,z_n)$ in the Tannakian Lie coalgebra $\Lie_\MT^\vee$ of the category of mixed Tate motives. We prove the second shuffle relations for these motivic elements.
    
    The universal enveloping algebra of $\Lie_\MT^\vee$ was described by Goncharov via motivic multiple polylogarithms, which obey a similar yet different set of double shuffle relations. Motivic correlators have several advantages: they obey dihedral symmetry relations at all points, not only at roots of unity;  they are defined for any curve, and the double shuffle relations admit a generalization to elliptic curve; and they describe elements of the motivic Lie coalgebra rather than its universal enveloping algebra.

\end{abstract}

\setcounter{tocdepth}{2}
\tableofcontents

\section{Introduction and main results}
\label{sec:intro}

\subsection{Summary}

The Hodge correlators $\Cor_\HHH(z_0,z_1,\dots,z_n)$ are functions of several complex variables, defined by an explicit integral formula in~\cite{goncharov-hodge-correlators}. They satisfy some linear relations: the dihedral symmetry relations, the distribution relations, and the shuffle relations.

We found new relations, called \emph{second shuffle relations}. When $z_i\in\cq0\cup\mu_N$, where $\mu_N$ are the $N$-th roots of unity, they should give almost all relations: the results of \cite{goncharov-motivic-modular} suggest that the other relations are sporadic, i.e., cannot be described by universal formulae.

When $z_i$ run through a finite subset $S$ of $\CC$, the Hodge correlators are the canonical real periods of the mixed Hodge-Tate structures on the pronilpotent completion of the fundamental group $\pi_1^\nil(\CC\PP^1\sm(S\cup\cq\infty),v_\infty)$, with the tangential base point at $\infty$. The latter is a Lie algebra in the category of mixed $\QQ$-Hodge-Tate structures. The Hodge correlators describe the real mixed Hodge structure on this Lie algebra tensored over $\QQ$ by $\RR$.

The category of mixed $\QQ$-Hodge-Tate structures is canonically equivalent to the category of representations of a graded Lie algebra over $\QQ$. Let us take its image in the representation defining $\pi_1^\nil(\CC\PP^1\sm(S\cup\cq\infty),v_\infty)$, and consider the graded dual Lie coalgebra $\Lie_\HT^\vee(S)$. The Hodge correlators were lifted in~\cite{goncharov-hodge-correlators} to canonical elements \begin{equation}\Cor_\Hod(z_0,\dots,z_n)\in\Lie_\HT^\vee(S).\label{eqn:cor_hod}\end{equation} The real numbers $\Cor_\HHH$ are the canonical real periods of these elements. We prove that our new relations can be lifted to relations on the elements~(\ref{eqn:cor_hod}).

Let $S\subset\ol\QQ\subset\CC$. The Lie algebra $\pi_1^\nil(\CC\PP^1\sm (S\cup\cq\infty),v_\infty)$ is the Betti realization of the motivic fundamental group $\pi_1^\Mot(\PP^1\sm(S\cup\cq\infty),v_\infty)$. The latter is a Lie algebra in the category of mixed Tate motives over $\ol\QQ$, defined in \cite{deligne-goncharov}. This category is identified with the category of representations of the motivic Galois Lie algebra. Just like in the Hodge case, we take the image of this Lie algebra in the representation provided by the motivic fundamental group, and consider the graded dual Lie coalgebra $\Lie_\MT^\vee(S)$. In~\cite{goncharov-hodge-correlators}, the elements (\ref{eqn:cor_hod}) were lifted to elements \begin{equation}\Cor_\Mot(z_0,\dots,z_n)\in\Lie_\MT^\vee(S).\label{eqn:cor_mot}\end{equation} We prove that our relations can be upgraded to linear relations on these elements.

The universal enveloping algebra for the Lie coalgebra $\Lie_\MT^\vee(S)$ was described in \cite{goncharov-polylogs-tate} via motivic multiple polylogarithms. The motivic double shuffle relations for them were proved in \cite{goncharov-periods-mm}. The explicit relation between motivic correlators and multiple polylogarithms is an interesting open problem.

The multiple polylogarithms obey a similar system of double shuffle relations, but the dihedral symmetry relation holds only at roots of unity. The combinatorics of those relations, originally described by \cite{goncharov-polylogs-modular}-\cite{goncharov-polylogs-tate}, were studied further by \cite{racinet}.

The motivic correlator description of $\pi_1^\Mot(\PP^1\sm(S\cup\cq\infty),v_\infty)$ has several advantages. Most importantly, motivic correlators are defined for any algebraic curve, not only $\AA^1\sm S$, and the double shuffle relations admit a generalization to elliptic curves \cite{malkin-ec}. The motivic correlators obey double shuffle and cyclic symmetry relations at all points. Motivic correlators describe elements of the Lie coalgebra rather than its universal enveloping algebra. Finally, they give the best way to describe the mysterious connection between the Lie coalgebra $\Lie_\MT^\vee(\cq0\cup\mu_N)$ and modular manifolds \cite{goncharov-motivic-modular}.

\subsubsection*{Acknowledgements} I am grateful to A.B.\ Goncharov for introducing me to this problem, for many helpful discussions and explanations, and for comments on a draft of this paper.

This material is based upon work supported by the National Science Foundation under grant
DMS-1440140 while the author was in residence at the Mathematical Sciences Research Institute in Berkeley, California, during the Fall 2019 semester. The author also acknowledges support from NSF grants DMS-1107452, 1107263, 1107367 ``RNMS: Geometric Structures and Representation Varieties'' (the GEAR Network).

\subsection{Hodge correlators and shuffle relations}

We describe a family of functions of several complex variables, the Hodge correlators (\cite{goncharov-hodge-correlators}).\footnote{In this paper, ``Hodge correlators'' will refer only to Hodge correlators associated to the curve $\PP^1$.} Our main result is a set of functional equations on the Hodge correlators and the Hodge-theoretic and motivic upgrades of these relations.

\subsubsection{Definition} 

Let $z_0,\dots,z_n\in\CC$. We define the Hodge correlator of weight $n$, $\Cor_\HHH(z_0,\dots,z_n)$.

Draw a disc in the plane with a sequence of points $V^\del=\cq{v_0,\dots,v_n}$ placed counterclockwise around the boundary, and label $v_i$ by the value $z_i$. Choose a plane trivalent tree $T$ inside the disc with leaves at the labeled boundary vertices.  Such a tree has $n-1$ interior vertices $V^\circ$ and $2n-1$ edges $E=\cq{E_0,\dots,E_{2n-2}}$. The embedding into the plane gives a canonical orientation $\rm{Or}_T\in\cq{\pm1}$ (a choice of component of $\RR^{\wedge E}$, i.e., ordering of the edges up to even permutation). 

Let us assign to each edge $E_j$ a function $f_j$ on \[{\mathbf X}:=\CC^{V^\circ}\times \CC^{V^\del}.\] Precisely, to an edge $E_i=(u,v)$, assign $f_i=(2\pi i)\inv\log\aq{x_u-x_v}$, where $x_u$ is the coordinate on $\mathbf{X}$ corresponding to a vertex $u$. Then fix the coordinate at each boundary vertex $v_i$ to be $z_i$. Abusing notation, also denote by $f_j$ the restriction of $f_j$ to $\CC^{V^\circ}$ with the boundary coordinates fixed.

Setting $d^\CC=\del-\ol\del$, we define:
\begin{equation}
c_T(z_0,\dots,z_n)=(-4)^{n-1}\binom{2n-2}{n-1}\inv\mathrm{Or}_T\int_{\CC^{V^\circ}} f_0\,d^\CC f_1\wedge\dots\wedge d^\CC f_{2n-2},
\label{eqn:integral_onetree_p1}
\end{equation}
This expression is independent of the numbering of the edges. The Hodge correlator is defined as the sum of these integrals over all plane trivalent trees $T$:
\begin{equation*}
    \Cor_\HHH(z_0,\dots,z_n)
    =\sum_Tc_T(z_0,\dots,z_n).
\end{equation*}
It takes values in $(2\pi i)^{-n}\RR$. The simplest example, in weight 1, is shown in Fig.~\ref{fig:w1_hc}.

\begin{figure}[h]
\begin{center}
\includegraphics{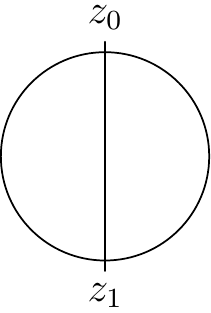}
\end{center}
\simplecap{fig:w1_hc}{$\Cor_\HHH(z_0,z_1)=(2\pi i)^{-1}\log\aq{z_0-z_1}$.}
\end{figure}

In weight 2, the Hodge correlators are given by
\begin{align*}
    \Cor_\HHH(z_0,z_1,z_2)&=-\f18\int_x(2\pi i)^{-3}\log\aq{x-z_0}\,d^\CC\log\aq{x-z_1}\wedge d^\CC\log\aq{x-z_2}.
\end{align*}
This integral is described by the Feynman diagram in Fig.~\ref{fig:w2_hc}.
\begin{figure}[h]
\begin{center}
    \includegraphics{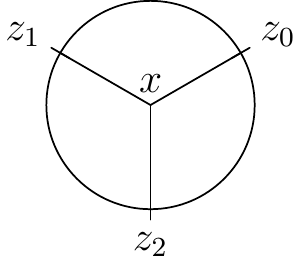}
\end{center}
\simplecap{fig:w2_hc}{}
\end{figure}

Recall the single-valued version of the dilogarithm, called the Bloch-Wigner function:
\[\LLL_2(z)=\Im\pq{\Li_2(z)}+\log\aq z\arg(1-z),\quad\Im(a+bi):=b.\]
The weight 2 Hodge correlator integral can be calculated explicitly as
\begin{equation}
    \Cor_\HHH(z_0,z_1,z_2)=(2\pi i)^{-2}\LLL_2\pq{\f{z_1-z_0}{z_2-z_0}}.
    \label{eqn:h_w2_l2}
\end{equation}

\subsubsection{Properties}

The Hodge correlators satisfy \emph{dihedral symmetry} relations:
\begin{align*}
    \Cor_\HHH(z_0,z_1,\dots,z_n)&=\Cor_\HHH(z_1,\dots,z_n,z_0)\\&=(-1)^{n+1}\Cor_\HHH(z_n,\dots,z_1,z_0).
\end{align*}
One can show using (\ref{eqn:integral_onetree_p1}) that the Hodge correlators are invariant under an additive shift of the arguments. In weight $>1$, they are also invariant under a multiplicative shift:
\begin{align*}
    \Cor_\HHH(z_0,\dots,z_n)&=\Cor_\HHH(z_0+a,\dots,z_n+a),\\
    \Cor_\HHH(z_0,\dots,z_n)&=\Cor_\HHH(az_0,\dots,az_n)\quad(a\in\CC\du, n>1).
\end{align*}

Furthermore, the Hodge correlators satisfy \emph{shuffle relations}: for $r,s\geq1$ and $z_0,\dots,z_{r+s}\in\CC$,
\begin{equation}
    \sum_{\sigma\in\Sigma_{r,s}}\Cor_\HHH(z_0,z_{\sigma\inv(1)},z_{\sigma\inv(2)},\dots,z_{\sigma\inv(r+s)})=0,
    \label{eqn:first_shuffle_cyc}
\end{equation}
where $\Sigma_{r,s}\subset S_{r+s}$ is the set of \emph{$(r,s)$-shuffles}, consisting of the permutations $\sigma$ such that \[\sigma(1)<\dots<\sigma(r),\quad\sigma(r+1)<\dots<\sigma(r+s).\]

\newcommand{\zzone}{\textcolor{blue}{z_1}}
\newcommand{\zztwo}{\textcolor{red}{z_2}}
For example, the $(1,1)$-shuffle relation states:
\[\Cor_\HHH(z_0,\zzone,\zztwo)+\Cor_\HHH(z_0,\zztwo,\zzone)=0;\]
the $(2,1)$-shuffle relation is:
\renewcommand{\zztwo}{\textcolor{blue}{z_2}}
\newcommand{\zzthr}{\textcolor{red}{z_3}}
\[\Cor_\HHH(z_0,\zzone,\zztwo,\zzthr)+\Cor_\HHH(z_0,\zzone,\zzthr,\zztwo)+\Cor_\HHH(z_0,\zzthr,\zzone,\zztwo)=0.\]
The shuffle relations may be considered ``easy'' because they hold on the level of the sum over trees of the \emph{integrands} in (\ref{eqn:integral_onetree_p1}).

\subsubsection{Second shuffle relation}

We found another relation on the Hodge correlators. Together, the two relations form the \emph{double shuffle relations}. To state the new relations, we must introduce some notation.

Because of the multiplicative invariance (in weight $>1$) of Hodge correlators, it is possible and convenient to introduce an inhomogeneous notation for them, where the arguments are represented by the quotients between successive nonzero values and the number of 0s between them. Precisely, given $w_0,\dots,w_k\in\CC\du$ such that $w_0w_1\dots w_k=1$, define
\begin{align*}
&\Cor_\HHH\du(w_0|n_0,w_1|n_1,\dots,w_k|n_k):=\\&=\Cor_\HHH(\underbrace{0,\dots,0}_{n_0},1,\underbrace{0,\dots,0}_{n_1},w_1,\underbrace{0,\dots,0}_{n_2},w_1w_2,\dots,\underbrace{0,\dots,0}_{n_k},w_1\dots w_k).
\end{align*}
This definition is illustrated in Fig.~\ref{fig:homo}.
\begin{figure}[h]
\begin{center}
\begin{tabular}{ccc}
    \includegraphics[width=0.3\textwidth]{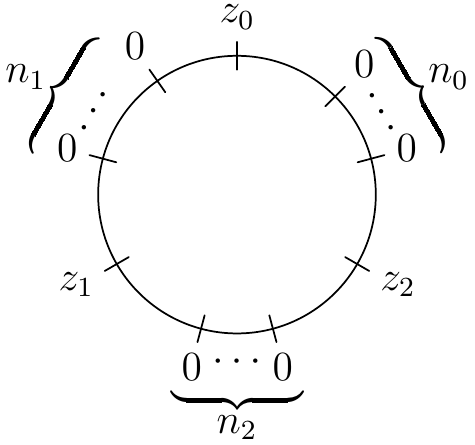}&
    \includegraphics[width=0.3\textwidth]{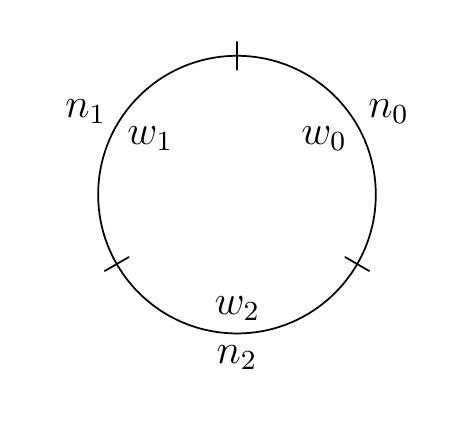}
    &\parbox{2in}{\vspace{-1.5in}$\displaystyle w_i=\f{z_i}{z_{i-1}}$}

\end{tabular}
\end{center}
\simplecap{fig:homo}{$\Cor_\HHH(z_0,\underbrace{0,\dots,0}_{n_1},z_1,\underbrace{0,\dots,0}_{n_2},z_2,\underbrace{0,\dots,0}_{n_0})\equiv\Cor_\HHH\du(w_1|n_1,w_2|n_2,w_3|n_3)$.}
\end{figure}

Define the \emph{depth} of an expression $\Cor_\HHH(z_0,\dots,z_n)$ to be one less than the number of arguments in the multiplicative notation, that is, $k$ in the formula above.

Our new shuffle relation states:
\begin{equation}
    \sum_{\sigma\in\Sigma_{r,s}}\Cor_\HHH\du(w_{\sigma\inv(1)}|n_{\sigma\inv(1)},\dots,w_{\sigma\inv(r+s)}|n_{\sigma\inv(r+s)},w_0|n_0)+\text{lower-depth terms}=0.
    \label{eqn:second_shuffle_cyc}
\end{equation}
That is, we shuffle two ordered sets of expressions $(w_i|n_i)$, while leaving the segment $(w_0|n_0)$ fixed. For example the $(1,1)$-shuffle relation begins:
\begin{center}
\begin{tabular}{cc}
\includegraphics{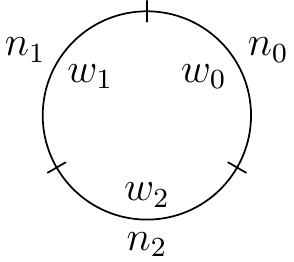}
&\includegraphics{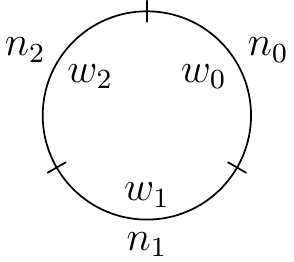}\\\,\\
$\Cor_\HHH\du(w_1|n_1,w_2|n_2,w_0|n_0)$&$+\:\Cor_\HHH\du(w_2|n_2,w_1|n_1,w_0|n_0)$\\\,
\end{tabular}
\end{center}

To describe the lower-depth terms, we need the notion of \emph{quasishuffle}. Let $A=\cq{a_1<\dots<a_r}$ and $B=\cq{b_1<\dots<b_s}$ be two ordered sets. A quasishuffle of $A$ and $B$ is a sequence of slots $\cq{1,\dots,M}$ and a placement of each element of $A\cup B$ in a slot, such that each slot is filled with one of:
\begin{itemize}
    \item some $a_i\in A$,
    \item some $b_j\in B$, 
    \item a pair $\cq{a_i,b_j}$,
\end{itemize} 
and the sequence of slots containing the $a_1,\dots,a_r$ and the sequence of slots containing the $b_1,\dots,b_s$ are ordered left to right. If $a_i$ and $b_j$ share a slot, they are said to \emph{collide}. If no elements collide, the quasishuffle is said to be a shuffle.

Let $A=\cq{1,\dots,r}$ and $B=\cq{r+1,\dots,r+s}$ with the natural orders. Then, equivalently, the quasishuffles are the surjective maps $\cq{1,\dots,r+s}\tto\sigma\cq{1,\dots,M_\sigma}$ that are strictly increasing on $1,\dots,r$ and $r+1,\dots,r+s$. 

Indices $i\in\cq{1,\dots,r}$ collide with indices $j\in\cq{r+1,\dots,r+s}$ whenever $\sigma(i)=\sigma(j)$. Let $\ol\Sigma_{r,s}$ be the set of such quasishuffles.

A quasishuffle $\sigma$ is a shuffle if $M_\sigma=r+s$. Recall the set of $(r,s)$-shuffles $\Sigma_{r,s}\subset S_{r+s}$. We naturally identify $\Sigma_{r,s}$ with the subset of the shuffles in $\ol\Sigma_{r,s}$.

The lower-depth terms in (\ref{eqn:second_shuffle_cyc}) come in two kinds:
\begin{enumerate}[(1)]
\item Terms coming from the $(r,s)$-\emph{quasishuffles} that are not proper shuffles. Whenever the segments $(w_i|n_j)$ and $(w_j|n_j)$ collide, we get a new segment $(w_iw_j|n_i+n_j+1)$ in their place -- a 0 is inserted -- and the term picks up a negative sign.

For the $(1,1)$-shuffle relation, there is only one quasishuffle that is not a shuffle. In this quasishuffle, the two segments $(w_1|n_1)$ and $(w_2|n_2)$ collide:
\begin{center}
    \begin{tabular}{c}
    \includegraphics{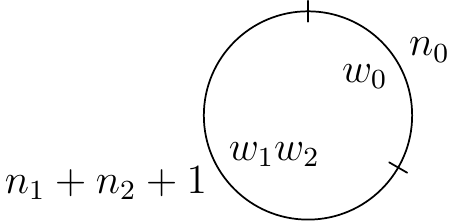}\\\,\\
    $-\Cor_\HHH\du(w_1w_2|n_1+n_2+1,w_0|n_0)$\\\,
    \end{tabular}
\end{center}
\item Two extra terms: one where the segments $w_1,\dots,w_r$ appear in order and the remaining segments $w_{r+1},\dots,w_{r+s},w_0$ collapse; another where the segments $w_{r+1},\dots,w_{r+s}$ appear in order and $w_1,\dots,w_r,w_0$ collapse.   These terms come with a negative sign.

For the $(1,1)$-shuffle relation:
\begin{center}
    \begin{tabular}{cc}
    \includegraphics{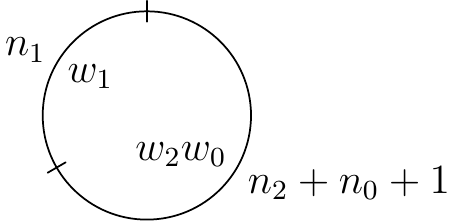}
    &\includegraphics{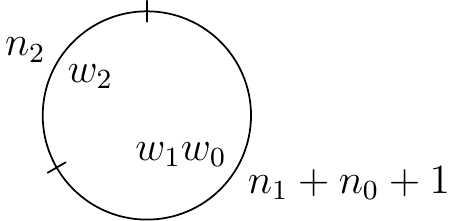}\\\,\\
    $-\Cor_\HHH\du(w_1|n_1,w_2w_0|n_2+n_0+1)$&$-\:\Cor_\HHH\du(w_2|n_2,w_1w_0|n_1+n_0+1)$\\\,
    \end{tabular}
\end{center}
\end{enumerate}

In summary, the $(1,1)$-shuffle relation states, for $w_0,w_1,w_2\in\CC\du$ and $w_0w_1w_2=1$,
\begin{align*}
    \Cor_\HHH\du(w_1|n_1,w_2|n_2,w_0|n_0)&+\Cor_\HHH\du(w_2|n_1,w_1|n_1,w_0|n_0)\\&-\Cor_\HHH\du(w_1w_2|n_1+n_2+1,w_0|n_0)\\&-\Cor_\HHH\du(w_1|n_1,w_2w_0|n_2+n_0+1)\\&-\Cor_\HHH\du(w_2|n_2,w_1w_0|n_1+n_0+1)&=0.
\end{align*}
It is already a nontrivial relation, which is not easy to prove from the definition (\ref{eqn:integral_onetree_p1}) even for $n_0=n_1=n_2=0$.

By formula (\ref{eqn:h_w2_l2}), Hodge correlators in weight 2 are expressed in a simple way in terms of the Bloch-Wigner function $\LLL_2$. The $(1,1)$-shuffle relation with $n_0=n_1=n_2=0$ is equivalent to the five-term relation,
\[\LLL_2\pq{\f{1-w_1}{1-w_1w_2}}+\LLL_2\pq{\f{1-w_2}{1-w_1w_2}}+\LLL_2(1-w_1w_2)+\LLL_2(w_1)+\LLL_2(w_2)=0.\]
According to \cite{bloch-irvine}, this is essentially the only functional equation for $\LLL_2$. It follows that the dihedral symmetry and shuffle relations are the \emph{only} relations between the Hodge correlators in weight 2.

\newcommand{\xxxx}{\color{blue}{w_1}}
\newcommand{\yyyy}{\color{blue}{w_2}}
\newcommand{\zzzz}{\color{red}{w_3}}
For further illustration, let us write out the $(2,1)$-shuffle relation for the Hodge correlator \[\Cor_\HHH\du(w_1|0,w_2|1|w_3|1,w_4|0),\] where $\xxxx$ and $\yyyy$ will be shuffled with $\zzzz$:

\newcommand{\bbbb}[2]{\begin{bmatrix}#1\\#2\end{bmatrix}}
\begin{enumerate}[(1)]
\item[(0)] There are three terms from the shuffles:
\begin{center}
\begin{tabular}{ccc}
$\xxxx\,\yyyy\,\zzzz$&$\xxxx\,\zzzz\,\yyyy$&$\zzzz\,\xxxx\,\yyyy$\\
\includegraphics{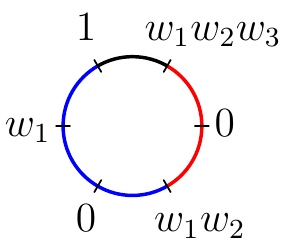}&\includegraphics{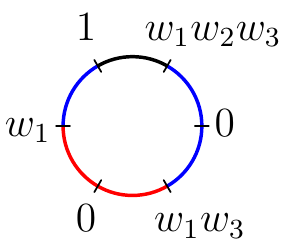}&\includegraphics{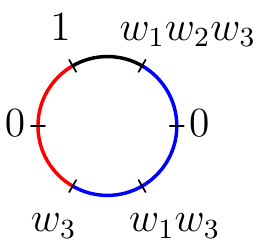}\\

\end{tabular}
\end{center}
\item There are two terms from the quasishuffles that are not shuffles:
\begin{center}
\begin{tabular}{ccc}
$\bbbb{\xxxx}{\zzzz}\,\yyyy$&$\xxxx\,\bbbb{\yyyy}{\zzzz}$\\
\includegraphics{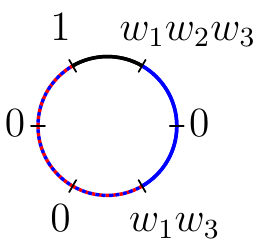}&\includegraphics{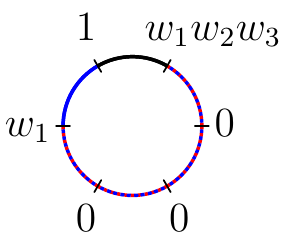}\\

\end{tabular}
\end{center}
\item There are two additional terms:
\begin{center}
\begin{tabular}{ccc}
\includegraphics{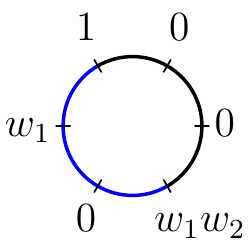}&\includegraphics{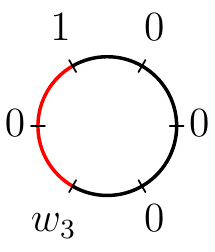}\\

\end{tabular}
\end{center}
\end{enumerate}
The full relation is then
\begin{align*}
    \Cor_\HHH\du(w_1|0,w_2|1,w_3|1,w_4|0)&+\Cor_\HHH\du(w_1|0,w_3|1,w_2|1,w_4|0)+\Cor_\HHH\du(w_3|1,w_1|0,w_2|1,w_4|0)\\
    &-\Cor_\HHH\du(w_1w_3|2,w_2|1,w_4|0)-\Cor_\HHH\du(w_1|0,w_2w_3|3,w_4|0)\\
    &-\Cor_\HHH\du(w_1|0,w_2|1,(w_1w_2)\inv|2)-\Cor_\HHH\du(w_3|1,w_3\inv|3)&=0,
\end{align*}
where the $3+2+2$ terms in the three rows match the $3+2+2$ pictures above.

We now write out the general relation:
\begin{thm}
    \begin{enumerate}[(a)]
    \item
    Suppose that $r,s>1$ and that not all $n_i=0$ or not all $w_i=1$. Then the Hodge correlators satisfy the relation:
    \begin{align*}
        &\sum_{\sigma\in\ol\Sigma_{r,s}}(-1)^{r+s-M_\sigma}\Cor_\HHH\du(w_{\sigma\inv(1)}|n_{\sigma\inv(1)},\dots,w_{\sigma\inv(M_\sigma)}|n_{\sigma\inv(M_\sigma)},w_0|n_0)\\
        &-\Cor_\HHH\du(w_1|n_1,\dots,w_r|n_r,w_{\cq{r+1,\dots,r+s,0}}|n_{\cq{r+1,\dots,r+s,0}})\\
        &-\Cor_\HHH\du(w_{r+1}|n_{r+1},\dots,w_{r+s}|n_{r+s},w_{\cq{1,\dots,r,0}}|n_{\cq{1,\dots,r,0}})&=0,
    \end{align*}
    where
    \[n_S=\sum_{i\in S}(n_i+1)-1,\quad w_S=\prod_{i\in S}w_i.\]
    \item The Hodge correlators satisfy all specializations of this relation as any subset of the $w_i$ $(1\leq i\leq n)$ approaches 0.
\end{enumerate}
    \label{thm:period_main}
\end{thm}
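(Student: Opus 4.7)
My approach is to lift the identity to the Hodge-theoretic level and prove it for the elements $\Cor_\Hod(z_0,\ldots,z_n)\in\Lie_\HT^\vee(S)$ mentioned in the introduction, then deduce Theorem~\ref{thm:period_main} by applying the period map. A direct integrand-level argument on (\ref{eqn:integral_onetree_p1}) does not seem feasible: the first shuffle relation holds on the sum of integrands, as remarked in the excerpt, but the second shuffle is corrected by the quasishuffle and collapse terms in (\ref{eqn:second_shuffle_cyc}), which are themselves nontrivial Hodge correlators of lower depth. Their presence rules out a purely integrand-level cancellation and strongly suggests that the identity is a statement about the underlying Hodge-theoretic structure rather than about the integral itself. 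An additional benefit of this reduction is that the same proof will establish the sharper statements about $\Cor_\Hod$ and $\Cor_\Mot$ announced in the abstract.

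To prove the Hodge-level identity I would induct on the weight. The plan is to apply the cobracket $\delta\colon\Lie_\HT^\vee(S)\to\Lambda^2\Lie_\HT^\vee(S)$ to the linear combination of $\Cor_\Hod$ that appears in the theorem and to verify that the result vanishes in $\Lambda^2\Lie_\HT^\vee(S)$, relying on the explicit cobracket formula for Hodge correlators established in \cite{goncharov-hodge-correlators}. That formula expresses $\delta\Cor_\Hod(z_0,\ldots,z_n)$ as a sum over ``cuts'' of the cyclic word $(z_0,\ldots,z_n)$: each cut splits the cyclic word into two shorter cyclic arcs, producing a wedge of two lower-weight Hodge elements. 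Summed over all terms on the left-hand side of the theorem, the cut contributions should regroup, in each tensor factor, into lower-weight instances of the second shuffle relation, which vanish by the inductive hypothesis. Once $\delta$ is shown to vanish, the induction closes by identifying the remaining weight-$1$ (Kummer) contribution and checking the identity at a convenient specialization such as $w_i=1$, where all terms collapse to known quantities. The base case is weight $2$, where the relation is equivalent to the five-term relation for $\LLL_2$ recorded after (\ref{eqn:h_w2_l2}).

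The main obstacle is the combinatorial matching of cuts. Each cut of the cyclic word $(w_0|n_0,\ldots,w_{r+s}|n_{r+s})$ has to be classified by how it interacts with the two shuffled subsequences $\{1,\ldots,r\}$ and $\{r+1,\ldots,r+s\}$ and with the fixed marker $0$: cuts internal to a single subsequence will match the shuffle terms of (\ref{eqn:second_shuffle_cyc}); cuts straddling a colliding pair in a quasishuffle have to be matched with the cuts on the reduced word obtained by inserting a $0$, and the signs must reproduce the $(-1)^{r+s-M_\sigma}$ factor; and cuts isolating the $w_0$ segment together with one of the two subsequences have to be matched with cuts coming from the two extra collapse terms. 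Controlling signs and verifying that this classification exhausts all cuts bijectively is the technical core of the argument and the most delicate step. Finally, part (b) follows by continuity: both $\Cor_\Hod$ and its real period $\Cor_\HHH$ extend continuously as any subset of the $w_i$ tends to $0$, so the identity passes to the specializations.
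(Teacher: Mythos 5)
Your overall skeleton is the same as the paper's: lift the statement to $\Cor_\Hod$, induct on the weight, show that the cobracket of the relation vanishes by classifying the cuts and regrouping them into lower instances of the relation (in the paper this is packaged as the statement that the second shuffles form a coideal, Theorem~\ref{thm:main_qdih_const}, whose proof is exactly the delicate cut/sign bookkeeping you flag as the technical core, including the $(\text{weight }1)\wedge(\text{weight }n-1)$ Kummer part), conclude by rigidity that the family is constant, pin down the constant by one period evaluation, and recover Theorem~\ref{thm:period_main} by applying the period map. The genuine gap is in your final step. You propose to fix the constant ``at a convenient specialization such as $w_i=1$, where all terms collapse to known quantities.'' That point does not work: when all $n_i=0$, the locus $w_1=\dots=w_n=1$ is precisely the excluded locus of the theorem, and the relation genuinely fails there (the discrepancy is a nonzero rational multiple of a zeta value already in weight~3); and when some $n_i>0$, the terms at $w_i=1$ are correlators at arguments in $\{0,1\}$, i.e.\ multiple zeta value type periods, which neither vanish nor are independently known to satisfy the relation, so nothing collapses and the evaluation is as hard as the original statement. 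The paper instead evaluates at the opposite degeneration $w_1,\dots,w_n\to0$, where \emph{every} term specializes to $\Cor_\HHH(1,0,\dots,0)=0$, so the constant is manifestly zero.

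Moreover, both this evaluation and your one-line treatment of part (b) lean on a specialization/continuity statement that is not free: as the $w_i$ degenerate (to $0$, or even to $1$, which lies on the removed diagonals of the parameter space), arguments of the correlators collide, and one needs the specialization theorem for Hodge correlators under nodal degeneration (Theorem~\ref{thm:degen_nodal}, and its consequence Theorem~\ref{thm:corrh_cts}) to know that the constant variation's period is computed by naive substitution and that $\Cor_\Hod$ specializes compatibly. In the paper this occupies an entire section and is an essential ingredient, not a remark; without it, rigidity only tells you the relation is some constant element of $\Ext^1(\RR(0),\RR(n))$, with no way to evaluate it. So your plan becomes the paper's proof once you (i) replace the evaluation point $w_i=1$ by $w_i\to0$, and (ii) supply the specialization theorem that justifies passing to that boundary point and also yields part (b).
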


\subsubsection{Applications}

Theorem~\ref{thm:period_main} gives simple proofs of certain results of \cite{goncharov-rudenko}.

\begin{cly}[\cite{goncharov-rudenko}, Proposition 2.8]
    For $n>2$, every Hodge correlator of weight $n$ is a linear combination of Hodge correlators of weight $n$ and depth at most $n-2$.

    Precisely, for $z_1,\dots,z_n\in\CC\du$, we have
    \begin{align}
        \Cor_\HHH(z_1,\dots,z_n,0)
        &=\sum_{i=1}^n\Cor_\HHH\pq{z_1,\dots,z_{i-1},z_i,z_i\f{z_1}{z_n},\dots,z_{n-1}\f{z_1}{z_n},z_n\f{z_1}{z_n}}\nonumber\\
        &\quad-\sum_{i=2}^n\Cor_\HHH\pq{z_1,\dots,z_{i-1},0,z_i\f{z_1}{z_n},\dots,z_{n-1}\f{z_1}{z_n},z_n\f{z_1}{z_n}}\nonumber\\
        &\quad-\Cor_\HHH\pq{z_1,z_1\f{z_1}{z_n},0,\dots,0}.\label{eqn:lower_depth_reduction}
    \end{align}
    \label{cly:gr28}
\end{cly}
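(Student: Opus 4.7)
The plan is to derive the identity from a single application of the second shuffle relation (Theorem~\ref{thm:period_main}(a)), specifically the $(1,n-1)$-shuffle, applied to a weight-$n$ Hodge correlator of maximal depth $n$ so that $\Cor_\HHH(z_1,\dots,z_n,0)$ emerges as one of the two ``extra'' non-shuffle terms. I would begin with $\Cor_\HHH\du(w_0|0,w_1|0,\dots,w_n|0)$ subject to $w_0w_1\cdots w_n=1$, and make the parameter choice $w_0=1$, $w_1=\lambda:=z_1/z_n$, $w_i=z_i/z_{i-1}$ for $i=2,\dots,n$ (automatically consistent with the constraint). Applying the $(r,s)=(1,n-1)$ second shuffle --- shuffling $(w_1|0)$ against $(w_2|0),\dots,(w_n|0)$ while holding $(w_0|0)$ fixed --- produces $2n+1$ terms: $n$ shuffle terms (depth $n$), $n-1$ quasishuffle collisions (depth $n-1$), and two extra terms (depths $1$ and $n-1$).

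The crux is the term-by-term identification, carried out after standard cyclic rotation and multiplicative rescaling (valid since $n>1$). Extra term~2, $-\Cor_\HHH\du(w_2|0,\dots,w_n|0,w_1w_0|1)=-\Cor_\HHH\du(w_2|0,\dots,w_n|0,\lambda|1)$, equals $-\Cor_\HHH(z_1,\dots,z_n,0)$ after cyclic rotation of $\Cor_\HHH(0,z_1,\dots,z_n)$ and rescaling. For each $j\in\{0,1,\dots,n-1\}$, the shuffle placing $w_1=\lambda$ immediately after the first $j$ segments of $(w_2,\dots,w_n)$ becomes the $i=j+1$ term of the first sum, $\Cor_\HHH(z_1,\dots,z_{i-1},z_i,z_i\lambda,\dots,z_n\lambda)$. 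For each $j\in\{2,\dots,n\}$, the collision of $w_1$ with $w_j$ produces a combined segment $(w_1w_j|1)$ and yields the $i=j$ term of the second sum, $\Cor_\HHH(z_1,\dots,z_{i-1},0,z_i\lambda,\dots,z_n\lambda)$, the inserted zero appearing at the correct position. Extra term~1, $-\Cor_\HHH\du(\lambda|0,1/\lambda|n-1)$, becomes after rotation and rescaling $-\Cor_\HHH(z_1,z_1\lambda,0,\dots,0)$, the last term.

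The signs track automatically: shuffles carry $(-1)^{r+s-M_\sigma}=+1$ (with $M_\sigma=r+s=n$), collisions carry $-1$ (with $M_\sigma=n-1$), and the two extras carry explicit minus signs in Theorem~\ref{thm:period_main}(a). Rearranging the resulting equation to isolate the extra-term-2 contribution yields exactly the formula of the corollary. The main obstacle is the combinatorial bookkeeping: verifying the index alignment $i=j+1$ for shuffles against the first sum and $i=j$ for collisions against the second, while carefully tracking the cyclic rotations and multiplicative rescalings at each step. If the theorem is read as strictly requiring $r,s\geq 2$, the needed $(1,n-1)$-shuffle relation can be obtained as a limit of a $(2,n-2)$-shuffle via Theorem~\ref{thm:period_main}(b), but the identification above is unchanged.
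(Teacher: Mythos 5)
Your proposal is correct and is essentially the paper's own proof: the paper applies exactly this second shuffle relation (phrased as the $(n-1,1)$-shuffle after normalizing $z_1=1$ by multiplicative invariance) to the segments $(z_2/z_1|0),\dots,(z_n/z_{n-1}|0)$ and $(z_1/z_n|0)$ with $(1|0)$ fixed, matching the $n$ shuffles to the first sum, the $n-1$ collisions to the second sum, and the two extra terms to the left-hand side and the final depth-one correlator. Your worry about the hypothesis ``$r,s>1$'' is moot in practice, since the paper itself invokes the relation with $s=1$ (and even $r=s=1$ in the next corollary), so the stated condition is to be read as $r,s\geq1$; the proposed detour through Theorem~\ref{thm:period_main}(b) is unnecessary (and would not obviously produce the $(1,n-1)$ relation anyway).
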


In weight 3, we deduce the Hodge correlator version of relations (27) and (29) from \cite{goncharov-rudenko}. 
\begin{cly}
    The Hodge correlators in weight 3 satisfy the relations:
\begin{align}
    \Cor_\HHH(1,0,0,x)&+\Cor_\HHH(1,0,0,1-x)+\Cor_\HHH(1,0,0,1-x\inv)=\Cor_\HHH(1,0,0,1),\\
\Cor_\HHH(0,x,1,y)&=
-\Cor_\HHH(1,0,0,1-x\inv)
-\Cor_\HHH(1,0,0,1-y\inv)
-\Cor_\HHH\pq{1,0,0,\f yx}\nonumber\\\label{eqn:corr_29}
&\quad-\Cor_\HHH\pq{1,0,0,\f{1-y}{1-x}}
+\Cor_\HHH\pq{1,0,0,\f{1-y\inv}{1-x\inv}}
+\Cor_\HHH(1,0,0,1).
\end{align}
\label{cly:gr2729}
\end{cly}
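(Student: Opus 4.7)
The plan is to derive both relations from the weight-$3$ applications of Theorem~\ref{thm:period_main} together with Corollary~\ref{cly:gr28}. Write $\ell(z) := \Cor_\HHH(1,0,0,z)$; by the dihedral symmetry and the multiplicative invariance (valid in weight $>1$), every depth-$1$ weight-$3$ Hodge correlator equals $\ell(z)$ for some $z$, and in particular $\ell(z)=\ell(1/z)$ and the constant term $\ell(1)$ plays the role of the constant on the right-hand sides.

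For relation (29), I would apply Corollary~\ref{cly:gr28} to $\Cor_\HHH(0,x,1,y)=\Cor_\HHH(x,1,y,0)$ (cyclic symmetry) via the formula \eqref{eqn:lower_depth_reduction} with $n=3$, $(z_1,z_2,z_3)=(x,1,y)$, producing six weight-$3$ Hodge correlators. Those already of depth~$1$ are brought to the form $\ell(\cdot)$ by dividing through by $x$ (multiplicative invariance) and cyclically rotating, directly contributing terms such as $-\ell(y/x)$ and $-\ell(1/x)=-\ell(x)$. The terms still of depth $\geq 2$ are reduced by a second application of Corollary~\ref{cly:gr28} after a suitable cyclic rotation placing a $0$ at the last position, or alternatively by the $(1,1)$-shuffle of Theorem~\ref{thm:period_main}. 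At each stage, pairs of $\ell$-values with arguments related by $z\mapsto 1/z$ collapse via $\ell(z)=\ell(1/z)$, and a bookkeeping check shows that the surviving combination is exactly the six-term right-hand side of~(29).

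For relation (27), I would apply the $(1,1)$-case of Theorem~\ref{thm:period_main} in weight~$3$ (say with $n_0=1$, $n_1=n_2=0$) at the substitution $w_1=x$, $w_2=1-x$ (forcing $w_0=1/(x(1-x))$). The resulting identity contains two depth-$2$ Hodge correlators and three depth-$1$ correlators. Using (29), just established, to reduce the two depth-$2$ correlators to depth-$1$ combinations, the whole identity becomes a linear relation among $\ell$-values. After simplifications using $\ell(z)=\ell(1/z)$ and the distribution relations at the argument $1$, the extraneous $\ell$-values cancel in pairs and the three surviving terms are $\ell(x)$, $\ell(1-x)$, $\ell(1-1/x)$, together with $\ell(1)$, giving (27).

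The main obstacle is purely computational: each application of \eqref{eqn:lower_depth_reduction} or of the $(1,1)$-shuffle produces up to nine $\ell$-values with complicated rational arguments in $x,y,1-x,1-y$, and verifying that they collapse precisely to the right-hand sides of (27) and (29) requires careful algebraic bookkeeping, aided by the symmetries of $\ell$ and the distribution relations. The only nonroutine ingredient is the choice of substitution $(w_1,w_2)=(x,1-x)$ for (27), which is dictated by the requirement that the arguments $x,1-x,1-1/x$ arise naturally from the left-hand and right-hand specializations in the $(1,1)$-shuffle; once this is in place, the remainder is routine normalization.
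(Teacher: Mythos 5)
There is a genuine gap: your scheme is circular as written. Your derivation of (29) applies Corollary~\ref{cly:gr28} to $\Cor_\HHH(x,1,y,0)$ and asserts that bookkeeping with $\ell(z):=\Cor_\HHH(1,0,0,z)$ and $\ell(z)=\ell(1/z)$ lands exactly on the right-hand side of (29). It does not. Carrying out the reduction (the essential move, which your sketch omits, is an \emph{additive} shift by $x$ exploiting the repeated argument $x$ in the zero-free terms such as $\Cor_\HHH(x,x^2/y,x/y,x)$ --- note these terms contain no $0$, so a ``second application of Corollary~\ref{cly:gr28} after placing a $0$ last'' is impossible, and no term $-\ell(x)$ occurs), one obtains
\[
\Cor_\HHH(0,x,1,y)=\ell\pq{\tfrac{1-y}{x-y}}+\ell\pq{\tfrac{1-x\inv}{1-y\inv}}+\ell\pq{\tfrac{x-1}{x-y}}-\ell(1-y\inv)-\ell(1-x\inv)-\ell\pq{\tfrac xy}.
\]
Comparing with (29), the terms $\ell\pq{\tfrac{1-y}{x-y}}+\ell\pq{\tfrac{x-1}{x-y}}$ must be traded for $\ell(1)-\ell\pq{\tfrac{1-y}{1-x}}$; since these arguments involve $x-y$ while nothing in (29) does, no amount of $\ell(z)=\ell(1/z)$, scaling, or cyclic/dihedral symmetry accomplishes this --- the bridge is precisely relation (27) evaluated at $z=\tfrac{1-y}{x-y}$. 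So your proof of (29) needs (27), while your proof of (27) uses (29) to reduce the two depth-$2$ terms of the $(1,1)$-shuffle at $(w_1,w_2)=(x,1-x)$: a circle.

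The paper breaks the circle by proving (27) \emph{first}, with no depth-$2$ input: one applies the $(1,1)$-second shuffle to the segments $(x|0)$ and $(x\inv|1)$ with the segment carrying $w_0=1$ fixed; after the first shuffle relation, multiplicative invariance, and an additive shift, every term is already of the form $\ell(\cdot)$, and the identity reads $\ell(x)+\ell(1-x)+\ell(1-x\inv)=\ell(1)$. Then (29) follows from your (and the paper's) Corollary~\ref{cly:gr28} reduction of $\Cor_\HHH(x,1,y,0)$ together with this instance of (27). If you reorder your argument this way (and drop the appeal to distribution relations, which play no role here), the rest of your outline goes through; alternatively, you would have to reduce the depth-$2$ terms in your (27)-derivation by Corollary~\ref{cly:gr28} itself rather than by (29), and then verify the resulting $\ell$-identity independently, which is exactly the computation you have not done.
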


We have noted that the double shuffle and dihedral symmetry relations give all relations between Hodge correlators in weight 2. 

In weight 3, the Hodge correlators of depth 1 are expressed in terms of the single-valued trilogarithm $\LLL_3$ (see \S\ref{sec:rel_hodge_polylog}). By the results of \cite{goncharov-rudenko}, the relations (\ref{eqn:corr_29}) imply the general functional equation for $\LLL_3$ (\cite{goncharov-conf-polylogs}). We conclude that the double shuffle relations for Hodge correlators imply all functional equations for $\LLL_2$ and $\LLL_3$.

\subsection{Quasidihedral Lie coalgebras}

Let $G$ be an abelian group. We use the multiplicative notation for $G$; the identity element is $1\in G$. Typically, $G$ will be the multiplicative group of a field $F^\times$ or the group of $N$-th roots of unity $\mu_N$. We adjoin to $G$ a formal element 0, where $0\cdot g=0$ for $g\in G\cup\cq0$.

We define the \emph{quasidihedral Lie coalgebra} $\DDD(G)$. It generalizes the dihedral Lie coalgebra of \cite{goncharov-dihedral}; the latter is the associated graded for the depth filtration of of $\DDD(G)$. The aim of the construction of $\DDD(G)$ is twofold:
\begin{enumerate}[(1)]
    \item It is the main combinatorial ingredient in the proof of the double shuffle relations for correlators.
    \item The Lie coalgebra $\DDD(G)$ describes the coproduct of motivic correlators.
\end{enumerate}

\subsubsection{Cyclic Lie coalgebra}

Let $V$ be the $\QQ$-vector space with basis indexed by $G\cup\cq0$

Let $T(V)=\bigoplus_{n\geq0}V^{\otimes n}$ be the tensor algebra of $V$ over $\QQ$. We impose a grading by weight, where $V^{\otimes n}$ has weight $n-1$. Then define the cyclic Lie coalgebra, as a vector space, by

\[\CCC(G)=\f{T(V)}{\text{cyclic symmetry}}.\]
It is positively graded and generated in weight $n$ by elements $x_0\otimes\dots\otimes x_n$ modulo the relation $x_0\otimes\dots\otimes x_n=x_1\otimes\dots\otimes x_n\otimes x_0$. We can represent these elements by elements of $G\cup\cq0$ written counterclockwise at marked points on a circle.

The coproduct on $\CCC(G)$ is defined on such a generator by splitting the circle into two arcs that share exactly one point. That is, consider a line inside the circle, starting at a marked point and ending between two marked points. It splits the circle into two parts, representing generators $x'$ and $x''$, and the coproduct of $x_0\otimes\dots\otimes x_n$ is the sum of $x'\wedge x''$ over all such cuts.
\begin{center}
\includegraphics{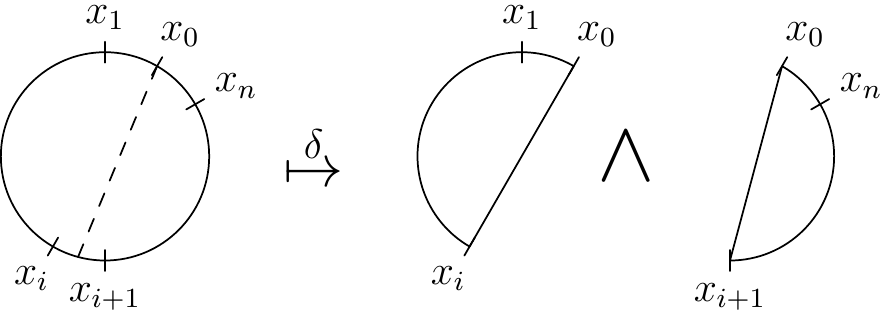}
\end{center}
Precisely, the coproduct is defined by
\begin{equation}
    \delta\pq{x_0\otimes\dots\otimes x_n}=\sum_{\rm cyc}\sum_{i=1}^{n-1}\pq{x_0\otimes x_1\otimes\dots\otimes x_i}\wedge\pq{x_0\otimes x_{i+1}\otimes\dots\otimes x_n}.
    \label{eqn:cyc_coproduct}
\end{equation}
It respects the weight grading and satisfies the co-Jacobi identity.

We will write elements of $\CCC(G)$ as 
\[C(x_0,\dots,x_n)=x_0\otimes\dots\otimes x_n.\]
Also introduce a notation, analogous to that for Hodge correlators, for $w_0,\dots,w_k\in G$ with $w_0\dots w_k=1$:
\begin{align*}
    &C\du(w_0|n_0,w_1|n_1,\dots,w_k|n_k):=\\&=C(\underbrace{0,\dots,0}_{n_0},1,\underbrace{0,\dots,0}_{n_1},w_1,\underbrace{0,\dots,0}_{n_2},\dots,w_1\dots w_{k-1},\underbrace{0,\dots,0}_{n_k},w_1\dots w_k).
\end{align*}

\subsubsection{Relations}

A \emph{first shuffle} in $\CCC(G)$ is an element of the form
\[\sum_{\sigma\in\Sigma_{r,s}}C(x_0,x_{\sigma\inv(1)},x_{\sigma\inv(2)},\dots,x_{\sigma\inv(r+s)}).\] 
Define
\[\widetilde\DDD(G)=\f{\CCC(G)}{\text{first shuffles, scaling relations, distribution relations}}.\]
The scaling relations we impose are:
\begin{enumerate}[(1)]
\item In weight 1, we have $C(0,0)=0$ and $C(ab,ac)=C(0,a)+C(b,c)$ for $a\in G$.
\item In weight $>1$, multiplicative invariance: 
\[C(x_0,\dots,x_n)=C(ax_0,\dots,ax_n),\quad a\in G.\]
\end{enumerate}
The distribution relations are the following. For $l\in\ZZ_{>0}$, let $G_l$ denote the $l$-torsion of $G$. Suppose that $G_l$ is finite and $l$ divides $\aq{G_l}$, and suppose $x_0,\dots,x_n\in G\cup\cq{0}$ are divisible by $l$ (note $0$ is always divisible by $l$). Let $m$ be the number of 0s among the $x_i$. Then the relation is
\begin{equation}
    C(x_0,\dots,x_n)=\f{l^m}{\aq{G_l}}\sum_{y_i^l=x_i}C(y_0,\dots,y_n),
    \label{eqn:distr_rel}
\end{equation}
except in the case that $n=1$ and $x_0=x_1$.

The following is immediate from the constructions of \cite{goncharov-dihedral} (Theorem 4.3).
\begin{thm}
The first shuffles, scaling relations, and distribution relations generate a coideal in $\CCC(G)$. The coproduct on $\CCC(G)$ descends to a well-defined coproduct on $\widetilde\DDD^\vee(G)$.
\end{thm}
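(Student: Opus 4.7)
The plan is to verify the coideal condition $\delta(I) \subset I \wedge \CCC(G) + \CCC(G) \wedge I$ for the subspace $I$ generated by first shuffles, scaling relations, and distribution relations; this condition is equivalent to the coproduct descending to a well-defined coproduct on the quotient $\widetilde\DDD^\vee(G)$. Since the statement asserts the result is immediate from \cite{goncharov-dihedral}, I would pattern the argument on Goncharov's proof for the associated graded dihedral Lie coalgebra, observing that the combinatorics of the coproduct on cyclic generators is the same for $\CCC(G)$ as for its associated graded, so those computations transfer with no essential change.

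For first shuffles, apply (\ref{eqn:cyc_coproduct}) to $\sum_{\sigma \in \Sigma_{r,s}} C(x_0, x_{\sigma\inv(1)}, \dots, x_{\sigma\inv(r+s)})$ and reorganize by cut location. Each chord partitions the interleaved indices into subsets of sizes $(p, q)$ and $(r-p, s-q)$; grouping the terms of $\delta$ by these sizes, the sum on each side of the wedge becomes a smaller first shuffle of the corresponding subsets of $\cq{1,\dots,r}$ and $\cq{r+1,\dots,r+s}$. Hence every term of $\delta$ applied to a first shuffle has at least one first-shuffle factor, so first shuffles form a coideal in $\CCC(G)$.

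For scaling relations, nothing need be checked in weight 1 because $\delta$ vanishes there. For the multiplicative invariance relation $r_{\rm sc} := C(x_0, \dots, x_n) - C(ax_0, \dots, ax_n)$ in weight $> 1$, I would use the Leibniz-type expansion $\delta(r_{\rm sc}) = \sum_{\rm chords} [C(a\vec u) \wedge C(a\vec v) - C(\vec u) \wedge C(\vec v)]$, split as $[C(a\vec u) - C(\vec u)] \wedge C(a\vec v) + C(\vec u) \wedge [C(a\vec v) - C(\vec v)]$. When both pieces have weight $> 1$, both brackets are multiplicative-invariance relations, landing in $I \wedge \CCC + \CCC \wedge I$. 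When one piece has weight 1, the weight-1 scaling relation rewrites $C(ax_i, ax_j) - C(x_i, x_j)$ modulo $I$ as $C(0, a)$; summing such terms over all chords and applying first-shuffle and cyclic-symmetry corrections reduces the result to $I \wedge \CCC + \CCC \wedge I$.

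For distribution relations $C(\vec x) - \frac{l^m}{\aq{G_l}} \sum_{y_i^l = x_i} C(\vec y)$, each chord partitions the $y$-indices, so the sum over lifts factors into independent sums on the two sides, with a single sum over the lifts of the shared chord endpoint. Matching the prefactor $l^m/\aq{G_l}$ against $l^{m_1}/\aq{G_l}$ and $l^{m_2}/\aq{G_l}$ on the two sides (with $m_1 + m_2$ equal to $m$ or $m+1$ depending on whether the endpoint is zero) shows each pair of pieces is a distribution relation on one side wedged with a $\CCC(G)$-element on the other. The exclusion $n = 1$, $x_0 = x_1$ is a weight-1 phenomenon and does not enter the check. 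The main obstacle is this prefactor bookkeeping, together with the weight-1 boundary case in the scaling step; both are exactly the subtleties that Goncharov addresses in the graded setting, and the filtered case transfers directly because the coproduct respects the depth filtration.
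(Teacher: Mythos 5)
The paper itself offers no proof of this theorem: it is quoted as immediate from the constructions of \cite{goncharov-dihedral} (Theorem 4.3), so a direct verification of the coideal condition, as you propose, is the right kind of argument. Your treatment of the first shuffles is the standard one and is fine, and the scaling step is essentially sound; in fact the weight-one leftovers are simpler than you suggest. Modulo $I\wedge\CCC(G)+\CCC(G)\wedge I$ they assemble into $C(0,a)$ wedged with a signed sum of point-deleted cyclic words, and the two families of cuts (the $i=1$ and $i=n-1$ cuts in the cyclic sum) contribute the same collection of deleted words with opposite signs, so they cancel outright by a reindexing using cyclic symmetry of the weight-one factor; no first-shuffle correction is involved (weight $2$ needs a two-line separate check, which also closes).

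The genuine gap is in the distribution relations. The sum over lifts does \emph{not} factor into independent sums on the two sides of a cut: the two halves share the lift $y_s$ of the common endpoint, so a given cut produces the diagonal sum $\sum_{y_s}\bigl(\sum_{y_L}C(y_L,y_s)\bigr)\wedge\bigl(\sum_{y_R}C(y_s,y_R)\bigr)$ rather than a product, and no amount of prefactor bookkeeping alone converts this into (distribution relation)$\wedge\CCC(G)+\CCC(G)\wedge$(distribution relation) plus the cut of $C(x_0,\dots,x_n)$. The missing ingredient is the scaling relations: two lifts of $x_s$ differ by $\zeta\in G_l$, and multiplicative invariance in weight $>1$ gives $\sum_{y_R}C(\zeta y_s,y_R)=\sum_{y_R}C(y_s,\zeta^{-1}y_R)=\sum_{y_R}C(y_s,y_R)$, so each half-sum is independent of the chosen lift of the shared point; only after this decoupling does your count of zeros ($m_1+m_2$ equal to $m$ or $m+1$) finish the matching. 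Moreover, the decoupling argument degenerates when a half has weight one, where scaling produces extra $C(0,\zeta)$ terms, and the weight-one distribution relation you then need for the adjacent pair $(x_s,x_{s\pm1})$ is precisely the excluded relation whenever the generator has two equal adjacent arguments. So, contrary to your closing remark, the exclusion ``$n=1$, $x_0=x_1$'' does enter the coideal check, and those boundary terms need a separate cancellation argument rather than being dismissed as a weight-one phenomenon.
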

Abusing notation, denote also by $C$ and $C\du$ the images in $\widetilde\DDD(G)$ of the elements $C,C\du$ in $\CCC(G)$. 

A \emph{second shuffle} in $\CCC(G)$ is an element of the form suggested by Theorem~\ref{thm:period_main}:
\begin{align*}
    &\sum_{\sigma\in\ol\Sigma_{r,s}}(-1)^{r+s-M_\sigma}C\du(w_{\sigma\inv(1)}|n_{\sigma\inv(1)},\dots,w_{\sigma\inv(M_\sigma)}|n_{\sigma\inv(M_\sigma)},w_0|n_0)\\
    &-C\du(w_1|n_1,\dots,w_r|n_r,w_{\cq{r+1,\dots,r+s,0}}|n_{\cq{r+1,\dots,r+s,0}})\\
    &-C\du(w_{r+1}|n_{r+1},\dots,w_{r+s}|n_{r+s},w_{\cq{1,\dots,r,0}}|n_{\cq{1,\dots,r,0}}),
\end{align*}
where
\[n_S=\sum_{i\in S}(n_i+1)-1,\quad w_S=\prod_{i\in S}w_i.\]
Define the \emph{quasidihedral Lie coalgebra}
\[\DDD(G)=\f{\widetilde\DDD(G)}{\text{second shuffles}}.\]
Then we prove:
\begin{thm}
    The second shuffles form a coideal in $\widetilde\DDD(G)$. The coproduct on $\widetilde\DDD(G)$ descends to a well-defined coproduct on $\DDD(G)$.
    \label{thm:main_qdih_const}
\end{thm}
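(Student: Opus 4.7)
The plan is to show that for any second shuffle element $Z \in \CCC(G)$, its image in $\widetilde\DDD(G)$ has coproduct lying in $(\text{second shuffles}) \wedge \widetilde\DDD(G) + \widetilde\DDD(G) \wedge (\text{second shuffles})$. Since the preceding theorem already establishes that $\delta$ is well-defined on $\widetilde\DDD(G)$, this suffices.

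I apply $\delta$ termwise. Recall that the coproduct of a cyclic generator $C\du(w_0|n_0,\dots,w_k|n_k)$ is a sum over \emph{cuts}: choose a marked vertex $v$ and a position between two adjacent marked vertices, severing the circle into two arcs that meet at $v$. For a second shuffle $Z = Z_{r,s}$ built from color sets $A = \{1,\dots,r\}$ and $B = \{r+1,\dots,r+s\}$ together with the fixed base $w_0$, I reorganize $\delta(Z)$ by first fixing the combinatorial type of the cut --- which indices of $A$, $B$ and $\{0\}$ land on each arc, and the nature of the shared vertex $v$ --- and then summing over all $\sigma \in \ol\Sigma_{r,s}$ compatible with that cut.

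With the cut data fixed, the sum over $\sigma$ factorizes: quasishuffles on the left arc are independent of quasishuffles on the right arc. Writing $A = A' \sqcup A''$ and $B = B' \sqcup B''$ for the induced split, each cut class contributes essentially (a quasishuffle-sum over $\ol\Sigma_{|A'|,|B'|}$) $\wedge$ (a quasishuffle-sum over $\ol\Sigma_{|A''|,|B''|}$). When both factors are two-colored (contain indices from both $A$ and $B$), each is by definition a second shuffle plus its two collapsed extra terms, and hence is already in the coideal. The boundary cases are those in which all of $A$ (resp.\ all of $B$) lies on a single arc: on the monochromatic arc one sees an expression that is a first shuffle, scaling, or distribution consequence of shorter $C\du$'s --- hence zero in $\widetilde\DDD(G)$ --- while the opposite arc contributes an ordinary cyclic word. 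These boundary contributions are precisely what the two subtracted collapsed terms in the definition of $Z_{r,s}$ exist to cancel, and indeed their own coproducts (ordinary cuts of cyclic words) reproduce the same expressions with matching signs.

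The main obstacle is the bookkeeping of the sign $(-1)^{r+s-M_\sigma}$ and of how a cut interacts with a collision slot. The sign factors multiplicatively as $(-1)^{r+s-M_\sigma} = (-1)^{(r'+s'-M_{\sigma'})+(r''+s''-M_{\sigma''})}$ across the cut, but when the shared vertex $v$ is itself a collision slot, a single collision is ``shared'' between the two arcs and the apportionment of signs requires a careful adjustment. A case analysis by the location of $v$ (a pure $A$-slot, a pure $B$-slot, a collided slot, the base slot $w_0$, or one of the interpolated zeros) is needed to match the signs everywhere. Once this analysis is complete, every residual term is absorbed either into relations that vanish in $\widetilde\DDD(G)$ (first shuffles, scaling, distribution) or into the coproducts of the two collapsed extra terms of $Z$, completing the verification that $\delta(Z) \equiv 0$ modulo $(\text{second shuffles}) \wedge \widetilde\DDD(G) + \widetilde\DDD(G) \wedge (\text{second shuffles})$.
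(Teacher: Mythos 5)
Your outline follows the same broad strategy as the paper (classify the cuts of each cyclic term, try to factor the sum over quasishuffles across the cut, and play the leftovers off against the two collapsed correction terms), but the two claims you use to shortcut the computation do not hold, and what you defer as ``bookkeeping'' is where the theorem actually lives. First, the assertion that for a two-colored cut class ``each factor is by definition a second shuffle plus its two collapsed extra terms, and hence is already in the coideal'' is incorrect: the coideal is generated by the \emph{corrected} elements $\ol\QSh^{r',s'}=\QSh^{r',s'}-R_A'-R_B'$, so an uncorrected quasishuffle sum $\QSh^{r',s'}$ is congruent modulo the coideal to the two collapsed single-color terms, which are nonzero. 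These leftovers, produced by \emph{every} two-colored cut class, must then cancel against the leftovers of the other cut classes and against $\delta R_A+\delta R_B$; establishing exactly this cancellation is the bulk of the paper's proof (Lemmas~\ref{lma:pf_dq}--\ref{lma:pf_dr}). Moreover the factorization across the cut is only approximate: a cut may land inside the string of zeros created by a collided segment $(w_iw_j|n_i+n_j+1)$, and splitting those zeros between the two arcs does not respect the product structure — this is why the paper works with generating functions with multiset indices and needs the identity (\ref{eqn:multi_identity}), producing the $(t_i-u_j)^{-1}$-type terms that your sign analysis would have to reproduce.

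Second, you omit the weight-one anomaly entirely. Because the weight-1 scaling relation has the extra term $C(ab,ac)=C(0,a)+C(b,c)$, the coproduct of $C\du$ contains the correction terms (\ref{eqn:mult_cop_3}) of the form $L_i\wedge C(0,w_i)$, and in $\delta(\ol\QSh)$ these assemble not into lower-depth relations nor into $\delta R_A,\delta R_B$, but into a term of the shape $\bigl(\text{weight-1 elements}\bigr)\wedge\bigl(Q-R_A-R_B\bigr)$ — the second shuffle \emph{itself} reappears as a factor (the paper's (\ref{eqn:step2})). Handling this requires the separate Step~2 computation, an induction on the depth $r+s$ combined with $\delta^2=0$, and a direct verification of the weight-2 base case (\ref{eqn:dilog_identity}), where terms $C(0,x)\wedge C(0,y)$ cannot be discarded; none of this appears in your proposal. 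As it stands, then, your text is a statement of the intended strategy rather than a proof: the deferred sign and collision analysis is not a routine check but a delicate global cancellation — the author describes the result as a ``small combinatorial miracle'' for which no simpler argument is known — so the gap is genuine.
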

Theorem~\ref{thm:main_qdih_const} provides us with a Lie coalgebra generated by sequences of elements of $G\cup\cq0$ that satisfies dihedral symmetry, scaling, and the two shuffle relations. 

Let $\CCC^\circ(G)$ the subspace of $\CCC(G)$ generated by elements $C(x_0,\dots,x_n)$ where not all $x_i$ are equal. It is a subcoalgebra, which we call the \emph{restricted cyclic Lie coalgebra}. The image of $\CCC^\circ(G)$ in $\DDD(G)$ is the \emph{restricted quasidihedral Lie coalgebra}, denoted  $\DDD^\circ(G)$.

The Hodge correlators satisfy cyclic symmetry, first shuffle, distribution, and scaling relations. Equivalently, the function $\Cor_\HHH\du$ factors through $\widetilde\DDD(\CC\du)$ and a map
\[C\du(w_0|n_0,\dots,w_k|n_k)\mapsto\Cor_\HHH\du(w_0|n_0,\dots,w_k|n_k).\]
An equivalent form of Theorem \ref{thm:period_main} is that, restricted to the set of arguments where not all $w_i=1$ or not all $n_i=0$, this function factors through the quotient $\DDD^\circ(\CC\du)$.

\subsubsection{Depth filtration} 

The Lie coalgebra $\DDD(G)$ is filtered by the \emph{depth}, where a generator has depth $d$ if it includes $d+1$ elements of $G$ (not counting 0s). Consider $\gr^D\DDD(G)$. In this coalgebra, the second shuffle relations lose their lower-depth terms.

\subsection{Relations for motivic correlators: Hodge realization}

We present the construction of motivic correlators of \cite{goncharov-hodge-correlators} and state our main result in this setting.

This section concerns the Hodge realization of motivic correlators. They are objects in the fundamental Lie coalgebra of the category of $\RR$-mixed Hodge structures, and are Hodge-theoretic upgrades of the Hodge correlator functions.

\subsubsection{Summary}

In \cite{goncharov-hodge-correlators}, given any collection of complex numbers $z_0,\dots,z_n$, the Hodge correlators $\Cor_\HHH(z_0,\dots,z_n)$ were upgraded to elements of the Tannakian Lie coalgebra $\Lie_\HT^\vee$ of the category of real mixed Hodge structures: \begin{equation}\Cor_\Hod(z_0,\dots,z_n)\in\Lie_\HT^\vee.\label{eqn:elem_cor_hod}\end{equation} Furthermore, if follows easily from the construction of the upgraded Hodge correlators (\ref{eqn:elem_cor_hod}) that they satisfy the dihedral and first shuffle relations, and that their coproduct in the coalgebra $\Lie_\HT^\vee$ is given precisely by the formula (\ref{eqn:cyc_coproduct}).

One of the main results of this paper is that the elements (\ref{eqn:elem_cor_hod}) satisfy the second shuffle relations. In other words, they provide a map of Lie coalgebras $\DDD^\circ(\CC\du)\to\Lie_\HT^\vee$.

\subsubsection{Hodge-theoretic setup}

Let $\MHTS_\RR$ of be the tensor category of $\RR$-mixed Hodge-Tate structures and $\HTS_\RR$ the category of $\RR$-pure Hodge-Tate structures. Every object of $\MHTS_\RR$ is filtered by weight, and  $\MHTS_\RR$ is generated by the simple objects $\RR(n)$, the pure Hodge-Tate structures of weight $-n$. The cohomology of a punctured projective line is a mixed Hodge-Tate structure, nontrivial in weights 0 and 1.

The \emph{Galois Lie algebra} of the category of mixed Hodge-Tate structures, $\Lie_\HT$, is the algebra of tensor derivations of the functor $\gr^W:\MHTS_\RR\to\HTS_\RR$. It is a graded Lie algebra in the category $\HTS_\RR$, and $\MHTS_\RR$ is equivalent to the category of graded $\Lie_\HT$-modules in $\HTS_\RR$. Let $\Lie_\HT^\vee$ be its graded dual. A canonical \emph{period map} \[p:\Lie_\HT^\vee\to\RR\] was defined in \cite{goncharov-hodge-correlators}.

Let $X=\PP^1(\CC)$, $S\subset X$ a finite set of punctures containing $\infty$, and $v_\infty=\f{-1}{z^2}\f{\del}{\del z}$ a distinguished tangent vector at $\infty$. The pronilpotent completion $\pi_1^\nil(X\sm(S\cup\cq\infty),v_\infty)$ of the fundamental group $\pi_1(X\sm S,\infty)$ carries a mixed Hodge-Tate structure, depending on $v_\infty$, and thus there is a map
\[\Lie_\HT\to\Der\pq{\gr^W\pi_1^\nil(X\sm S,v_\infty)}.\]

\subsubsection{Hodge correlator coalgebra}

\label{sec:hodge_cor_coalg}

The \emph{Hodge correlator coalgebra} is defined by \cite{goncharov-hodge-correlators} as 
\begin{equation*}
\CLie_{X,S,v_\infty}^\vee:=\f{T(\CC\bq{S\sm\cq\infty}^\vee)}{\text{relations}}\otimes H_2(X).
\end{equation*}
Note that $H_2(X)\cong\RR(1)$. If $[h]\in H_2(X)$ is the fundamental class, we write $x(1)$ for $x\otimes[h]$.

The relations are the following:
\begin{enumerate}[(1)]
\item Cyclic symmetry: $x_0\otimes\dots\otimes x_n=x_1\otimes\dots\otimes x_n\otimes x_0$.
\item (First) shuffle relations:
\begin{equation*}
\sum_{\sigma\in\Sigma_{p,q}}x_0\otimes x_{\sigma\inv(1)}\otimes\dots\otimes x_{\sigma\inv(p+q)}=0.
\end{equation*}
\item Take the quotient by the weight $-1$ elements $(x_0)$.
\end{enumerate}
There is a Lie coalgebra structure on $\CLie_{X,S,v_0}^\vee$, defined by the same formula as for the cyclic Lie coalgebra:
\begin{equation}
    \delta\pq{(x_0\otimes\dots\otimes x_n)(1)}=\sum_{\rm cyc}\sum_{i=1}^{n-1}\pq{(x_0\otimes x_1\otimes\dots\otimes x_i)(1)}\wedge\pq{(x_0\otimes x_{i+1}\otimes\dots\otimes x_n)(1)}.
    \label{eqn:clie_coproduct}
\end{equation}

An action of the graded dual Lie algebra $\CLie_{X,S,v_\infty}$ by derivations on $L_{X,S,s_0}$ was constructed by \cite{goncharov-hodge-correlators}. The action
\begin{equation*}
    \CLie_{X,S,v_\infty}\to\Der\pq{L_{X,S,v_\infty}}
\end{equation*} 
is injective. Its image consists of the \emph{special derivations} $\Der^S\pq{L_{X,S,v_\infty}}$, those which act by 0 on the loop around $\infty$ and preserve the conjugacy classes of all the loops $s\in S\sm\cq\infty$.

Dualizing this map composed with the action of $\Lie_\HT$, we get the \emph{Hodge correlator map} of Lie coalgebras:
\[\Cor_\Hod:\CLie^\vee_{X,S,v_{\infty}}\to\Lie_\HT^\vee.\]
We will also write $\Cor_\Hod(x_0,\dots,x_n)$ for $\Cor_\Hod\pq{(x_0\otimes\dots\otimes x_n)(1)}$, and similarly define \[\Cor_\Hod\du(w_0|n_0,\dots,w_k|n_k).\]

\subsubsection{Period map and Hodge correlator functions}

Recall that the Hodge correlator functions $\Cor_\HHH(x_0,\dots,x_n)$ satisfy cyclic symmetry and shuffle relations, so we may also denote by $\Cor_\HHH$ the function
\begin{align*}
    \Cor_\HHH:\CLie^\vee_{X,S,v_\infty}&\to\CC,\\
    (x_0\otimes\dots\otimes x_n)(1)&\mapsto\Cor_\HHH(x_0,\dots,x_n).
\end{align*}
The dual to the Hodge correlator $\Cor_\HHH:\CLie_{X,S,v_\infty}^\vee\to\CCC$, an element of $\CLie_{X,S,v_\infty}$, is called the \emph{Green operator} $\mathbf{G}_{v_\infty}$. It can be viewed as a special derivation of $\gr^W\pi_1^\nil(X\sm S,v_\infty)\otimes\CC$, and defines a real mixed Hodge structure on $\pi_1^\nil(X\sm S,v_\infty)$. An element $x\in\CLie^\vee_{X,S,v_\infty}$ provides a framing $\RR(n)\to\gr_{2n}^W\pi_1^\nil(X\sm S,v_\infty)$, and $\Cor_\Hod(x)$ is the element of $\Lie_\HT^\vee$ induced by this framing.

As made precise by a main result of \cite{goncharov-hodge-correlators}, $\Cor_\HHH$ factors through the Hodge correlator map to $\Lie_\HT^\vee$ and the period map $\Lie_\HT^\vee\to\CC$, and the resulting mixed Hodge structure on $\pi_1^\nil$ coincides with the standard one.

\begin{thm}[\cite{goncharov-hodge-correlators}, Theorem 1.12]
    \begin{enumerate}[(a)]
    \item Let $x\in\CLie^\vee_{X,S,v_\infty}$ be homogeneous of weight $n$. Then $\Cor_\HHH(x)=(2\pi i)^{-n}p(\Cor_\Hod(x))$, where $p$ is the canonical period map $\Lie_\HT^\vee\to\RR$.
    
    \item The mixed Hodge structure on $\pi_1^\nil$ determined by the dual Hodge correlator map coincides with the standard mixed Hodge structure on $\pi_1^\nil$.
    \end{enumerate}
    \label{thm:hc_main_point}
\end{thm}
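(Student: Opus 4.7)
The plan is to construct the canonical real mixed Hodge structure on $\pi_1^\nil(X\sm(S\cup\cq\infty),v_\infty)$ explicitly, identify its canonical real splitting of the weight filtration with the Green operator $\mathbf{G}_{v_\infty}$ dual to the Hodge correlator functions, and thereby read off the period. The guiding observation is that a real mixed Hodge-Tate structure on a vector space is equivalent to a real splitting of its weight filtration; the period of a framed MHTS is a matrix entry of this splitting; and every real MHTS carries a distinguished canonical real splitting (e.g.\ via Deligne's $\delta$-operator), uniquely characterized by standard Hodge-theoretic properties.

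First I would set up the standard MHS on $\pi_1^\nil$ via the Chen-Hain construction. The de Rham realization is the completed bar complex of 1-forms on $X\sm(S\cup\cq\infty)$ with logarithmic poles, with $v_\infty$ providing a tangential-base-point regularization at $\infty$; the Betti side is the pronilpotent group algebra of $\pi_1$; the weight filtration comes from the lower central series, and the Hodge filtration from the bar degree. I would then construct the canonical real splitting by iteratively applying the formal inverse of $\ol\del$ with respect to the standard flat metric on $\CC$, whose integral kernel is the Green's function $(2\pi i)\inv\log\aq{x-y}$. By the definition recalled in \S\ref{sec:hodge_cor_coalg}, the resulting bigrading of $\gr^W\pi_1^\nil\otimes\CC$, viewed as a special derivation, is precisely the Green operator $\mathbf{G}_{v_\infty}$ dual to $\Cor_\HHH$.

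Next I would expand $\mathbf{G}_{v_\infty}$ as a sum of Feynman diagrams over plane trivalent trees. Each iterated use of the Green's function becomes an interior vertex; the arguments $z_i$ label the leaves; the orientation $\mathrm{Or}_T$ reflects the plane embedding; and the coefficient $(-4)^{n-1}\binom{2n-2}{n-1}\inv$ arises from the symmetrization needed to promote a formal right-inverse of $\ol\del$ to an actual splitting. Matching term by term with equation (\ref{eqn:integral_onetree_p1}) identifies $\mathbf{G}_{v_\infty}$ paired against $(x_0\otimes\dots\otimes x_n)(1)$ with $\Cor_\HHH(x_0,\dots,x_n)$. Part (a) then follows by duality: the action of $\Lie_\HT$ on $\gr^W\pi_1^\nil$ factors through $\CLie_{X,S,v_\infty}$ via the embedding into special derivations, and the period of the framed Hodge-Tate structure $\Cor_\Hod(x)$ is the matrix entry of the canonical splitting along $x$, which equals $(2\pi i)^n\Cor_\HHH(x)$ after accounting for the Tate twist. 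Part (b) follows because the Hodge correlator map determines $\mathbf{G}_{v_\infty}$ and hence the canonical real splitting, which by uniqueness must agree with the one produced by the standard MHS on $\pi_1^\nil$.

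The main obstacle is the Feynman-diagrammatic expansion in the third step. One must rigorously justify the convergence of the iterated Green's function integrals at the tangential base point (the tangential direction is what controls the logarithmic boundary terms at $\infty$), verify that exactly the combinatorics of plane trivalent trees --- and not, say, all trivalent trees or all rooted trees --- arise from the splitting recursion, and derive the precise rational coefficient and sign in (\ref{eqn:integral_onetree_p1}). This is the technical heart of the argument, requiring a delicate interplay of the bar complex, the Hodge-harmonic decomposition on $\CC\PP^1$, and the symmetrization implicit in any canonical splitting of a weight filtration.
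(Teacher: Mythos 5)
This statement is not proved in the paper at all: it is imported verbatim as Theorem 1.12 of \cite{goncharov-hodge-correlators}, and everything downstream (the factorization of $\Cor_\HHH$ through $\Lie_\HT^\vee$, Lemma~\ref{lma:d0h0_rational}, the proofs of Theorems~\ref{thm:period_main} and \ref{thm:hodge_main}) simply uses it as a black box. So what you are proposing is a reconstruction of Goncharov's theorem, not of anything argued here. Measured against that, your sketch has a genuine gap: the entire content of the theorem is concentrated in your third step --- the convergence of the regularized integrals at the tangential base point, the fact that the splitting recursion produces exactly plane trivalent trees with the orientation class $\mathrm{Or}_T$, and the coefficient $(-4)^{n-1}\binom{2n-2}{n-1}\inv$ --- and you explicitly leave all of it unproved (``the technical heart''). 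As written, part (b) is then circular: you deduce agreement with the standard MHS ``by uniqueness'' of the canonical splitting, but the identification of $\mathbf{G}_{v_\infty}$ with the canonical splitting of the standard (Chen--Hain) structure is precisely the unestablished step. The actual proof in the literature goes a different way: one shows the Green operator satisfies the Maurer--Cartan/Griffiths transversality equations (the differential equations recorded in (\ref{eqn:diff_eq_p1})), so that it defines a variation of mixed Hodge--Tate structures over $\MMM_{0,n}'$, and then identifies this variation with the standard one by rigidity and behavior at the boundary, rather than by a direct diagrammatic computation of a splitting.

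Two further points of caution. First, your guiding principle ``a real mixed Hodge--Tate structure is equivalent to a real splitting of its weight filtration'' is false as stated: a real splitting would force the structure to be a direct sum of Tate twists, killing all extensions. The correct statement involves the Deligne bigrading of $V_\CC$; the extension data is measured by the failure of that complex splitting to be real, and this is exactly what the Green operator records. Second, the period map $p$ in part (a) is the one attached to Goncharov's generators $G_w$, which the paper explicitly distinguishes from Deligne's generators; so ``the period is a matrix entry of the canonical splitting'' needs the specific normalization built into $\mathbf{G}_{v_\infty}$, and the factor $(2\pi i)^{-n}$ in (a) comes out of that choice. Without pinning these down, the coefficient matching you defer cannot even be formulated unambiguously.
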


\subsubsection{Second shuffle relations}

We state the version of the main result for the Hodge correlators, on the level of the map $\Cor_\Hod$.

\begin{thm}
    \begin{enumerate}[(a)]
    \item Restricted to the subspace of $\CLie_{X,S,v_\infty}^\vee$ generated by elements $(x_0\otimes\dots\otimes x_n)(1)$ with not all $x_i$ equal, the map $\Cor_\Hod$ factors through $\DDD^\circ(\CC\du)$.
    \item
    Suppose that $r,s>1$ and that not all $n_i=0$ or not all $w_i=1$. Then the Hodge correlators satisfy the relation:
    \begin{align*}
        &\sum_{\sigma\in\ol\Sigma_{r,s}}(-1)^{r+s-M_\sigma}\Cor_\Hod\du(w_{\sigma\inv(1)}|n_{\sigma\inv(1)},\dots,w_{\sigma\inv(M_\sigma)}|n_{\sigma\inv(M_\sigma)},w_0|n_0)\\
        &-\Cor_\Hod\du(w_1|n_1,\dots,w_r|n_r,w_{\cq{r+1,\dots,r+s,0}}|n_{\cq{r+1,\dots,r+s,0}})\\
        &-\Cor_\Hod\du(w_{r+1}|n_{r+1},\dots,w_{r+s}|n_{r+s},w_{\cq{1,\dots,r,0}}|n_{\cq{1,\dots,r,0}})&=0,
    \end{align*}
    where
    \[n_S=\sum_{i\in S}(n_i+1)-1,\quad w_S=\prod_{i\in S}w_i.\]
    \item The Hodge correlators satisfy all specializations of this relation as any subset of the $w_i$ $(1\leq i\leq n)$ approaches 0.
\end{enumerate}
    \label{thm:hodge_main}
\end{thm}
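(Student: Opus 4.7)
The strategy is an induction on the weight $n$, combining Theorem~\ref{thm:period_main} (the period identity) with Theorem~\ref{thm:main_qdih_const} (second shuffles form a coideal in $\widetilde\DDD(\CC\du)$). Part (b) is the principal statement; (a) is its geometric reformulation, granting that the remaining defining relations of $\widetilde\DDD(\CC\du)$ (cyclic symmetry, first shuffles, scaling, distribution) are already known to hold for $\Cor_\Hod$ via parallel arguments. Part (c) follows from (b) by extending the whole construction to $\CC\du\cup\{0\}$, as already allowed by the definition of $\CCC(G)$.

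The first step is to recall that, by the construction reviewed in \S\ref{sec:hodge_cor_coalg}, the map
\[\Cor_\Hod\colon\CLie^\vee_{X,S,v_\infty}\longrightarrow\Lie_\HT^\vee\]
is a morphism of Lie coalgebras, with source coproduct given by (\ref{eqn:clie_coproduct}), and that it factors through $\widetilde\DDD(\CC\du)$ on its restricted domain. Fix a second-shuffle generator $R$ of weight $n$ and set $\alpha_R:=\Cor_\Hod(R)\in\Lie_\HT^\vee$. Because $\Cor_\Hod$ respects the cobracket,
\[\delta\alpha_R=(\Cor_\Hod\wedge\Cor_\Hod)(\delta R).\]
By Theorem~\ref{thm:main_qdih_const}, every wedge term of $\delta R$ contains at least one factor that is itself a second-shuffle expression of weight strictly less than $n$. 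The inductive hypothesis says that $\Cor_\Hod$ annihilates all such lower-weight second shuffles, so $\delta\alpha_R=0$.

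At this point $\alpha_R$ is a primitive element of $\Lie_\HT^\vee$ in weight $n$. By a standard feature of the Hodge-Tate Galois Lie coalgebra, the space of primitives in weight $n\geq1$ is identified via the canonical period map $p$ with $\Ext^1_{\MHTS_\RR}(\RR(0),\RR(n))\cong\RR$; in particular $p$ is injective on primitives of each weight. By Theorem~\ref{thm:hc_main_point}(a) we have $p(\alpha_R)=(2\pi i)^n\Cor_\HHH(R)$, which vanishes by Theorem~\ref{thm:period_main}. Hence $\alpha_R=0$, completing the inductive step. The base of the induction is the lowest weight in which a nontrivial second-shuffle relation exists (the weight-$2$ $(1,1)$-shuffle, which reduces to the five-term relation for $\LLL_2$); there $\delta\alpha_R$ lands in $\Lie_\HT^\vee(1)\wedge\Lie_\HT^\vee(1)$ and vanishes by an immediate inspection of the weight-one Hodge correlators, after which the period argument concludes.

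I expect the main obstacle to be the bookkeeping needed to verify that $\delta R$, computed via (\ref{eqn:clie_coproduct}), really splits as a sum in which one factor of each wedge is a strictly-lower-weight second shuffle. One must trace how the collision terms of type (1) and the two collapse terms of type (2) in the statement of the second shuffle arise from cuts of the marked-point circle and regroup into the correct combinations. This combinatorial verification is precisely the content of Theorem~\ref{thm:main_qdih_const}, so once that theorem is in hand the induction closes cleanly, and the extension to (c) is a routine passage to the limit as some $w_i\to0$ within the enlarged alphabet $\CC\du\cup\{0\}$.
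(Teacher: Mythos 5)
Your first half — computing $\delta\Cor_\Hod(R)$ via the coalgebra morphism property and killing it by induction on weight using the coideal statement of Theorem~\ref{thm:main_qdih_const} — agrees with the paper, and your observation that a primitive of weight $n>1$ in $\Lie_\HT^\vee$ is controlled by its single real period is also the fact the paper relies on. The genuine gap is in how you dispose of that period. You invoke Theorem~\ref{thm:period_main} to conclude $p(\alpha_R)=(2\pi i)^n\Cor_\HHH(R)=0$, but in the paper Theorem~\ref{thm:period_main} is not an independent input: it is obtained \emph{from} Theorem~\ref{thm:hodge_main} by applying the period map, and the paper explicitly remarks that the function-level identity is not easy to prove from the integral definition even in the simplest case $(1,1)$-shuffle with $n_0=n_1=n_2=0$. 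So as written your argument is circular; to make it work you would have to supply a direct analytic proof of the second shuffle relation for the functions $\Cor_\HHH$, which is precisely the hard content being established.

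The paper closes this step differently: for fixed $r,s,n_i$ it regards the relation not pointwise but as a variation of framed mixed Hodge--Tate structures over the parameter space $T$ of the $w_i$. Once the coproduct vanishes (inductively), rigidity of variations of weight $>1$ forces the variation to be constant, and the constant is then evaluated by degenerating all $w_i\to 0$ using the specialization theorem for nodal curves (Theorem~\ref{thm:degen_nodal}, Theorem~\ref{thm:corrh_cts}), where every term specializes to $\Cor_\HHH(1,0,\dots,0)=0$. This is how the period vanishing is obtained without ever proving the function identity first. Relatedly, your part (c) and the extension to non-generic parameter values (colliding arguments) are not a ``routine passage to the limit'': they require exactly this specialization/continuity theorem, which occupies a full section of the paper, to know that the canonical extension of the variation and its period behave well at the boundary and that the relation propagates from the dense locus $T$ to its closure.
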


While Theorem~\ref{thm:period_main} was an equality between functions, Theorem~\ref{thm:hodge_main} is a relation in the fundamental Lie coalgebra of mixed Hodge-Tate structures. Theorem~\ref{thm:period_main} follows immediately from Theorem~\ref{thm:hodge_main} by applying the period map.

\subsection{Relations for motivic correlators over a number field}

We now state the most general version of the result by upgrading the constructions of the previous section from mixed Hodge structures to mixed motives over a number field.

\subsubsection{Motivic setup}

Let $F$ be a number field and $\MTM_F$ the category of mixed Tate motives over $F$. It is generated by objects $\QQ(n)=\QQ(1)^{\otimes n}$ for $n\in\ZZ$, where $\QQ(1)$ is the Tate motive, pure of weight $-1$. This induces a canonical weight filtration on objects of $\MTM_F$. There is a functor $\gr^W:\MTM_F\to\PM_F$, where $\PM_F$ is the category of pure motives over $F$.

The \emph{fundamental (motivic Tate) Lie algebra} $\Lie_\MTF$ is the algebra of tensor derivations of the functor $\gr^W$, a graded Lie algebra in the category $\PM_F$, and $\MTM_F$ is equivalent to the category of graded $\Lie_\MTF$-modules.

An embedding $\sigma:F\to\CC$ induces a \emph{realization functor} $r:\MTM_F\to\MHTS_\RR$ and a map $r:\Lie_\MTF^\vee\to\Lie_\HT^\vee$. 

Let $X=\PP^1$, $S\subset X(F)$ a finite set of punctures containing $\infty$, and $v_\infty$ the distinguished tangent vector at $\infty$. Deligne and Goncharov's \emph{motivic fundamental group} (\cite{deligne-goncharov}) $\pi_1^\Mot(X\sm S,v_\infty)_{\rm un}$ is a  prounipotent group scheme in the category $\MTM_F$. The Hodge realization of its Lie algebra is $\pi_1^\nil(X\sm S,v_\infty)$. As it is an object in $\MTM_F$, there is an action $\Lie_\MTF\to\Der\pq{\gr^W\pi_1^\Mot}$.

\subsubsection{Motivic correlator coalgebra}

The construction of the Hodge correlator coalgebra $\CLie^\vee_{X,S,v_\infty}$ can be upgraded to the motivic setting. 
The definition of the \emph{motivic correlator coalgebra} mimics that of its Hodge realization:
\begin{equation*}
\pq{\CLie^\Mot_{X,S,v_\infty}}^\vee:=\f{T\pq{(\QQ(1)^{S\sm\cq\infty})^\vee}}{\text{relations}}\otimes H_2(X),
\end{equation*}
a graded Lie coalgebra in the category of pure motives over $F$, where the relations imposed are the cyclic symmetry, first shuffles, and quotient by weight $0$. Then $\CLie^\Mot_{X,S,v_0}$ is isomorphic to the algebra of special derivations of $\gr^W\pi_1^\Mot(X-S,v_\infty)$, and there is a map
\begin{equation*}
\Cor_\Mot:\pq{\CLie^\Mot_{X,S,v_\infty}}^\vee\to\Lie_\MTF^\vee.
\end{equation*}
We will write $\Cor_\Mot(x_0,\dots,x_n)$ for $\Cor_\Mot(\pq{x_0\otimes\dots\otimes x_n}(1))$, and likewise $\Cor_\Mot(w_0|n_0,\dots,w_k|n_k)$.

Let us describe how motivic correlators are related to Hodge correlators. Fix an embeding $r:F\to\CC$. The Hodge realization provides coalgebra maps $\Lie_\MTF^\vee\to\Lie_\HT^\vee$ and \[r:\pq{\CLie^\Mot_{X,S,v_\infty}}^\vee\otimes\CC\to\CLie^\vee_{X,S,v_\infty}\otimes\CC,\] and thus a period map 
\[
    \Cor_\HHH\circ r:\pq{\CLie^\Mot_{X,S,v_\infty}}^\vee\otimes\CC\to\CLie_{X,S,v_\infty}^\vee\otimes\CC\to\CC.
\]

By Theorem~\ref{thm:hc_main_point}, it coincides with the composition 
\[
    \pq{\CLie^\Mot_{X,S,v_\infty}}^\vee\to\Lie_\Mot^\vee\to\Lie_\HT^\vee\to\CC.   
\]
We can summarize the objects and maps defined thus far as follows:
\[
\xymatrix{
    \Der^S(\gr^W\pi_1^\Mot(X\sm S,v_\infty))^\vee\ar@{-}[r]
    &(\CLie_{X,S,v_\infty}^\Mot)^\vee\ar[r]^{\quad\Cor_\Mot}\ar[d]^r
    &\Lie_\MTF^\vee\ar[d]^r
    \\
    \Der^S(\gr^W\pi_1^\nil(X\sm S,v_0))^\vee\ar@{-}[r]
    &(\CLie_{X,S,v_\infty}^\vee)\ar[r]^{\quad\Cor_\Hod}\ar[dr]_{(2\pi i)^w\Cor_\HHH}&\Lie_\HT^\vee\ar[d]^p
    \\
    &&\RR.
}    
\]

Under certain conditions, relations on motivic correlators hold can be proven by showing that they hold in the Hodge realization under any complex embedding. This is a key fact in the proof of the motivic upgrade of our relations on Hodge correlators:
\begin{lma}
    Let $X\sm S$ be a rational curve over $F$. Suppose $x\in\pq{\CLie_{X,S,v_\infty}^\Mot}^\vee$ has weight $>1$, $\delta\Cor_\Mot(x)=0$, and $\Cor_\HHH(r(x))=0$ for every embedding $r:F\to\CC$. Then $\Cor_\Mot(x)=0$.
    \label{lma:d0h0_rational}
\end{lma}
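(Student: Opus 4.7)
The plan is to use that the hypothesis $\delta\Cor_\Mot(x)=0$ together with weight $>1$ forces $\Cor_\Mot(x)$ to lie in a space canonically identified with an $\Ext^1$-group in $\MTM_F$, and then to apply Borel's regulator injectivity theorem to detect this element through its real periods under all complex embeddings of $F$.

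First, I would observe that $\delta\Cor_\Mot(x)=0$ means $\Cor_\Mot(x)$ is a primitive element of the graded Lie coalgebra $\Lie_\MTF^\vee$. By the standard Tannakian formalism, the space of primitives of weight $n$ in $\Lie_\MTF^\vee$ is canonically identified with $\Ext^1_{\MTM_F}(\QQ(0),\QQ(n))$, the group of extensions of $\QQ(0)$ by $\QQ(n)$ in the category of mixed Tate motives over $F$. Since $\Cor_\Mot(x)$ has weight $n>1$, Borel's theorem (together with the Beilinson-Deligne description of Ext groups in $\MTM_F$) gives
\[
\Ext^1_{\MTM_F}(\QQ(0),\QQ(n))\cong K_{2n-1}(F)\otimes\QQ,
\]
a finite-dimensional $\QQ$-vector space.

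Second, I would invoke the injectivity of the Borel regulator: for $n\geq 2$, the map
\[
K_{2n-1}(F)\otimes\QQ\;\longrightarrow\;\bigoplus_{r:F\hookrightarrow\CC}\RR
\]
assembled from the real period maps attached to all complex embeddings of $F$ is injective. Under the identification above, the regulator attached to an embedding $r$ is exactly the composition
\[
\Ext^1_{\MTM_F}(\QQ(0),\QQ(n))\xrightarrow{\;r\;}\Ext^1_{\MHTS_\RR}(\RR(0),\RR(n))\xrightarrow{\;p\;}\RR,
\]
i.e.\ $y\mapsto p(r(y))$ on primitives. Combining this with the compatibility of Hodge and motivic correlators from Theorem~\ref{thm:hc_main_point}(a), for every embedding $r:F\to\CC$ we have
\[
p\bigl(r(\Cor_\Mot(x))\bigr)=p\bigl(\Cor_\Hod(r(x))\bigr)=(2\pi i)^n\,\Cor_\HHH(r(x))=0
\]
by the hypothesis on $x$. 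Thus every regulator value of $\Cor_\Mot(x)$ vanishes, and Borel's injectivity forces $\Cor_\Mot(x)=0$.

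The main obstacle, and the essential external input, is Borel's regulator injectivity theorem for the algebraic $K$-theory of a number field; the rest is formal bookkeeping with the Tannakian description of primitives in $\Lie_\MTF^\vee$ and the compatibility of the Hodge realization with the motivic and Hodge correlator maps. The weight $>1$ hypothesis is indispensable, since in weight $1$ the primitive space is $\Ext^1_{\MTM_F}(\QQ(0),\QQ(1))\cong F^\times\otimes\QQ$, which is infinite-dimensional and not detected by its real periods alone.
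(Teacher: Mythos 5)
Your proposal is correct and follows essentially the same route as the paper: the paper simply packages the identification of primitives with $\Ext^1_{\MTM_F}(\QQ(0),\QQ(n))\cong K_{2n-1}(F)\otimes\QQ$, the Beilinson--Borel regulator comparison, and Borel's injectivity into Theorem~\ref{thm:d0h0m0}, and then deduces the lemma by the same compatibility $p(r(\Cor_\Mot(x)))=p(\Cor_\Hod(r(x)))=(2\pi i)^n\Cor_\HHH(r(x))$ that you use. You have merely unwound that cited theorem into its ingredients, so there is nothing to correct.
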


\subsubsection{Dependence on $S$} If $S\subseteq S'$, there is an induced inclusion $\iota:(\CLie_{X,S,v_0}^\Mot)^\vee\to(\CLie_{X,S',v_0}^\Mot)^\vee$.

The following diagram commutes:
\[
    \xymatrix{
        &(\CLie_{X,S,v_\infty}^\Mot)^\vee\ar[d]^\iota\ar[ddl]_{\Cor_\Mot}\ar[ddr]^{\Cor_\HHH\circ r}\\
        &(\CLie_{X,S',v_\infty}^\Mot)^\vee\ar[dl]^{\Cor_\Mot}\ar[dr]_{\Cor_\HHH\circ r}\\
        \Lie_\MTF^\vee\ar[rr]_{p\circ r}&&\CC.        
    }
\]
This allows us to write down elements of $(\CLie_{X,S,v_0}^\Mot)^\vee$ without explicitly specifying $S$.

\subsubsection{Second shuffle relations}

We are ready to state the most general version of the main result.

\begin{thm}
    Let $F$ be a number field.
    \begin{enumerate}[(a)]
    \item Restricted to the subspace of $\pq{\CLie_{X,S,v_\infty}^\Mot}^\vee$ generated by elements $(x_0\otimes\dots\otimes x_n)(1)$ with not all $x_i$ equal, the map $\Cor_\Mot$ factors through $\DDD^\circ(F^\times)$.
    \item
    Suppose that $r,s>1$ and that not all $n_i=0$ or not all $w_i=1$. Then the motivic correlators satisfy the same relation as in Theorem~\ref{thm:hodge_main}, with $\Cor_\Hod\du$ replaced by $\Cor_\Mot\du$.
    \item The motivic correlators satisfy all specializations of this relation as any subset of the $w_i$ $(1\leq i\leq n)$ approaches 0.
\end{enumerate}
    \label{thm:mot_main}
\end{thm}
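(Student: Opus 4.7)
The plan is to deduce the motivic Theorem~\ref{thm:mot_main} from its Hodge counterpart Theorem~\ref{thm:hodge_main} by means of Lemma~\ref{lma:d0h0_rational}, using induction on weight to supply the coproduct hypothesis. Part~(a) is the assertion that $\Cor_\Mot$ descends to $\DDD^\circ(F^\times)$. Cyclic symmetry and first shuffles are already built into the definition of $(\CLie^\Mot_{X,S,v_\infty})^\vee$. Multiplicative scaling invariance in weight $>1$ follows from functoriality of $\pi_1^\Mot$ under the torus action $z\mapsto az$ on $\PP^1$, and the weight-one scaling relation is checked directly from the motivic $H^1$. The distribution relations follow from the motivic direct-image computation under the power map $z\mapsto z^l$ on $\PP^1\sm(\mu_l\cdot S'\cup\{0,\infty\})$, mirroring the Hodge argument verbatim. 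Granted these, part~(a) reduces to part~(b), namely the vanishing of the second shuffle combination on motivic correlators.

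For part~(b), let $\Sigma^{II}_{r,s}(\vec w,\vec n)\in(\CLie^\Mot_{X,S,v_\infty})^\vee$ denote the left-hand side of the claimed relation. I proceed by induction on the total weight $n_0+\dots+n_{r+s}+(r+s)$. For the inductive step, note that the motivic coproduct on $(\CLie^\Mot_{X,S,v_\infty})^\vee$ is given by the same combinatorial formula~(\ref{eqn:cyc_coproduct}) as on $\CCC(F^\times)$, and $\Cor_\Mot$ is a morphism of Lie coalgebras into $\Lie_\MTF^\vee$. By Theorem~\ref{thm:main_qdih_const} the second shuffles form a coideal in $\widetilde\DDD(F^\times)$, so $\delta\Sigma^{II}_{r,s}$, computed in $\CCC(F^\times)\wedge\CCC(F^\times)$, can be written as a sum of wedges each having one factor equal to a strictly lower-weight second shuffle, modulo cyclic symmetry, first shuffles, scaling, and distribution relations. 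By part~(a), the latter four families are annihilated by $\Cor_\Mot$, and by the induction hypothesis so are the lower-weight second shuffles. Hence $\delta\Cor_\Mot(\Sigma^{II}_{r,s})=0$ in $\Lie_\MTF^\vee\wedge\Lie_\MTF^\vee$. Meanwhile, Theorem~\ref{thm:hodge_main} gives $\Cor_\HHH(r(\Sigma^{II}_{r,s}))=0$ for every embedding $r:F\to\CC$. The hypothesis that not all $w_i=1$ or not all $n_i=0$ ensures the total weight is $>1$, so Lemma~\ref{lma:d0h0_rational} yields $\Cor_\Mot(\Sigma^{II}_{r,s})=0$, closing the induction. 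The base case is the minimal-weight second shuffle satisfying the hypotheses, where the coideal identity already forces $\delta\Sigma^{II}_{r,s}=0$ outright, and the lemma applies directly.

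Part~(c) follows by enlarging the puncture set. A specialization $w_i\to 0$ in the inhomogeneous notation corresponds to degenerating one of the running products $w_1\cdots w_j$ to $0\in\PP^1$. Using the inclusion $\iota:(\CLie^\Mot_{X,S,v_\infty})^\vee\to(\CLie^\Mot_{X,S\cup\{0\},v_\infty})^\vee$ and the commutative diagram from the subsection on dependence on $S$, the specialized combination is itself a second shuffle expression in the enlarged coalgebra, and so is covered by part~(b).

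The main obstacle will be the compatibility check in the inductive step: strictly speaking, Theorem~\ref{thm:main_qdih_const} only produces the identity $\delta\Sigma^{II}_{r,s}\in(\text{second shuffles})\wedge\widetilde\DDD(F^\times)+\widetilde\DDD(F^\times)\wedge(\text{second shuffles})$ \emph{modulo} the first shuffle, scaling, and distribution relations defining $\widetilde\DDD(F^\times)$. The induction argument therefore crucially relies on those relations holding as genuine identities on $\Cor_\Mot$, not just in the quotient $\widetilde\DDD(F^\times)$. This is precisely the content of part~(a), so the proof must be organized so that part~(a) — specifically, the motivic scaling and distribution identities — is established before the induction of part~(b) is run, giving the argument its expected bootstrap structure.
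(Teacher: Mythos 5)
Your central mechanism is exactly the paper's: induct on the weight, use the coideal property of the second shuffles (Theorem~\ref{thm:main_qdih_const}) to conclude $\delta\Cor_\Mot$ of the relation vanishes, use Theorem~\ref{thm:hodge_main} to kill the Hodge-realization period under every embedding, and invoke Lemma~\ref{lma:d0h0_rational} (Borel rigidity) to conclude the motivic element is zero. The difference is organizational: the paper does not prove part (a) ``before'' part (b). It runs a single induction over the entire kernel of $\pq{\CLie_{X,S,v_\infty}^\Mot}^{\vee\circ}\to\DDD^\circ(F^\times)$ --- scaling, distribution and second shuffle relations all at once --- checking only the weight-one scaling identity directly via $\Cor_\Mot(a,b)=(a-b)\in F^\times\otimes\QQ$; the weight $>1$ scaling relations are then handled by the same rigidity argument (they die in $\DDD^\circ$, their Hodge periods vanish by the known scaling invariance of $\Cor_\HHH$, and their coproducts vanish by the inductive hypothesis), so no independent motivic proof of them is ever needed.

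This matters because your bootstrap makes a separate proof of weight $>1$ multiplicative invariance a prerequisite for the induction, and your proposed proof of it is incomplete as stated: the automorphism $z\mapsto az$ of $\PP^1$ does not fix the tangential base point --- it rescales $v_\infty$ --- so functoriality of $\pi_1^\Mot$ only gives invariance up to a change of tangent vector, and the paper never establishes (nor needs) that motivic correlators of weight $>1$ are insensitive to that rescaling. Similarly, your part (c) claim that the specialization $w_i\to0$ ``is itself a second shuffle expression in the enlarged coalgebra'' is not correct as stated: the limit is a degenerate linear combination that is not literally of the form (\ref{eqn:def_olqsh_q})--(\ref{eqn:def_olqsh_rb}). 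Both issues are repaired by the paper's uniform scheme: treat the weight $>1$ scaling relations and the specialized relations as further elements of the kernel and feed them through the same induction, using Theorem~\ref{thm:hodge_main}(a),(c) for the vanishing of their Hodge periods. With that reorganization your argument coincides with the paper's proof.
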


\section{Background: Hodge and motivic correlators}

\subsection{Hodge realization of motivic correlators}

\subsubsection{Mixed Hodge theory}

We recall the relevant definitions from~\cite{deligne-mixed-hodge}. A real mixed Hodge structure consists of the following data:
\begin{enumerate}[(1)]
\item A real vector space $V$;
\item An increasing weight filtration $W_\bullet$ on $V$;
\item A decreasing Hodge filtration $F^\bullet$ on its complexification $V_\CC=V\otimes_\RR\CC$, with conjugate $\ol F^\bullet$,
\end{enumerate}
such that $F^\bullet$ and $\ol F^\bullet$ induce a pure real Hodge structure of weight $n$ on $\gr_n^WV_\CC$, i.e.,
\[
\gr^W_nV_\CC=\bigoplus_{p+q=n}F_{(n)}^p\cap\ol F_{(n)}^q,\quad
F_{(n)}^p=\f{F^p\cap(W_n)_\CC+(W_{n-1})_\CC}{(W_{n-1})_\CC},\quad\ol F_{(n)}^q=\dots.
\]

A mixed Hodge structure is a \emph{mixed Hodge-Tate structure} if $V_\CC^{p,q}=0$ for $p\neq q$. For the real mixed Hodge structures that are Tate, which are the ones we consider, the associated graded pure Hodge-Tate structures are trivial in odd weight. Therefore, we reindex the filtration by semiweight (so $\RR(1)$ has weight $-1$, rather than $-2$).

Mixed Hodge-Tate structures are iterated extensions of the one-dimensional pure mixed Hodge-Tate structures of weight $-n$, denoted $\RR(n)$. Equivalently, in the category $\MHS_\RR$ of real mixed Hodge-Tate structures, the subcategory of mixed Hodge-Tate structures $\MHTS_\RR$ is the full subcategory generated by the simple objects $\RR(n)$.

The map $\gr^W$ provides a fiber functor from mixed to pure real Hodge-Tate structures: \[\gr^W:\MHTS_\RR\to\HTS_\RR.\] The Tannakian reconstruction theorem implies that there is a graded Lie algebra $\Lie_\HT$ in the category $\HTS_\RR$ such that $\MHTS_\RR$ is equivalent to the category of finite-dimensional graded $\Lie_\HT$-modules in $\HTS_\RR$. Specifically, $\Lie_\HT=\Der^\otimes\gr^W$, the graded Lie algebra in $\HTS_\RR$ of tensor derivations of the functor $\gr^W$. That is, every mixed Hodge-Tate structure $X$ determines an action \[\Lie_\HT\to\Der\pq{\gr^WX}.\]  Let $\Lie_\HT^\vee$ be the graded dual of $\Lie_\HT$. 

The simple objects of the category $\HS_\RR$ are $\RR(n)$, and $\Lie_\HT$ is free on
\[\bigoplus_{n<0}\Ext^1_{\MHTS_\RR}(\RR(0),\RR(n))^\vee\otimes_{\End\RR(n)}\RR(n).\]

A \emph{framing} of a mixed Hodge-Tate structure $V$ of weight $n$ consists of a pair of morphisms $\RR(0)\to\gr_0^WV$, $\gr_{-2n}^WV\to\RR(n)$. The isomorphism classes of framed real mixed Hodge-Tate structures generate a Hopf algebra $\HHH_\bullet$, with the structure defined by \cite{bgsv}, which is canonically isomorphic to the dual to the universal enveloping algebra of $\Lie_\HT$. An element of $\Lie_\HT^\vee$ of weight $n$ is represented by a framed real mixed Hodge-Tate structure of weight $n$, modulo products in $\HHH$, that is, 
\begin{equation}
    \Lie_{\HT/B}^\vee\cong\f{\HHH}{\HHH_{>0}\cdot\HHH_{>0}}.
    \label{eqn:lie_quot_hopf} 
\end{equation}

The $\Ext^1(\RR(0),\RR(n))$ are trivial for $n\geq0$ and 1-dimensional for $n<0$, in which case \[\Ext^1(\RR(0),\RR(n))=(\RR(n)\otimes\CC)/\RR(n)=\RR(n)\otimes_\RR i\RR.\] 

According to \cite{goncharov-hodge-correlators}, a choice of  generators $n_w$ of $\Lie_\HT\otimes\CC$ satisfying $n_w=-\ol n_w$ amounts to a map
\begin{equation*}
\Lie_\HT^\vee\to\bigoplus_{n<0}\Ext^1_{\MHTS_\RR}(\RR(0),\RR(n))\otimes\RR(n)^\vee=\bigoplus_{n<0}\RR(n)\otimes_\RR i\RR,
\end{equation*}
and thus defines a canonical period map 
\begin{equation*}
    p:\Lie_\HT^\vee\to\RR.
\end{equation*}
Such generators were originally defined by Deligne for the larger category of real mixed Hodge structures (\cite{deligne-mixed-hodge}). However, we use the different set of generators proposed by Goncharov (\cite{goncharov-hodge-correlators}), the \emph{Green's operators} $G_{w}$. They have the property that, for Hodge structures varying over a base, the Griffiths transversality condition needed to define variations of Hodge structures is expressed by a Maurer-Cartan differential equation on the $G_{w}$, which is essential for the construction of Hodge correlators. Contrary to this, the differential equations for Deligne's generators are difficult to write.

A \emph{variation} of real mixed Hodge-Tate structures on a complex variety $B$ is a variation of the linear data of real mixed Hodge-Tate structure that satisfies the Griffiths transversality condition. Precisely, it is a real vector bundle with flat connection $(V,\nabla)$ with a weight filtration $W_\bullet$ on $V$ and a Hodge filtration $F^\bullet$ on $V\otimes_\RR\OOO_B$ such that $F$ and $V$ induce a real mixed Hodge-Tate structure over each point of $X$ and $\nabla^{0,1}F^p\subseteq F^{p-1}\otimes\Omega_B^1$.

A consequence of the transversality condition is that for $n>1$, $\Ext^1(\RR(0),\RR(n))$ is rigid in the category of variations of mixed Hodge-Tate structures over $B$: if the coproduct of a variation of Hodge-Tate structures of weight $w>1$ is 0, then the variation is isomorphic to a constant one.

\subsubsection{Pronilpotent fundamental group}

Let $X=\PP^1(\CC)$, $S\subset X$ a finite set of punctures containing $\infty$, and $v_\infty=\f{-1}{z^2}\f{d}{dz}$ a distringuished tangent vector at $\infty$. Let $\pi_1=\pi_1(X\sm S,\infty)$ be the classical fundamental group. The group algebra $A=\QQ[\pi_1]$ is a free group generated by loops around the points of $S\sm\cq{\infty}$. Let $\III=\ker(A\to\QQ)$ be the augmentation ideal. Then form a Hopf algebra
\begin{equation*}
A^\nil(X\sm S,v_\infty):=\lim_{\leftarrow}\pq{\dots\to A/\III^{n+1}\to A/\III^n\to\dots\to A},
\end{equation*}
with coproduct defined by $g\to g\otimes g$ for $g\in\pi_1$. The subset of primitive elements is denoted $\pi_1^\nil(X\sm S,v_\infty)$. It is actually a pronilpotent Lie algebra, the Mal'cev completion of $\pi_1$.

There is a canonical weight filtration on $H_1(X\sm S,\QQ)$, where the loops around punctures lie in weight $-1$. This induces a weight filtration $W$ on $A^\nil$, and we have
\begin{equation*}
\gr^WA^\nil(X\sm S,v_\infty)=T(\gr^WH_1(X\sm S,\QQ)).
\end{equation*}
Furthermore, let $L_{X,S,v_\infty}$ be the free Lie algebra generated by $\CC\bq{S\sm\cq\infty}$. Then there is a canonical isomorphism
\begin{equation*}
L_{X,S,v_\infty}\cong\gr^W\pi_1^\nil(X\sm S,v_\infty)\otimes\CC.
\end{equation*}

There is a real mixed Hodge-Tate structure on $\pi_1^\nil(X\sm S,v_\infty)\otimes\RR$, which depends on the choice of the tangent vector $v_\infty$, and thus an action $\Lie_\HT\to\Der\pq{\gr^W\pi_1^\nil(X\sm S,v_\infty)}$.

\subsubsection{Correlators in families}

The construction of the Hodge correlator coalgebra (\S\ref{sec:hodge_cor_coalg}) can be performed over a base. Let $X\to B$ be a smooth family of genus 0 curves. Generalizing from the case of $B$ a point, one simply replaces the punctures $S$ by \emph{nonintersecting} sections $s:B\to X$ and the tangential base point by a nonvanishing section $v_\infty:B\to T^1_{X/B}$ factoring through a distinguished section $s_\infty:B\to X$. This construction yields a family of coalgebras
\begin{equation}
    \pq{\CLie^\vee_{X_t,\cq{(s_i)_t},(v_\infty)_t}}_{t\in B}.
    \label{eqn:coalg_base}
\end{equation}
We will denote this coalgebra by $\CLie^\vee_{X/B,S,v_\infty}$ when the objects $X,S,v_\infty$ vary over $B$.

The Green's function $(2\pi i)\inv\log\aq{x-y}$, used in the definition of the Hodge correlator, becomes a distribution on $X\times_BX$ with logarithmic singularities along the relative divisors $x=s_\infty$, $y=s_\infty$, and $x=y$. As we explain below, the higher-weight correlators also determine smooth variations over the base. In particular, the period map $Cor_\HHH:\CLie^\vee_{X,S,v_\infty}\to\CC$ is upgraded to a map \[\Cor_\HHH:\CLie^\vee_{X/B,S,v_\infty}\to\AAA^0_B,\] and the map $\Cor_\Hod$ to a map \[\Cor_\Hod:\CLie^\vee_{X/B,S,v_\infty}\to\Lie_{\HT/B}^\vee\] to the fundamental Lie coalgebra of the category of variations of real mixed Hodge-Tate structures.

The case of specialization at intersecting sections, as well as degeneration to nodal curves, is related to the behavior of the Hodge structure on $\pi_1^\nil$ at the boundary of the moduli space of Riemann surfaces with $n$ punctures. We will examine this question in \S\ref{sec:nodal}.

As $X,S,v_0$ vary over the moduli space $\MMM_{0,n}'$ of Riemann surfaces of genus $0$ with $n$ distinct marked points and a tangential base point $v_0$, we get a family $\mathbf{V}$ of framed $\RR$-mixed Hodge structures on $\pi_1^\nil(X\sm S,s_0)$. Theorem~\ref{thm:hc_main_point} is generalized to the following.

\begin{thm}[\cite{goncharov-hodge-correlators}, Theorem 1.12]
\begin{enumerate}[(a)]
    \item There is a flat connection on $\mathbf{V}$ making it a variation of mixed Hodge structures over $\MMM_{0,n}'$. 
    \item This variation coincides with the standard variation of mixed Hodge structures on $\pi_1^\nil$.
\end{enumerate}
\label{thm:hc_main}
\end{thm}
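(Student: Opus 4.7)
The plan is to construct the flat connection on $\mathbf{V}$ from the family version of the Green operator $\mathbf{G}_{v_\infty}$ and to verify its defining properties (flatness, Griffiths transversality) by analysis of the integrals (\ref{eqn:integral_onetree_p1}). Once $\mathbf{V}$ is established as a variation of mixed Hodge-Tate structures, part (b) follows from a comparison with the standard variation via rigidity in the category of variations of mixed Hodge-Tate structures over $\MMM_{0,n}'$.

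For part (a), the kernels $(2\pi i)^{-1}\log|x-y|$ and their $d^{\CC}$-derivatives vary smoothly as $(X,S,v_\infty)$ varies over $\MMM_{0,n}'$, producing a smooth section $\mathbf{G}_{v_\infty}$ of the bundle $\CLie_{X/B,S,v_\infty}\otimes\AAA^0_B$. The first task is to differentiate $\mathbf{G}_{v_\infty}$ with respect to the moduli parameters under the integral sign. Applying Stokes' theorem, the derivative splits into boundary contributions indexed by degenerations of the plane trivalent trees. The key step is the Maurer-Cartan equation
\[
d\mathbf{G}_{v_\infty} + \tfrac12 [\mathbf{G}_{v_\infty},\mathbf{G}_{v_\infty}] = 0,
\]
which reduces to a combinatorial cancellation: when an internal edge of a tree degenerates, the tree splits along a trivalent vertex into two subtrees whose contributions assemble into the commutator, while residual boundary terms at divergent ends are killed by the regularization supplied by the tangential base point $v_\infty$. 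This identity yields a flat connection $\nabla = d + \mathbf{G}_{v_\infty}$ on the Hodge correlator bundle. The Hodge filtration $F^\bullet$ induced from the type decomposition of the integrand (counting holomorphic versus antiholomorphic factors in each $d^{\CC}f_j$) then satisfies Griffiths transversality $\nabla^{0,1}F^p\subset F^{p-1}\otimes\Omega^1$ by bidegree bookkeeping on each Feynman term.

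For part (b), I would compare the constructed variation $\mathbf{V}$ with the standard variation $\mathbf{V}^{\mathrm{std}}$ on $\pi_1^\nil$ by induction on weight. In weight $1$, the framed extension is the standard mixed Hodge structure on $H_1(X\sm S)$, whose real period is $(2\pi i)^{-1}\log|z_0-z_1|$, exactly the weight-$1$ Hodge correlator of Fig.~\ref{fig:w1_hc}. In higher weight, the coproduct (\ref{eqn:clie_coproduct}) on $\CLie^\vee_{X/B,S,v_\infty}$ is designed to match the coproduct on $\Lie_{\HT/B}^\vee$ arising from the Deligne action of $\Lie_\HT$ on $\pi_1^\nil$, so the two variations produce the same cobracket. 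By the rigidity property recalled in the text, a variation of mixed Hodge-Tate structures of weight $>1$ with trivial coproduct is constant; applied to the difference of framings $\Cor_\Hod(x) - \Cor_\Hod^{\mathrm{std}}(x)$ for each $x\in\CLie^\vee_{X/B,S,v_\infty}$, this reduces the comparison to a check at one fiber, which is supplied by Theorem~\ref{thm:hc_main_point} together with the weight-$1$ identification.

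The main obstacle is the verification of the Maurer-Cartan equation. It requires careful tracking of Stokes-boundary contributions from edge collisions and from the singularities of the Green kernel along the tangential base point, and one must show that all non-commutator contributions cancel against the regularization supplied by $v_\infty$. The matching of coproducts in (b), while in principle routine, demands a precise comparison between the combinatorial coproduct (\ref{eqn:clie_coproduct}) and the cobracket on $\Lie_\HT^\vee$ arising from iterated integrals of logarithmic forms on $X\sm S$; this is where the choice of the Green's operator generators (rather than Deligne's) becomes essential, since only in these generators does Griffiths transversality take the form of a manageable Maurer-Cartan equation.
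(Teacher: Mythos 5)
This theorem is not proved in the paper at all: it is imported verbatim from \cite{goncharov-hodge-correlators} (Theorem 1.12), so there is no internal proof to compare your proposal against. What you have written is, in outline, a reconstruction of Goncharov's original argument rather than anything this paper supplies: the Green operator $\mathbf{G}_{v_\infty}$ as the dual of $\Cor_\HHH$, the Maurer--Cartan-type differential equation on the Green's operators as the expression of Griffiths transversality (which the paper itself flags as the reason for preferring these generators over Deligne's), and a rigidity/coproduct comparison with the standard mixed Hodge structure on $\pi_1^\nil$.

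Two cautions if you intend to flesh this out. First, in the cited construction the variation is not obtained from a connection of the naive form $\nabla=d+\mathbf{G}_{v_\infty}$ on the correlator bundle; the Green operator enters through the associated twistor connection, and the flatness/transversality statement is an identity for that connection, so the Stokes-theorem analysis of tree degenerations has to be organized accordingly rather than by ``bidegree bookkeeping'' on individual Feynman terms. Second, for part (b) the rigidity argument you invoke (trivial coproduct in weight $>1$ forces a constant variation) matches how this paper uses Theorem~\ref{thm:hc_main} downstream, but the comparison with the standard variation on $\pi_1^\nil$ in the source requires identifying the cobracket coming from (\ref{eqn:clie_coproduct}) with the action of $\Lie_\HT$ on $\gr^W\pi_1^\nil$ via special derivations, together with a base-point/framing bookkeeping that is more than a single weight-$1$ fiber check. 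Since the result is cited, none of this needs to be reproved here; if you want the details, they are in \cite{goncharov-hodge-correlators}, not in this paper.
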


A consequence of Theorem~\ref{thm:hc_main} is that the coalgebra structure on $\CLie_{X,S,s_0}^\vee$ should translate into differential equations on the periods over $\MMM_{0,n}'$. We now describe these equations.

Extend the period map $\Cor_\HHH$ to a map 
\begin{align*}
    \Cor_\HHH:\wedge^2\CLie_{X/B,S,v_\infty}&\to\AAA^1_B,\\
    C_1\wedge C_2&\mapsto\f{2w_2-1}{2(w-1)}\Cor_\HHH(C_2)\,d_B^\CC \Cor_\HHH(C_1)\\&\quad-\f{2w_1-1}{2(w-1)}\Cor_\HHH(C_1)\,d_B^\CC \Cor_\HHH(C_2),
\end{align*}
where $w_i=\wt C_i$ and $w=w_1+w_2$.
Then we have a diagram that commutes in weight $>1$:
\begin{equation}
\xymatrix{
    \CLie_{X/B,S,v_\infty}\ar[r]^\delta\ar[d]_{\Cor_\HHH}&\bigwedge_B^2\CLie_{X/B,S,v_\infty}\ar[d]^{\Cor_\HHH}\\\AAA^0_B\ar[r]^{d_B}&\AAA^1_B.
}
\label{eqn:diff_eq_p1}
\end{equation}

For the simplest example, consider the Hodge correlator $\Cor_\HHH(1,0,z)$ as $z$ varies over $\PP^1\sm\cq{0,1,\infty}$. Noting that $\Cor_\HHH(1,0)=0$, we have
\begin{align*}
d\Cor_\HHH(1,0,z)
&=\Cor_\HHH\pq{C(1,z)\wedge C(0,z)}\\
&=(2\pi i)^{-2}\pq{\log\aq{z}\,d^\CC\log\aq{z-1}-\log\aq{z-1}\,d^\CC\log\aq{z}}&=-\f12(2\pi i)^{-2}d\LLL_2(z),
\end{align*}
and indeed, by (\ref{eqn:h_w2_l2}), $\Cor_\HHH(1,0,z)=-\f12(2\pi i)^{-2}\LLL_2(z)$.

We emphasize that the sections have so far required to be nonintersecting. 
In \S\ref{sec:nodal} we will prove a specialization theorem, which allows to pass to the boundary of $\MMM_{0,n}'$. It will imply the statement about periods:
\begin{thm}
The Hodge correlators $\Cor_\HHH(z_0,\dots,z_n)$ are continuous on $\CC^{n+1}\sm\cq{z_0=\dots=z_n}$.
\label{thm:corrh_cts}
\end{thm}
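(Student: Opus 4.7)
The plan is to reduce the continuity claim to the specialization theorem of $\S$\ref{sec:nodal}, which controls the behavior of Hodge correlators under collision of marked points. Continuity is trivial on the locus where the $z_i$ are pairwise distinct, so the real content is continuous extension across partial diagonals.

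First, on the open set $U\subset\CC^{n+1}$ where the $z_i$ are pairwise distinct, every integrand $f_0\,d^\CC f_1\wedge\dots\wedge d^\CC f_{2n-2}$ appearing in~(\ref{eqn:integral_onetree_p1}) has only integrable logarithmic singularities in the interior coordinates $x\in\CC^{V^\circ}$ and depends smoothly on the boundary parameters $(z_0,\dots,z_n)$. A standard differentiation-under-the-integral argument, together with uniform integrability of the singular factors on compact subsets of $U$, shows that each $c_T$ and hence $\Cor_\HHH$ is $C^\infty$ on $U$. Thus continuity is automatic off every small diagonal.

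Next, let $(z_0^*,\dots,z_n^*)$ lie on a partial diagonal but not on the full one, and partition $\cq{0,\dots,n}$ into blocks $I_1,\dots,I_r$ with $r\geq2$ by $i\sim j\Leftrightarrow z_i^*=z_j^*$, with common value $\zeta_j$ on $I_j$. As $(z_0,\dots,z_n)\to(z_0^*,\dots,z_n^*)$, the corresponding configuration in $\MMM_{0,n+2}'$ approaches a boundary stratum along which $\PP^1$ degenerates to a nodal curve: a central $\PP^1$ carrying the distinct values $\zeta_1,\dots,\zeta_r$ and $\infty$, together with one peripheral $\PP^1$ for each $I_j$ with $\aq{I_j}\geq2$, attached at a single node and carrying the labels $\cq{z_i\;:\;i\in I_j}$. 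The specialization theorem of $\S$\ref{sec:nodal} writes the limit of $\Cor_\HHH(z_0,\dots,z_n)$ as an explicit finite combination of Hodge correlators attached to the components of this nodal curve. Because $r\geq2$, each component carries strictly fewer boundary marked points than the original configuration, so every term in the sum is a well-defined finite Hodge correlator of strictly lower weight; the identification of this value with $\Cor_\HHH(z_0^*,\dots,z_n^*)$ is built into the statement of the specialization theorem. This provides the required continuous extension at every partial-diagonal point, which is all that remains.

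\emph{Main obstacle.} The entire argument rests on the specialization theorem of $\S$\ref{sec:nodal}. Its proof is the technically demanding step: one must decompose the integration domain $\CC^{V^\circ}$ into regions labeled by whether each interior vertex stays bounded or is pulled into an infinitesimal neighborhood of a colliding block, and identify each region's contribution with an integral on a subtree living on a single component of the limiting nodal curve. The combinatorial bookkeeping---which plane trivalent trees $T$ survive in each region, how the orientation $\mathrm{Or}_T$ and the normalization constant $(-4)^{n-1}\binom{2n-2}{n-1}\inv$ decompose under the splitting of $T$ across the node, and the vanishing of ``intermediate-rate'' contributions in which some interior vertex escapes to infinity at a scale distinct from any block---is where the real work lies. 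Once that theorem is in hand, Theorem~\ref{thm:corrh_cts} follows immediately from the two steps above.
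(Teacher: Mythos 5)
Your overall route is the paper's: Theorem~\ref{thm:corrh_cts} is deduced as a corollary of the specialization theorem of \S\ref{sec:nodal} (Theorem~\ref{thm:degen_nodal}), with continuity away from the diagonals being the easy part. However, your description of what that theorem gives at a partial-diagonal point is wrong in a way that matters. The limit is \emph{not} ``an explicit finite combination of Hodge correlators attached to the components'' each ``of strictly lower weight.'' By the common-parent rule in the degeneration map $\pi_D$, only one component of the nodal curve contributes --- the one nearest the base point, here the central $\PP^1$ since $r\geq2$ --- and on that component the degenerated element retains all $n+1$ tensor factors, the labels of each collided block being repeated at the corresponding node. Thus the boundary value is the single weight-$n$ correlator $\Cor_\HHH(z_0^*,\dots,z_n^*)$ itself (with repeated arguments), which is precisely why the limit agrees with the substitution value and continuity follows. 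If the limit really were a sum of lower-weight correlators over several components, the identification with $\Cor_\HHH(z_0^*,\dots,z_n^*)$ that you invoke in the same sentence could not hold, so as written the key step is internally inconsistent; the correct reading of Theorem~\ref{thm:degen_nodal}(b,c) together with the definition of $\pi_D$ repairs it. (Note also that the specialization theorem is stated in weight $>1$; in weight $1$ the claim is just continuity of $\log\aq{z_0-z_1}$ off $z_0=z_1$, which is the excluded locus.)

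A secondary remark: your ``main obstacle'' paragraph anticipates proving the specialization theorem by decomposing the Feynman integration domain into scale regions and tracking how trees, orientations, and normalizations split across the node. That is not the paper's argument: Theorem~\ref{thm:degen_nodal} is proved by induction on the weight, using the differential equations on the periods to establish tame logarithmic singularities along the boundary divisor, rigidity of variations of mixed Hodge--Tate structures to reduce the specialized element to a constant, and the circle-averaging computation of Lemma~\ref{lma:circ_spec} to show that constant vanishes. Since you only cite the theorem rather than prove it, this does not by itself create a gap, but the bookkeeping you flag as ``where the real work lies'' is not the work the paper actually does.
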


\subsubsection{Distribution relations}
\label{sec:distr_rel}

The formula expressing how the Hodge correlators transform under endomorphisms of $X$ appears in \cite{goncharov-hodge-correlators}, Lemma 12.3. We translate this result to our setting, showing that it gives a relation of the form (\ref{eqn:distr_rel}). 

Consider the map $[l]:\PP^1\to\PP^1$, $z\mapsto z^l$ ($l\in\ZZ_{>0}$). Let $S'=[l]\inv(S)$. Then there is an induced map
\begin{align*}
    [l]\du:\CLie_{X,S,v_\infty}&\to\CLie_{X,S',v_\infty},\\
    (z_0\otimes\dots\otimes z_n)(1)&\mapsto\f1l(z_0'\otimes\dots\otimes z_n')(1),
\end{align*}
where
\[z_i'=\begin{cases}\sum_{y_i^l=z_i}(y_i)&z_i\neq0\\l\cdot(0)&z_i=0\end{cases}.\]
That is, each point is pulled back to the sum of its preimages, counted with multiplicity. The factor $\f1l$ comes from the degree of the induced map on $H_2(X)$.

Then the diagram commutes:
\[\xymatrix{
    \CLie_{X,S,v_\infty}\ar[r]^{[l]^*}\ar[dr]_{\Cor_\Hod}&\CLie_{X,S',v_\infty}\ar[d]^{\Cor_\Hod}\\&\Lie_\HT^\vee.
}\]
For example, in weight 1, we have
\[\Cor_\Hod(x,y)=\f12\pq{\Cor_\Hod(\sqrt x,\sqrt y)+\Cor_\Hod(\sqrt x,-\sqrt y)+\Cor_\Hod(-\sqrt x,\sqrt y)+\Cor_\Hod(-\sqrt x,-\sqrt y)},\] where a branch of the square root has been chosen. On the level of periods, this becomes the equality
\[\log\aq{x-y}=\f12\pq{\log\aq{\sqrt x-\sqrt y}+\log\aq{\sqrt x+\sqrt y}+\log\aq{-\sqrt x-\sqrt y}+\log\aq{-\sqrt x+\sqrt y}}.\] 

\subsection{Motivic correlators over a number field}

\subsubsection{Mixed motives}

Let $F$ be a number field. There is a semisimple abelian category $\PM_F$ of Grothendieck pure motives over $F$ and a functor $H:\mathbf{SmProj}_F\to\PM_F$ assigning to every smooth projective variety over $F$ the sum of its motivic cohomology objects:
\begin{equation*}
H(X)=\bigoplus_{i=0}^{2\dim(X)}H^i(X).
\end{equation*}
Every Weil cohomology theory $\mathbf{SmProj}_F\to\mathbf{Vect}$ factors through $H$ and a realization functor $\PM_F\tto r\mathbf{Vect}$:
\begin{align*}
    r_{\text{\rm Betti}}H(X)&=\bigoplus_{i=0}^{2\dim(X)}H^i_{\text{\rm Betti}}(X_\CC,\ZZ)\otimes\QQ&\text{(Betti)},\\
    r_\Hod H(X)&=\bigoplus_{i=0}^{2\dim(X)}\bigoplus_{p+q=i}H^{p,q}_{\text{\rm Hod}}(X_\CC,\RR)&\text{(real de Rham (Hodge))},\\
    r_\ell H(X)&=\bigoplus_{i=0}^{2\dim(X)}H^i_{\text{\rm\'et}}(X_{\ol F},\ZZ_\ell)\otimes\QQ_\ell&\text{($\ell$-adic \'etale)}. 
\end{align*}
This category is graded by the \emph{weight}, where the weight of $H^i(X)$ is $i$. There is an invertible Tate object $\QQ(1)$ of weight $-2$; we write $M(n)$ for the Tate twist $M\otimes\QQ(1)^{\otimes n}$. The various realization functors respect the weight. For example, for $X$ a variety over $F$ and a fixed embedding $F\to\CC$, the $r_\Hod H^i(X)$ carries a pure Hodge structure of weight $i$. For $X$ with good reduction modulo $p$, the Frobenius automorphism acts on $r_\ell H^i(X)$ with eigenvalues of norm $p^{i/2}$.

There is a conjectural category of \emph{mixed motives} $\MiM_F$ that should extend this construction to arbitrary varieties over $F$. The desired properties of $\MiM_F$ were conjectured by Beilinson \cite{beilinson-height-pairing}, see also Deligne \cite{deligne-motifs}. It is expected to be an abelian tensor category, in which every object has a canonical weight filtration $W_\bullet$. There should be a fiber functor $\gr^W:\MiM_F\to\PM_F$ such that $\gr^W_iX$ is pure of weight $i$. 

The Hodge realization of a mixed motive should be a mixed Hodge structure. Deligne \cite{deligne-hodge3} showed that for any complex variety $X$, there is a mixed Hodge structure on $\bigoplus H^i_\Hod(X,\RR)$. In this way, $\gr^W$ is a motivic lift of the associated graded functor from mixed to pure real Hodge structures: $\gr^W:\MHS_\RR\to\HS_\RR$.

The full tensor subcategory of $\MiM_F$ generated by $\QQ(1)$ is the category of \emph{mixed Tate motives} $\MTM_F$. Such a category with desirable properties has been constructed by \cite{deligne-goncharov}. If $X$ is a rational curve, then $H(X)$ is a mixed Tate motive. The simple objects of $\MTM_F$ are $\QQ(n)=\QQ(1)^{\otimes n}$, $n\in\ZZ$, and every object of $\MTM_F$ is an iterated extension of these objects. They satisfy
\begin{align*}
\Hom(\QQ(m),\QQ(n))&=0,\quad m<n;\\
\Ext^1(\QQ(0),\QQ(n))&=\begin{cases}0&n\leq 0\\K_{2n-1}(F)\otimes\QQ&n>0\end{cases},\\
\Ext^i(\QQ(0),\QQ(n))&=0,\quad i>1.
\end{align*}
The real Hodge realizations of mixed Tate motives are mixed Hodge-Tate structures. The images of the $\QQ(n)$, the real mixed Hodge-Tate structures $\RR(n)$ generate the subcategory $\MHTS_\RR$ in $\MHS_\RR$.

We will consider only the mixed Tate motives. As in the Hodge realization, the associated graded objects of the weight filtration are trivial in odd weight, so we reindex the filtration by semiweight (so $\QQ(1)$ has weight $-1$, rather than $-2$).

\subsubsection{Fundamental Lie algebra and period map}
 
Assume the mixed motivic formalism above. The Tannakian reconstruction theorem implies that there would be a negatively graded Lie algebra $\Lie_{\MTF}$ in the category $\PM_F$, the \emph{fundamental (motivic Tate) Lie algebra}, such that $\MTM_F$ is canonically equivalent to the category of finite-dimensional graded $\Lie_\MTF$-modules in $\PM_F$. That is, for any $X\in\MTM_F$, there is an action by derivations $\Lie_\MTF\to\Der(\gr^WX)$. We prefer to study its graded dual $\Lie_\MTF^\vee$.

This Lie coalgebra breaks into isotypical components over the isomorphism classes of simple Tate objects of $\PM_F$:
\begin{equation*}
\Lie_\MTF^\vee=\bigoplus_{[M]\in\PM_F}\pq{\Lie_\MTF^\vee}_M\boxtimes_{\End(M)}M\du.
\end{equation*}
As a consequence, the cohomology of $\Lie_\MTF^\vee$ can be expressed as Ext-groups in the category of mixed motives:
\begin{equation*}
H^i\pq{\Lie_\MTF^\vee\tto\delta\wedge^2\Lie_\MTF^\vee\tto\delta\dots}=\bigoplus_{[M]}\Ext^i_{\MTF}(\QQ(0),M)\boxtimes_{\End(M)}M\du.
\end{equation*}
For $F$ a number field, the $\Ext^i$ in $\MTM_F$ are trivial for $i>1$; equivalently, $\Lie_\Mot^\vee$ is free on the generators $\Ext^1_{\MTM_F}(\QQ(0),M)$.

Fix an embedding $r:F\to\CC$. The Hodge realization functor induces a Lie coalgebra morphism $r_\Hod:\Lie_\MTF^\vee\to\Lie_\HT^\vee$. This means that there is a period map $p\circ r_\Hod:\Lie_\MTF^\vee\to\RR$.

For every integer $n>0$, there is the Beilinson regulator map
\[
\reg:\Ext^1_{\MTM_F}(\QQ(0),\QQ(n))\to\bigoplus_{F\to\CC/\sigma}\Ext^1_{\MHTS_\RR}(\RR(0),\RR(n)),
\]
where $\sigma$ is complex conjugation. By Beilinson's theorem (\cite{beilinson-regulators}) it coincides for $n>1$ with the Borel regulator on $K_{2n-1}(F)$, i.e., the diagram commutes:
\[
\xymatrix{
    \Ext^1_{\MTM_F}(\QQ(0),\QQ(n))\ar[r]^{\reg}\ar[d]&\bigoplus_{F\to\CC/\sigma}\Ext^1_{\MHTS_\RR}(\RR(0),\RR(n))\ar[d]\\K_{2n-1}(F)\otimes\QQ\ar[r]^\reg&\RR^{d_n(F)}
},\]
\[d_n(F)=\begin{cases}r_1(F)+r_2(F)&\text{$n$ odd},\\r_2(F)&\text{$n$ even}\end{cases}.\]

Borel's theorem states that this regulator map -- the second row in the diagram -- is injective \cite{borel-stable-cohomology}. So there is an injective map on the first cohomology of the fundamental Lie coalgebras
\[
    \ker(\Lie_\MTF^\vee\tto\delta\wedge^2\Lie_\MTF^\vee)
    \to
    \bigoplus_{F\to\CC/\sigma}\ker(\Lie_{\HT}^\vee\tto\delta\wedge^2\Lie_{\HT}^\vee).
\]
In particular, we get the following basic theorem, which plays a crucial role in this paper:
\begin{thm}
    If $x\in\Lie_\MTF^\vee$ is of weight at least 2 with $\delta(x)=0$ and $p(r_\Hod(x))=0$ for every embedding $r:F\to\CC$, then $x=0$.
    \label{thm:d0h0m0}
\end{thm}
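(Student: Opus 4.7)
The plan is straightforward given the preparatory material: the statement is essentially a repackaging of Borel's theorem on the injectivity of the regulator, combined with the cofreeness of $\Lie_\MTF^\vee$.

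First I would invoke the vanishing $\Ext^i_{\MTM_F}(\QQ(0),M)=0$ for $i>1$, which makes $\Lie_\MTF^\vee$ cofree. Consequently, the space of cocycles $\ker(\delta:\Lie_\MTF^\vee\to\wedge^2\Lie_\MTF^\vee)$ is exactly the space of cogenerators, which by the isotypical decomposition recorded just above the theorem is $\bigoplus_M\Ext^1_\MTF(\QQ(0),M)\boxtimes_{\End(M)}M^\vee$. Thus a cocycle $x$ of weight $n\geq 2$ corresponds canonically to a class $[x]\in\Ext^1_\MTF(\QQ(0),\QQ(n))\cong K_{2n-1}(F)\otimes\QQ$, and $x$ is recovered from $[x]$ (up to a fixed trivial tensor factor of $\QQ(n)^\vee$). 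So the problem reduces to showing $[x]=0$.

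Next, since realization is a map of Lie coalgebras, $r_\Hod(x)$ is itself a cocycle in $\Lie_\HT^\vee$, and on the level of cogenerators $r_\Hod$ is by construction the Beilinson regulator
\[
\Ext^1_\MTF(\QQ(0),\QQ(n))\to\bigoplus_{F\to\CC/\sigma}\Ext^1_{\MHTS_\RR}(\RR(0),\RR(n)).
\]
Each factor on the right is one-dimensional real, equal to $\RR(n)\otimes_\RR i\RR$, and by the construction of $p$ via Goncharov's Green's operator generators $G_w$ (or equivalently via Deligne's generators) it is carried isomorphically onto $\RR$ by the period map. Hence the composition $p\circ r_\Hod$, indexed over embeddings, agrees with the Beilinson regulator viewed as a map $K_{2n-1}(F)\otimes\QQ\to\RR^{d_n(F)}$.

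Finally I would apply the two deep inputs already cited in the preceding paragraphs: Beilinson's theorem identifies this motivic regulator with the Borel regulator, and Borel's theorem asserts that the Borel regulator is injective for $n\geq 2$. The hypothesis $p(r_\Hod(x))=0$ for every embedding then forces $[x]=0$, and hence $x=0$. There is no genuine obstacle in this argument; the only point worth remarking is that the weight-$\geq 2$ hypothesis is essential, because in weight $1$ one has $\Ext^1_\MTF(\QQ(0),\QQ(1))=F^\times\otimes\QQ$, for which the regulator $u\mapsto\log|u|$ has an enormous kernel and the analogous statement is false.
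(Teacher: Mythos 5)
Your argument is correct and matches the paper's own reasoning: the paper also deduces the theorem from the cofreeness of $\Lie_\MTF^\vee$ (vanishing of higher Ext's), the identification of $\ker\delta$ with $\bigoplus_M\Ext^1_{\MTM_F}(\QQ(0),M)\boxtimes M^\vee$, Beilinson's comparison of the motivic regulator with the Borel regulator, and Borel's injectivity theorem, with the period map detecting the one-dimensional $\Ext^1_{\MHTS_\RR}(\RR(0),\RR(n))$ in each complex embedding. Your remark about the failure in weight $1$ likewise mirrors the paper's example following Lemma~\ref{lma:d0h0_rational}.
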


Specifically, we obtain Lemma~\ref{lma:d0h0_rational}:
\begin{lma*}
    Let $X\sm S$ be a rational curve over $F$. Suppose $x\in\pq{\CLie_{X,S,v_\infty}^\Mot}^\vee$ has weight $>1$, $\delta\Cor_\Mot(x)=0$, and $\Cor_\HHH(r(x))=0$ for every embedding $r:F\to\CC$. Then $\Cor_\Mot(x)=0$.
\end{lma*}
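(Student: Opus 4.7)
The plan is to deduce the lemma as an immediate consequence of Theorem~\ref{thm:d0h0m0}, applied to the element $y := \Cor_\Mot(x) \in \Lie_\MTF^\vee$. I need to check the three hypotheses of that theorem for $y$: that $y$ has weight $\geq 2$, that $\delta(y) = 0$, and that $p(r_\Hod(y)) = 0$ for every embedding $r : F \to \CC$.

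The first condition is automatic, since $\Cor_\Mot$ is a morphism of graded Lie coalgebras and $x$ has weight $>1$, so $y$ lies in the weight $w \geq 2$ component of $\Lie_\MTF^\vee$. The second condition, $\delta(y) = 0$, is precisely the hypothesis $\delta\Cor_\Mot(x) = 0$ of the lemma. For the third condition, I would invoke the commutative diagram displayed just before Theorem~\ref{thm:mot_main} relating motivic and Hodge correlators: the Hodge realization functor satisfies $r_\Hod \circ \Cor_\Mot = \Cor_\Hod \circ r$, where on the right side $r$ denotes the Hodge realization map $(\CLie_{X,S,v_\infty}^\Mot)^\vee \otimes \CC \to \CLie_{X,S,v_\infty}^\vee \otimes \CC$. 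Composing with the period map and applying Theorem~\ref{thm:hc_main_point}(a) gives
\[
p(r_\Hod(y)) \;=\; p(\Cor_\Hod(r(x))) \;=\; (2\pi i)^{w}\,\Cor_\HHH(r(x)),
\]
which vanishes for every embedding $r : F \to \CC$ by the third hypothesis of the lemma. Theorem~\ref{thm:d0h0m0} then gives $y = \Cor_\Mot(x) = 0$, as desired.

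There is essentially no hard step here; the lemma is a direct translation of Theorem~\ref{thm:d0h0m0} through the motivic correlator map, made possible because $\Cor_\Mot$ intertwines the Hodge realization with $\Cor_\Hod$, and $\Cor_\Hod$ intertwines the period map with $(2\pi i)^w\Cor_\HHH$. The rationality hypothesis ``$X\setminus S$ is a rational curve over $F$'' enters only insofar as it ensures that $\pi_1^\Mot(X\setminus S, v_\infty)$ lies in $\MTM_F$, so that the motivic correlator coalgebra $(\CLie_{X,S,v_\infty}^\Mot)^\vee$ actually maps into $\Lie_\MTF^\vee$ and Theorem~\ref{thm:d0h0m0} is applicable. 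The only point requiring any care is checking that weight $>1$ for $x$ in the correlator coalgebra (with the semiweight reindexing conventions used throughout the paper) really does correspond to weight $\geq 2$ in $\Lie_\MTF^\vee$, so that the Borel/Beilinson rigidity input underlying Theorem~\ref{thm:d0h0m0} is available; this is built into the construction of $\Cor_\Mot$ and the definition of the weight grading on the correlator coalgebra.
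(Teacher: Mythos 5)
Your proposal is correct and follows essentially the same route as the paper: apply Theorem~\ref{thm:d0h0m0} to $\Cor_\Mot(x)$, using that its coproduct vanishes by hypothesis and that the canonical period of its Hodge realization equals $(2\pi i)^{w}\Cor_\HHH(r(x))=0$ for every embedding by Theorem~\ref{thm:hc_main_point}(a) and the compatibility of $\Cor_\Mot$ with $\Cor_\Hod$ under realization. The paper's proof is just a terser statement of the same argument.
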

\begin{proof}
    $\Cor_\Mot(x)$ is an element of $\Lie_\MTF^\vee$ with coproduct 0. The canonical period of its Hodge realization in $\Lie_{\HT}^\vee$ coincides with the correlator period $\Cor_\HHH(r(x))=0$. By Theorem~\ref{thm:d0h0m0}, it is 0.
\end{proof}
This does not hold in weight 1. For example, choose $z$ to be an element of $F$ that is not a root of unity, but has norm 1 under every complex embedding (e.g., $F=\QQ(i)$ and $z=\f15\pq{3+4i}$). Then $((0)\otimes(z))(1)$ has coproduct 0 and period $\log\aq{\sigma(z)}=0$ under both of the embeddings $\QQ(i)\tto\sigma\CC$. However, the object $\Cor_\Mot(0,z)$ is not 0 as an element of $\Ext^1_{\MTM/F}(\QQ(0),\QQ(1))\cong F^\times\otimes\QQ$.

\subsubsection{Distribution relations}

Suppose $x_i\in F$ are such that $x^l-x_i$ splits in $F$ for all $i$. Then the distribution relations from \S\ref{sec:distr_rel} hold:
\[\Cor_\Mot(x_0,\dots,x_n)=\f1l\sum_{y_i^l=x_i}\Cor_\Mot(y_0,\dots,y_n),\]
where $y_i=0$ is taken with multiplicity $l$ if $x_i=0$.

\section{Construction of the quasidihedral Lie coalgebra}

\subsection{Definitions}

For an abelian group $G$, we defined the Lie coalgebra $\widetilde\DDD(G)$ as the quotient of the tensor algebra of $\QQ[G\cup\cq0]$ by cyclic symmetry, first shuffle, distribution, and scaling relations.

Recall Theorem~\ref{thm:main_qdih_const}:
\begin{thm*}
    The second shuffles form a coideal in $\widetilde\DDD(G)$. The coproduct on $\widetilde\DDD(G)$ descends to a well-defined coproduct on $\DDD(G)$.
\end{thm*}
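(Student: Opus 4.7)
The goal is to show that for every second shuffle element $X \in \CCC(G)$, the coproduct $\delta(X)$ lies in the subspace of $\CCC(G) \wedge \CCC(G)$ whose image in $\DDD(G) \wedge \DDD(G)$ is zero; equivalently, that $\delta(X)$ can be written as a sum of wedges each of whose factors is a first shuffle, a scaling relation, a distribution relation, or itself a smaller second shuffle. Granted this, the coproduct descends to a well-defined map $\DDD(G) \to \DDD(G) \wedge \DDD(G)$, proving Theorem~\ref{thm:main_qdih_const}.

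My approach is to exploit the geometric description of the coproduct given after formula~(\ref{eqn:cyc_coproduct}): a term of $\delta(C\du(\ldots))$ corresponds to a chord inside the disk whose endpoints are a marked vertex $v$ and a point strictly between two consecutive marked vertices. The resulting two arcs, each containing $v$, are the two wedge factors. Fix a second shuffle element indexed by two sequences $A = ((w_1|n_1),\dots,(w_r|n_r))$ and $B = ((w_{r+1}|n_{r+1}),\dots,(w_{r+s}|n_{r+s}))$ past the fixed block $(w_0|n_0)$. For each term of $X$, whether a quasishuffle $\sigma \in \ol\Sigma_{r,s}$ or one of the two correction terms, a chord induces a partition of $A$ into contiguous sub-sequences $A_1, A_2$ and of $B$ into $B_1, B_2$, together with a placement of $(w_0|n_0)$ on one arc or spanning the chord.

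I would then group coproduct contributions by the cut type and check that in each group the sum factors through a relation already quotiented out. Three representative cases illustrate this: (i) if the chord separates $A_1 \cup B_1$ from $A_2 \cup B_2 \cup (w_0|n_0)$ with both $A_1$ and $B_1$ nonempty, then the sum over quasishuffles restricts on the $(w_0|n_0)$-free arc to a first shuffle of $A_1$ with $B_1$, wedged with an arbitrary cyclic generator built from the remaining data; this vanishes in $\widetilde\DDD(G)$; (ii) if the chord passes through the $(w_0|n_0)$ block, each arc carries some $A$-, $B$-, and $w_0$-data, and after resummation one arc assembles into a smaller second shuffle with its own correction terms, which arise precisely from the restrictions of the two extra terms of $X$; (iii) chord cuts lying inside a single block $(w_i|n_i)$ match collision terms in the quasishuffle sum, in which the $w_i, w_j$ merge into $(w_iw_j|n_i+n_j+1)$, and the sign $(-1)^{r+s-M_\sigma}$ correctly absorbs the extra 0 contributed by the merged block.

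The main obstacle I anticipate is the combinatorial bookkeeping of signs and collisions: tracking $(-1)^{r+s-M_\sigma}$ under restriction of a quasishuffle to each arc, and matching collisions in $\sigma$ with adjacencies of an $a_i$ and a $b_j$ on an arc, or with the ``collapses'' encoded in the two extra terms. I expect the cleanest formulation introduces a bijection between the coproduct terms and pairs consisting of a cut type together with a sub-quasishuffle on each arc, and then verifies that the fibers sum, using the first shuffle relations on the shorter arcs, to wedges in which one factor is either a first shuffle, a smaller second shuffle correctly assembled with its correction terms, or a scaling/distribution relation. The matching of the two extra correction terms in $X$ with the corner cases where one of the arcs degenerates to a single block is the step I expect to require the most care.
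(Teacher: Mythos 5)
Your overall strategy---classify the terms of $\delta(X)$ by the chord/cut, group them, and reduce each group using relations of lower depth---is the same skeleton as the paper's proof, but two of your specific reductions do not work as stated. In your case (i), for a fixed upper arc the lower arcs do \emph{not} assemble into a first shuffle of $A_1$ with $B_1$: summing over $\sigma\in\ol\Sigma_{r,s}$ with the signs $(-1)^{r+s-M_\sigma}$, the lower data assembles into the signed quasishuffle sum $\QSh$ of lower depth, which is only removed by invoking the lower-depth \emph{second} shuffle relation (the induction hypothesis); and even after that, the two ``collapsed'' correction-type generating functions that remain do not vanish for a single cut --- they cancel only after being combined across the different cut types (cut at a marked point versus at a zero, on either side of the block), via the identity (\ref{eqn:multi_identity}). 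This cross-cut cancellation, not a termwise ``one factor is a first shuffle'' argument, is the actual content of the computations in Lemma-Computations \ref{lma:dq_1_case1}--\ref{lma:pf_dq_type2}, and your case (ii) (cuts through the $w_0$-block) requires the same kind of assembly and comparison with $\delta R_A$, $\delta R_B$ rather than a direct identification.

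The more serious omission is the weight-one phenomenon, which your chord picture does not see. Because the scaling relation in weight $1$ is inhomogeneous, $C(ab,ac)=C(0,a)+C(b,c)$, rewriting the coproduct (\ref{eqn:cyc_coproduct}) in the multiplicative notation $C\du(w_0|n_0,\dots)$ produces extra $(\text{weight }1)\wedge(\text{weight }w-1)$ terms $L_i\wedge C(0,w_i)$ (Lemma \ref{lma:mult_cop}); establishing this corrected coproduct formula is itself a nontrivial step. These terms do not vanish group by group: they assemble into $\bq{\sum_iC(0,a_i)(t_i-v)+\sum_jC(0,b_j)(u_j-v)}\wedge(Q-R_A-R_B)$ as in (\ref{eqn:step2}), i.e.\ the second factor is the \emph{same-depth} relation rather than a smaller one, and disposing of them requires a separate argument: a $\delta^2=0$ step inside the induction on depth, plus an explicit weight-two base case, the identity (\ref{eqn:dilog_identity}) (essentially the five-term relation), where uncontrolled $(\text{weight }1)\wedge(\text{weight }1)$ terms appear. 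Without (a) the inhomogeneous coproduct formula with its weight-one terms, (b) the treatment of those terms, and (c) the depth and weight base cases, the bookkeeping you defer is not merely tedious but incomplete, and the descent of the coproduct to $\DDD(G)$ is not yet established.
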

The proof of this theorem is the goal of this section. 

The extra term in the scaling relation in weight 1, and the presence of terms of lower depth in the coproduct formula (\ref{eqn:cyc_coproduct}), makes the proof more difficult than that in \cite{goncharov-dihedral}'s construction of the dihedral Lie coalgebra. We find Theorem~\ref{thm:main_qdih_const} to be a small combinatorial miracle. Unfortunately, we do not know a simpler proof.

\subsubsection{Generating functions}

The second shuffle relations can be expressed in a compact form in terms of generating functions. This simplifies their proof.

We package the elements of $\widetilde\DDD(G)$ into a generating function as follows:
\begin{equation}
    \GF\pg{w_0,\dots,w_k}{t_0,\dots,t_k}:=\sum_{n_i\geq0} C^*(w_0|n_0,\dots,w_k|n_k)\prod_{i=0}^kt_i^{n_i}, \label{eqn:gf_def_simple}
\end{equation} where $\prod_{i=0}^kw_i=1$ and the $t_i$ are formal variables. 

\begin{figure}[ht]  
    \centering
\includegraphics{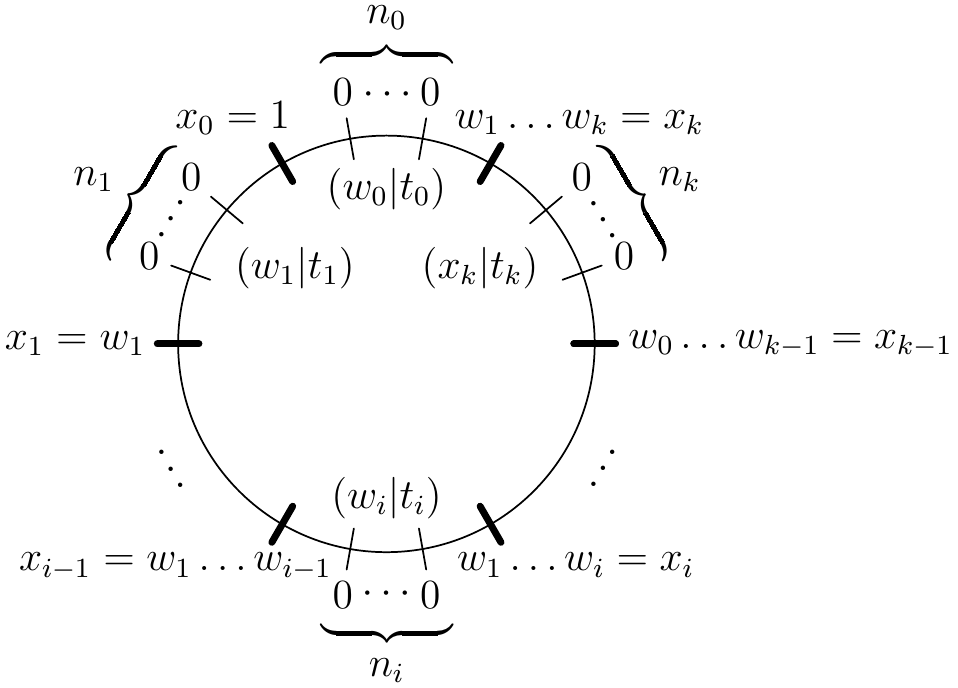}

\end{figure}

We allow multisets of variables to appear in place of the $t_i$: if $S_i=\cq{t_{i,1},\dots,t_{i,d_i}}$, then
\begin{align}
    \GF\pg{w_0,\dots,w_k}{S_1,\dots,S_k}
    &=\sum_{n_i\geq0}\sum_{\substack{n_{i,j}\geq0\\\sum_{j=1}^{d_i}n_{i,j}=n_i-d_i+1}}C\du(w_0|n_0,\dots,w_k|n_k)\prod_{i=0}^k\prod_{j=1}^{d_i}t_{i,j}^{n_{i,j}}\nonumber\\
    &=\sum_{n_{i,j}\geq0}C\du(x_0|N_0,\dots,x_k|N_k)\prod_{i=0}^k\prod_{j=1}^{d_i}t_{i,j}^{n_{i,j}}, \label{eqn:gf_multiset}
\end{align}
where in the last expression $N_i=n_{i,1}+1+n_{i,2}+1+\dots+1+n_{i,d_j}$. The corresponding operation on the correlator coefficients is combining adjacent segments of 0s, with additional 0s being inserted between them, such as \[\substack{\displaystyle(\text{$n_{i,1}$ 0s indexed by $t_{i,1}$})\\\displaystyle(\text{$n_{i,2}$ 0s indexed by $t_{i,2}$})}\to(\text{$n_{i,1}+1+n_{i,2}$ 0s indexed by $\cq{t_{i,1},t_{i,2}}$}).\]

\begin{figure}[ht]
    \centering
\includegraphics{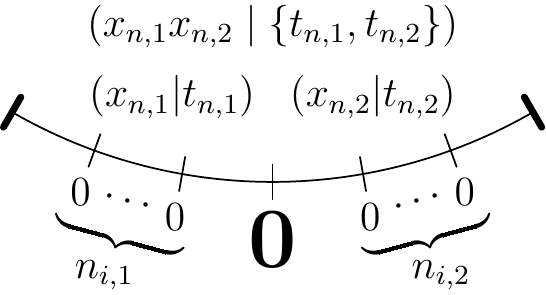}

\end{figure}

There is a useful identity
\begin{lma}
\begin{align}
    \GF\pg{\dots,w,\dots}{\dots,\cq t\sqcup T,\dots}&-\GF\pg{\dots,w,\dots}{\dots,\cq u\sqcup T,\dots}\nonumber\\&=(t-u)\GF\pg{\dots,w,\dots}{\dots,\cq{t,u}\sqcup T,\dots}.
\label{eqn:multi_identity}
\end{align}
\end{lma}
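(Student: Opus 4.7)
The plan is to expand both sides via the definition (\ref{eqn:gf_multiset}) and reduce the identity to the elementary factorization $t^N - u^N = (t-u)\sum_{a+b=N-1} t^a u^b$ by comparing coefficients monomial by monomial.

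First I would observe that all three generating functions in the claim agree on every slot other than the distinguished one (the one adjacent to $w$ whose multiset argument is being modified). So I would fix the exponents $n_{i',j}$ for all variables attached to every other slot, as well as the exponents $n_j$ of the variables in $T$ inside the distinguished slot, thereby reducing the identity to a statement about the single formal variable $t$, the single formal variable $u$, or the pair $\{t,u\}$ sitting in one slot alongside $T$.

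Second, I would apply (\ref{eqn:gf_multiset}) to write both sides explicitly. Set $K = \sum_{t_j\in T} n_j + |T|$, and let $C_M$ denote the value of $C^*$ on the fixed configuration when the distinguished slot's index $N_i$ equals $M$. Then the coefficient of the fixed monomial $\prod t_j^{n_j}$ (times the fixed monomial in the other slots' variables) is, on the left-hand side,
\[
\sum_{N\geq 0} C_{N+K}\bigl(t^N - u^N\bigr),
\]
and on the right-hand side,
\[
(t-u)\sum_{a,b\geq 0} C_{a+b+K+1}\,t^a u^b.
\]
The shift from $K$ to $K+1$ reflects that $\{t,u\}\sqcup T$ has one more element than $\{t\}\sqcup T$, so $N_i$ acquires one additional $+1$ in the definition (\ref{eqn:gf_multiset}).

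Third, I would verify the resulting scalar identity. Expanding $(t-u)\sum_{a,b\geq 0} C_{a+b+K+1}\,t^a u^b$ and reindexing the inner sums, one gets
\[
\sum_{P\geq 1} C_{P+K}\sum_{N=1}^{P}\bigl(t^N u^{P-N} - t^{P-N} u^N\bigr) = \sum_{P\geq 1} C_{P+K}\bigl(t^P - u^P\bigr),
\]
where the inner sum telescopes. This matches the left-hand side, whose $N=0$ term vanishes. The argument is essentially routine; the only care needed is the index bookkeeping, in particular tracking the $+1$ shift in the $N_i$ induced by enlarging the multiset by one element. I do not expect a conceptual obstacle.
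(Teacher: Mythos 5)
Your proof is correct and is essentially the paper's argument: the paper disposes of this lemma in one line ("clear by comparing the coefficients of $t^ru^s$"), and your expansion via (\ref{eqn:gf_multiset}), with the $+1$ shift in $N_i$ from enlarging the multiset and the telescoping sum giving $t^P-u^P$, is exactly the coefficient comparison it has in mind. No gap.
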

\begin{proof}
    Clear by comparing the coefficients of $t^ru^s$.

\end{proof}

Theorem~\ref{thm:main_qdih_const} can then be expressed in terms of the generating functions:
\begin{thm*}
    The subspace of $\widetilde\DDD(G)\bbq{t_1,\dots,t_k}$ generated by elements of the form
    \begin{align*}
        &\sum_{\sigma\in\ol\Sigma_{r,s}}(-1)^{r+s-M_\sigma}\GF\pg{w_{\sigma\inv(1)},\dots,w_{\sigma\inv(M_\sigma)},w_0}{S_{\sigma\inv(1)},\dots,S_{\sigma\inv(M_\sigma)},S_0}\\
        &-\GF\pg{w_1,\dots,w_r,w_{\cq{r+1,\dots,r+s,0}}}{S_1,\dots,S_r,S_{\cq{r+1,\dots,r+s,0}}}\\
        &-\GF\pg{w_{r+1},\dots,w_{r+s},w_{\cq{1,\dots,r,0}}}{S_{r+1},\dots,S_{r+s},S_{\cq{1,\dots,r,0}}}&=0,
    \end{align*}
    where
    \[S_I=\bigsqcup_{i\in I}S_i,\quad w_I=\prod_{i\in I}w_i\]
    forms a coideal.
    
\end{thm*}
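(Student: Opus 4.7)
The plan is to apply the cyclic coproduct $\delta$ to a second-shuffle generating-function element and verify that the result lies in $\widetilde\DDD(G)\wedge I + I\wedge\widetilde\DDD(G)$, where $I$ denotes the span of second shuffles; by antisymmetry of $\wedge$, this is exactly the coideal condition, and the induced coproduct on $\DDD(G) = \widetilde\DDD(G)/I$ then descends automatically.

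First I would lift the coproduct formula \eqref{eqn:cyc_coproduct} to the generating-function formalism. Pictorially, applying $\delta$ to $\GF\pg{w_0,\dots,w_k}{S_0,\dots,S_k}$ amounts to summing over all ways of drawing a chord on the cyclic arrangement of the $w_i$ and multisets $S_i$: each chord has one endpoint at a vertex (the ``origin,'' which is shared by both arcs) and one endpoint in an edge (the ``terminus''). When an endpoint falls inside a multiset $S_i$, one partitions $S_i = S_i' \sqcup S_i''$ and distributes the pieces to the two arcs, which algebraically is the multiset operation of \eqref{eqn:gf_multiset}. The cyclic symmetry of \eqref{eqn:cyc_coproduct} is absorbed into the cyclic freedom in choosing the origin. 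This gives a clean generating-function description of $\delta$, manifestly compatible with the identity \eqref{eqn:multi_identity}.

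Next I would apply this coproduct to each of the three pieces of a second shuffle --- the signed quasishuffle sum over $\ol\Sigma_{r,s}$ and the two ``collapse'' corrections --- and classify the resulting cuts by how the chord endpoints interact with the bipartition $A=\{1,\dots,r\}$, $B=\{r+1,\dots,r+s\}$ of shuffled labels and with the fixed slot $w_0$. Cuts that cleanly separate $A\cup\{0\}$ on one arc from $B$ on the other (or symmetrically) organize into second shuffles of smaller configurations wedged with a single generating-function factor, contributing terms in $I\wedge\widetilde\DDD(G)$. Cuts that mix $A$- and $B$-labels on both arcs should cancel pairwise against each other using first-shuffle relations, which are already imposed in $\widetilde\DDD(G)$ and therefore automatically act on each factor of the wedge.

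The main obstacle --- the ``small combinatorial miracle'' the author highlights --- is matching the cuts of a quasishuffle term in which two colliding segments have been merged (replacing $(w_i|n_i)(w_j|n_j)$ by $(w_iw_j|n_i+n_j+1)$) with the corresponding cuts of the true-shuffle terms that subdivide the merged segment. The multiset identity \eqref{eqn:multi_identity}, which reads
\[
\GF\pg{\dots,w,\dots}{\dots,\cq t\sqcup T,\dots}-\GF\pg{\dots,w,\dots}{\dots,\cq u\sqcup T,\dots}=(t-u)\GF\pg{\dots,w,\dots}{\dots,\cq{t,u}\sqcup T,\dots},
\]
is precisely the algebraic mechanism for this matching: the factor $(t-u)$ absorbs the sign difference between a shuffle and the adjacent quasishuffle, while the enlarged multiset on the right realizes the merged segment. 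Similarly, the two collapse-correction terms should match cuts of the quasishuffle sum whose chord meets the boundary of the $w_0$-segment, again via \eqref{eqn:multi_identity} combined with a first-shuffle relation on the factor not containing $w_0$. Organizing the entire argument around these two identities reduces the proof to a careful but systematic bookkeeping of signs, multiplicities, and boundary cases --- the place where I expect the bulk of the technical effort to lie.
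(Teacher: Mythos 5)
Your overall strategy --- compute $\delta$ cut-by-cut on the cyclic word, classify chords by how they interact with the $A/B$ bipartition and the fixed slot $\pg{w_0}{S_0}$, and use the identity (\ref{eqn:multi_identity}) to match the cuts of a merged quasishuffle segment against the cuts of the corresponding true-shuffle terms --- is indeed the skeleton of the paper's Step 1. Two smaller corrections there: the ``mixed'' cuts (both arcs containing $A$- and $B$-labels) do not cancel pairwise via first shuffles alone; in the paper they are grouped into lower-depth quasishuffle sums on the arc away from the distinguished segment and then reduced using the lower-depth \emph{second} shuffle relations (which is legitimate for the coideal statement, since those relations lie in $I$), and the matching with the two collapse-correction terms $R_A,R_B$ is a separate global bookkeeping (Lemma~\ref{lma:pf_dr}), not just a boundary-chord observation.

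The genuine gap is that your ``clean generating-function description of $\delta$'' is not simply a sum over chords. Because the weight-$1$ generators of $\widetilde\DDD(G)$ are not multiplicatively invariant --- the scaling relation in weight $1$ is $C(ab,ac)=C(0,a)+C(b,c)$ --- rewriting the cut formula (\ref{eqn:cyc_coproduct}) in the inhomogeneous variables $w_i$ produces extra $(\text{weight }1)\wedge(\text{weight}\geq 1)$ terms, the $L_i\wedge\log w_i$ terms (\ref{eqn:gf_cop_3}) of Lemma~\ref{lma:cop_of_gf}. These terms are precisely the ``combinatorial miracle'': they do not cancel inside the chord bookkeeping, and the paper must prove separately (Lemma~\ref{lma:pf_log}) that their total coefficient is proportional to $Q-R_A-R_B$ itself, i.e.\ $\delta(Q-R_A-R_B)=\bigl[\sum_i C(0,a_i)(t_i-v)+\sum_j C(0,b_j)(u_j-v)\bigr]\wedge(Q-R_A-R_B)$ modulo lower-depth relations and $(\text{weight }1)\wedge(\text{weight }1)$ terms; only then does the coideal property follow, by induction on the depth $r+s$ together with a $\delta^2=0$ argument. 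Moreover the residual $(\text{weight }1)\wedge(\text{weight }1)$ terms force an explicit base-case verification in weight $2$, the identity (\ref{eqn:dilog_identity}) (equivalently the coproduct form of the five-term relation), for which your proposal has no mechanism. Without an account of these weight-$1$ contributions and the base case, the argument as outlined cannot close.
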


\subsubsection{Coproduct}

Let us write down the formula defining the coproduct (\ref{eqn:cyc_coproduct}) in terms of the elements $C\du$.

\begin{lma}
    Let $C= C^*(w_0|n_0,\dots,w_k|n_k)$ and suppose $\text{\rm wt}(C)>2$. Then
\begin{align}
    \delta C\nonumber
    =&\sum_{\rm cyc}\bigg(\sum_{i=0}^{k}\sum_{n_i'+n_i''=n_i} C^*(\underbracket{w_i\dots w_k}|n_i',w_0|n_0,\dots,w_{i-1}|n_{i-1})\wedge\nonumber\\
    &\hspace{1.04in}\wedge C^*(w_{i+1}|n_{i+1},\dots,w_k|n_k,\underbracket{w_0w_1\dots w_i}|n_i'')\bigg)\label{eqn:mult_cop_1}\\
    +&\sum_{\rm cyc}\bigg(\sum_{i=1}^k\sum_{\substack{n_i'+n_i''=n_i\\n_0'+n_0''=n_0+1}} C^*(w_1|n_1,\dots,w_{i-1}|n_{i-1},\underbracket{w_i\dots w_k w_0}|n_i'+n_0'')\wedge\nonumber\\
    &\hspace{1.19in}\wedge C^*(\underbracket{w_0\dots w_i}|n_0'+n_i'',w_{i+1}|n_{i+1},\dots,w_k|n_k)\bigg)\label{eqn:mult_cop_2}\\
    +&\sum_{i=0}^kL_i\wedge C(0,w_i),\label{eqn:mult_cop_3}
\end{align}
    where
    \begin{equation}
        L_i=
        \begin{cases}
             C^*(w_0|n_0,\dots,w_i|n_i-1,\dots,w_k|n_k),&n_i>0,\\

            \substack{\displaystyle\quad C^*(w_0|n_0,\dots,\underbracket{w_{i-1}w_i}|n_{i-1},w_{i+1}|n_{i+1},\dots,w_k|n_k)\\\displaystyle+ C^*(w_0|n_0,\dots,w_{i-1}|n_{i-1},\underbracket{w_iw_{i+1}}|n_{i+1},\dots,w_k|n_k)},&n_i=0
        \end{cases},\label{eqn:log_cop}
    \end{equation}
    and the sums are taken over cyclic permutations of the indices $0,\dots,k$.
    
    If $\text{\rm wt}(C)=2$, this formula holds modulo terms of the form $C(0,a)\wedge C(0,b)$.
    \label{lma:mult_cop}
    \end{lma}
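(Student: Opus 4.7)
Plan: My approach is to unpack $C = C^*(w_0|n_0, \dots, w_k|n_k)$ as $C(x_0, \dots, x_{N-1})$ with $N = k+1+\sum n_i$, apply the cyclic coproduct formula (\ref{eqn:cyc_coproduct}), and reassemble the resulting terms in $C^*$ notation. Writing $a_j := w_1 \cdots w_j$ for $j = 0, \dots, k$ (with $a_0 = 1$), each summand of $\delta C$ corresponds to a cut of the cyclic sequence, namely a pair consisting of a shared vertex $x_v$ and a cut gap between two adjacent positions. I partition the cuts according to whether $x_v$ is one of the nonzero entries $a_j$ or a zero, and whether both arcs have weight $\geq 2$ (interior cuts) or one arc has weight $1$ (boundary cuts).

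For interior cuts with shared vertex $a_j$: the cut gap lies in some $n_i$-run, splitting it as $n_i' + n_i'' = n_i$, and the two arcs contain complementary cyclic chains $(a_j, a_{j+1}, \dots, a_{i-1})$ and $(a_j, a_{j-1}, \dots, a_i)$, both sharing $a_j$. Using cyclic symmetry together with multiplicative invariance (available since both arcs have weight $>1$) to normalize the initial entry of each arc to $1$, this contribution matches (\ref{eqn:mult_cop_1}); as $j$ varies over $\{0, \dots, k\}$, the outer $\sum_{\rm cyc}$ is reproduced. For interior cuts with zero shared vertex: by the outer cyclic sum, I may assume the shared zero lies in the $n_0$-run, and the cut gap lies in the $n_i$-run for some $i \in \{1, \dots, k\}$. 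When each arc is written in $C^*$ form, the shared zero is absorbed into the leading zero-segment of both arcs, so the constraint $n_0' + n_0'' = n_0 + 1$ from (\ref{eqn:mult_cop_2}) arises precisely from counting the shared zero on both sides.

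For boundary cuts: the weight-$1$ arc is $C(x_v, x_w)$ for adjacent positions, of the form $C(0, a_j)$, $C(a_j, 0)$, or (when the intervening zero-run is empty) $C(a_j, a_{j+1})$. The scaling relation $C(ab, ac) = C(0, a) + C(b, c)$ applied with $b = 0$, $c = y$ yields $C(0, xy) = C(0, x) + C(0, y)$, so $C(0, a_j) = \sum_{i=1}^{j} C(0, w_i)$ and $C(a_j, a_{j+1}) = C(a_j \cdot 1, a_j \cdot w_{j+1}) = C(0, a_j) + C(1, w_{j+1})$. After regrouping all boundary contributions by the generator $C(0, w_i)$, the companion factor is exactly $L_i$ as defined in (\ref{eqn:log_cop}); the two summands of $L_i$ in the $n_i = 0$ case correspond to boundary cuts whose weight-$1$ arc involves a zero from the $n_{i-1}$-run (effectively merging $w_{i-1}$ and $w_i$) or from the $n_{i+1}$-run (merging $w_i$ and $w_{i+1}$). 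For $\wt(C) = 2$, we have $N = 3$ and every cut has both arcs of weight $1$; the cuts with a zero shared vertex produce additional $C(0, a) \wedge C(0, b)$ terms not captured by (\ref{eqn:mult_cop_1})--(\ref{eqn:mult_cop_3}), which accounts for the stated exception.

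The hardest step will be the boundary case: tracking signs, correctly regrouping all boundary contributions by the generator $C(0, w_i)$, and verifying that the residual factor is precisely $L_i$ as in (\ref{eqn:log_cop}), with particular care in the $n_i = 0$ sub-case where two distinct cuts both contribute a factor of $C(0, w_i)$ but with different companion summands of $L_i$.
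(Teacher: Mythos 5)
Your overall skeleton (classify coproduct terms by the shared vertex, zero versus nonzero; use cyclic symmetry and multiplicative invariance to put each arc in $C\du$ form; treat weight-one corrections via the weight-one scaling relation) is the same as the paper's, but your accounting does not close, and the place where it fails is exactly the crux of the lemma. You claim that the cuts with both arcs of weight $\geq 2$ match (\ref{eqn:mult_cop_1})--(\ref{eqn:mult_cop_2}) exactly, and that the boundary cuts regroup entirely into $\sum_i L_i\wedge C(0,w_i)$. Neither half is true as stated. The sums (\ref{eqn:mult_cop_1}) and (\ref{eqn:mult_cop_2}) have unrestricted index ranges and therefore contain terms in which one factor has weight one, written in \emph{normalized} form: for instance, taking $i=1$ with $n_0=n_1'=0$ in (\ref{eqn:mult_cop_1}) the first factor is $C\du(w_1\cdots w_k|0,w_0|0)=C(1,w_0)$, and taking $n_0'=1$ at $i=0$ gives factors $C\du(1|1)$. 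Such terms cannot arise from your ``interior'' cuts. Dually, your boundary cuts produce raw weight-one factors such as $C(x_{i-1},x_i)=C(0,x_{i-1})+C(1,w_i)$, and the $C(1,w_i)$ pieces cannot be absorbed into (\ref{eqn:mult_cop_3}), whose weight-one factors are all of the form $C(0,w_i)$; they do not vanish and do not cancel among themselves.

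What the proof actually requires is the cross-comparison between these two leftover families: one must check, cut by cut, that the normalized weight-one factors implicitly contained in (\ref{eqn:mult_cop_1})--(\ref{eqn:mult_cop_2}) differ from the raw $(\text{weight }w-1)\wedge(\text{weight }1)$ terms of the coproduct by weight-one scaling corrections, and that the total discrepancy assembles into $\sum_i L_i\wedge C(0,w_i)$. This is where the telescoping $C(0,x_i)-C(0,x_{i-1})=C(0,w_i)$ comes from in the $n_i>0$ case, and where the two-term form of $L_i$ for $n_i=0$ arises (the companions with merged segments $w_{i-1}w_i$ and $w_iw_{i+1}$ come from the removal of the nonzero points $x_{i-1}$ and $x_i$, compared against the terms $C\du(w_i|0,w_i\inv|0)\wedge C_{w-1}$ already present in (\ref{eqn:mult_cop_1}) — not, as you suggest, from cuts through zeros of the $n_{i\pm1}$-runs). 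The paper's proof is organized around exactly this comparison: it first identifies all cuts with (\ref{eqn:mult_cop_1})--(\ref{eqn:mult_cop_2}) only modulo terms with a weight-one factor, and then separately computes the $(\text{weight }w-1)\wedge(\text{weight }1)$ part on both sides, the mismatch being (\ref{eqn:mult_cop_3}). As written, your decomposition asserts an identity that is false term-by-term, and your ``hardest step'' (regrouping boundary contributions by $C(0,w_i)$ alone) cannot succeed without this missing matching step.
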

    \begin{proof}

    Classify the terms $C'\wedge C''$ of $\delta C$ by the common point of the two resulting parts $C'$ and $C''$. Let $x_i=w_1\dots w_i$ be the point counterclockwise from the segment $w_i$. Up to cyclic symmetry, any cut is either:
    \begin{enumerate}[(a)]
    \item a cut from $x_0$ to the segment $w_i$ (between $x_{i-1}$ and $x_i$) (Fig.~\ref{fig:cut}(a));
    \item cut from a 0 on the segment $w_0$ (between $x_k$ and $x_0$) to the segment $w_i$ (Fig.~\ref{fig:cut}(b)).
    \end{enumerate}
    
    \begin{figure}[ht]
        \centering 
        \begin{tabular}{cc}
    \includegraphics{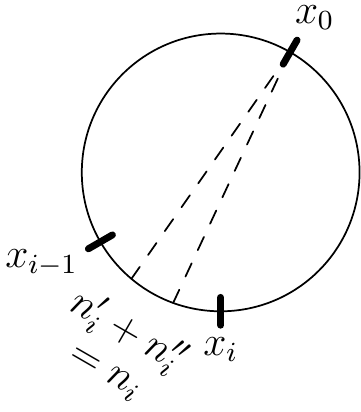}
        &
    \includegraphics{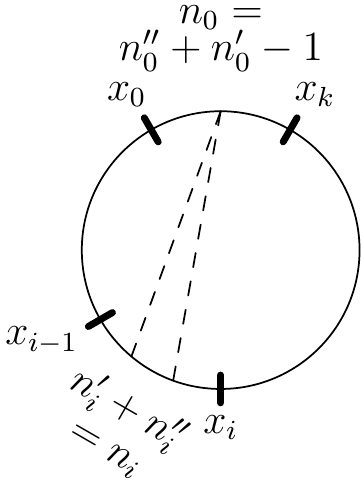} 
        \\\ \\
        (a)&(b)
    \end{tabular}
    \simplecap{fig:cut}{}
    \end{figure}
    
    We first write the terms arising from these cuts modulo elements of form $C(0,x)$.

    Case (a) contributes the terms (\ref{eqn:mult_cop_1}) and case (b) contributes the terms (\ref{eqn:mult_cop_2}), noting that modulo elements of the form $C(0,a)$ the $C\du$ have cyclic symmetry.
    
    Now we handle the terms (\ref{eqn:mult_cop_3}). Let $w=n_0+n_1+\dots+n_k+k$ be the weight. Consider the $(\text{weight $w-1$})\wedge(\text{weight 1})$ terms of the coproduct. 
    
    Such elements, of form $C_{w-1}\wedge C$, fall into two cases, depending on which point is present in $C$ but not in $C_{w-1}$.
    \begin{enumerate}[(1)]
    \item 0 on the segment $w_i$ (from $x_{i-1}$ to $x_i$).
    \item $x_i$.
    \end{enumerate}
    If $w>2$, the $C_{w-1}$ are invariant under scaling.
    If $w=2$, then the cyclic permutation of the arguments $w_0,\dots,w_{i-1}w_i,w_{i+1},\dots,w_k$ and $w_0,\dots,w_{i-1},w_iw_{i+1},\dots,w_k$ in (\ref{eqn:log_cop}) modifies those terms by an element of weight 1, so the expressions in (\ref{eqn:mult_cop_3}) are determined up to $(\text{weight 1})\wedge(\text{weight 1})$.
    
    In case (1), we have \[C_{w-1}=(w_0|n_0,\dots,x_i|n_i-1,\dots,x_k|n_k).\] The only nonzero terms that appear are $(C_{w-1}\wedge(-C(0,x_i))$ (cut clockwise of $x_i$) and $C_{w-1}\wedge C(0,x_{i-1})$ (cut counterclockwise of $x_i$).
    
    On the other hand, (\ref{eqn:mult_cop_1}) produces no terms for these two cuts (they correspond to to $i=1$ and $i=k$). Thus this case contributes the terms \[C_{w-1}\wedge(C(0,x_i)-C(0,x_{i-1}))=C_{w-1}\wedge C(0,w_i),\] which are the $n_i>0$ terms in (\ref{eqn:mult_cop_3}).
    
    In case (2), 
    \[
        C_{w-1}= C^*(x_0|n_0,\dots,x_ix_{i+1}|n_i+n_{i+1},\dots,x_k|n_k). 
    \]
    Let $C_1'$ and $C_1''$ be the elements formed by $x_i$ and the point clockwise and counterclockwise from $x_i$, respectively. Then the resulting terms are $-C_{w-1}\wedge C_1'$ and $C_{w-1}\wedge C_1''$.

    If $n_i=0$, then $C_1'=C(x_i,x_{i-1})=C\du(w_i|0,w_i\inv|0)+C(0,x_i)$, while (\ref{eqn:mult_cop_1}) contributes $C\du(w_i\inv|0,w_i|0)\wedge C_{w-1}$. Thus we get an added term \[-C_{w-1}\wedge(C(0,x_i)-C(0,w_i)).\]
    
    If $n_i\neq0$, then $C_1'=C(0,x_i)$, while (\ref{eqn:mult_cop_2}) contributes 0. Thus we get a term $-C_{w-1}\wedge C(0,x_i)$.
    
    Similarly, we get terms $C_{w-1}\wedge(C(0,x_i)+C(0,w_{i+1}))$ if $n_{i+1}=0$ and $C_{w-1}\wedge C(0,x_i)$ if $n_{i+1}>0$.

    Collecting terms, the total contribution from this case is $C_{w-1}\wedge(M_i'+M_i'')$, where
     \begin{equation}
     M_i'=\begin{cases}C(0,x_i)&n_i=0,\\0&n_i\neq0\end{cases},\quad
     M_i''=\begin{cases}C(0,x_{i+1})&n_{i+1}=0,\\0&n_{i+1}\neq0\end{cases}.
    \label{eqn:log_cop_mi}
     \end{equation}
     Reindexing, we get exactly the $n_i=0$ terms of (\ref{eqn:mult_cop_3}).
\end{proof}

We remark that if a cyclic permutation is applied to the arguments in (\ref{eqn:mult_cop_1}), so that it is written
\begin{align*}
    &C^*(w_0|n_0,\dots,w_{i-1}|n_{i-1},\underbracket{w_i\dots w_k}|n_i')\wedge\nonumber\\
    &\wedge C^*(\underbracket{w_0w_1\dots w_i}|n_i'',w_{i+1}|n_{i+1},\dots,w_k|n_k)\bigg),
\end{align*}
then the $n_i=0$ terms in (\ref{eqn:mult_cop_3}) disappear.
        
Then there is the following formula for the coproduct of generating functions:

\begin{lma}
    Suppose $k>2$ and let $X=\GF\pg{w_0,\dots,w_k}{t_0,\dots,t_k}$. Then
    \begin{align}
        \delta X=\sum_{\rm cyc}\bigg(\sum_{i=0}^{k}&\GF\pg{\underbracket{w_i\dots w_k},w_0,\dots,w_{i-1},}{t_i,t_1,\dots,t_{i-1}}\nonumber\\
        &\wedge \GF\pg{w_{i+1},\dots,w_k,\underbracket{w_0\dots w_i}}{t_{i+1},\dots,t_k,t_i}\bigg)\label{eqn:gf_cop_1}\\
        +\sum_{\rm cyc}\bigg(\sum_{i=1}^k&t_0\GF\pg{w_1,\dots,w_{i-1},\underbracket{w_i\dots w_kw_0}}{t_1,\dots,t_{i-1},\cq{t_i,t_0}}\nonumber\\
        &\wedge \GF\pg{\underbracket{w_0\dots w_i},w_{i+1},\dots,w_k}{\cq{t_i,t_0},t_{i+1},\dots,t_k}\label{eqn:gf_cop_2}\\
        +\sum_{i=1}^k&L_i\wedge\log w_i,\label{eqn:gf_cop_3}
    \end{align}
    where
    \begin{align}
        L_i=&t_i\GF\pg{w_0,\dots,w_k}{t_0,\dots,t_k}\label{eqn:log_cop_gf_lr}\\
        &+\GF\pg{w_0,\dots,\underbracket{w_{i-1}w_i},w_{i+1},\dots,w_k}{t_1,\dots,t_{i-1},t_{i+1},\dots,t_k}\label{eqn:log_cop_gf_l}\\
        &+\GF\pg{w_0,\dots,w_{i-1},\underbracket{w_iw_{i+1}},\dots,w_k}{t_1,\dots,t_{i-1},t_{i+1},\dots,t_k}.\label{eqn:log_cop_gf_r}
    \end{align}
    If $k=2$, this formula holds modulo terms of the form $C(0,a)\wedge C(0,b)$.
    \label{lma:cop_of_gf}
\end{lma}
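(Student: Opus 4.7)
The plan is to derive the generating-function identity directly from the pointwise coproduct formula of Lemma~\ref{lma:mult_cop} by repackaging all terms according to their exponents in the $t_i$. For each $(n_0,\dots,n_k)$, Lemma~\ref{lma:mult_cop} breaks $\delta C^*(w_0|n_0,\dots,w_k|n_k)$ into three families of terms (\ref{eqn:mult_cop_1})--(\ref{eqn:mult_cop_3}); I will regroup these three families, after multiplication by $\prod t_i^{n_i}$ and summation over $(n_0,\dots,n_k)$, to produce the three parts (\ref{eqn:gf_cop_1})--(\ref{eqn:gf_cop_3}) of the generating-function coproduct.

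For the type~(\ref{eqn:mult_cop_1}) terms, fix a cyclic representative and an index $i$. The left factor $C^*(w_i\cdots w_k|n_i',w_0|n_0,\dots,w_{i-1}|n_{i-1})$ depends on the independent data $(n_0,\dots,n_{i-1},n_i')$ and the right factor depends on $(n_i'',n_{i+1},\dots,n_k)$, with $n_i=n_i'+n_i''$. Because $t_i^{n_i}=t_i^{n_i'}t_i^{n_i''}$, the summation factorises as the wedge product of the two generating functions in (\ref{eqn:gf_cop_1}), where the combined segments $w_i\cdots w_k$ and $w_0\cdots w_i$ are each indexed by their own copy of $t_i$.

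For the type~(\ref{eqn:mult_cop_2}) terms, the two combined segments have sizes $n_i'+n_0''$ and $n_0'+n_i''$ under the constraints $n_i'+n_i''=n_i$ and $n_0'+n_0''=n_0+1$. Applying the multiset convention (\ref{eqn:gf_multiset}) with $S=\{t_i,t_0\}$, a combined segment of size $n_i'+n_0''$ corresponds to the coefficient of $t_i^{n_i'}t_0^{n_0''-1}$, since the convention inserts a ``$+1$'' between multiset entries, and likewise for the right segment. The total power of $t_0$ produced by the two generating functions is $(n_0''-1)+(n_0'-1)=n_0-1$, and the explicit prefactor $t_0$ in (\ref{eqn:gf_cop_2}) promotes this to $t_0^{n_0}$, reconciling the two indexings.

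For the type~(\ref{eqn:mult_cop_3}) terms, the weight-one factor $C(0,w_i)$ is identified with the formal symbol $\log w_i$, consistent with the scaling relations forcing $C(0,\cdot)$ to behave as a logarithm character on $G$. The factor $L_i$ then has two subcases from Lemma~\ref{lma:mult_cop}: when $n_i>0$, summing $C^*(\dots,w_i|n_i-1,\dots)\,t_i^{n_i}$ over $n_i\geq1$ gives $t_i\GF\pg{w_0,\dots,w_k}{t_0,\dots,t_k}$, which is (\ref{eqn:log_cop_gf_lr}); when $n_i=0$, the two merged-segment $C^*$'s contribute (\ref{eqn:log_cop_gf_l}) and (\ref{eqn:log_cop_gf_r}), with the now-absent $t_i$ omitted from the indexing. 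Summing in $i$ then produces (\ref{eqn:gf_cop_3}). I expect the main obstacle to be the careful tracking of the off-by-one shifts in the multiset convention and the $n_0+1$ splitting in (\ref{eqn:mult_cop_2}); the $t_0$ prefactor in (\ref{eqn:gf_cop_2}) is exactly what reconciles them. The caveat about $k=2$ in the statement is inherited directly from the analogous caveat in Lemma~\ref{lma:mult_cop}.
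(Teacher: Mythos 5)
Your proposal is correct and takes essentially the same route as the paper: the paper's proof of Lemma~\ref{lma:cop_of_gf} is precisely to resum the expressions (\ref{eqn:mult_cop_1})--(\ref{eqn:mult_cop_3}) of Lemma~\ref{lma:mult_cop} against the monomials $\prod_i t_i^{n_i}$, with (\ref{eqn:mult_cop_1}), (\ref{eqn:mult_cop_2}) giving (\ref{eqn:gf_cop_1}), (\ref{eqn:gf_cop_2}) and the $n_i>0$ / $n_i=0$ cases of (\ref{eqn:log_cop}) giving (\ref{eqn:log_cop_gf_lr}) and (\ref{eqn:log_cop_gf_l})--(\ref{eqn:log_cop_gf_r}). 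Your explicit tracking of the multiset off-by-one and the $t_0$ prefactor reconciling $n_0'+n_0''=n_0+1$ in (\ref{eqn:gf_cop_2}) just spells out what the paper dismisses as ``obvious,'' and it checks out.
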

\begin{proof}
Directly reinterpret Lemma~\ref{lma:mult_cop} via the definition (\ref{eqn:gf_def_simple}) by summing the expressions (\ref{eqn:mult_cop_1}), (\ref{eqn:mult_cop_2}), (\ref{eqn:mult_cop_3}) over choices of $\cq{n_i}_{i=0}^k$ taken with a monomial $\prod_it_i^{n_i}$.

The expressions (\ref{eqn:mult_cop_1}) and (\ref{eqn:mult_cop_2}) yield (\ref{eqn:gf_cop_1}) and (\ref{eqn:gf_cop_2}) in an obvious manner.

The $n_i>0$ cases in (\ref{eqn:log_cop}) give the terms with (\ref{eqn:log_cop_gf_lr}), and the $n_i=0$ cases give (\ref{eqn:log_cop_gf_l})-(\ref{eqn:log_cop_gf_r}).
\end{proof}

We also remark that if a cyclic permutation is applied to the arguments in (\ref{eqn:gf_cop_1}), so that it is written
\begin{align*}
    &\GF\pg{w_0,\dots,w_{i-1},\underbracket{w_i\dots w_k}}{t_1,\dots,t_{i-1},t_i}\nonumber\\
    &\wedge \GF\pg{\underbracket{w_0\dots w_i},w_{i+1},\dots,w_k}{t_i,t_{i+1},\dots,t_k}\bigg)
\end{align*}
then the terms (\ref{eqn:log_cop_gf_l}) and (\ref{eqn:log_cop_gf_r}) disappear.

\subsubsection{Dual generating function and homogeneity}

For a more complete analogy with the generating functions $L, L\du$ for multiple polylogarithms (\S\ref{sec:zeta_shuffle}), we define a dual generating function $\HGF$:
\begin{align}
    \HGF\pg{x_0,\dots,x_k}{t_0,\dots,t_k}&:=\sum_{n_i\geq0} C(x_0,\underbrace{0,\dots,0}_{n_0},x_1\dots,x_k,\underbrace{0,\dots,0}_{n_k})\prod_{i=0}^k(t_0+\dots+t_i)^{n_i},\label{eqn:hgf_def_simple}
\end{align} where the formal variables $t_i$ satisfy the relation $\sum_{i=0}^kt_i=0$. The pair of generating functions $\GF,\HGF$ resemble those used by \cite{goncharov-dihedral} in the definition of the dihedral Lie coalgebra. 

The duality is made clear by the following statement:
\begin{lma}
\begin{enumerate}[(a)]
\item The generating functions are related by
\begin{equation}
    \GF\pg{w_0,\dots,w_k}{t_0,\dots,t_k}=\HGF\pg{1,w_0,\dots,w_0\dots w_{k-1}}{t_0,t_1-t_0,\dots,t_k-t_{k-1}}.
\end{equation}
\item For $k>1$, the generating functions $\GF$ are homogeneous in the $t_i$ (invariant under a shift $t_i\mapsto t_i+t$), and the $\HGF$ are homogeneous in the $x_i$ (invariant under a shift $x_i\mapsto x_i\cdot x$). 
\item Both generating functions are invariant under cyclic permutation of the indices.
\end{enumerate}
\end{lma}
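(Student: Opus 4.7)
The plan is to prove each part by expanding the defining sums and applying the relations that hold in $\widetilde\DDD(G)$: cyclic symmetry and multiplicative invariance of $C$, and (crucially for (b)) the first shuffle relations.

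For (a), the key identity in $\widetilde\DDD(G)$ is
\[
  C\du(w_0|n_0,\dots,w_k|n_k)
  = C(1,0^{n_0},w_0,0^{n_1},w_0w_1,\dots,w_0\cdots w_{k-1},0^{n_k}),
\]
obtained by multiplying every argument of $C(0^{n_0},1,0^{n_1},w_1,\dots,0^{n_k},w_1\cdots w_k)$ by $w_0$ (multiplicative invariance in weight $>1$; note $w_0\cdots w_k=1$ and $w_0\cdot 0=0$) and cyclically rotating the resulting $1$ to the front. The right-hand side is precisely the summand of $\HGF\pg{1,w_0,\dots,w_0\cdots w_{k-1}}{s_0,\dots,s_k}$, and the monomial match $(s_0+\cdots+s_i)^{n_i}=t_i^{n_i}$ is solved by $s_0=t_0$, $s_i=t_i-t_{i-1}$ for $i\geq 1$, yielding (a).

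For (b), the $\HGF$ case is immediate from multiplicative invariance of $C$ combined with $a\cdot 0=0$: each summand is left unchanged under $x_i\mapsto x\cdot x_i$, and every summand has weight $>1$ as soon as $k>1$. For $\GF$, shift invariance in $\vec t$ is equivalent to the polynomial identity $\sum_{i=0}^k\partial\GF/\partial t_i=0$, which unpacks to
\[
  \sum_{i=0}^k (n_i+1)\,C\du(w_0|n_0,\dots,w_i|n_i+1,\dots,w_k|n_k) = 0
\]
for every $\vec n$. The coefficient $n_i+1$ is exactly the number of gaps in segment $w_i$ of the cyclic word representing $C\du(\vec w|\vec n)$, and every insertion of a new $0$ into such a gap produces the same correlator (the one with $n_i$ replaced by $n_i+1$). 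Summing over $i$ exhausts every gap of the cyclic word, and by cyclic symmetry of $C$ one may fix any vertex as $x_0$; the resulting sum is then the $(1,N)$-first-shuffle of $x_1=0$ against the remaining arguments, which vanishes by construction of $\widetilde\DDD(G)$.

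For (c), the $\HGF$ case follows at once from cyclic symmetry of $C$: simultaneously rotating the $x_i$ and $s_i$ corresponds to a cyclic rotation of the $C$-arguments in each summand, with matching monomial weights. For $\GF$, cyclic invariance reduces to
\[
  C\du(w_1|m_1,\dots,w_k|m_k,w_0|m_0) = C\du(w_0|m_0,w_1|m_1,\dots,w_k|m_k),
\]
which I verify by multiplying the arguments of the right-hand $C$-representative by $w_1\inv$ (multiplicative invariance; $w_1\cdots w_k w_0=w_1\inv$) and then applying cyclic symmetry of $C$ to bring the resulting sequence into the form of the left side.

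The main obstacle is (b) for $\GF$: recognizing the ``insert-an-extra-zero'' sum as an instance of the first shuffle relation is not apparent from the generating-function formulae alone. It is, however, forced by the compatibility between (a) and the definition of $\HGF$ modulo $\sum s_i=0$: the direction transverse to that constraint hyperplane must annihilate $\GF$, and the first shuffle relations in $\widetilde\DDD(G)$ are exactly what make this happen.
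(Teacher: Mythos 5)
Your parts (a), (b), and the $\GF$ half of (c) are correct and essentially follow the paper's route: (a) is the rotate-and-rescale identification of summands (the paper dismisses it as clear from the definitions), and your formulation of shift invariance of $\GF$ as $\sum_i\partial\GF/\partial t_i=0$ is the $n=1$ case of the paper's comparison of coefficients of $t^n\prod_i t_i^{n_i}$; both arguments reduce to first shuffle relations in which zeros are shuffled against all other points with one marked point held fixed. (One caveat you share with the paper: the rescaling by $w_0$, resp.\ $w_1\inv$, used in (a) and in (c) for $\GF$ invokes multiplicative invariance, which holds only in weight $>1$, so the weight $\le1$ summands occurring when $k\le1$ are not covered by this argument.)

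The genuine gap is (c) for $\HGF$. It does not ``follow at once from cyclic symmetry of $C$ with matching monomial weights'': the monomial attached to the zeros following $x_i$ is the partial sum $(t_0+\cdots+t_i)^{n_i}$, anchored at the starting index, so simultaneously rotating the $x_i$ and the $t_i$ does \emph{not} carry summands to summands with equal coefficients. For instance, for $k=2$ the weight-$3$ part of the difference is, using $s_0+s_1+s_2=0$ and cyclic symmetry of $C$,
\begin{align*}
\HGF\pg{x_0,x_1,x_2}{s_0,s_1,s_2}-\HGF\pg{x_1,x_2,x_0}{s_1,s_2,s_0}
= s_0\bigl[C(x_0,0,x_1,x_2)+C(x_0,x_1,0,x_2)+C(x_0,x_1,x_2,0)\bigr],
\end{align*}
which is a nonzero multiple of a $(1,2)$ first shuffle: it vanishes in $\widetilde\DDD(G)$, but not termwise. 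So cyclic invariance of $\HGF$ genuinely uses the shuffle relations, and your heuristic closing remark (that the direction transverse to $\sum_i s_i=0$ is controlled by the first shuffles) is in fact the missing ingredient here, not just for (b). The clean fix, which is what the paper does, is to deduce (c) for $\HGF$ from (a) together with (b) and (c) for $\GF$: under (a), rotating $(x_i;t_i)$ corresponds to cyclically rotating the $w_i$ and replacing each $t_i$ by $t_{i+1}-t_0$ (indices mod $k+1$, using $\sum_i t_i=0$), and the common shift by $-t_0$ is then removed by the homogeneity statement (b).
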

\begin{proof}
Part (a) is clear from the definitions.

For $\GF$, (c) is clear from the scaling relations imposed in $\widetilde\DDD(G)$. For $\HGF$, (b) is also immediate. Part (c) for $\HGF$ would follow easily from (a) and (b,c) for $\GF$, recalling that $t_1+\dots+t_k=0$.

The nontrivial part is (b) for $\GF$. We must show
\[
\GF\pg{w_0,\dots,w_k}{t_0+t,\dots,t_k+t}
=
\GF\pg{w_0,\dots,w_k}{t_0,\dots,t_k}.
\]
Consider the coefficient of $t^n\cdot\prod_it_i^{n_i}$ on each side. If $k=0$, the coefficients on both sides are equal. If $k>0$, the coefficient on the left side is precisely a first shuffle relation (where the $n$ 0s indexed by the variable $t$ are shuffled with all other points, with the point 1 remainining fixed), while the right side is 0.
\end{proof}

The first shuffle relation imposed in $\widetilde\DDD(G)$ can be expressed in terms of the $\HGF$:
\begin{lma}
    The generating functions $\HGF$ obey a shuffle relation for $r,s>1$:
\begin{equation}
    \sum_{\sigma\in\Sigma_{r,s}}\HGF\pg{x_{\sigma\inv(1)},\dots,x_{\sigma\inv(r+s)},x_0}{t_{\sigma\inv(1)},\dots,t_{\sigma\inv(r+s)},t_0}=0.
    \label{eqn:first_shuffle_corr}
\end{equation}
    \label{lma:first_shuf_gf}
\end{lma}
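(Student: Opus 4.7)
The plan is to verify this identity by expanding each $\HGF$ as a formal power series in $t_0,\ldots,t_{r+s}$ (subject to $\sum_j t_j=0$) and showing that the coefficient of every $t$-monomial vanishes as a consequence of the first shuffle relations already imposed in $\widetilde\DDD(G)$. Using $\sum_j t_j=0$, the factor $(t_{\sigma^{-1}(1)}+\cdots+t_{\sigma^{-1}(r+s)}+t_0)^{n_{r+s}}$ vanishes unless $n_{r+s}=0$, so only those terms survive; and cyclic symmetry of $C$ lets me move $x_0$ to the front of each argument. This produces a sum over shuffles $\sigma$, zero counts $(n_0,\ldots,n_{r+s-1})$, and label assignments to zeros, where the expansion of $(t_{\sigma^{-1}(1)}+\cdots+t_{\sigma^{-1}(i+1)})^{n_i}$ records that each zero in position $i$ receives a label $t_a$ with $\sigma(a)\leq i+1$, equivalently: every $t_a$-labeled zero appears after $x_a$ in the linear sequence starting at $x_0$.

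Next I would fix a monomial $M=\prod_{a=1}^{r+s}t_a^{m_a}$ and set up a bijection between (i) pairs $(\sigma,\lambda)$, where $\sigma\in\Sigma_{r,s}$ and $\lambda$ is a label assignment realizing $M$, and (ii) triples $(A'',B'',\sigma')$, where $A''$ is the augmented sequence $(x_1,0^{K_1^A},x_2,\ldots,x_r,0^{K_r^A})$ with labeled zeros attached after each $x_b$, analogously for $B''$, and $\sigma'\in\Sigma_{|A'|,|B'|}$ is a shuffle. The key observation is that since $\sigma$ preserves the internal orders of $\{x_1,\ldots,x_r\}$ and $\{x_{r+1},\ldots,x_{r+s}\}$, each $A$-labeled zero (label $t_a$ with $a\leq r$) has a well-defined slot in $A'$ determined by which $A$-element immediately precedes it in the shuffled sequence; likewise for $B$-zeros. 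Reading off relative positions of all augmented elements recovers $\sigma'$, and the correspondence preserves the underlying element of $C(x_0,\cdots)\in\widetilde\DDD(G)$.

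Under the bijection, the coefficient of $M$ in the LHS becomes
\[\sum_{(A'',B'')}\,\sum_{\sigma'\in\Sigma_{|A'|,|B'|}} C\bigl(x_0,\,[A'',B'']_{\sigma'}\bigr),\]
where the outer sum is over augmented-with-labels pairs whose label counts realize $M$. The inner sum is precisely a first shuffle relation applied to the sequences $A'$ and $B'$ (regarded as sequences of elements of $G\cup\{0\}$), so it vanishes in $\widetilde\DDD(G)$ by construction. Since $M$ was arbitrary, the entire LHS is zero.

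The main obstacle I expect is executing the bijection cleanly enough to see that the multinomial bookkeeping matches: within each slot after $x_b$ in $A'$, the zeros are distinguishable by their labels $t_a$ with $a\leq b$ (and analogously in $B'$), and the count $\prod_b\binom{K_b^A}{k_{1,b},\ldots,k_{b,b}}\cdot\prod_b\binom{K_b^B}{k_{r+1,b},\ldots,k_{b,b}}$ of label orderings within slots is precisely what the expansion of $\prod_i(t_{\sigma^{-1}(1)}+\cdots+t_{\sigma^{-1}(i+1)})^{n_i}$ produces on the $\HGF$ side. Once this accounting is checked against the expansion in Step 1, the reduction to first shuffles is immediate.
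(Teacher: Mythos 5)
Your proposal is correct and takes essentially the same route as the paper: the paper's one-line proof likewise fixes $x_0$, expands in the $t$-variables, and observes that the coefficient of each monomial is a first shuffle relation in which $x_1,\dots,x_r$ together with the zeros indexed by $t_1,\dots,t_r$ are shuffled against the remaining points. Your labeled-zero bijection simply makes explicit the bookkeeping that the paper leaves implicit.
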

\begin{proof}
    Similar to the previous lemma. It follows from the shuffle relation on the coefficients, where we fix $x_0$ and shuffle the $x_1,\dots,x_r$ and the zeros indexed by $t_1,\dots,t_r$ with the other points.
\end{proof}

\subsection{Proof of Theorem~\ref{thm:main_qdih_const}}

\subsubsection{Summary of the proof}

The proof of the Theorem~\ref{thm:main_qdih_const} will be by induction on the depth of the second shuffles.

Define
\begin{align*}
\QSh^{r,s}(w_1|S_1,\dots,w_n|S_n,w_0|S_0)=\\
=\sum_{\sigma\in\ol\Sigma_{r,s}}(-1)^{r+s-M_\sigma}\GF\pg{w_{\sigma\inv(1)},\dots,w_{\sigma\inv(M_\sigma)},w_0}{S_{\sigma\inv(1)},\dots,S_{\sigma\inv(M_\sigma)},S_0},
\end{align*}
where $w_i\in G$ with $\prod_iw_i=1$, and
\begin{align}
    \ol\QSh^{r,s}(w_1|S_1,\dots,w_n|S_n,w_0|S_0)&=
    \QSh^{r,s}(w_1|S_1,\dots,w_n|S_n,w_0|S_0)\label{eqn:def_olqsh_q}\\
    &-\GF\pg{w_1,\dots,w_r,w_{\cq{r+1,\dots,r+s,0}}}{S_1,\dots,S_r,S_{\cq{r+1,\dots,r+s,0}}}\label{eqn:def_olqsh_ra}\\
    &-\GF\pg{w_{r+1},\dots,w_{r+s},w_{\cq{1,\dots,r,0}}}{S_{r+1},\dots,S_{r+s},S_{\cq{1,\dots,r,0}}}.\label{eqn:def_olqsh_rb}
\end{align}
We must show that the elements $\ol\QSh$ form a coideal, i.e., their coproducts vanish modulo other elements of this form. 

To make the notation more transparent, when $r$ and $s$ are fixed, we will relabel 
\begin{align*}
T_1,\dots,T_r&=S_1,\dots,S_r,\\
U_1,\dots,U_s&=S_{r+1},\dots,S_{r+s},\\
V&=S_0,\\
a_1,\dots,a_r&=w_1,\dots,w_r\\
b_1,\dots,b_s&=w_1,\dots,w_{r+s},\\
c&=w_0,
\end{align*}
so that we consider elements
\[\ol\QSh^{r,s}(a_1|T_1,\dots,a_r|T_r,b_1|U_1,\dots,b_s|U_s,c|V).\]

The main steps will be the following:
\begin{enumerate}[\it Step 1.]
\item[\it Step 0.] Fix the $a_i$ and $b_j$. Show that it suffices to assume $\aq{T_i}=\aq{U_i}=\aq{V}=1$. Denote the three terms (\ref{eqn:def_olqsh_q}), (\ref{eqn:def_olqsh_ra}), (\ref{eqn:def_olqsh_rb}) by $Q$, $R_A$, and $R_B$, respectively. 
\item Show that $\delta(Q-R_A-R_B)$ is zero modulo shuffle relations of lower depth and elements of the form $C(0,x)$ (Lemma~\ref{lma:pf_lwsr}).
\begin{enumerate}[(a)]
\item Group the terms of $\delta Q$ according to a combinatorial classification and reduce them using shuffle relations of lower depth (Lemma~\ref{lma:pf_dq}).
\item Group the terms of $\delta(R_A)$ and $\delta(R_B)$ in the same way and show that they coincide with the terms found in (a) (Lemma~\ref{lma:pf_dr}).
\end{enumerate}
\item Show that the (weight 1)$\wedge$(weight $\geq 1$) component of $\delta(Q-R_A-R_B)$ is 0, modulo shuffle relations of lower depth (Lemma~\ref{lma:pf_log}).

\end{enumerate}

Throughout the proof, in a term $\GF\pg{w_1,\dots,w_k,w_0}{s_1,\dots,s_k,s_0}$ appearing in the definition of $\ol\QSh$, call the segment $\pg{w_0}{s_0}$ the \emph{distinguished segment} (i.e., $\pg cv$ in (\ref{eqn:def_olqsh_q}) and the collapsed segments in (\ref{eqn:def_olqsh_ra}) and (\ref{eqn:def_olqsh_rb})). In the following lemmas, we will always use the following classification of terms of the coproduct of a generating function (see Fig.~\ref{fig:cut_class}). 
\begin{enumerate}[(1)]
\item Terms $g\wedge h$ where one of the parts $g$ or $h$ contains the distinguished segment (i.e., the distinguished segment is not cut). In this case, we always write the term in the form $\pm g\wedge h$, where $g$ contains the distinguished segment.
\begin{enumerate}[(a)]
\item Cut from a point $x_i$ to the segment $\pg{w_j}{s_j}$ ($0\leq i<j\leq k$).
\item Cut from a point $x_j$ to the segment $\pg{w_{i+1}}{s_{i+1}}$ ($0\leq i<j\leq k$).
\item Cut from a 0 on the segment $\pg{x_{i+1}}{t_{s+1}}$ to the segment $\pg{w_j}{t_j}$ ($0\leq i<j\leq k$).
\item Cut from a 0 on the segment $\pg{x_j}{s_j}$ to the segment $\pg{w_{i+1}}{t_{i+1}}$ ($0\leq i<j\leq k$).
\end{enumerate}
\item Terms $g\wedge h$ where the distinguished segment is cut. In this case, we always write $\pm g\wedge h$, where $g$ contains the point $x_0$ and $h$ the point $x_k$.
\begin{enumerate}[(a)]
\item Cut from a point $x_i$ to the distinguished segment.
\item Cut from a 0 on the segment $\pg{w_i}{s_i}$ to the distinguished segment ($0<i<k$).
\item Cut from a 0 on the distinguished segment to the segment $\pg{s_i}{t_i}$ ($0<i<k$).
\end{enumerate}
\end{enumerate}

\begin{figure}[t]
    \centering 
\begin{tabular}{cccc}
    \hspace{-0.5in}
\includegraphics{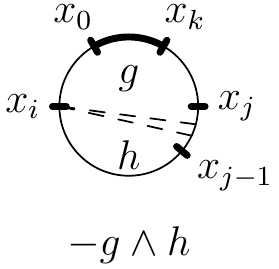}
    &
\includegraphics{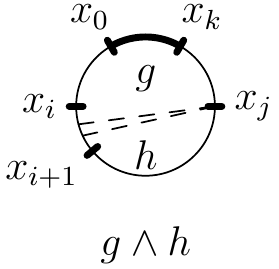}
    &
\includegraphics{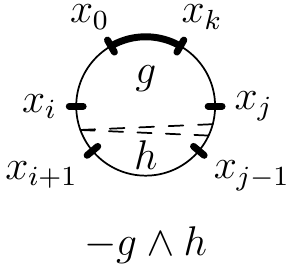}
    &
\includegraphics{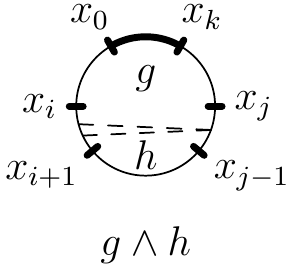}
    \\\ \\
    (1a)&(1b)&(1c)&(1d)
\end{tabular}

\vspace{1em}

\begin{tabular}{ccc}
\includegraphics{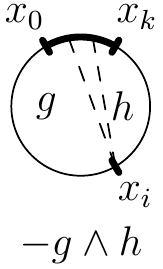}
    &
\includegraphics{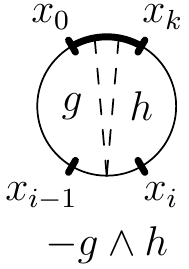}
    &
\includegraphics{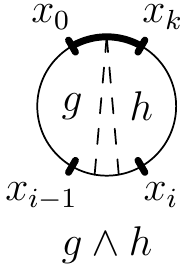}
\\\ \\
    (2a)&(2b)&(2c)
\end{tabular}
\simplecap{fig:cut_class}{}
\end{figure}

\subsubsection{Step 0}

As stated in Step 0 above, we fix $m>0$ and $n>0$, the $a_i$, $b_j$, $c$, and the $T_i$, $U_j$, $V$, and let $Q,R_A,R_B$ be the three terms of the expression defining $\ol\QSh$: (\ref{eqn:def_olqsh_q}), (\ref{eqn:def_olqsh_ra}), and (\ref{eqn:def_olqsh_rb}), respectively.

We may assume $T_i=\cq{t_i}$, $U_j=\cq{u_j}$, and $V=\cq{v}$, by the following:
\begin{lma}[Step 0]
\label{lma:pf_red_multi}
The shuffle relations for $\aq{T_i}=\aq{U_j}=\aq{V}=1$ imply the shuffle relations for general index sets.
\end{lma}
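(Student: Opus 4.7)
The plan is induction on the total cardinality $N := \sum_i |T_i| + \sum_j |U_j| + |V|$, with the singleton case $N = r + s + 1$ supplied as the hypothesis of the lemma. The inductive step will reduce one slot of cardinality $\geq 2$ to slots of smaller cardinality by invoking the multiset identity (\ref{eqn:multi_identity}).

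Without loss of generality, suppose $|T_1| \geq 2$ and write $T_1 = \cq{t,u} \sqcup T$ with $t, u$ two distinct formal variables. The central observation is that each of the three pieces $Q$, $R_A$, $R_B$ defining $\ol\QSh$ is a sum of generating functions $\GF$ in which $T_1$ sits inside a single slot, possibly disjoint-unioned with a further multiset $X$ depending on the term: $X = \emptyset$ in $R_A$ and in those quasishuffle terms of $Q$ where $T_1$ does not collide; $X = U_j$ in a quasishuffle term of $Q$ where $T_1$ collides with $U_j$; and $X = U_1 \sqcup \dots \sqcup U_s \sqcup V$ in $R_B$. In every such term the combined slot has the form $\cq{t,u} \sqcup T \sqcup X$, so (\ref{eqn:multi_identity}) applies verbatim, and summing termwise with unchanged signs and slot arrangements yields
\[(t - u) \cdot \ol\QSh(T_1 = \cq{t,u} \sqcup T, \dots) = \ol\QSh(T_1 = \cq{t} \sqcup T, \dots) - \ol\QSh(T_1 = \cq{u} \sqcup T, \dots).\]

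By the inductive hypothesis the right-hand side is zero, since each of its two terms has total cardinality $N - 1$. Since $t - u$ is not a zero divisor in the formal power series ring over $\widetilde\DDD(G)$---a trivial comparison of monomial coefficients shows that $(t-u)X = 0$ forces $X = 0$---the left-hand side also vanishes, completing the inductive step. The argument for reducing a general $|U_j| \geq 2$ or $|V| \geq 2$ is identical. I expect the only obstacle to be combinatorial bookkeeping: one must confirm that the ``rest'' $X$ depends solely on which indexed slots are placed together, not on the individual formal variables within the slot being reduced, so that (\ref{eqn:multi_identity}) applies termwise and the identities can be summed. This is transparent because the quasishuffles $\sigma \in \ol\Sigma_{r,s}$ and the two collapses defining $R_A$ and $R_B$ are specified purely by which $T_i$, $U_j$, $V$ go into which slot.
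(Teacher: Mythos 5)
Your argument is correct and is exactly the paper's intended proof: the paper disposes of this step with the single line ``Obvious by induction using (\ref{eqn:multi_identity})'', and your induction on the total cardinality, with the termwise application of that identity to $Q$, $R_A$, $R_B$ and the observation that $t-u$ is not a zero divisor in the formal power series module, is just that argument written out in full. (One immaterial slip: in $R_B$ the multiset sharing a slot with $T_1$ is $T_2\sqcup\dots\sqcup T_r\sqcup V$, not $U_1\sqcup\dots\sqcup U_s\sqcup V$; nothing in your argument depends on this.)
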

\begin{proof}
Obvious by induction using (\ref{eqn:multi_identity}).
\end{proof}

\begin{lma}[Step 1]
\label{lma:pf_lwsr}
Modulo shuffle relations of lower depth and elements $C(0,x)$, $\delta(Q-R_A-R_B)=0$.
\end{lma}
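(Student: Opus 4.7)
The plan is to expand $\delta Q$ and $\delta(R_A + R_B)$ term-by-term using Lemma~\ref{lma:cop_of_gf} and match the resulting contributions combinatorially according to the cut classification (1a)--(1d), (2a)--(2c) pictured in Figure~\ref{fig:cut_class}. Since we are allowed to discard $C(0,x)$-terms, the log-correction pieces from line~(\ref{eqn:gf_cop_3}) of the coproduct formula (corresponding to cut types (1c), (1d), (2b), (2c)) are ultimately absorbed into discarded terms or into lower-depth shuffles, to be treated in the companion lemma of Step 2. This reduces the present task to matching the ``principal'' contributions of types (1a), (1b), (2a).

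The key combinatorial observation I would like to exploit is that every cut of a generating function $\GF_\sigma$ indexed by a quasishuffle $\sigma\in\ol\Sigma_{r,s}$ splits $\sigma$ into a pair of sub-quasishuffles $\sigma_L, \sigma_R$ on complementary sub-intervals $I_L, I_R \subseteq \cq{1,\dots,r+s}$, and the sign factors multiplicatively:
\[
(-1)^{r+s-M_\sigma} = (-1)^{r_L+s_L-M_{\sigma_L}} \cdot (-1)^{r_R + s_R - M_{\sigma_R}},
\]
since a collision slot of $\sigma$ must lie entirely inside $I_L$ or inside $I_R$. Consequently, once the cut position and the partition of $\cq{1,\dots,r+s}$ are fixed, the corresponding contribution to $\delta Q$ is a wedge $Q^{(L)} \wedge Q^{(R)}$ of two lower-depth quasishuffle sums. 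By the inductive hypothesis on depth, each factor $Q^{(*)}$ is congruent to its corresponding regularized sum $R_A^{(*)} + R_B^{(*)}$ modulo lower-depth shuffles and $C(0,x)$-terms, so the contribution collapses to wedges of regularized partial correlators.

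Meanwhile, $\delta R_A$ and $\delta R_B$ are coproducts of single generating functions and split along the same partitions, but now the split is forced to be \emph{pure}: the left factor carries only $a$-indices for $R_A$, and only $b$-indices for $R_B$. I expect the contributions of $\delta(R_A + R_B)$ to match exactly the two boundary terms in the wedge expansion of $\delta Q$, leaving only ``interior'' wedges that themselves are lower-depth $\ol\QSh$ expressions and so vanish by induction. Cuts of type (2a), which cross the distinguished segment $\pg{c}{v}$, admit the same treatment, with a point $x_i$ on the distinguished segment playing the role of the split point and the two halves of the segment inheriting the roles of $I_L$ and $I_R$.

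The main obstacle will be the interplay between cut positions and collisions in the quasishuffle: slots containing both an $a_i$ and a $b_j$ cannot themselves be split by a type-(1a)/(1b) cut, and one has to verify that no spurious terms survive when such a slot lies at the boundary between $I_L$ and $I_R$. A secondary difficulty, which I would isolate into Lemma~\ref{lma:pf_log}, is the log anomaly carried by (\ref{eqn:gf_cop_3}): the (weight $w-1$)$\wedge$(weight $1$) terms must be shown to cancel across $Q$, $R_A$, $R_B$ up to $C(0,x)$ and lower-depth shuffles. The principal part of the computation itself I would package as two sub-lemmas corresponding to Steps 1(a) and 1(b) of the roadmap, one computing $\delta Q$ explicitly cut-type by cut-type, the other showing that $\delta(R_A + R_B)$ supplies precisely the pure-split ``boundary'' of that expansion.
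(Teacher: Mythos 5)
There is a genuine gap, and it comes right at the start of your reduction: you identify the cut types (1c), (1d), (2b), (2c) with the log-correction line~(\ref{eqn:gf_cop_3}) of Lemma~\ref{lma:cop_of_gf} and propose to set them aside, reducing Step 1 to matching only the (1a), (1b), (2a) contributions. This is a misidentification. The cuts (1c), (1d), (2b), (2c) are cuts based at a \emph{zero} lying on one of the segments; they produce the full-weight terms~(\ref{eqn:gf_cop_2}), carrying factors such as $u_{j_0}$ or $(v-t_{i_0+1})$ and multiset variables like $\cq{t_{i_0+1},v}$, not terms of the form $(\cdot)\wedge C(0,x)$. Only the genuinely weight-one pieces~(\ref{eqn:gf_cop_3}), i.e.\ the $L_i\wedge C(0,w_i)$ terms, may be discarded at this stage and deferred to Step 2. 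If you drop the (1c)/(1d)/(2b)/(2c) cuts, the computation does not close: in the paper's argument the vanishing of the ``interior'' case (at least two $a_i$ and two $b_j$ below the cut) is obtained only by combining the (1a)/(1b) terms with the (1c)/(1d) terms through the identity~(\ref{eqn:multi_identity}); the assembled residue of the type-(1) cuts contains the $(u_{j_1}-u_{j_0})$-weighted piece~(\ref{eqn:qsh_red_cd}); and on the other side $\delta R_A+\delta R_B$ has its own (2b)/(2c) contributions, e.g.~(\ref{eqn:dr_2bv}), (\ref{eqn:dr_2cv}), (\ref{eqn:dr_2bc}), which must be matched against~(\ref{eqn:dq_2_total2v}) and~(\ref{eqn:dq_2_total3}) coming from the very cuts you discarded. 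With only (1a), (1b), (2a) retained, the two sides simply do not agree modulo $C(0,x)$ and lower-depth shuffles.

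Your remaining outline is close in spirit to the paper's: cuts do split a quasishuffle term into an upper part containing the distinguished segment and a lower part which, summed over cuts with fixed data, assembles into a lower-depth $\QSh$, and one then invokes the lower-depth relations to replace that $\QSh$ by the two collapsed generating functions. But note that after this replacement the resulting wedges are \emph{not} lower-depth $\ol\QSh$ elements that vanish by induction one at a time; they cancel only when summed over the different cut types and positions (including the zero-based cuts), and the boundary effects you flag as ``the main obstacle'' — cuts adjacent to or through a collision slot $\pg{a_ib_j}{\cq{t_i,u_j}}$, which shift the indices $i_0',j_0'$ by one and produce the $\f{1}{t_i-u_j}$-type terms — are precisely where most of the bookkeeping lives. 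As written, the proposal defers exactly the terms that make the cancellation work, so the claimed reduction is not valid.
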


\begin{lma}[Step 2]
     Modulo lower-depth shuffle relations and terms $C(0,x)\wedge C(0,y)$, 
    \begin{equation}
        \delta(Q-R_A-R_B)=\bq{\sum_{i=1}^mC(0,a_i)(t_i-v)+\sum_{j=1}^nC(0,b_j)(u_j-v)}\wedge(Q-R_A-R_B).
        \label{eqn:step2}
    \end{equation}
    \label{lma:pf_log}
\end{lma}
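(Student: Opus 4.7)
The strategy is to compute the coefficients of each $C(0,a_i)$ and each $C(0,b_j)$ in $\delta(Q-R_A-R_B)$ separately, using Lemma~\ref{lma:cop_of_gf}. The weight-$1$ factors in the coproduct of any $\GF\pg{w_0,\dots,w_k}{t_0,\dots,t_k}$ come from the logarithmic terms (\ref{eqn:gf_cop_3}): each slot $k$ contributes $L_k\wedge\log w_k$, where $L_k$ decomposes into a ``plain'' piece $t_k\GF$ and two ``merger'' pieces that combine the slot with its cyclic neighbors. First I will apply the weight-$1$ scaling identity $C(0,xy)=C(0,x)+C(0,y)$ (which follows by setting $b=0,c=a$ in $C(ab,ac)=C(0,a)+C(b,c)$) and the constraint $\log c=-\sum_i\log a_i-\sum_j\log b_j$ arising from $\prod w_i=1$, to rewrite every $\log w_k$ as a $\ZZ$-combination of the basic logarithms $C(0,a_i)$ and $C(0,b_j)$. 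This reduces the problem to identifying the coefficient of each such basic weight-$1$ factor.

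\textbf{Step 1.} Collect the coefficient of $C(0,a_i)$ in $\delta Q$. For each $\sigma\in\ol\Sigma_{r,s}$, the symbol $a_i$ occupies exactly one slot: either standalone with index $t_i$, or collided with some $b_j$ with multiset index $\{t_i,u_j\}$. The plain piece $t_k\GF$ in $L_k$ for that slot has $t_k=t_i$ (standalone) or involves $t_i$ (collided), and summing the resulting terms over $\sigma$ and weighting by $(-1)^{r+s-M_\sigma}$ yields $t_i\cdot Q$. The merger pieces merge the $a_i$-slot with one of its cyclic neighbors. Mergers with non-distinguished neighbors produce $\GF$'s on one fewer slot; across the full quasishuffle sum, these assemble into lower-depth $\ol\QSh$ relations and vanish modulo the inductive hypothesis. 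The merger with the cyclically adjacent distinguished segment produces a generating function on fewer slots with the combined multiset index $\{t_i,v\}$ (or with $u_j$ adjoined if $a_i$ was collided); via identity (\ref{eqn:multi_identity}), these match $-v$ times $R_A$- or $R_B$-type generating functions, depending on the collision structure.

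\textbf{Step 2.} Compute the coefficient of $C(0,a_i)$ in $\delta R_A$ and $\delta R_B$ by applying Lemma~\ref{lma:cop_of_gf} to each of these single generating functions. In $R_A$, the distinguished slot carries the symbol $b_1\cdots b_s c$, whose logarithm decomposes to contribute $-C(0,a_i)$ per $i$ via $\log c$; analogously for $R_B$. Assembling plain and merger contributions, one verifies that the full coefficient of $C(0,a_i)$ in $\delta(Q-R_A-R_B)$ equals $(t_i-v)(Q-R_A-R_B)$, modulo lower-depth shuffle relations and $C(0,x)\wedge C(0,y)$ terms. The analogous computation with $a_i\leftrightarrow b_j$ and $R_A\leftrightarrow R_B$ handles the $C(0,b_j)$-coefficient with coefficient $(u_j-v)$, proving (\ref{eqn:step2}).

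\textbf{Main obstacle.} The bulk of the work is the combinatorial bookkeeping in Step 1. One must track the signs $(-1)^{r+s-M_\sigma}$ as $\sigma$ varies, verify that the merger-with-non-distinguished-neighbor contributions genuinely reassemble into lower-depth $\ol\QSh$ relations (rather than leaving stray terms), and check that when the $a_i$-slot is collided with a $b_j$, a merger into an adjacent slot or into the distinguished segment still behaves compatibly with the quasishuffle pattern. The key algebraic tool is identity (\ref{eqn:multi_identity}), which converts merger operations into multiset-index changes and is what produces the clean $-v$ coefficient matching the $R_A,R_B$ indices. The ``small combinatorial miracle'' is that after all these contributions are collected, what remains is precisely $(t_i-v)$ and $(u_j-v)$ multiples of $(Q-R_A-R_B)$ itself, rather than of some other element; this is what gives the Maurer--Cartan-like form of the identity and, combined with Lemma~\ref{lma:pf_lwsr}, allows the induction to close.
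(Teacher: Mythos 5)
Your overall strategy is the same as the paper's: isolate the coefficient of each weight-one generator $C(0,a_i)$, $C(0,b_j)$ in $\delta Q$, $\delta R_A$, $\delta R_B$ via the logarithmic terms (\ref{eqn:gf_cop_3}) of Lemma~\ref{lma:cop_of_gf}, decompose the logarithms using the weight-one scaling relation and $\prod_iw_i=1$, and balance the ``plain'' and ``merger'' pieces with the help of (\ref{eqn:multi_identity}) and the lower-depth relations; this is exactly the paper's computation of $L_i^A$ and $M_i^A$.

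However, one step as you describe it would fail. The mergers of the $a_i$-slot with a non-distinguished neighbor do \emph{not} assemble into corrected relations $\ol\QSh$ and do not vanish modulo the inductive hypothesis: they assemble into \emph{uncorrected} lower-depth quasishuffles (e.g.\ the $\QSh^{r-1,s}$ with $a_{i-1}a_i$ merged, as in (\ref{eqn:dq_log_al})), and modulo the lower-depth relations these reduce to the depth-lowered correction terms such as (\ref{eqn:dq_log_qa})--(\ref{eqn:dq_log_qab}). Those surviving terms are essential: together with the distinguished-segment mergers (\ref{eqn:dq_log_ca})--(\ref{eqn:dq_cb}) they produce the $-(t_i-v)R_B$ part of the answer and cancel the analogous contributions (\ref{eqn:dr_log_al}) and the $\GF$ part of (\ref{eqn:dr_log_v}) coming from $\delta R_A$; if they are discarded as zero, the coefficient of $C(0,a_i)$ does not close up to $(t_i-v)(Q-R_A-R_B)$. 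Relatedly, the $-v$ multiples of $Q$ and $R_A$ come from the plain piece (\ref{eqn:log_cop_gf_lr}) applied to the \emph{distinguished} segment (whose logarithm contains $-\log a_i$), not from merging the $a_i$-slot into the distinguished segment --- note also that in (\ref{eqn:log_cop_gf_l})--(\ref{eqn:log_cop_gf_r}) a merged slot keeps only its neighbor's index, not a combined multiset $\cq{t_i,v}$. With these corrections to the bookkeeping your plan coincides with the paper's proof.
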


\subsubsection{Proof of Step 1}

\begin{lmacomp}[Step 1(a)] 
    \label{lma:pf_dq}
    Modulo shuffle relations of lower depth and elements $C(0,x)$, 
    $\delta Q$ is given by the sum of expressions
    
    (\ref{eqn:dq_2_total2})-(\ref{eqn:dq_sum_ffgg})
    below.
\end{lmacomp}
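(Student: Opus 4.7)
The approach is to apply Lemma~\ref{lma:cop_of_gf} to each generating-function summand defining $Q$ and then swap the order of summation, so that the outer sum is indexed by a combinatorial ``cut type plus cut position'' and the inner sum is over the quasishuffles $\sigma\in\ol\Sigma_{r,s}$ that realize such a cut. I expect that, for each choice of cut data, the inner sum factors as a wedge of two smaller generating-function expressions, one of which is itself a quasishuffle-sum in strictly fewer indices; the induction hypothesis on $r+s$ (namely $\ol\QSh^{r',s'}\equiv 0$ modulo lower-depth shuffles for $r'+s'<r+s$) then lets us replace that factor by its $R_A$- and $R_B$-type boundary remainders. This is what will produce the target list (\ref{eqn:dq_2_total2})--(\ref{eqn:dq_sum_ffgg}).

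Concretely, for cuts of types (1a)--(1d) the distinguished segment $c|v$ lies entirely on one side (call it the $g$-side). Fixing which prefix/suffix of the shuffled sequence is cut off, the $a$'s and $b$'s partition into two subsets and the inner sum over $\sigma$ runs over quasishuffles of the ``$h$-side'' subset. This inner sum is an honest $\QSh^{r',s'}$ with $r',s'\leq r+s-1$, so by induction it equals its $R_A+R_B$ remainder, yielding one collection of terms in the target. For cuts of types (2a)--(2c) the distinguished segment is itself split; the cyclic closure $\prod a_i\cdot\prod b_j\cdot c=1$ guarantees that both sides of the wedge are still closed generating functions, and the same factor-plus-induction mechanism applies. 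The log-type contributions (\ref{eqn:gf_cop_3})--(\ref{eqn:log_cop_gf_r}) of Lemma~\ref{lma:cop_of_gf} involve weight-$1$ factors of the form $C(0,\cdot)$ and are absorbed into the claimed modulus, to be revisited in Step~2 (Lemma~\ref{lma:pf_log}).

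The main obstacle is the bookkeeping around quasishuffles with collisions adjacent to the cut. When $a_i$ and $b_j$ both land in a slot immediately on one side of the cut, the resulting term admits two natural readings: as a collision on the $g$-side combined with a clean cut, or as a clean cut whose two sides separately receive $a_i$ and $b_j$. One must verify that the sign $(-1)^{r+s-M_\sigma}$ together with the extra $0$ inserted by a collision (and the implicit use of the multiset identity (\ref{eqn:multi_identity}) from Step~0 to combine adjacent zero segments) combine exactly so that the inner factorization produces the quasishuffle $\QSh^{r',s'}$ with the correct coefficients. A parallel check is required when the collision involves the index $0$, i.e., when the colliding pair is adjacent to $c|v$. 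Once these sign and weight identities are verified, the remainder of the computation is a direct, if bulky, reorganization of the expansion produced by Lemma~\ref{lma:cop_of_gf}, and the grouping by the seven cut types (1a)--(2c) yields precisely the listed expressions for $\delta Q$.
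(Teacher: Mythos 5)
Your setup coincides with the paper's: expand each generating-function summand of $Q$ by Lemma~\ref{lma:cop_of_gf}, regroup the resulting terms by the type and position of the cut (the classification (1a)--(1d), (2a)--(2c)), observe that for fixed cut data the remaining freedom in $\sigma\in\ol\Sigma_{r,s}$ assembles into a lower-depth quasishuffle, and replace that quasishuffle, modulo the lower-depth relations (this is all the ``induction hypothesis'' you invoke really amounts to here, since the lemma is stated modulo those relations), by its two collapsed remainder terms, with the weight-one pieces $C(0,x)$ absorbed into the stated modulus and deferred to Step~2. This is exactly the mechanism of Lemma-Computations~\ref{lma:pf_dq_type1} and \ref{lma:pf_dq_type2}, including the role you assign to the identity (\ref{eqn:multi_identity}) for collisions adjacent to the cut.

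What your proposal does not do is produce the specific right-hand side, and that is where essentially all of the content of this Lemma-Computation lies. Two cancellations drive the paper's computation and neither is asserted, let alone verified, in your sketch: (i) for type-(1) cuts in which at least two $a_i$ and at least two $b_j$ lie on the far side of the cut, the contributions of the four cut types (1a)--(1d) cancel identically (Lemma-Computation~\ref{lma:dq_1_case1}, via (\ref{eqn:multi_identity})), so only the boundary cases with at most one $a_i$ or at most one $b_j$ below the cut survive; these then require a \emph{second} reduction, because the sum over the upper parts $g$ itself reassembles into a lower-depth quasishuffle (cf.\ (\ref{eqn:qsh_red_1a})) before collapsing to the terms $F$, $F_L$, $F_R$, $G$, $G_L$, $G_R$ of (\ref{eqn:dq_1_total}); and (ii) the leftover boundary terms $F_L,F_R,G_L,G_R$ from type-(1) cuts cancel against matching contributions from the type-(2) cuts ((\ref{eqn:dq_2_total1}) against (\ref{eqn:dq_1_total})), which is precisely what leaves the short list (\ref{eqn:dq_2_total2})--(\ref{eqn:dq_sum_ffgg}). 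Note also that your reduction is applied to only one factor of each wedge, whereas for type-(2) cuts both sides must be expanded ($q_L=f_L+g_L$, $q_R=f_R+g_R$) and the four cross-term families tracked separately. Declaring the rest ``a direct, if bulky, reorganization'' therefore does not establish the stated equality: identifying and executing these cancellations is the lemma, and your write-up stops at the point where they begin.
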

    
Group all terms of $\delta Q$ by the type of cut as defined in the outline above. Some computational lemmas will simplify the contributions to $\delta Q$ coming from the cuts of each type. The contribution of cuts (1a/b/c/d) is computed in Lemma~\ref{lma:pf_dq_type1}, and cuts (2a/b/c) are dealt with in Lemma~\ref{lma:pf_dq_type2}.

\begin{lmacomp} 
\label{lma:pf_dq_type1}
The contribution of cuts of type (1a/b/c/d)
to $\delta Q$, modulo shuffle relations of lower depth and elements $C(0,x)$,
is given by (\ref{eqn:dq_1_total}) below.
\end{lmacomp}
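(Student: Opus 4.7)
The plan is to apply the generating-function coproduct formula of Lemma~\ref{lma:cop_of_gf} to every term $\GF\pg{w_{\sigma\inv(1)},\dots,w_{\sigma\inv(M_\sigma)},c}{s_{\sigma\inv(1)},\dots,s_{\sigma\inv(M_\sigma)},v}$ in the defining expansion of $Q$, and then to regroup the resulting summands by the geometric type of the cut, restricting attention here to types $(1a)$--$(1d)$, in which the distinguished segment $\pg cv$ is not cut.

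Fix first a cut of type $(1a)$, running from a vertex $x_i$ to a segment $w_j$ with $i<j$: the distinguished segment lies entirely inside $g$, while $h$ spans the inner arc $w_{i+1},\dots,w_j$. I would reparametrize the sum over $\sigma\in\ol\Sigma_{r,s}$ by the data $(A_h,B_h)$ of which $a_i$ and which $b_j$ fall into the inner arc. For each such choice, the requirement that $\sigma$ be a quasishuffle of the whole sequence factors into two independent quasishuffle conditions: one on $A_h\sqcup B_h$ inside $h$, and one on the complement (together with the fixed distinguished segment) on the outer arc $g$. The signs $(-1)^{r+s-M_\sigma}$ split multiplicatively between the two arcs since collisions occur on one side or the other, and the formal variables $t_i,u_j$ redistribute exactly as prescribed by (\ref{eqn:gf_cop_1}). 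This turns the contribution of $(1a)$ into a sum over splittings of a product of two smaller $\QSh$-type generating functions.

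The key combinatorial observation is now the following: whenever the inner arc $h$ contains at least one $a_i$ \emph{and} at least one $b_j$, the inner quasishuffle sum is precisely the $\QSh$ term of a lower-depth $\ol\QSh^{r',s'}$. By induction on depth $r+s$, this equals $R_A+R_B$ for the smaller problem modulo shuffle relations of lower depth; and those collapsing terms, once reassembled with the outer $g$-factor, are themselves absorbed into lower-depth shuffle relations of the full configuration. Thus, modulo lower-depth shuffles, only the \emph{degenerate} inner arcs survive --- those where $h$ consists entirely of $a$'s or entirely of $b$'s. Cuts of type $(1b)$, $(1c)$, $(1d)$ are handled by the same factorization argument; they differ from $(1a)$ only in whether each endpoint sits at a vertex or in the interior of a segment (which controls whether one ends up with the vertex-to-segment branch (\ref{eqn:gf_cop_1}) or the zero-to-segment branch (\ref{eqn:gf_cop_2}) of Lemma~\ref{lma:cop_of_gf}) and in the orientation of the cut.

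The concluding step is to assemble the surviving degenerate contributions from the four cut types into one closed expression matching (\ref{eqn:dq_1_total}). I expect the main obstacle to be bookkeeping rather than conceptual: one must verify the multiplicativity of the signs $(-1)^{r+s-M_\sigma}$ on each side of a cut that itself splits a collided slot; track the extra formal variable created on the outer arc when a collided slot is bisected, as dictated by (\ref{eqn:gf_cop_2}); and treat the boundary cases in which $h$ is empty, $h$ abuts the distinguished segment, or the cut endpoint coincides with $x_0$ or the vertex just before the distinguished segment. These degenerate positions are exactly the places where the $n_i=0$ log-correction terms (\ref{eqn:log_cop_gf_l})--(\ref{eqn:log_cop_gf_r}) could get misattributed; deferring those to Step~2 (Lemma~\ref{lma:pf_log}) and absorbing them into elements $C(0,x)$ is what allows the present statement to be made modulo $C(0,x)$-terms.
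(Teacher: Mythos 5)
Your overall organization (apply Lemma~\ref{lma:cop_of_gf} to each term of $Q$, group by cut type, and factor each cut into an inner-arc quasishuffle wedged against an outer factor containing the distinguished segment) matches the paper's. But your central combinatorial claim --- that every cut whose inner arc contains at least one $a_i$ and at least one $b_j$ vanishes modulo lower-depth shuffle relations, so that only pure-$a$ or pure-$b$ inner arcs survive --- is false, and the mechanism you invoke does not exist. Modulo lower-depth relations the inner quasishuffle is congruent not to zero but to its two \emph{collapsed} terms (the analogues of $R_A$ and $R_B$ for the inner arc), and a wedge of the outer factor with such a collapsed generating function is not an element of the coideal generated by lower-depth relations, so it cannot be ``absorbed.'' In the paper these residual terms are handled in two genuinely different ways: when at least two $a$'s and at least two $b$'s lie below $g$ (Case 1, Lemma-Computation~\ref{lma:dq_1_case1}) they cancel only after summing over the four cut types (1a)--(1d) with varying endpoint data, using the identity (\ref{eqn:multi_identity}) --- an explicit cancellation, not an absorption; and when exactly one $a_i$ lies below $g$ together with several $b_j$ (Case 3, Lemma-Computation~\ref{lma:dq_1_case3}) they do not cancel at all and survive as (\ref{eqn:onea_contrib3})--(\ref{eqn:onea_contrib}). (Only the subcase with exactly one $a$ and exactly one $b$ below $g$ cancels outright.)

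These surviving mixed-arc terms are exactly what your plan discards, and they are indispensable for reaching (\ref{eqn:dq_1_total}). In the assembly step, for fixed $j_0,j_1$ the Case 2 contributions supply the terms of the upper-part quasishuffle in (\ref{eqn:qsh_red_1a}) and (\ref{eqn:qsh_red_1c}) in which the collapsed block $b_{j_0}\cdots b_{j_1}$ collides with no $a_i$, while the Case 3 contributions supply precisely the terms in which it collides with some $a_i$; only with both in hand is the quasishuffle complete, so that a second application of lower-depth relations (now in the upper part) yields (\ref{eqn:qsh_red_a})--(\ref{eqn:qsh_red_cd}), whose reindexing produces $F(j_0,j_1)$, $F_L$, $F_R$ and the symmetric $G$-terms of (\ref{eqn:dq_1_total}). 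Under your claim only the pure-$b$ and pure-$a$ arcs would remain, the upper-part quasishuffle would be incomplete, and the sum would not reduce to (\ref{eqn:dq_1_total}). So the issue is not bookkeeping: the missing content is the explicit analysis of inner arcs containing exactly one letter of one alphabet, and the cross-cut-type cancellation (rather than absorption) for arcs containing at least two letters of each.
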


The cuts of types (1a) and (1b) contribute terms of the form (\ref{eqn:gf_cop_1}), while cuts of types (1c) and (1d) contribute terms of the form (\ref{eqn:gf_cop_2}) below.

Consider the upper parts $g$ of terms $\pm g\wedge h$ as shown in Fig.~\ref{fig:cut_class}; by cyclic invariance modulo $C(0,x)$ we may write 
\[g=\GF\pg{w_1,\dots,w_l,c}{S_1,\dots,S_l,V}.\] Let $\pg{w_p}{S_p}$ be the new segment arising from the cut (that is, the bracketed segment in (\ref{eqn:gf_cop_1}) or (\ref{eqn:gf_cop_2})). 

We say that $a_i$ appears in $g$ if either the segment $\pg{a_i}{t_i}$ or some $\pg{a_ib_j}{\cq{t_i,u_j}}$ is present in $g$ as one of the $\pg{w_l}{S_l}$ ($l\neq i$), and similarly for $b_j$. Then the set of segments that \emph{do not} appear in $g$ (``appear below g'') is determined by the $w_1,\dots,\widehat{w_p},\dots,w_l$ and consists of consecutively indexed elements $a_i$ and $b_j$, i.e., $a_{i_0},\dots,a_{i_1}$ and $b_{j_0},\dots,b_{j_1}$, where by convention $i_0=i_1+1$ if no $a_i$ appear, and likewise for $j_0,j_1$. 

Group the terms $g\wedge h$ by the sequence of segments $w_1,\dots,\widehat{w_p},\dots,w_l$. To shorten notation, write 
\newcommand{\tg}[1]{\widetilde{g}\pq{#1}}
\[\tg{S}=\GF\pg{w_1,\dots,w_p,\dots,w_l,c}{S_1,\dots,S_p=S,\dots,S_l,V}.\]

There are three cases:
\begin{enumerate}[(1)]
\item $i_1-i_0>0$ and $j_1-j_0>0$: at least two $a_i$ and two $b_j$ appear below $g$ (Lemma-Computation~\ref{lma:dq_1_case1}).
\item $i_1-i_0=-1$ or $j_1-j_0=-1$: only $a_i$'s or only $b_j$'s appear below $g$ (Lemma-Computation~\ref{lma:dq_1_case2}).
\item $i_1-i_0=0$ or $j_1-j_0=0$: only one $a_i$ or only one $b_j$ appear below $g$ (Lemma-Computation~\ref{lma:dq_1_case3}).
\end{enumerate}
We compute the contribution of each case in the next three lemmas.

\begin{lmacomp}
\textit{Case 1} ($i_1-i_0>0$ and $j_1-j_0>0$) contributes 0 to $\delta Q$. 
\label{lma:dq_1_case1}
\end{lmacomp}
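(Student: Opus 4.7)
The approach is to fix the skeleton of the upper piece $g$---that is, the ordered sequence $w_1,\dots,\widehat{w_p},\dots,w_l$ of segments appearing in $g$ together with the new bracketed segment $\pg{w_p}{S_p}$ produced by the cut---and then sum the corresponding terms of $\delta Q$ over the remaining freedom in the quasishuffle $\sigma\in\ol\Sigma_{r,s}$. The key observation is that this sum factors as $g\wedge(\text{sub-sum})$, where the sub-sum is itself a lower-depth second shuffle.

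First I would verify that, once the upper skeleton is fixed, the remaining choices of $\sigma$ are in bijection with quasishuffles $\sigma_{\mathrm{below}}\in\ol\Sigma_{r',s'}$ of the two below-runs $(a_{i_0},\dots,a_{i_1})$ and $(b_{j_0},\dots,b_{j_1})$, where $r'=i_1-i_0+1\geq 2$ and $s'=j_1-j_0+1\geq 2$ in Case 1. The sign $(-1)^{r+s-M_\sigma}$ factors cleanly as a constant (depending only on the fixed skeleton) times $(-1)^{r'+s'-M_{\sigma_{\mathrm{below}}}}$, which is exactly the sign carried by the sub-quasishuffle.

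Next I would collect the contributions from all four cut types (1a)--(1d) compatible with the fixed skeleton and identify the resulting inner sum, up to an overall sign, with $g\wedge\ol\QSh^{r',s'}$ applied to the below-sequences, where the new segment $\pg{w_p}{S_p}$ plays the role of the distinguished segment of the sub-shuffle. In this identification, the vertex cuts (1a) and (1b) produce the pure-shuffle terms of the sub-quasishuffle sum ($M_{\sigma_{\mathrm{below}}}=r'+s'$), the $0$-to-segment cuts (1c) and (1d) account for the collision terms ($M_{\sigma_{\mathrm{below}}}<r'+s'$), and the correction terms $R_A^{r',s'},R_B^{r',s'}$ from \eqref{eqn:def_olqsh_ra}--\eqref{eqn:def_olqsh_rb} appear when the cut absorbs a full below-run of $b$'s (resp.\ $a$'s) into the new distinguished segment. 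Cyclic and scaling relations in $\widetilde\DDD(G)$ may be used freely to rotate $g$ into a convenient standard form, and \eqref{eqn:multi_identity} to handle the multi-variable $S_i$.

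Finally, I would invoke the induction on total depth $r+s$: provided $r'+s'<r+s$---which holds whenever the upper skeleton contains at least one ``above'' $a_i$ or $b_j$, the remaining degenerate configuration being handled by cyclic symmetry modulo $C(0,x)$---the element $\ol\QSh^{r',s'}$ lies in the coideal generated by strictly lower-depth shuffle relations and is hence zero modulo them. Wedging with $g$ preserves this, giving the claim. The main obstacle I anticipate is the explicit combinatorial matching in the second step: putting the four cut-type data into signed bijection with the full set of quasishuffle terms and the $R_A^{r',s'},R_B^{r',s'}$ correction terms of $\ol\QSh^{r',s'}$, and verifying that the signs align. This is a pure unpacking of the coproduct formulas \eqref{eqn:gf_cop_1}--\eqref{eqn:gf_cop_2} against the definition \eqref{eqn:def_olqsh_q}--\eqref{eqn:def_olqsh_rb}, but the bookkeeping with $(-1)^{r+s-M_\sigma}$ and the bracketed segments is the combinatorial heart of the lemma.
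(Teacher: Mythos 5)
There is a genuine gap, and it sits exactly at the step you flag as the ``combinatorial heart.'' Your plan rests on the claim that, once the upper skeleton $g$ is fixed, the compatible terms of $\delta Q$ assemble into $g\wedge\ol\QSh^{r',s'}$ applied to the two below-runs, with the correction terms (\ref{eqn:def_olqsh_ra})--(\ref{eqn:def_olqsh_rb}) of the sub-relation ``appearing when the cut absorbs a full below-run.'' This is not what the coproduct produces. A cut only splits the two segments at its endpoints, so at most the two extreme segments of the below-arc are partially merged into the bracketed segments created by (\ref{eqn:gf_cop_1})--(\ref{eqn:gf_cop_2}); in Case 1 there are at least two $a$'s and two $b$'s below, and no cut can merge an entire run of $b$'s (resp.\ $a$'s) into the lower part's distinguished segment. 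Hence the would-be correction terms of $\ol\QSh^{r',s'}$ never arise as coproduct terms, and the fixed-skeleton sum is only the plain quasishuffle $\QSh$ of the below-runs (the collision terms inside it come from collisions already present in the original quasishuffle, not from the cut types (1c)/(1d) as you assert). Moreover the upper parts genuinely differ from cut to cut --- they are $\widetilde g(u_{j_0})$, $\widetilde g(u_{j_1})$, $\widetilde g(\cq{u_{j_0},u_{j_1}})$ and so on, depending on which boundary variables were absorbed and with coefficients such as $(u_{j_1}-u_{j_0})$ --- so the contribution does not factor as a single $g$ wedged with anything, and one cannot conclude by wedging a vanishing sub-relation with a fixed $g$.

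What the induction actually buys (and what the paper does) is the replacement, inside each group indexed by a cut type and by the indices $i_0',i_1',j_0',j_1'$ of the segments appearing fully below, of the inner $\QSh$ by the two collapsed generating functions $\GF\pg{a_{i_0'},\dots,a_{i_1'},(a_{i_0'}\cdots a_{i_1'})\inv}{\cdots}$ and $\GF\pg{b_{j_0'},\dots,b_{j_1'},(b_{j_0'}\cdots b_{j_1'})\inv}{\cdots}$. This does \emph{not} finish the proof: one is left with residual terms of the form (upper part with varying multiset arguments) $\wedge$ (collapsed function), coming from the four cut types and from the three boundary configurations $(i_0',i_1')=(i_0,i_1)$, $(i_0,i_1-1)$, $(i_0+1,i_1-1)$ (and symmetrically in the $b$'s), and the lemma is proved only by showing that these residual terms cancel among themselves via the identity (\ref{eqn:multi_identity}). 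That cross-cut cancellation is the actual content of Case 1 and is absent from your argument; the sign bookkeeping you mention is comparatively harmless, but without this final cancellation the proposed proof does not go through.
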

\begin{proof}
Consider a term $g\wedge h$ coming from a cut in Case 1. 

Let $i_0'\geq i_0$ be minimal such that $a_{i_0'}$ appears in $h$, and $i_1'\leq i_1$ be maximal such that $a_{i_1'}$ appears in $h$. Define $j_0',j_1'$ in the analogous way. For example, for cuts of type (1a), $i_0'=i_0$; for cuts of type (1c),
\[
    i_0'=\begin{cases} 
        i_0&\text{if $\pg{w}{S}$ is $\pg{b_{i_0}}{u_{i_0}}$}\\
        i_0+1&\text{if $\pg{w}{S}$ is $\pg{a_{i_0}}{t_{i_0}}$ or $\pg{a_{i_0}b_{i_0}}{\cq{t_{i_0},u_{j_0}}}$}
    \end{cases}
,\]
where $\pg wS$ is the segment that contains the vertex of the cut.

Notice that $i_0'-i_0\leq 1$ and $j_0-j_0'\leq 1$, and $i_1-i_0>0$ implies $i_1'-i_0'\geq-1$.

Group all terms of $\delta Q$ coming from Case 1 by the type of cut and by $i_0',j_0',i_1',j_1'$. These groups can be expressed in terms of 
\begin{align*}
\tg{S_1}\wedge\QSh\pg{&a_{i_0'},\dots,a_{i_1'},b_{j_0'},\dots,b_{j_1'},\pq{a_{i_0'}\dots a_{i_1'}\cdot b_{j_0'}\dots b_{j_1'}}\inv}{\\&t_{i_0'},\dots,t_{i_1'},u_{j_0'},\dots,u_{j_1'},S_2}
\end{align*}
for some $S_1,S_2$. Indeed, the arrangements of segments that may occur in the lower part of the cut, given $i_0,j_0$ and $i_1',j_1'$, are precisely the quasishuffles. Applying the lower-weight shuffle relations, this expression becomes
\begin{align}
\tg{S_1}&\wedge\pq{\GF\pg{a_{i_0'},\dots,a_{i_1'},\pq{a_{i_0'}\dots a_{i_1'}}\inv}{t_{i_0'},\dots,t_{i_1'},\cq{u_{j_0'},\dots,u_{j_1'}}\sqcup S_2}}\nonumber\\
+\,\tg{S_1}&\wedge\pq{\GF\pg{b_{j_0'},\dots,b_{j_1'},\pq{b_{j_0'}\dots b_{j_1'}}\inv}{u_{j_0'},\dots,u_{j_1'},\cq{t_{i_0'},\dots,t_{i_1'}}\sqcup S_2}}\label{eqn:fafb}.
\end{align}
\newcommand{\tff}{\widetilde{f}}
Fix $i_0',i_1',j_0',j_1'$, and introduce the notation 
\begin{align*}
    \tff_A(i_0',i_1',S)&=\GF\pg{a_{i_0'},\dots,a_{i_1'},\pq{a_{i_0'}\dots a_{i_1'}}\inv}{t_{i_0'},\dots,t_{i_1'},\cq{u_{j_0+1},\dots,u_{j_1-1}}\sqcup S},\\
    \tff_B(j_0',j_1',S)&=\GF\pg{b_{j_0'},\dots,b_{j_1'},\pq{b_{j_0'}\dots b_{j_1'}}\inv}{u_{j_0'},\dots,u_{j_1'},\cq{t_{i_0+1},\dots,t_{i_1-1}}\sqcup S}.
\end{align*}
The expressions in (\ref{eqn:fafb}) can be rewritten with $\tff_A$ and $\tff_B$.

Now let us collect these terms coming from different cuts and show that they yield 0. By symmetry, it suffices to show this for three kinds of terms $\tff_A(i_0',i_1',j_0',j_1',S_2)$: where $i_0'=i_0$ and $i_1'=i_1$; where $i_0'=i_0$ and $i_1'=i_1-1$; and where $i_0'=i_0+1$ and $i_1'=i_1-1$. 

Look at the terms with $i_0'=i_0$ and $i_1'=i_1$ (all $a_i$ that are not in $g$ are in $f_A$). They arise from cuts (1a) and (1b) where the cut segment is $b_{i_0+1}$ or $b_{i_1-1}$ and from cuts (1c) and (1d) where the cut segment and the segment containing the vertex are $b_{i_0+1}$ and $b_{i_1-1}$, or vice versa. These cases give:
\begin{align*}
-\tg{u_{j_1}}&\wedge\tff_A(i_0,i_1,\cq{u_{j_0}}\sqcup\cq{u_{j_1}}),\\
\tg{u_{j_0}}&\wedge\tff_A(i_0,i_1,\cq{u_{j_1}}\sqcup\cq{u_{j_0}}),\\
(u_{j_1}-u_{j_0})\tg{\cq{u_{j_0},u_{j_1}}}&\wedge\tff_A(i_0,i_1,\cq{u_{j_0},u_{j_1}}),
\end{align*}
the sum of which is 0 by (\ref{eqn:multi_identity}).

The terms with $i_0'=i_0$ and $i_1'=i_1-1$ (all $a_i$ that are not in $g$, except the last, are in $f_A$) come from three sources: 
\begin{itemize}
    \item cuts of type (1a) where the cut segment $x_2$ is either $a_{i_1}$ or $a_{i_1}b_{j_1}$; 
    \item cuts of type (1c) and (1d) where the segment $x_1$ containing the vertex and the segment $x_2$ that is cut are $b_{j_0}$ and $a_{i_1}$, or vice versa;
    \item cuts of type (1c) and (1d) where the segment $x_1$ containing the vertex and the segment $x_2$ that is cut are $b_{j_0}$ and $a_{i_1}b_{j_1}$, or vice versa.
\end{itemize}
A similar computation shows their total contribution is 0.

Finally, consider terms with $i_0'=i_0+1$ and $i_1'=i_1-1$ (all $a_i$ not in $g$ except the first and last are in $f_A$). They arise from cuts of type (1c) and (1d), where the segment $x_1$ is either $a_{i_0}$ or $a_{i_0}b_{j_0}$ and the segment $x_2$ is either $a_{i_1}$ or $a_{i_1}b_{j_1}$, yielding four cases: \[(x_1,x_2)=(a_{i_0},a_{j_1}),(a_{i_0}b_{j_0},a_{j_1}),(a_{i_0},a_{j_1}b_{j_1}),(a_{i_0}b_{j_0},a_{i_1}b_{j_1}).\] The sum of their contributions is also 0.

\end{proof}

\newcommand{\thhh}[1]{\widetilde{f}_B(#1)}
\begin{lmacomp}
\label{lma:dq_1_case2}
The contribution of Case 2 ($i_1-i_0=-1$) to the $\delta Q$ is given by the sum of expressions (\ref{eqn:dq_1a_allb})-(\ref{eqn:dq_1d_allb}) below.
\end{lmacomp}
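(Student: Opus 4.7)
My plan is to enumerate and organize the terms of $\delta Q$ that fall under Case 2, following exactly the same scheme as in Lemma-Computation~\ref{lma:dq_1_case1} but now noting that the asymmetry $i_1-i_0=-1$ forces only one kind of lower-depth shuffle term to survive. Concretely, since no $a_i$ appears below the cut in this case, the lower part of any relevant cut is built entirely out of consecutive segments $b_{j_0},\ldots,b_{j_1}$ (possibly with one or both of $a_{i_0}$ and $a_{i_1+1}$ merged into the first or last position via a quasishuffle collision). Applying the lower-depth second-shuffle relation to the lower part of each cut, the $\widetilde f_A$ contribution of (\ref{eqn:fafb}) is absent, and only the $\widetilde f_B$ contribution remains. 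This reduces the question to collecting coefficients multiplying $\widetilde f_B(j_0,j_1,S)$ terms.

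Next I would sort the surviving cuts by their type (1a)--(1d), which is how the four displays (\ref{eqn:dq_1a_allb})--(\ref{eqn:dq_1d_allb}) will be indexed. For each type I would classify the contributing cuts by the \emph{border segments}: the segment containing the vertex of the cut and the segment being cut. These borders can be purely $b$-type (the first or last element of the $b$-block), purely $a$-type (the boundary $a$'s $a_{i_0}$ or $a_{i_1+1}$ flanking the block), or mixed $a_{i_0}b_{j_0}$ / $a_{i_1+1}b_{j_1}$ coming from a quasishuffle collision at the end of the $b$-block. Each combination produces a term of the form $\widetilde g(\ldots)\wedge\widetilde f_B(j_0,j_1,\ldots)$ with a specific choice of multiset indices, read off from Lemma~\ref{lma:cop_of_gf}.

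The third step is to collapse adjacent pairs of contributions using the identity (\ref{eqn:multi_identity}) wherever two cuts differ only by which of two neighboring slots receives a particular $t_i$ or $u_j$. This is the same trick that collapsed triples of terms to $0$ in Case~1, except that here the collapses do not telescope to zero: they leave residual expressions whose specific form is precisely what gets recorded in (\ref{eqn:dq_1a_allb})--(\ref{eqn:dq_1d_allb}). In parallel, contributions to the lower-weight part from within the block (the quasishuffle of the $b_{j_0},\ldots,b_{j_1}$ with anything below the cut) are absorbed into the second-shuffle invocation of Step~0 and discarded, as they are lower-depth shuffle relations.

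The main obstacle I expect is the bookkeeping of the boundary cases. When $i_0=1$ or $i_1=m$ the flanking $a$-segment $a_{i_0}$ or $a_{i_1+1}$ does not exist, so certain candidate cuts of types (1c)/(1d) disappear and certain quasishuffle collisions are not available. Similarly, when the $b$-block has length one ($j_0=j_1$) the internal cuts of type (1a)/(1b) inside the block degenerate, and one must take care not to double-count a single contribution across types. These degenerations must be checked individually to confirm that the four summarized expressions (\ref{eqn:dq_1a_allb})--(\ref{eqn:dq_1d_allb}) remain valid with the stated coefficients, rather than producing unwanted correction terms.
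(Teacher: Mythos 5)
Your overall frame (sort the Case 2 cuts by type (1a)--(1d) and read their contributions off Lemma~\ref{lma:cop_of_gf}) is the right one, but two of your concrete steps misread what Case 2 is, and they would change the answer. In Case 2 no collided segment $\pg{a_ib_j}{\cq{t_i,u_j}}$ can occur in the lower part, nor can the vertex or the cut lie on an $a_i$ or on a collided segment: by the definition of the case split, the index $i$ is ``below $g$'' precisely when neither $\pg{a_i}{t_i}$ nor any $\pg{a_ib_j}{\cq{t_i,u_j}}$ survives as a standalone segment of $g$, and a segment that is cut or that carries the vertex is absorbed into the bracketed pieces of both factors, hence does not survive in $g$. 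So every configuration you list with a pure-$a$ or mixed $a$--$b$ border belongs to Case 3 and is handled in Lemma-Computation~\ref{lma:dq_1_case3}; admitting those cuts here would double count across the case decomposition (and note that with the convention $i_0=i_1+1$ your two ``flanking'' segments $a_{i_0}$ and $a_{i_1+1}$ are the same segment). Consequently there is also no quasishuffle sum in the lower part to reduce: for fixed $g$ and fixed $j_0,j_1$ the segments below form the single ordered block $b_{j_0},\dots,b_{j_1}$, so the lower factor is already a single generating function of the form $\widetilde f_B(\cdot)$. No lower-depth shuffle relation, and no analogue of (\ref{eqn:fafb}), is invoked in this case.

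Second, the displayed expressions (\ref{eqn:dq_1a_allb})--(\ref{eqn:dq_1d_allb}) are not residues of a collapsing procedure via (\ref{eqn:multi_identity}); they are the raw contributions of the four cuts, exactly one per type: (1a) vertex at the point clockwise of the block, cut through $b_{j_1}$, giving $-\widetilde g(u_{j_1})\wedge\widetilde f_B(j_0,j_1-1,u_{j_1})$ directly from (\ref{eqn:gf_cop_1}); (1b) the mirror cut through $b_{j_0}$; (1c) vertex at a $0$ on $b_{j_0}$, cut through $b_{j_1}$, which produces the prefactor $-u_{j_0}$ and the multiset $\cq{u_{j_0},u_{j_1}}$ directly from (\ref{eqn:gf_cop_2}); and (1d) its mirror with prefactor $u_{j_1}$. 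The identity (\ref{eqn:multi_identity}) is only used later, in the assembly carried out in the proof of Lemma~\ref{lma:pf_dq_type1}, where these terms are combined with the Case 3 output. Your worry about boundary cases with flanking $a$-segments at $i_0=1$ or $i_1=m$ is therefore moot, and your plan as written would neither reproduce the four stated terms nor keep Cases 2 and 3 disjoint.
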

\begin{proof}
Suppose that $i_1-i_0=-1$.  The cuts of types (1a), (1b), (1c), and (1d) contribute
\begin{align}
-\tg{u_{j_1}}&\wedge\thhh{j_0,j_1-1,u_{j_1}}\label{eqn:dq_1a_allb},\\
\tg{u_{j_0}}&\wedge\thhh{j_0+1,j_1,u_{j_0}}\label{eqn:dq_1b_allb},\\
-u_{j_0}\tg{\cq{u_{j_0},u_{j_1}}}&\wedge\thhh{j_0+1,j_1-1,\cq{u_{j_0},u_{j_1}}}\label{eqn:dq_1c_allb},\\
u_{j_1}\tg{\cq{u_{j_0},u_{j_1}}}&\wedge\thhh{j_0+1,j_1-1,\cq{u_{j_0},u_{j_1}}},\label{eqn:dq_1d_allb}
\end{align}

respectively.
\end{proof}
By symmetry, analogous expressions will result if $j_1-j_0=-1$. 

\begin{lmacomp}
\label{lma:dq_1_case3}
The contribution of Case 3 ($i_1-i_0=0$) to the $\delta Q$ is given by the sum of expressions (\ref{eqn:onea_contrib3}) and (\ref{eqn:onea_contrib}) below.
\end{lmacomp}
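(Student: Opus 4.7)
The plan is to imitate the Case~1 and Case~2 analyses in the regime where exactly one $a$-segment, call it $a_{i^\ast}$ with $i_0=i_1=i^\ast$, appears below the cut, together with the consecutive $b_{j_0},\dots,b_{j_1}$. For each of the four cut types (1a)--(1d), I would list the resulting terms $\pm g\wedge h$, writing $g$ in the form $\widetilde{g}(S_1)$ (with $S_1$ a singleton or two-element multiset, as in Case~2). The arrangements of $a_{i^\ast}$ among the $b_j$'s in the lower parts $h$ range precisely over all $(1,s')$-quasishuffles with $s'=j_1-j_0+1$; and, in parallel with Lemma-Computation~\ref{lma:dq_1_case1}, the four cut types combine --- after absorbing the ``inserted-zero'' contributions of (1c)/(1d) via the multiset identity~(\ref{eqn:multi_identity}) --- into the single element
\[
\widetilde{g}(S_1)\wedge \QSh^{1,s'}\!\big(a_{i^\ast}\mid t_{i^\ast},\; b_{j_0}\mid u_{j_0},\dots,b_{j_1}\mid u_{j_1},\; (a_{i^\ast}b_{j_0}\cdots b_{j_1})^{-1}\mid S_2\big),
\]
modulo lower-depth shuffles and elements $C(0,x)$, for suitable $S_1$ and $S_2$ recording which $u_j$'s are packaged into the sandwiched segment.

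With this $(1,s')$-quasishuffle in hand, I invoke the inductive hypothesis on depth: the second shuffle $\ol\QSh^{1,s'}$ has strictly smaller depth than the $\ol\QSh^{r,s}$ under analysis and is already known to vanish in $\DDD(G)$, so $\QSh^{1,s'}$ equals the sum of the two collapse terms $R_A'+R_B'$ from~(\ref{eqn:def_olqsh_ra})--(\ref{eqn:def_olqsh_rb}). The first collapses the $b_j$'s with the distinguished segment $(a_{i^\ast}b_{j_0}\cdots b_{j_1})^{-1}$, leaving only $a_{i^\ast}$ and one collapsed segment; the second collapses $a_{i^\ast}$ with the distinguished segment, leaving the $b_j$'s and one collapsed segment. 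Substituting these back splits the Case~3 contribution into exactly two families, which I would identify with the expressions~(\ref{eqn:onea_contrib3}) and~(\ref{eqn:onea_contrib}).

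The main obstacle I expect is bookkeeping rather than anything conceptual. Signs must be tracked carefully when the cuts in (1a)--(1d) place $a_{i^\ast}$ at one of the extreme positions in the lower part (these are the analogues of the $i_0'\neq i_0$ adjustments handled in Case~1). The induction must be arranged so that $(1,\cdot)$-shuffles of depth $<m+n$ are established by the time Case~3 is analyzed; this is consistent with inducting on total depth, but it should be stated explicitly so that $\ol\QSh^{1,s'}=0$ can be invoked. Finally, when $s'$ is very small the coproduct formula of Lemma~\ref{lma:cop_of_gf} carries weight-1 correction terms modulo $C(0,a)\wedge C(0,b)$; these are precisely the terms handled separately in Step~2 via Lemma~\ref{lma:pf_log} and should not interfere with the present count.
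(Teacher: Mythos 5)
Your guiding idea—package the lower parts of the cuts into a $(1,s')$-quasishuffle and then invoke the lower-depth relation $\ol\QSh^{1,s'}=0$ to replace it by its two collapse terms—is in fact the key device the paper uses in this case, but only in two specific sub-cases: the type (1a) cut whose cut segment is $b_{j_1}$ and the type (1c) cut with vertex on $b_{j_0}$ and cut at $b_{j_1}$ (these produce $\QSh^{1,j_1-j_0}$ and $\QSh^{1,j_1-j_0-1}$, reduced exactly as you propose, cf.\ (\ref{eqn:dq_1a_onea_cb}) and (\ref{eqn:dq_1c_onea_zbcb})). Your stronger structural claim, that \emph{all} cuts of types (1a)--(1d) in Case 3 combine into a single element $\widetilde g(S_1)\wedge\QSh^{1,s'}(\cdots)$, does not hold. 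Because $a_{i_0}$ is the unique $a$-segment below the cut, it can itself be the cut segment, or be fused into the segment $a_{i_0}b_{j_1}$ or $a_{i_0}b_{j_0}$ carrying the cut or the vertex; the resulting terms come with \emph{different} upper factors ($\widetilde g$ decorated by $t_{i_0}$, $u_{j_1}$, $\{t_{i_0},u_{j_1}\}$, $\{u_{j_0},u_{j_1}\}$, \dots) and with lower parts built on different subsets of $b_{j_0},\dots,b_{j_1}$; in the sub-case where $a_{i_0}$ is cut, the lower part contains no $a$ at all. So these terms are not the terms of one quasishuffle wedged against one fixed $\widetilde g(S_1)$, and there is no single $S_1,S_2$ making your displayed element equal to the Case 3 contribution.

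Consequently the final step of your plan also fails to match the target: after the genuine sub-case enumeration ((1a) splits into three classes, (1c) into five, with (1b)/(1d) antisymmetric), the term from the cut of $a_{i_0}$ cancels against its symmetrization (this is (\ref{eqn:dq_1a_onea_ca})), and the $a$-only collapse terms $\GF\pg{a_{i_0},a_{i_0}^{-1}}{t_{i_0},\cq{u_{j_0},\dots,u_{j_1}}}$ produced by the lower-depth reduction cancel between the (1a)/(1c) contributions and their (1b)/(1d) mirrors via the identity (\ref{eqn:multi_identity}). What survives, namely (\ref{eqn:onea_contrib3})--(\ref{eqn:onea_contrib}), consists of three terms all of whose second factors are the $b$-only generating functions, so it is \emph{not} the pair of collapse families of your $\ol\QSh^{1,s'}$; identifying ``first collapse family $=$ (\ref{eqn:onea_contrib3}), second $=$ (\ref{eqn:onea_contrib})'' is not correct, and the cancellations that actually produce the answer are absent from your outline. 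You also omit the preliminary observation that when $j_1-j_0=0$ the Case 3 cuts contribute nothing (the (1a) and (1b) terms cancel), which is part of the lemma's content. The depth bookkeeping you raise (that $1+s'<r+s$ so the lower-depth relation is available) is fine, but the heart of this lemma is precisely the term-by-term enumeration and cancellation that your proposal bypasses.
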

\begin{proof}
Suppose $i_1-i_0=0$, so only one segment $a_i$ occurs below $g$. 

If $j_1-j_0=0$, then it is easy to see that only cuts of type (1a) and (1b) contribute nonzero terms, and that the (1a) terms cancel with the (1b) terms. So assume $j_1-j_0>0$.

The cuts of type (1a) fall into three classes depending on which segment is cut: (i) $a_{i_0}$, (ii) $b_{j_1}$, or (iii) $a_{i_0}b_{j_1}$. The first two contribute
\begin{align}
-\tg{t_{i_0}}&\wedge\thhh{j_0,j_1,t_{i_0}}\label{eqn:dq_1a_onea_ca},\\
-\tg{u_{j_1}}&\wedge\QSh^{1,j_1-j_0}\pg{a_{i_0},b_{j_0},\dots,b_{j_1-1},\pq{a_{i_0}b_{j_0}\dots b_{j_1-1}}\inv}{\nonumber\\&\quad\quad\quad\quad\quad\quad t_{i_0},u_{j_0},\dots,u_{j_1-1},u_{j_1}}\nonumber\\
\equiv-\tg{u_{j_1}}&\wedge\biggl(\thhh{j_0,j_1-1,\cq{t_{i_0},u_{j_1}}}
\nonumber\\&\quad\quad+\GF\pg{a_{i_0},a_{i_0}\inv}{t_{i_0},\cq{u_{j_0},\dots,u_{j_1}}}\biggr)\label{eqn:dq_1a_onea_cb}, 
\end{align}

respectively, where we have used that the sequences that may occur in the lower part of the cut are precisely the shuffles of $a_i$ and $b_j$ appearing below $g$, except the cut segment $b_{j_1}$. Finally, the third class gives
\begin{align}
\f{1}{t_{i_0}-u_{j_1}}\biggl(
\tg{t_{i_0}}&\wedge\thhh{j_0,j_1-1,t_{i_0}}\nonumber\\
-\tg{u_{j_1}}&\wedge\thhh{j_0,j_1-1,u_{j_1}}\biggr)\label{eqn:dq_1a_onea_cab},
\end{align}

where we have applied (\ref{eqn:multi_identity}) to break the generating functions with $\cq{t_{i_0},u_{j_1}}$ into ones with only $t_{i_0}$ or $u_{j_1}$.

The cuts of type (1c) fall into five classes, depending on the segment where the vertex of the cut lies and the segment that is cut: (i) vertex on $a_{i_0}$ and $b_{j_1}$ is cut, (ii) vertex on $b_{j_0}$ and $b_{j_1}$ is cut, (iii) vertex on $b_{j_0}$ and $a_{i_0}$ is cut, (iv) vertex on $b_{j_0}$ and $a_{i_0}b_{j_1}$ is cut, (v) vertex on $a_{i_0}b_{j_0}$ and $b_{j_1}$ is cut. They contribute the following terms:
\begin{align}
-t_{i_0}\tg{\cq{t_{i_0},u_{j_1}}}&\wedge\thhh{j_0,j_1-1,\cq{t_{i_0},u_{j_1}}}\label{eqn:dq_1c_onea_zacb},\\
-u_{j_0}\tg{\cq{u_{j_0},u_{j_1}}}&\wedge \QSh^{1,j_1-j_0-1}\pg{a_{i_0},b_{j_0+1},\dots,b_{j_1-1},\pq{a_{i_0}b_{j_0+1}\dots b_{j_1-1}}\inv}{\nonumber\\&\quad\quad\quad\quad\quad\quad\quad t_{i_0},u_{j_0+1},\dots,u_{j_1-1},\cq{u_{j_0},u_{j_1}}}\nonumber\\
\equiv-u_{j_0}\tg{\cq{u_{j_0},u_{j_1}}}&\wedge\biggl(\thhh{j_0+1,j_1-1,\cq{t_{i_0},u_{j_0},u_{j_1}}}\nonumber\\&\quad\quad+\GF\pg{a_{i_0},a_{i_0}\inv}{t_{i_0},\cq{u_{j_0},\dots,u_{j_1}}}\biggr)
\label{eqn:dq_1c_onea_zbcb},\\
-u_{j_0}\tg{\cq{u_{j_0},t_{i_0}}}&\wedge\thhh{j_0+1,j_1,\cq{u_{j_0},t_{j_0}}}\label{eqn:dq_1c_onea_zbca},\\
\f{1}{t_{i_0}-u_{j_1}}\biggl(u_{j_0}\tg{\cq{u_{j_0},t_{i_0}}}&\wedge\thhh{j_0+1,j_1-1,\cq{u_{j_0},t_{i_0}}}\nonumber\\
-u_{j_0}\tg{\cq{u_{j_0},u_{j_1}}}&\wedge\thhh{j_0+1,j_1-1,\cq{u_{j_0};u_{j_1}}}\label{eqn:dq_1c_onea_zbcab}\biggr),\\
\f{1}{t_{i_0}-u_{j_0}}\biggl(t_{i_0}\tg{\cq{t_{i_0},u_{j_1}}}&\wedge\thhh{j_0+1,j_1-1,\cq{t_{i_0},u_{j_1}}}\nonumber\\
-u_{j_0}\tg{\cq{u_{j_0},u_{j_1}}}&\wedge\thhh{j_0+1,j_1-1,\cq{u_{j_0},u_{j_1}}}\label{eqn:dq_1c_onea_zabcb}\biggr).
\end{align}

The cuts (1b) and (1d) contribute antisymmetric terms, i.e., $u_{j_0}$ and $u_{j_1}$ are exchanged and $\thhh{j_0+d_0,j_1-d_1,S}$ becomes $-\thhh{j_0+d_1,j_1-d_0,S}$.
The entire contribution of case 3 is then the 

 symmetrization of the sum of expressions (\ref{eqn:dq_1a_onea_ca})-(\ref{eqn:dq_1c_onea_zabcb}).

The expression (\ref{eqn:dq_1a_onea_ca}) with its symmetrization cancels to 0.

The remaining terms form the contribution of Case 3, and are simplified to
\begin{align}
    \tg{\cq{t_{i_0},u_{j_1}}}&\wedge\thhh{j_0,j_1-1,u_{j_1}}
    -\tg{\cq{t_{i_0},u_{j_0}}}&\wedge\thhh{j_0+1,j_1,u_{j_0}}
    \label{eqn:onea_contrib3}\\
    -(u_{j_1}-u_{j_0})\tg{\cq{t,u_{j_0},u_{j_1}}}&\wedge\thhh{j_0+1,j_1-1,\cq{u_{j_0},u_{j_1}}}.\label{eqn:onea_contrib}
\end{align}

Analogous expressions result if $j_1-j_0=0$.
\end{proof}

\begin{proof}[Proof of Lemma~\ref{lma:pf_dq_type1}]
Let us collect the terms obtained from cases 2 and 3: (\ref{eqn:dq_1a_allb})-(\ref{eqn:dq_1d_allb}), (\ref{eqn:onea_contrib3}), and (\ref{eqn:onea_contrib}).

Consider first the expressions of the form $\thhh{j_0,j_1-1,u_{j_1}}$, arising from (\ref{eqn:dq_1a_allb}) and (\ref{eqn:onea_contrib3}). (The notation $\thhh$, which by definition depends on $i_0$ and $i_1$, is unambiguous here since no $a_i$ appear in the expression for $\thhh$ when $i_1-i_0\leq0$.) We claim that for fixed $j_0$ and $j_1$, the sum of these terms over all $g$ is precisely
\begin{align}
-\QSh^{m,n-(j_1-j_0)}\pg{&a_1,\dots,a_r,b_1,\dots,\underbracket{b_{j_0}\dots b_{j_1}},\dots,b_s,c}{\nonumber\\&t_1,\dots,t_r,u_1,\dots,u_{j_1},\dots,u_s,v}
\quad\quad\quad\wedge\,\thhh{j_0,j_1-1,u_{j_1}}.
\label{eqn:qsh_red_1a}
\end{align}
Indeed, the term that appears on the left side for a fixed $g$ is $-\tg{u_{j_1}}$ if $i_1-i_0=-1$ and $\tg{\cq{t_{i_0},u_{j_1}}}$ if $i_1-i_0=0$ 

. The quasishuffles for which the underlined segment collides with no $a_i$ provides the terms with $i_1-i_0=-1$, while the quasishuffles for which the underlined segment collides with some $a_i$ provide the terms with $i_0=i_1=i$.

In a similar way, the expressions with $u_{j_0}\thhh{j_0+1,j_1-1,\cq{u_{j_0},u_{j_1}}}$, coming from (\ref{eqn:onea_contrib}) and (\ref{eqn:dq_1c_allb}), yield
\begin{align}
-u_0\QSh^{m,n-(j_1-j_0)}\pg{&a_1,\dots,a_r,b_1,\dots,\underbracket{b_{j_0}\dots b_{j_1}},\dots,b_s,c}{\nonumber\\&t_1,\dots,t_r,u_1,\dots,\cq{u_{j_0},u_{j_1}},\dots,u_s,v}
\quad\quad\quad\wedge\,\thhh{j_0+1,j_1-1,\cq{u_{j_0},u_{j_1}}}.
\label{eqn:qsh_red_1c}
\end{align}
The expressions with $\thhh{j_0+1,j_1,u_{j_1}}$ and $u_{j_1}\thhh{j_0+1,j_1-1,\cq{u_{j_0},u_{j_1}}}$ give the antisymmetric terms.

Applying the shuffle relations of lower depth to (\ref{eqn:qsh_red_1a}) and (\ref{eqn:qsh_red_1c}), we get the total contribution of cases 2 and 3 for fixed $j_0$ and $j_1$:
\begin{align} 
&-\biggl(\GF\pg{a_1,\dots,a_r,b_1\dots b_s\cdot c}{t_1,\dots,t_r,\cq{u_1,\dots,u_{j_0-1},u_{j_1},u_{j_1+1},\dots,u_s,v}}\nonumber\\
&\quad\quad+\GF\pg{b_1,\dots,b_{j_0}\dots b_{j_1},\dots,b_s,a_1\dots a_r\cdot c}{u_1,\dots,u_{j_1},\dots,u_s,\cq{t_1,\dots,t_r,v}}\biggr)\nonumber\\
&\quad\wedge\,\GF\pg{b_{j_0},b_{j_0+1},\dots,b_{j_1-1},\pq{b_{j_0}\dots b_{j_1-1}}\inv}{u_{j_0},u_{j_0+1},\dots,u_{j_1-1},u_{j_1}}\label{eqn:qsh_red_a}\\
&+\biggl(\GF\pg{a_1,\dots,a_r,b_1\dots b_s\cdot c}{t_1,\dots,t_r,\cq{u_1,\dots,u_{j_0-1},u_{j_0},u_{j_1+1},\dots,u_s,v}}\nonumber\\
&\quad\quad+\GF\pg{b_1,\dots,b_{j_0}\dots b_{j_1},\dots,b_s,a_1\dots a_r\cdot c}{u_1,\dots,u_{j_1},\dots,u_s,\cq{t_1,\dots,t_r,v}}\biggr)\nonumber\\
&\quad\wedge\,\GF\pg{b_{j_0+1},\dots,b_{j_1-1},b_{j_1},\pq{b_{j_0+1}\dots b_{j_1}}\inv}{u_{j_0+1},\dots,u_{j_1-1},u_{j_1},u_{j_0}}\label{eqn:qsh_red_b}\\
&+(u_{j_1}-u_{j_0})\biggl(\GF\pg{a_1,\dots,a_r,b_1\dots b_s\cdot c}{t_1,\dots,t_r,\cq{u_1,\dots,u_{j_0-1},u_{j_0},u_{j_1},u_{j_1+1},\dots,u_s,v}}\nonumber\\
&\quad\quad+\GF\pg{b_1,\dots,b_{j_0}\dots b_{j_1},\dots,b_s,a_1\dots a_r\cdot c}{u_1,\dots,\cq{u_{j_0},u_{j_1}},\dots,u_s,\cq{t_1,\dots,t_r,v}}\biggr)\nonumber\\
&\quad\wedge\,\GF\pg{b_{j_0+1},\dots,b_{j_1-1},\pq{b_{j_1}\dots b_{j_1-1}}\inv}{u_{j_0+1},\dots,u_{j_1-1},\cq{u_{j_0},u_{j_1}}}\label{eqn:qsh_red_cd}.
\end{align} 
Notice that this expression does not depend on $i_0,i_1$, and all but one of the segments in each generating function $f$ depends only on the $a_i$ or only on the $b_j$.

Reindexing leads to cancelation of all terms $f(a_1,\dots,a_r,\dots)$ except the term in (\ref{eqn:qsh_red_a}) where $j_0=1$ and the term in (\ref{eqn:qsh_red_b}) where $j_1=n$. That is, if $j_0\neq 1$ and $j_1\neq n$, then this expression becomes
\begin{align} 
    F(j_0,j_1):=
    &-\biggl(\GF\pg{b_1,\dots,b_{j_0}\dots b_{j_1},\dots,b_s,a_1\dots a_r\cdot c}{u_1,\dots,u_{j_1},\dots,u_s,\cq{t_1,\dots,t_r,v}}\biggr)\nonumber\\
    &\quad\wedge\,\GF\pg{b_{j_0},b_{j_0+1},\dots,b_{j_1-1},\pq{b_{j_0}\dots b_{j_1-1}}\inv}{u_{j_0},u_{j_0+1},\dots,u_{j_1-1},u_{j_1}}\label{eqn:qsh_red_a_noa}\\
    &+\biggl(\GF\pg{b_1,\dots,b_{j_0}\dots b_{j_1},\dots,b_s,a_1\dots a_r\cdot c}{u_1,\dots,u_{j_0},\dots,u_s,\cq{t_1,\dots,t_r,v}}\biggr)\nonumber\\
    &\quad\wedge\,\GF\pg{b_{j_0+1},\dots,b_{j_1-1},b_{j_1},\pq{b_{j_0+1}\dots b_{j_1}}\inv}{u_{j_0+1},\dots,u_{j_1-1},u_{j_1},u_{j_0}}\label{eqn:qsh_red_b_soa}\\
    &+(u_{j_1}-u_{j_0})\biggl(\GF\pg{b_1,\dots,b_{j_0}\dots b_{j_1},\dots,b_s,a_1\dots a_r\cdot c}{u_1,\dots,\cq{u_{j_0},u_{j_1}},\dots,u_s,\cq{t_1,\dots,t_r,v}}\biggr)\nonumber\\
    &\quad\wedge\,\GF\pg{b_{j_0+1},\dots,b_{j_1-1},\pq{b_{j_1}\dots b_{j_1-1}}\inv}{u_{j_0+1},\dots,u_{j_1-1},\cq{u_{j_0},u_{j_1}}}\label{eqn:qsh_red_cd_noa}.
\end{align} 
If $j_0=1$ or $j_1=s$, the following terms remain, respectively:
\begin{align}
F_L(j_1):=
&-\GF\pg{a_1,\dots,a_r,b_1\dots b_s\cdot c}{t_1,\dots,t_r,\cq{u_{j_1},u_{j_1+1},\dots,u_s,v}}\nonumber\\
&\quad\wedge\,\GF\pg{b_1,\dots,b_{j_1-1},\pq{b_1\dots b_{j_1-1}}\inv}{u_1,\dots,u_{j_1-1},u_{j_1}}\label{eqn:qsh_red_a_l},\\
F_R(j_0):=\,
&\GF\pg{a_1,\dots,a_r,b_1\dots b_s\cdot c}{t_1,\dots,t_r,\cq{u_1,\dots,u_{j_0},v}}\nonumber\\
&\wedge\,\GF\pg{b_{j_0+1},\dots,b_s,\pq{b_{j_0+1}\dots b_s}\inv}{u_{j_0+1},\dots,u_s,u_{j_0}}.\label{eqn:qsh_red_b_r}
\end{align}

Identical terms $G(i_0,i_1)$, $G_L(i_1)$, $G_R(i_0)$ with the $\pg{a_i}{t_i}$ and $\pg{b_j}{u_j}$ exchanged appear in the cases $j_1-j_0=\text{$0$ or $-1$}$. 

So the total contribution of cuts of type 1 is 
\begin{align}
\sum_{\substack{1\leq j_0,j_1\leq s\\j_1-j_0\geq-1}}F(j_0,j_1)&+\sum_{\substack{1\leq i_0,i_1\leq r\\i_1-i_0\geq-1}}G(i_0,i_1)\nonumber\\
+\sum_{1\leq j_1\leq s}F_L(j_1)+\sum_{1\leq j_0\leq s}F_R(j_0)
&+\sum_{1\leq i_1\leq r}G_L(i_1)+\sum_{1\leq i_0\leq r}G_R(i_0)
\label{eqn:dq_1_total}
\end{align}
finishing the computation.
\end{proof}

\begin{lmacomp}
\label{lma:pf_dq_type2}
Computation of cuts of type (2).
\end{lmacomp}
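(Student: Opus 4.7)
The plan is to mirror the strategy of Lemma-Computation~\ref{lma:pf_dq_type1}: apply the generating-function coproduct of Lemma~\ref{lma:cop_of_gf} to $Q = \sum_\sigma (-1)^{r+s-M_\sigma}\GF\pg{w_{\sigma^{-1}(1)},\dots,w_{\sigma^{-1}(M_\sigma)},c}{t_{\sigma^{-1}(1)},\dots,t_{\sigma^{-1}(M_\sigma)},v}$, restrict to the cuts of type (2) (those that split the distinguished segment $\pg{c}{v}$), and then resum over $\sigma\in\ol\Sigma_{r,s}$. After a cut of type (2), the distinguished segment $\pg{c}{v}$ gets split into two pieces $\pg{c_1}{V_1}$ and $\pg{c_2}{V_2}$ with $c_1 c_2 = c$, one piece attached to the upper part $g$ (containing $x_0$) and the other to the lower part $h$ (containing $x_k$). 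The remaining segments are partitioned between $g$ and $h$ according to a subset $A\subseteq\{1,\dots,r\}$ and $B\subseteq\{r+1,\dots,r+s\}$ of consecutive-in-order indices appearing in $g$; the complements go to $h$. Fixing this partition and summing over all quasishuffles $\sigma$ compatible with it produces a product of two independent quasishuffle sums, one for each side of the cut.

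The three cut types contribute as follows. For (2a) cuts, the cut runs from some vertex $x_l$ to an endpoint of the distinguished segment, so neither half picks up an extra zero; the $V = \{v\}$ variables are distributed according to which endpoint is hit, and the two halves become $\QSh^{|A|,|B|}$-type expressions in the appropriate subsets of $a_i$'s and $b_j$'s. Applying the inductive hypothesis (lower-depth shuffle relations), each half collapses to a pair of $\GF$'s matching the $R_A$- and $R_B$-type terms. For (2b) cuts — a cut from a zero on some $\pg{w_l}{s_l}$ to the distinguished segment — an extra zero gets consumed; by identity~(\ref{eqn:multi_identity}) these pair with (2c) cuts (a zero on the distinguished segment to a $\pg{w_l}{s_l}$) to give differences of generating functions in which $v$ migrates from one half to the other, precisely reproducing the mechanism that generates lower-depth $\GF$ wedge products.

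The key simplifying step, exactly as in Lemma-Computation~\ref{lma:pf_dq_type1}, is that for each fixed partition $(A,B)$ the quasishuffles of the remaining $a_i$'s and $b_j$'s on each side of the cut form a full $\ol\Sigma_{|A|,|B|}$ (after removing the distinguished segment on that side, which is now collapsed). Thus each half, modulo lower-depth shuffle relations, reduces either to a single $\GF$ (when $A$ or $B$ is empty on that side, giving a shuffle-only sum) or to a $\QSh^{|A|,|B|}$ expression that the inductive hypothesis annihilates up to two ``collapsed'' $\GF$'s. The resulting sum of wedges $\GF \wedge \GF$ matches exactly the type-(1) contribution computed in (\ref{eqn:dq_1_total}) — with the roles of upper and lower parts at $x_0$ and $x_k$ interchanged — plus wedges involving the collapsed segments $w_{\{1,\dots,r,0\}}$ and $w_{\{r+1,\dots,r+s,0\}}$ that are precisely what $\delta R_A$ and $\delta R_B$ contribute.

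The main obstacle is the bookkeeping of how the variable $v$ (and any extra zero created by the cut in cases (2b)/(2c)) ends up distributed between $V_1$ and $V_2$, together with tracking signs from the quasishuffle $(-1)^{r+s-M_\sigma}$ factor and from the orientation of the cut. The combinatorics is delicate because, unlike in the type-(1) computation, both halves can contain both $a_i$'s and $b_j$'s, so four endpoint-cases must be analyzed (upper endpoint on $x_0$ or $x_k$, lower endpoint symmetric), and cases (2b)/(2c) must be paired via~(\ref{eqn:multi_identity}) before the inductive hypothesis can be applied cleanly. Once this is done, adding (\ref{eqn:dq_1_total}) from Lemma-Computation~\ref{lma:pf_dq_type1} to the type-(2) output of the present lemma yields, modulo $C(0,x)$ terms and lower-depth shuffles, exactly $\delta R_A + \delta R_B$ — which is the content of the forthcoming Lemma~\ref{lma:pf_dr} and completes Lemma~\ref{lma:pf_lwsr}.
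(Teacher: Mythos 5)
Your overall strategy is the same as the paper's: group the type-(2) cuts by the indices at which the cut separates the $a$'s and $b$'s, observe that for a fixed separation the arrangements on each side of the cut run over a full set of quasishuffles (so the contribution is a wedge of two $\QSh$-expressions), reduce each side by the lower-depth shuffle relations to a sum of two collapsed generating functions, pair the (2b) and (2c) cuts using (\ref{eqn:multi_identity}), and then telescope over the separating index. This is exactly how the paper organizes the computation (via the quantities $q_L,q_R$ and $f_L,f_R,g_L,g_R$).

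However, two points in your description of the outcome are off and would trip you up if you carried the plan out as stated. First, for a (2a) cut the variable $v$ is not ``distributed according to which endpoint is hit'': it enters the collapsed segment of \emph{both} halves (the contribution is $-q_L(i_0,j_0,v)\wedge q_R(i_0+1,j_0+1,v)$), and for (2b)/(2c) cuts the group element of the segment carrying the vertex or the cut is absorbed into the collapsed segment of both halves as well, so your identity $c_1c_2=c$ fails there. Second, and more substantively, the type-(2) output does not ``match exactly'' the type-(1) contribution (\ref{eqn:dq_1_total}). After expanding $q_L\wedge q_R$ and telescoping, the cross families $f_L\wedge g_R$ and $g_L\wedge f_R$ produce $-\sum_iG_L(i)-\sum_jF_R(j)$ and $-\sum_jF_L(j)-\sum_iG_R(i)$ (together with the terms $Z$ and $-Z$, which cancel each other), i.e.\ precisely the \emph{negatives} of the single-sum terms of (\ref{eqn:dq_1_total}), which therefore cancel; the double sums $F(j_0,j_1)$ and $G(i_0,i_1)$ of (\ref{eqn:dq_1_total}) survive and are matched by the type-(1) cuts of $\delta R_A+\delta R_B$, while the $g_L\wedge g_R$ and $f_L\wedge f_R$ families produced by the type-(2) cuts match the type-(2) cuts of $\delta R_B$ and $\delta R_A$ (the $H$-sums in (\ref{eqn:drb_all})). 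As literally written, your claim that the type-(2) output equals (\ref{eqn:dq_1_total}) plus the $\delta R$-type wedges is inconsistent with your final assertion that adding (\ref{eqn:dq_1_total}) yields $\delta R_A+\delta R_B$ (it would double-count (\ref{eqn:dq_1_total})); the correct mechanism is the cancellation just described, which is what Lemmas~\ref{lma:pf_dr} and~\ref{lma:pf_lwsr} then exploit.
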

\begin{proof}
A cut of type (2a/b/c) divides the circle into a left part $g$ and a right part $h$ (see Fig.~\ref{fig:cut_class}). Let $i_0$ be maximal such that $a_{i_0}$ appears in $g$ and $i_1$ minimal such that $a_{i_1}$ appears in $h$, with $i_0=-1$ or $i_1=m+1$ if the corresponding segments do not appear. Define $j_0,j_1$ in the same manner, for the $b_j$.

Let
\begin{align*}
f_L(i_0,j_0,S)&=\GF\pg{a_1,\dots,a_{i_0},\pq{a_1\dots a_{i_0}}\inv}{t_1,\dots,t_{i_0},\cq{u_1,\dots,u_{j_0}}\sqcup S},\\
f_R(i_1,j_1,S)&=\GF\pg{a_{i_1},\dots,a_r,\pq{a_{i_1}\dots a_r}\inv}{t_{i_1},\dots,t_r,\cq{u_{j_1},\dots,u_m}\sqcup S},
\end{align*}
and define $g_L(i_0,j_0,S)$ and $g_R(i_1,j_1,S)$ in a similar way for the $\pg{b_j}{u_j}$.
(As usual, one interprets these expressions as 0 if the index set is empty.) Also let
\begin{align*}
q_L(i_0,j_0,S)&=\QSh^{i_0,j_0}\pg{a_1,\dots,a_{i_0},b_1,\dots,b_{j_0},\pq{a_1\dots a_{i_0}\cdot b_1\dots b_{j_0}}\inv}{t_1,\dots,t_{i_0},u_1,\dots,u_{j_0},S},\\
&=f_L(i_0,j_0,S)+g_L(i_0,j_0,S)\\
q_R(i_1,j_1,S)&=\QSh^{r-i_1+1,s-j_1+1}\pg{a_{i_1},\dots,a_r,b_{j_1},\dots,b_s,\pq{a_{i_1}\dots a_r\cdot b_{j_1}\dots b_s}\inv}{t_{i_1},\dots,t_r,u_{j_1},\dots,u_s,S}\\
&=f_R(i_1,j_1,S)+g_R(i_1,j_1,S).
\end{align*}

Consider cuts (2a) for fixed $i_0,i_1,j_0,j_1$. For such cuts, \[i_1-i_0=j_1-j_0=1,
\quad-1\leq i_0\leq r,\quad-1\leq j_0\leq s.\] The $g$ that occur in the resulting terms are exactly the quasishuffles of $\cq{a_i:i\leq i_0}$ and $\cq{b_j:j\leq j_0}$. The analogous statement holds for $h$. The contribution of cuts (2a) is
\begin{align}
-q_L(i_0,j_0,v)\wedge q_R(i_0+1,j_0+1,v).\label{eqn:dq_2a}
\end{align}

Now look at cuts (2b) and (2c). The non-distinguished segment containing the vertex or the cut is either $a_{i_0+1}$ ($i_0<r$), $b_{j_0+1}$ ($j_0<s$), or $a_{i_0+1}b_{j_0+1}$ ($i_0<r,j_0<s$). 
The terms coming from the sum of (2b) and (2c) are, for these three cases respectively,
\begin{align}
(v-t_{i_0+1})q_L(i_0,j_0,\cq{t_{i_0+1},v})&\wedge q_R(i_0+2,j_0+1,\cq{t_{i_0+1},v}),\label{eqn:dq_2bc_a}\\
(v-u_{j_0+1})q_L(i_0,j_0,\cq{u_{j_0+1},v})&\wedge q_R(i_0+1,j_0+2,\cq{u_{j_0+1},v}),\label{eqn:dq_2bc_b}\\
\f{-1}{t_{i_0+1}-u_{j_0+1}}\biggl(
(v-t_{i_0+1})q_L(i_0,j_0,\cq{t_{i_0+1},v})&\wedge q_R(i_0+2,j_0+2,\cq{t_{i_0+1},v})\nonumber\\
-(v-u_{i_0+1})q_L(i_0,j_0,\cq{u_{j_0+1},v})&\wedge q_R(i_0+2,j_0+2,\cq{u_{j_0+1},v})\biggr).\label{eqn:dq_2bc_ab}
\end{align}

Let us assemble the terms of the form $f_L\wedge g_R$ and $g_L\wedge g_R$ coming from application of the shuffle relations to the $q_L$ and $q_R$. (The terms $g_L\wedge f_R$ and $f_L\wedge f_R$ are symmetrical.)

The terms $f_L\wedge g_R$, for $-1\leq i_0<r$ and $-1\leq j_0<s$, are:
\begin{align}
-f_L(i_0,j_0,v)&\wedge g_R(i_0+1,j_0+1,v)\nonumber\\
+(v-t_{i_0+1})f_L(i_0,j_0,\cq{t_{i_0+1},v})&\wedge g_R(i_0+2,j_0+1,\cq{t_{i_0+1},v}),\nonumber\\
+(v-u_{j_0+1})f_L(i_0,j_0,\cq{u_{j_0+1},v})&\wedge g_R(i_0+1,j_0+2,\cq{u_{j_0+1},v}),\nonumber\\
-\f{1}{t_{i_0+1}-u_{j_0+1}}\biggl(
(v-t_{i_0+1})f_L(i_0,j_0,\cq{t_{i_0+1},v})&\wedge g_R(i_0+2,j_0+2,\cq{t_{i_0+1},v})\nonumber\\
-(v-u_{i_0+1})f_L(i_0,j_0,\cq{u_{j_0+1},v})&\wedge g_R(i_0+2,j_0+2,\cq{u_{j_0+1},v})\biggr)\nonumber\\
=f_L(i_0,j_0,t_{i_0+1})&\wedge g_R(i_0+1,j_0+1,v)\nonumber\\
-f_L(i_0,j_0+1,t_{i_0+1})&\wedge g_R(i_0+1,j_0+2,v)\nonumber.
\end{align}
Summing this over $j_0$ leaves
\begin{align}
    f_L(i_0,0,t_{i_0+1})\wedge g_R(i_0+1,1,v)&-f_L(i_0,s,\cq{u_s,v})\wedge g_R(i_0+1,s+1,v)\\
    &=f_L(i_0,0,t_{i_0+1})\wedge g_R(i_0+1,1,v)&=-G_L(i_0+1).
    \label{eqn:dq_2}
\end{align}
If $i_0=r,j_0<s$, then from (\ref{eqn:dq_2a}) and (\ref{eqn:dq_2bc_b}) we also have the terms
\begin{align}
-f_L(r,j_0,v)&\wedge g_R(r+1,j_0+1,v),\label{eqn:dq_2_t1}\\
(v-u_{j_0+1})f_L(r,j_0,\cq{u_{j_0+1},v})&\wedge g_R(r+1,j_0+2,\cq{u_{j_0+1},v})\nonumber\\
=f_L(r,j_0+1,v)&\wedge\pq{g_R(r+1,j_0+2,v)-g_R(r+1,j_0+2,u_{j_0+1})}\label{eqn:dq_2_t2}.
\end{align}
The last term $f_L(r,j_0+1,v)\wedge g_R(r+1,j_0+2,u_{j_0+1})$ is $F_R(j_0+1)$. The remaining term and (\ref{eqn:dq_2_t1}) mostly cancel when summed over $j_0$, leaving only
\begin{align}
    Z:=-f_L(r,0,v)\wedge g_R(r+1,1,v)+f_L(r,s,v)\wedge g_R(r+1,s+1,v)\nonumber\\
    =-\GF\pg{a_1,\dots,a_r,\prod_jb_j\cdot c}{t_1,\dots,t_r,v}\wedge \GF\pg{b_1,\dots,b_s,\prod_ia_i\cdot c}{u_1,\dots,u_s,v}\label{eqn:dq_2_nice}.
\end{align}

If $j_0=s,i_0<r$, there are terms
\begin{align}
-f_L(i_0,s,v)&\wedge g_R(i_0+1,s+1,v)&=0,\nonumber\\
(v-t_{i_0+1})f_L(i_0,s,\cq{t_{i_0+1},v})&\wedge g_R(i_0+2,s+1,\cq{t_{i_0+1},v})&=0.
\end{align}
Finally, $i_0=r,j_0=s$ also produces 0.

Thus the sum of terms $f_L\wedge g_R$ is 
\begin{equation}
    Z-\sum_{i_0=0}^{r-1}G_L(i_0+1)-\sum_{j_0=0}^{s-1}F_R(j_0+1)\label{eqn:dq_all_2_fg}.
\end{equation}

Similarly, terms of the form $g_L\wedge f_R$ give
\begin{equation}
    -Z-\sum_{j_0=0}^{s-1}F_L(j_0+1)-\sum_{i_0=0}^{r-1}G_R(i_0+1)\label{eqn:dq_all_2_gf}.
\end{equation}

The terms $g_L\wedge g_R$ where $i_0<r$, $j_0<s$ are, similarly:
\begin{align}
    -g_L(i_0,j_0,t_{i_0+1})&\wedge g_R(i_0+2,j_0+1,\cq{t_{i_0+1},v})\nonumber\\
    +g_L(i_0,j_0,\cq{t_{i_0+1},u_{j_0+1}})&\wedge g_R(i_0+2,j_0+2,\cq{t_{i_0+1},v}).\label{eqn:dq_all_2_gg}
\end{align}
If $i_0=r,j_0<s$, we get the terms
\begin{align}
-g_L(r,j_0,v)&\wedge g_R(r+1,j_0+1,v),\nonumber\\
(v-u_{j_0+1})g_L(r,j_0,\cq{u_{j_0+1},v})&\wedge g_R(r+1,j_0+2,\cq{u_{j_0+1},v}).
\label{eqn:dq_all_2_ggm}
\end{align}
If $j_0=s,i_0<r$, there are terms
\begin{align}
-g_L(i_0,s,v)&\wedge g_R(i_0+1,s+1,v)&=0\nonumber,\\
(v-t_{i_0+1})g_L(i_0,s,\cq{t_{i_0+1},v})&\wedge g_R(i_0+2,s+1,\cq{t_{i_0+1},v})&=0.
\end{align}
The case $i_0=r$, $j_0=s$ again contributes 0.

The terms $f_L\wedge f_R$ are symmetrical.

Assembling (\ref{eqn:dq_all_2_gf})-(\ref{eqn:dq_all_2_ggm}), the total contribution of cuts (2a/b/c) is
\begin{align}
-\sum_{i=1}^rG_L(i)&-\sum_{j=1}^sF_R(j)\label{eqn:dq_2_total1}\\
+\sum_{i_0=0}^{r-1}\sum_{j_0=0}^{s-1}\biggl(
    -g_L(i_0,j_0,t_{i_0+1})&\wedge g_R(i_0+2,j_0+1,\cq{t_{i_0+1},v})\label{eqn:dq_2_total2}\\
    +g_L(i_0,j_0,\cq{t_{i_0+1},u_{j_0+1}})&\wedge g_R(i_0+2,j_0+2,\cq{t_{i_0+1},v})\biggr)\label{eqn:dq_2_total2v}\\
    +\sum_{j_0=0}^{s-1}\biggl(
        -g_L(r,j_0,v)&\wedge g_R(r+1,j_0+1,v)\label{eqn:dq_2_total3v}\\
+(v-u_{j_0+1})g_L(r,j_0,\cq{u_{j_0+1},v})&\wedge g_R(r+1,j_0+2,\cq{u_{j_0+1},v})\biggr),\label{eqn:dq_2_total3}
\end{align}
plus symmetrical terms.
\end{proof}

\begin{proof}[Proof of Lemma~\ref{lma:pf_dq}]
    Cancellation of (\ref{eqn:dq_1_total}) with (\ref{eqn:dq_2_total1}) leaves
    \begin{equation}
    \sum_{\substack{1\leq j_0,j_1\leq s\\j_1-j_0\geq-1}}F(j_0,j_1)
    \label{eqn:dq_sum_ffgg}
    \end{equation}
plus the symmetrical term.

Thus $\delta Q$ is the symmetrized sum of expressions (\ref{eqn:dq_2_total2})-(\ref{eqn:dq_sum_ffgg}).
\end{proof}

\begin{lmacomp}[Step 1(b)]
\label{lma:pf_dr} 
Modulo elements $C(0,x)$, $\delta R_B$ and $\delta R_A$ are given by expression (\ref{eqn:drb_all}) below and its symmetric expression, respectively.
\end{lmacomp}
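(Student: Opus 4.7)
The plan is to compute $\delta R_B$ by a direct application of Lemma~\ref{lma:cop_of_gf} to the single generating function
\[R_B = \GF\pg{b_1,\dots,b_s,\, a_1\cdots a_r\cdot c}{u_1,\dots,u_s,\, \cq{t_1,\dots,t_r,v}},\]
and then to read off $\delta R_A$ from the evident symmetry swapping $(a_i, t_i) \leftrightarrow (b_j, u_j)$. Because $R_B$ is a single generating function, there is no quasishuffle sum to expand; the only subtlety is that its distinguished segment carries a \emph{multi-set} index $\cq{t_1,\dots,t_r,v}$ which must be unpacked using (\ref{eqn:gf_multiset}) and (\ref{eqn:multi_identity}) before and after applying the coproduct formula.

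I would classify the terms of $\delta R_B$ using exactly the same cut scheme as in Step 1(a): type (1) cuts do not meet the distinguished segment, while type (2) cuts do. For type (1) cuts, the multi-set $\cq{t_1,\dots,t_r,v}$ remains entirely attached to the distinguished side, so each term is a wedge of two generating functions whose ``free'' parameters are only the $b_j$ and $u_j$. These are the natural candidates to match the $g_L \wedge g_R$ contributions in (\ref{eqn:dq_2_total2})-(\ref{eqn:dq_2_total3}) and the diagonal sum $F(j_0,j_1)$ in (\ref{eqn:dq_sum_ffgg}), once the routine applications of (\ref{eqn:multi_identity}) are performed to convert multi-set entries back to single-variable ones.

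For type (2) cuts the distinguished segment is split and the multi-set $\cq{t_1,\dots,t_r,v}$ must be partitioned between the two parts. Here I would introduce a case split according to which index ($t_i$ for some $i$, or $v$) labels the 0 at which the cut enters or exits the distinguished segment, and use (\ref{eqn:multi_identity}) to rewrite each case as a difference of single-variable generating functions times a factor $(t_i - v)$ or $(t_{i} - t_{j})$. I expect these to reproduce the boundary $F_L(j_1)$ and $F_R(j_0)$ contributions from (\ref{eqn:qsh_red_a_l})-(\ref{eqn:qsh_red_b_r}) together with the $Z$-type term of (\ref{eqn:dq_2_nice}). The formula for $\delta R_A$ follows by the $a \leftrightarrow b$ symmetry.

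The main obstacle is the bookkeeping in the type (2) case: a cut that enters the distinguished segment at a 0 indexed by $t_i$ must be paired with the right contribution in $\delta Q$, and the signs, variable orderings, and $(t_i - v)$-type factors produced by (\ref{eqn:multi_identity}) must match on both sides. Once $\delta R_B$ and $\delta R_A$ are assembled in this form and compared against Lemma-Computation~\ref{lma:pf_dq}, every weight-$\geq 2$ $\wedge$ weight-$\geq 2$ term in $\delta(Q - R_A - R_B)$ cancels modulo lower-depth shuffle relations (inductive hypothesis) and elements $C(0,x)$; what remains is only (weight 1) $\wedge$ (weight $\geq 1$), which is the subject of Step 2 in Lemma~\ref{lma:pf_log}.
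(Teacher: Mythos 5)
Your high-level plan (apply the coproduct directly to the single generating function $R_B$, classify cuts by whether they hit the distinguished segment $\pg{a_1\cdots a_r\,c}{\cq{t_1,\dots,t_r,v}}$, unpack the multi-set index via (\ref{eqn:multi_identity}), and get $\delta R_A$ by symmetry) is exactly the paper's route. But your concrete accounting of which terms come from which cuts is wrong, and that accounting is the actual content of the lemma. First, no term of $\delta R_B$ can ever equal $F_L(j_1)$, $F_R(j_0)$, or the $Z$-term (\ref{eqn:dq_2_nice}): each of those has a factor $\GF\pg{a_1,\dots,a_r,\dots}{t_1,\dots,t_r,\dots}$ in which the $a_i$ occur as separate segments, whereas in $R_B$ the $a_i$ are already collapsed into the distinguished segment and the coproduct cannot un-collapse them — every factor in $\delta R_B$ has only (products of consecutive) $b_j$'s as non-distinguished segments. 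In the paper's bookkeeping the $F_L,F_R,G_L,G_R$ and $\pm Z$ terms never reach the comparison at all: they cancel internally inside $\delta Q$ (the boundary terms against (\ref{eqn:dq_2_total1}), and $Z$ against $-Z$ between the $f_L\wedge g_R$ and $g_L\wedge f_R$ assemblies), so assigning them to the type (2) cuts of $R_B$ is not a harmless relabeling; it would make the final cancellation in Lemma~\ref{lma:pf_lwsr} fail.

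Second, you have the remaining matching reversed. A type (1) cut of $R_B$ leaves the whole multi-set $\cq{t_1,\dots,t_r,v}$ on the distinguished side together with \emph{both} ends of the $b$-string and a new collapsed segment $b_{j_0}\cdots b_{j_1}$; the other factor is a consecutive run $\pg{b_{j_0},\dots,b_{j_1-1}}{\cdot}$. These are precisely the terms (\ref{eqn:qsh_red_a_noa})--(\ref{eqn:qsh_red_cd_noa}), i.e.\ $F(j_0,j_1)$, matching (\ref{eqn:dq_sum_ffgg}) — they are \emph{not} of the $g_L\wedge g_R$ shape, since $g_L$ and $g_R$ contain only a prefix, respectively a suffix, of the $b$'s with the $t$-variables split between the two multi-sets. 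The $g_L\wedge g_R$ terms (\ref{eqn:dq_2_total2})--(\ref{eqn:dq_2_total3}) arise instead from the type (2) cuts of $R_B$, where the distinguished segment is split and the multi-set $\cq{t_1,\dots,t_r,v}$ is partitioned between the two factors; making this precise requires the expansion identities (\ref{eqn:multi_cop_id_1})--(\ref{eqn:multi_cop_id_3}) (cut at a nonzero point, vertex on a multi-indexed segment, and vertex at a $0$ of a multi-indexed segment, respectively), which produce the sums over the splitting index $i$ appearing in (\ref{eqn:dr_2a})--(\ref{eqn:dr_2c2}) and hence $H_B(j_0)$. Until you redo the type (1)/type (2) identification along these lines, the proposed computation of (\ref{eqn:drb_all}) does not go through.
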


\begin{proof}[Proof of Lemma~\ref{lma:pf_dr}]
We compute $\delta R_B$.

Recall that the distinguished segment of $R_B$ is $\prod_ja_i\cdot c$. We use the above classification of cuts of type (1a/b/c/d) and (2a/b/c).

Consider first the terms $f\wedge g$ coming from cuts of type (1). For each such term, let $j_0,j_1$ be the minimal and maximal indices of $b_j$ that do not appear in $g$. For fixed $j_0,j_1$, the cuts of type (1a), (1b), and (1c/d) produce precisely the expressions (\ref{eqn:qsh_red_a_noa}), (\ref{eqn:qsh_red_b_soa}), and (\ref{eqn:qsh_red_cd_noa}) above. Thus the contribution of cuts of type (1) is $F(j_0,j_1)$, and the total contribution is 
\begin{equation}
    \sum_{\substack{0\leq j_0,j_1\leq s\\j_1-j_0\geq-1}}F(j_0,j_1).
    \label{eqn:dr_1}
\end{equation}

Next, we look at cuts of type (2). We will need a simplified formula for terms where either the vertex or the cut are on a segment indexed with $S=\cq{s_1,\dots,s_k}$. If $k=1$ and the vertex is at a nonzero point, we get terms of the form \[\GF\pg{\dots}{\dots,s_1}\wedge \GF\pg{\dots}{s_1,\dots}.\] Applying (\ref{eqn:multi_identity}), it is easy to show by induction that, for general $k$, the resulting terms are
\begin{equation}
    \sum_{i=1}^k\GF\pg{\dots}{\dots,\cq{s_1,\dots,s_i}}\wedge \GF\pg{\dots}{\cq{s_i,\dots,s_k},\dots}.\label{eqn:multi_cop_id_1}
\end{equation}
For example, if $k=2$, this becomes
\begin{align}
    \GF\pg{\dots}{\dots,s_1}\wedge \GF\pg{\dots}{\cq{s_1,s_2},\dots}&+\GF\pg{\dots}{\dots,\cq{s_1,s_2}}\wedge \GF\pg{\dots}{s_2,\dots}\nonumber
    \\=\f{1}{s_1-s_2}\biggl(\GF\pg{\dots}{\dots,s_1}\wedge \GF\pg{\dots}{s_1,\dots}&+\GF\pg{\dots}{\dots,s_2}\wedge \GF\pg{\dots}{s_2,\dots}\biggr)\nonumber,
\end{align}
agreeing with the formula following directly from (\ref{eqn:multi_identity}) that has been used in the previous computations.

Similarly, if the vertex is on some segment $s'$, the term for $k=1$, \[s'\GF\pg{\dots}{\dots,\cq{s_1,s'}}\wedge \GF\pg{\dots}{\cq{s_1,s'},\dots},\] expands into
\begin{equation}
    s'\sum_{i=1}^k\GF\pg{\dots}{\dots,\cq{s_1,\dots,s_i,s'}}\wedge \GF\pg{\dots}{\cq{s_i,\dots,s_k,s'},\dots}.\label{eqn:multi_cop_id_2}
\end{equation}

Finally, if the vertex is at a 0 on the segment $s_i$ and the cut is on the segment $s'$, we get terms
\begin{align}
    \sum_{i=1}^ks_i\GF\pg{\dots}{\dots,\cq{s_1,\dots,s_i,s'}}&\wedge \GF\pg{\dots}{\cq{s_i,\dots,s_k,s'},\dots}\nonumber\\
    +\sum_{i=1}^{k-1}\GF\pg{\dots}{\dots,\cq{s_1,\dots,s_i,s'}}&\wedge \GF\pg{\dots}{\cq{s_{i+1},\dots,s_k,s'},\dots}.
    \label{eqn:multi_cop_id_3}
\end{align}

These identities can also be shown combinatorially, by interpreting the definition of the multiple generating functions in terms of collapsing segments.

For a term $f\wedge g$ coming from a cut of type (2), let $j_0$ be the maximal index of $b_j$ appearing in $f$ and $j_1$ the minimal index in $g$, so $j_1-j_0=1$ for cuts (2a) and $j_1-j_0=2$ for cuts (2b/c). By (\ref{eqn:multi_cop_id_1}), for fixed $j_0$, the cuts of type (2a) contribute
\begin{align}
-\sum_{i=1}^mg_L(i,j_0,\emptyset)&\wedge g_R(i,j_0+1,v)\label{eqn:dr_2a}
\\+\,g_L(r,j_0,v)&\wedge g_R(r+1,j_0+1,v). \label{eqn:dr_2av}
\end{align}
By (\ref{eqn:multi_cop_id_2}), the cuts of type (2b) contribute
\begin{align}
-\sum_{i=1}^mu_{j_0+1}g_L(i,j_0,u_{j_0+1})&\wedge g_R(i,j_0+2,\cq{u_{j_0+1},v})\label{eqn:dr_2b}\\
-\,u_{j_0+1}g_L(r,j_0,\cq{u_{j_0+1},v})&\wedge g_R(r+1,j_0+2,\cq{u_{j_0+1},v}).\label{eqn:dr_2bv}
\end{align}
By (\ref{eqn:multi_cop_id_3}), the cuts of type (2c) contribute
\begin{align}
\sum_{i=1}^mt_ig_L(i,j_0,u_{j_0+1})&\wedge g_R(i,j_0+2,\cq{u_{j_0+1},v})\label{eqn:dr_2c1}\\
+\,vg_L(r,j_0,\cq{u_{j_0+1},v})&\wedge g_R(r+1,j_0+2,\cq{u_{j_0+1},v})\label{eqn:dr_2cv}\\
+\sum_{i=1}^mg_L(i,j_0,u_{j_0+1})&\wedge g_R(i+1,j_0+2,\cq{u_{j_0+1},v}).
\label{eqn:dr_2c2}
\end{align}
The sum of expressions (\ref{eqn:dr_2b}), (\ref{eqn:dr_2c1}), and (\ref{eqn:dr_2c2}) simplifies to
\begin{equation}
    g_L(i,j_0,u_{j_0+1})\wedge g_R(i,j_0+2,v).
\label{eqn:dr_2bc}
\end{equation}
Then, letting $H_B(j_0)$ be the sum of expressions (\ref{eqn:dr_2a}), (\ref{eqn:dr_2av}), (\ref{eqn:dr_2bv}), (\ref{eqn:dr_2cv}), and (\ref{eqn:dr_2bc}), the coproduct of $R_B$ is 
\begin{equation}
    \sum_{\substack{0\leq j_0,j_1\leq s\\j_1-j_0\geq-1}}F(j_0,j_1)
    +\sum_{j=0}^{s-1}H_B(j).
\label{eqn:drb_all}
\end{equation}

The coproduct of $R_A$ is the symmetric expression.
\end{proof}

\begin{proof}[Proof of Lemma~\ref{lma:pf_lwsr}]
We now compare the results of the computations in Lemmas~\ref{lma:pf_dq} and \ref{lma:pf_dr}.

We have computed that $\delta Q$ is the symmetrization of 
\[(\ref{eqn:dq_2_total2})
+(\ref{eqn:dq_2_total2v})
+(\ref{eqn:dq_2_total3v})
+(\ref{eqn:dq_2_total3})
+(\ref{eqn:dq_sum_ffgg})\] 
and $\delta R_B+\delta R_A$ is the symmerization of 
\[(\ref{eqn:dr_1})
+\sum_{j=0}^{n-1}\bigl[
(\ref{eqn:dr_2a})
+(\ref{eqn:dr_2av})
+(\ref{eqn:dr_2bv})
+(\ref{eqn:dr_2cv})
+(\ref{eqn:dr_2bc})\bigr].\]

Obviously $(\ref{eqn:dr_1})=(\ref{eqn:dq_sum_ffgg})$. Now
\[(\ref{eqn:dq_2_total2})=\sum(\ref{eqn:dr_2a}),\quad
(\ref{eqn:dq_2_total2v})=\sum(\ref{eqn:dr_2bc}),\quad
(\ref{eqn:dq_2_total3v})=\sum(\ref{eqn:dr_2av}),\quad
(\ref{eqn:dq_2_total3})=\sum\bq{(\ref{eqn:dr_2bv})+(\ref{eqn:dr_2cv})},
\]
which finishes the proof.
\end{proof}

\subsubsection{Proof of Step 2}

Here we show the terms of weight $(1)\wedge(w-1)$ coming from $\delta Q-\delta R_A-\delta R_B$ are 0.

\begin{proof}

We first examine the relevant terms of $\delta Q$. Let us compute the coefficient $L_i^A$ occurring with $C(0,a_i)$. These come from shuffles containing segment $\pg{a_i}{t_i}$, a segment $\pg{a_ib_j}{\cq{t_i,u_j}}$, and the segment $\pg{c}{v}$ (where we write $c=\prod_ia_i\inv\prod_jb_j\inv$). 

Inspect the generating functions of depth 1 $\GF\pg{w,w\inv}{s_1,s_2}$ that appeared in the proof of Lemma~\ref{lma:pf_lwsr}. All generating functions in the lower half of cuts (1a/b/c/d) were written in a form where $\pg{w}{s_1}$ is the first segment counterclockwise of the distinguished segment, rather than with the segment counterclockwise of the vertex of the cut as in (\ref{eqn:gf_cop_1})).

So, by the remark following Lemma~\ref{lma:cop_of_gf},

the terms (\ref{eqn:log_cop_gf_r}) vanish in the coproduct, so the terms arising from these cuts are canceled by the lower-depth shuffle relations in Lemma~\ref{lma:pf_lwsr}. Similarly, for cuts of type (2), we only have terms (\ref{eqn:log_cop_gf_lr}) contributing the coefficient of $C(0,a_i\inv)$.

For quasishuffles in which $\pg{a_i}{t_i}$ appears, the terms (\ref{eqn:log_cop_gf_l}) where some $b_j$ appears immediately clockwise of $a_i$ gives terms
\begin{align}
\QSh^{r,s}_{(ij)}\pg{a_1,\dots,a_i,\dots,a_r,b_1,\dots,b_j,c}{&t_1,\dots,\emptyset,\dots,t_r,u_1,\dots,u_j,\dots,u_s,v},\label{eqn:dq_log_ab}
\end{align}
where $\QSh^{r,s}_{(ij)}$ denotes the sum over only those quasishuffles where $a_i$ collapses with $b_j$.

The terms (\ref{eqn:log_cop_gf_l}) where either $a_{i-1}$ or some $a_{i-1}b_j$ appears immediately clockwise of $a_i$ sum to
\begin{align}
    \QSh^{r-1,s}\pg{a_1,\dots,a_{i-1}a_i,a_{i+1},\dots,a_r,b_1,\dots,b_s,c}{t_1,\dots,t_{i-1},t_{i+1},\dots,t_r,u_1,\dots,u_s,v}.\label{eqn:dq_log_al}
\end{align}

Finally, the terms (\ref{eqn:log_cop_gf_lr}) contribute to $L^A_i$ the terms 
\begin{align}
    t_i\QSh^{r,s}_{(i)}\pg{a_1,\dots,a_r,b_1,\dots,b_s,c}{t_1,\dots,t_r,u_1,\dots,u_s,v}
\label{eqn:dq_log_alr}
\end{align}
where $\QSh_{(i)}$ denotes the the quasishuffles in which $a_i$ does not collapse with any $b_j$.

For quasishuffles in which some $\pg{a_ib_j}{\cq{t_i,u_j}}$ appears, the terms (\ref{eqn:log_cop_gf_l}) contribute 0, since they arise from cuts of segments containing no 0s.
The terms (\ref{eqn:log_cop_gf_lr}) give
\begin{align}
\f{-1}{t_i-u_j}\biggl(
t_i\QSh^{r,s}_{(ij)}\pg{a_1,\dots,a_i,\dots,a_r,b_1,\dots,b_j,\dots,b_s,c}{&t_1,\dots,t_i,\dots,t_r,u_1,\dots,\emptyset,\dots,u_s,v}\nonumber\\
-u_j\QSh^{r,s}_{(ij)}\pg{a_1,\dots,a_i,\dots,a_r,u_1,\dots,u_j,\dots,u_s,v}{&t_1,\dots,\emptyset,\dots,t_r,u_1,\dots,u_j,\dots,u_s,v}\biggr)\nonumber\\
=-t_i\QSh^{r,s}_{(ij)}\pg{a_1,\dots,a_i,\dots,a_r,b_1,\dots,b_s,c}{&t_1,\dots,t_i,\dots,t_r,u_1,\dots,u_j,\dots,u_s,v}\label{eqn:dq_log_ablr1}\\
-\QSh^{r,s}_{(ij)}\pg{a_1,\dots,a_i,\dots,a_r,b_1,\dots,b_s,c}{&t_1,\dots,\emptyset,\dots,t_r,u_1,\dots,u_j,\dots,u_s,v}.
\label{eqn:dq_log_ablr2}
\end{align}

For the segment $\pg{c}{v}$, which includes a factor of $a_i\inv$, we get a contribution of
\begin{align}
-v\QSh^{r,s}\pg{a_1,\dots,a_r,b_1,\dots,b_s,c}{t_1,\dots,t_r,u_1,\dots,u_s,v}=-vQ.\label{eqn:dq_log_vq}
\end{align}
from (\ref{eqn:log_cop_gf_lr}) and
\begin{align*}
-\QSh^{r-1,s}\pg{a_1,\dots,a_{m-1},b_1,\dots,b_s,a_rc}{&t_1,\dots,t_{r-1},u_1,\dots,u_s,t_r}\\
-\QSh^{r,s-1}\pg{a_1,\dots,a_r,b_1,\dots,b_{n-1},b_sc}{&t_1,\dots,t_r,u_1,\dots,u_{s-1},u_s}\\
+\QSh^{r-1,s-1}\pg{a_1,\dots,a_{m-1},b_1,\dots,b_{n-1},a_rb_sc}{&t_1,\dots,t_{r-1},u_1,\dots,u_{s-1},\cq{t_r,u_s}},
\end{align*}
from (\ref{eqn:log_cop_gf_l}), with three terms, depending on which segment ($a_r$, $b_s$, or $a_rb_s$) appears clockwise of $c$. By the lower-depth shuffle relations, this simplifies to
\begin{align}
-\GF\pg{a_1,\dots,a_r,\prod_jb_j\cdot c}{&t_1,\dots,t_r,\cq{u_1,\dots,u_s}}\label{eqn:dq_log_ca}\\
-\GF\pg{b_1,\dots,b_s,\prod_ia_i\cdot c}{&u_1,\dots,u_s,\cq{t_1,\dots,t_r}}.\label{eqn:dq_cb}
\end{align}

The terms (\ref{eqn:dq_log_ab}) cancel with (\ref{eqn:dq_log_ablr2}). Summing (\ref{eqn:dq_log_ablr1}) over $j$ and adding to (\ref{eqn:dq_log_alr}) results in
\begin{equation}
    t_i\QSh^{r,s}\pg{a_1,\dots,a_r,b_1,\dots,b_s,c}{t_1,\dots,t_r,u_1,\dots,u_s,v}=t_iQ.
\label{eqn:dq_log_x}
\end{equation} 
Thus $L_i^A$ is the sum of (\ref{eqn:dq_log_al}), (\ref{eqn:dq_log_vq}), (\ref{eqn:dq_log_ca}), (\ref{eqn:dq_cb}) and (\ref{eqn:dq_log_x}). Applying lower-depth shuffle relations and (\ref{eqn:multi_identity}), this sum simplifies to
\begin{align}
L_i^A=\GF\pg{a_1,\dots,a_{i-1}a_i,a_{i+1},\dots,a_r,\prod_jb_j\cdot c}{&t_1,\dots,t_{i-1},t_{i+1},\dots,t_r,\cq{u_1,\dots,u_s}\sqcup\cq v}\label{eqn:dq_log_qa}\\
-\GF\pg{a_1,\dots,a_r,\prod_jb_j\cdot c}{&t_1,\dots,t_r,\cq{u_1,\dots,u_s}}\label{eqn:dq_log_qab}\\
+(t_i-v)(Q-R_B).\label{eqn:dq_log_qb}
\end{align}

Now let us compute the coefficient $M_i^A$ occuring with $C(0,a_i)$ in $\delta(R_A)$. For the segment $\pg{a_i}{t_i}$ in $R_A$, (\ref{eqn:log_cop_gf_lr}) and (\ref{eqn:log_cop_gf_l}) contribute the terms
\begin{align}
t_i\GF\pg{a_1,\dots,a_i,\dots,a_r,\prod_jb_j\cdot c}{&t_1,\dots,t_i,\dots,t_r,\cq{u_1,\dots,u_s}\sqcup\cq v}&=t_iR_A,\label{eqn:dr_log_x}\\
\GF\pg{a_1,\dots,a_{i-1}a_i,a_{i+1},\dots,a_r,\prod_jb_j\cdot c}{&t_1,\dots,t_{i-1},t_{i+1},\dots,t_r,\cq{u_1,\dots,u_s}\sqcup\cq v},\label{eqn:dr_log_al}
\end{align}
where the second term appears only if $i>1$.

The distinguished segment $\pg{\prod_ia_i\inv}{\cq{u_j}\sqcup\cq v}$ contributes only a term (\ref{eqn:log_cop_gf_lr}). By an argument similar to that in Lemma~\ref{lma:pf_dr}, this term can be written
\begin{align}
-vR_A-\GF\pg{a_1,\dots,a_r,\prod_jb_j\cdot c}{t_1,\dots,t_r,\cq{u_1,\dots,u_s}}.
\label{eqn:dr_log_v}
\end{align}

Combining (\ref{eqn:dq_log_qa})-(\ref{eqn:dr_log_v}), we find that \[(L_i^A-M_i^A)=(t_i-v)(Q-R_A-R_B).\] Therefore, adding the symmetric terms for the $C(0,b_j)$,
\[\delta(Q-R_A-R_B)=\bq{\sum_{i=1}^rC(0,a_i)(t_i-v)+\sum_{j=1}^sC(0,b_j)(u_j-v)}\wedge(Q-R_A-R_B)\]
modulo lower-depth shuffle relations and elements $(\text{weight 1})\wedge(\text{weight 1})$.

\end{proof}    

\subsubsection{Conclusion}

We are ready to use the coproduct we have computed to reduce the proof of the relations to a simple base case.

\begin{proof}[Proof of Theorem~\ref{thm:main_qdih_const}(a)]
We induct on the depth $r+s$. When $r=0$ or $s=0$, $\ol\QSh^{r,s}$ is identically 0.

If $r,s>0$, taking coproduct on both sides of (\ref{eqn:step2}) and using that $\delta^2=0$, one deduces that $\delta(Q-R_A-R_B)=0$ modulo shuffle relations of depth $<r+s$ and terms $\text{(weight 1)}\wedge\text{(weight 1)}$. 

When no terms $C(0,x)\wedge C(0,y)$ are present in the coproduct, Lemma~\ref{lma:pf_red_multi} and Lemma~\ref{lma:pf_log} imply that $\delta(Q-R_A-R_B)$ lies in the ideal generated by lower-depth relations. 

These terms appear only in a base case: the constant term of the shuffle relation for $r=s=1$. Showing the coproduct of this term is 0 amounts to proving the identity
\begin{equation}
    \delta\pq{\bq{ C^*(a|0,b|0,c|0)+ C^*(b|0,a|0,c|0)- C^*(ab|1,c|0)}- C^*(a|0,bc|1)- C^*(b|0,ac|1)}=0.\label{eqn:dilog_identity}
\end{equation}
We compute directly that the left side of (\ref{eqn:dilog_identity}) is 
\begin{align*}
\quad C(1,a)\wedge C(1,ab)+C(1,ab)\wedge (C(1,b)+C(0,a))+(C(1,b)+C(0,a))&\wedge C(1,a)\\
+\,C(1,b)\wedge C(1,ab)+C(1,ab)\wedge (C(1,a)+C(0,b))+(C(1,a)+C(0,b))&\wedge C(1,b)\\
-\,C(1,ab)\wedge C(0,ab)+C(1,a)\wedge C(0,a)+C(1,b)&\wedge C(0,b)\\
=\quad C(1,ab)\wedge C(0,a)+C(0,a)&\wedge C(1,a)\\+\,C(1,ab)\wedge C(0,b)+C(0,b)&\wedge C(1,b)\\-\,C(1,ab)\wedge C(0,ab)+C(1,a)\wedge C(0,a)+C(1,b)&\wedge C(0,b)&=0.
\end{align*}

The theorem is proved.
\end{proof}

\section{Specialization theorem for Hodge correlators}

\label{sec:nodal}

We now study how the Hodge correlators over a base $B$ behave when the sections collide. This will require extending the theory of Hodge correlators to nodal curves.

\subsubsection{The correlator Lie coalgebra for nodal curves} 

Recall the moduli space $\MMM_{0,n}'$ of $n$ distinct points and a distinguished tangent vector on $\PP^1$. Its Deligne-Mumford compactification $\ol\MMM_{0,n}'$ consists of the nodal curves of genus 0, i.e., those whose dual graph is a tree and in which every component is a punctured projective line. with $n$ marked points and a distinguished tangent vector $v_\infty$.

Let $X=\bigcup_iX_i$ be a genus 0 nodal curve with a set of punctures $S$. Let $T$ be the dual tree of $X$, with vertices indexed by $i$ corresponding to $X_i$, rooted at the component $0$ with the base point $s_0\in X_0$, oriented away from the root (write $i\to j$ if $(i,j)$ is an edge). Choose a coordinate $z_i$ on each $X_i$ such that the point joining the component to its parent $X_j$ is $(z_i=\infty,z_j=v_{ji})$, and the base point on $X_0$ is at $z_0=\infty$ with tangent vector $v_\infty$. Let $S_i$ be the set of punctures on $X_i$. Let $N_i=\cq{v_{ij}:i\to j}$.

We define the correlator Lie coalgebra for the nodal curve $X$ by
\begin{equation}
    \CLie^\vee_{X,S,v_\infty} =
\bigoplus_{i}\CLie^\vee_{X_i,S_i\cup N_i,v_i}
,\label{eqn:def_nodal_clie}
\end{equation}
where $v_i$ is the tangent vector $\f{-1}{z_i^2}\f{\del}{\del z_i}$ at $z_i=\infty$.

It coincides with the usual definition if $X$ is smooth, justifying the notation. If $X$ is not smooth, it is different from $\widetilde{\CLie}^\vee_{X,S,s_0}$, the coalgebra naively defined as the tensor algebra of $S$ modulo cyclic symmetry and shuffle relations with a $H_2(X)$ coefficient. They are related in the following way. For each $i$, there is a surjective coalgebra morphism to the component of the direct sum corresponding to $X_i$:
\[\widetilde{\CLie}^\vee_{X,S,s_0}\tto{\pi_i}\CLie^\vee_{X_i,S_i\cup N_i,v_i}.\]
To define it on a generator $(x_1\otimes\dots\otimes x_n)\otimes[X_i]$, let $p$ be the common parent of the components containing the $x_j$. If $p\neq i$, 
the $i$-th component of the map is 0. Otherwise, set
\begin{align*}
\pi_i(x)=\begin{cases}x,&x\in X_i\\z_i=v_{ij}\in N_i,&\text{$x\in X_k$, where $\exists$ path $i\to j\to\dots\to k$}\end{cases},
\end{align*}
extended to preserve the tensor product. That is, points in $X_i$ remain, while points in components below $X_i$ collapse to the nearest node on $X_i$. Evidently this map preserves the coproduct and defining relations. Taking the direct sum of the maps $\pi_i$, we have produced a coalgebra morphism:
\begin{equation*}
\pi:\widetilde{\CLie}^\vee_{X,S,v_\infty}\to\CLie^\vee_{X,S,v_\infty}.
\end{equation*}
It preserves the decomposition of the domain by $H_2(X)=\bigoplus_iH_2(X_i)$. 

In particular, if $(X,S,v_0)$ vary over a base $B\to\MMM_{0,n}'$, and the variation extends to $\ol B\to\ol\MMM_{0,n}'$, with $D=\ol B\sm B$, then we have a \emph{degeneration map} 
\begin{equation}
    \pi_D:\CLie^\vee_{X/B,S,v_\infty}\to\widetilde{\CLie}^\vee_{X/D,S,v_\infty}\to\CLie^\vee_{X/D,S,v_\infty},
\end{equation}
where the first map simply applies the induced map on $H_2$ and the second map is the quotient defined above. The composition forgets the way in which the sections in $S$ collided at boundary of $B$.

\subsubsection{Specialization theorem}

Recall that an element of $\CLie^\vee_{X/B,S,v_\infty}$ over a base $B\to\MMM_{0,n}'$ determines, by the map $\Cor_\Hod$, a variation of Hodge structures over $B$, and, by the period map $p$, a smooth function on $B$. The maps $\Cor_\Hod$ and $p$ also exist for $X$ a nodal curve, extended by linearity from the definition (\ref{eqn:def_nodal_clie}).

\begin{thm}
Suppose $B\to\MMM_{0,n}'$ is a family of curves $(X,S,v_\infty)$ extending to $\ol B\to\ol\MMM_{0,n}'$, with $D=\ol B\sm B$ a normal crossings divisor, and suppose $x\in\CLie^\vee_{X/B,S,s_0}$ of weight $n>1$.
\begin{enumerate}[(a)]
\item The Deligne's canonical extension to $D$ of the variation of framed mixed Hodge structures determined by $\Cor_\Hod(x)$ is independent of the normal vector to $D$. Thus there is a specialized map $\Spec_D\Cor_\Hod:\pq{\CLie_{X/B,S,v_\infty}\to\Lie_{\HT/D}^\vee}_{w>1}$.
\item This specialized map coincides with the Hodge correlator of the degeneration map:
\[
 \xymatrix{
    \pq{\CLie^\vee_{X/B,S,v_\infty}}_{w>1}\ar[r]^{\pi_D}\ar[d]_{\Cor_\Hod}
    &\pq{\CLie^\vee_{X/D,S,v_\infty}}_{w>1}\ar[d]^{\Cor_\Hod}\\(\Lie_{\HT/B}^\vee)\ar[r]^{\Spec_D}&(\Lie_{\HT/D}^\vee).
 }
.\]
\item Let $t=0$ be a local equation for $D$. Then
\[\lim_{t\to0}p(\Cor_\Hod(x_t))=p(Cor_Hod(x_{t=0})).\]
\end{enumerate}
\label{thm:degen_nodal}
\end{thm}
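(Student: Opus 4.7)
The plan is to induct on the weight $n$ of the element $x\in\CLie^\vee_{X/B,S,v_\infty}$. The base case is weight $n=2$, where by formula (\ref{eqn:h_w2_l2}) the Hodge correlator is expressed through the Bloch–Wigner function $\LLL_2$, which is continuous on $\CC\PP^1$ and vanishes at $\{0,1,\infty\}$. Direct inspection of how the cross-ratios $\frac{z_1-z_0}{z_2-z_0}$ degenerate when some $z_i$ collide verifies parts (a)–(c) in weight $2$: on each component $X_i$ of the nodal curve, the cross-ratio limits to the corresponding cross-ratio of the residual marked points on $X_i$ (those that do not collapse to the node), matching the combinatorial description of $\pi_D$.

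For the inductive step, the key tool is the differential equation (\ref{eqn:diff_eq_p1}), which (on the level of periods) reads
\[
d_B\Cor_\HHH(x) \;=\; \Cor_\HHH(\delta x).
\]
Because $\delta$ decreases neither nothing about weight but produces a wedge of strictly-lower-weight factors, the induction hypothesis applies to $\Cor_\HHH(\delta x)$: it extends continuously across $D$. I would verify the crucial combinatorial compatibility that $\pi_D$ is a coalgebra morphism, so $\delta\circ\pi_D=(\pi_D\wedge\pi_D)\circ\delta$ (this is immediate from the definition of $\pi_D$ on generators and the coproduct formula (\ref{eqn:clie_coproduct})). Hence on $D$ the induction gives $\Cor_\HHH(\delta x)\to\Cor_\HHH(\delta\pi_D(x))=d_B\Cor_\HHH(\pi_D(x))$. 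Integrating the differential equation and fixing the constant at a single convenient boundary point (e.g.\ a point of maximal degeneration where everything reduces to weight-$2$ pieces) proves part (c).

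To lift this from periods to variations of mixed Hodge structures and prove (a) and (b), I would use the rigidity property recorded in the Hodge-theoretic background: in weight $>1$, a variation of mixed Hodge–Tate structures is determined by its coproduct together with its period (since $\mathrm{Ext}^1(\RR(0),\RR(n))$ is rigid under Griffiths transversality). By induction, the coproduct $\delta\Cor_\Hod(x)$ extends canonically across $D$ and equals $\delta\Cor_\Hod(\pi_D(x))$ there, independent of the normal direction; likewise the period extends, by part (c). Rigidity then forces the Deligne canonical extension to be independent of the normal vector and to coincide with $\Cor_\Hod(\pi_D(x))$, yielding parts (a) and (b).

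The main obstacle is the explicit analytic analysis near $D$: the tree integrals (\ref{eqn:integral_onetree_p1}) involve $(2\pi i)^{-1}\log|x-y|$ for varying sections, and as a subset of the $z_i$ collide one gets logarithmically divergent contributions on individual trees. The work is to show that after summing over trees these divergences cancel, and to identify the finite remainder with the sum of tree integrals computed on the dual components $X_i$ of the nodal curve. The mechanism is a rescaling in the collapsing region: zooming in near a forming node by the parameter $t$ (a local equation of $D$), trees whose interior vertices remain ``close'' in the rescaled coordinate contribute to a correlator on a bubble component, while trees whose interior vertices separate contribute to the parent component, exactly reproducing the decomposition (\ref{eqn:def_nodal_clie}). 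Matching this geometric rescaling with the combinatorial map $\pi_D$ — together with ruling out boundary contributions using the tangential base-point condition built into $v_\infty$ — is the technical heart of the argument.
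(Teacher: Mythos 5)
Your skeleton---induction on the weight, the differential equations (\ref{eqn:diff_eq_p1}) to control the periods near $D$, compatibility of $\pi_D$ with the coproduct, and rigidity of weight-$>1$ variations to promote statements about coproducts and periods to statements about the framed variations---is the same as the paper's. The gap is at the one step carrying the real analytic content: identifying the limiting period with the period of the degeneration, i.e.\ showing that the locally constant discrepancy between $\lim_{t\to0}p(\Cor_\Hod(x_t))$ and $p(\Cor_\Hod(\pi_D(x)))$ is zero. Your first way out, ``fixing the constant at a point of maximal degeneration where everything reduces to weight-2 pieces,'' does not work: $\pi_D$ does not lower the weight (points collapse to nodes, but the generator keeps all its tensor factors on some component of the nodal curve), deeper boundary strata are no easier than codimension-one ones, and near the full diagonal the correlator need not even extend continuously (cf.\ the remark that $\LLL_2\pq{\f{a-c}{b-c}}$ has no limit as $a,b,c\to0$), so there is no boundary point at which the limiting value is known for free. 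Your second way out---the rescaling/bubbling analysis of the tree integrals with cancellation of logarithmic divergences after summing over trees---is precisely what you concede is the ``technical heart,'' and it is only announced, not carried out; as it stands the argument is incomplete.

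The paper closes this gap by a different and much softer device (Lemma~\ref{lma:circ_spec}). Once the inductive step shows that the discrepancy has vanishing differential along $D$, hence is a constant independent of the configuration, one evaluates that constant by averaging the correlator over configurations with all nonzero points on the unit circle: uniformity of the limit lets one exchange limit and integral, and for each plane trivalent tree the inner integral over two neighboring boundary points is an $\LLL_2$ of a cross-ratio, which vanishes upon averaging because $\LLL_2\pq{\f{z-a}{z-b}}$ is antisymmetric under the involution $a\mapsto\ol a\,z^2/\aq{z}^2$, $b\mapsto\ol b\,z^2/\aq{z}^2$. Hence the average, and so the constant, is $0$. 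To salvage your route you would need either to carry out the bubbling analysis in full or to substitute an averaging argument of this kind for pinning down the constant.
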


\begin{proof}
Let $x\in\CLie^\vee_{X/B,S,v_\infty}$ be a generator of weight $w>1$. For any $v$ a normal vector to $D$, we get the specialized framed mixed Hodge-Tate structure $\Spec_D^v\Cor_\Hod(x)$.
    
We must show that:
\begin{enumerate}[(1)]
    \item The periods of $\Cor_\Hod(x)$ extend continuously to $D$.
    \item The coproduct of $\Spec_D^v\Cor_\Hod(x)$ does not depend on the direction of specialization $v$ at any smooth point of $D$.
    \item The periods of the specializations (i.e., the limits of the periods at $D$) coincide with the periods of the degeneration to $D$.
\end{enumerate}

We will prove (1-3) by induction on the weight. First, let us see how they imply the result.

Assuming (2), the coproduct of $\Spec_D^v\Cor_\Hod(x)$ is independent of $v$. Because the coproduct commutes with $\Spec_D^v$, this element is independent of $v$ up to $\Ext^1(\RR(0),\RR(n))$, which is 1-dimensional and controlled by the period. By (1), the period is independent of the direction of specialization, which gives (a). By (3), it coincides with the period of the degeneration, which gives (b). Then (c) follows by the definitions from (b).

To show (1), we let $\cq{\eps_i=0}$ be a set of smooth local equations for $D$ and prove that $p(\Cor_\Hod(x))$ can be represented locally as a polynomial in the $\log\eps_i$ such that the terms with $\log\eps_i$ appearing in positive degree have coefficients vanishing along $\cq{\eps_i=0}$ (\emph{tame logarithmic singularities}). This will follow from the differential equations on the periods. Note that in weight 1, the period of $C(x,y)$ has a (not tame) logarithmic singularity along $x=y$. In weight $>1$, we proceed by induction.

Consider a simple element $x=x_0\otimes\dots\otimes x_n\in\CLie^\vee_{X/B,S,v_\infty}$ ($n>1$). Suppose that not all $x_i$ collide on $D$, so we must only consider the summand of the nodal $\CLie^\vee_{X/D,S,s_0}$ corresponding to the component containing the base point. The terms of $\delta(x)$ can be grouped into those of two forms:
\begin{enumerate}[(i)]
\item $x'\wedge x''$, where not all sections in $x'$ and in $x''$ collide to the same section on $D$;
\item $x'\wedge(x''_1-x''_2)$, where not all sections in $x'$ collapse on $D$, but $x''_1$ and $x''_2$ coincide on $D$.
\end{enumerate}
By the inductive hypothesis, the specialization of $\delta(x)$ does not depend on the direction of specialization: for terms (i), $x'$ and $x''$ satisfy (2), while in terms (ii) the $x''_1-x''_2$ vanish under specialization to $D$. This gives (2).

For (1), from the differential equations on the periods~(\ref{eqn:diff_eq_p1}), we see that $d_Bp(\Cor_\Hod(x))$ is a sum of terms that are smooth over $B$ with logarithmic singularities along $D$ (from type (i)) and terms that vanish along $D$ by the inductive hypothesis (from type (ii)). We conclude that $p^{v_\infty}(x)$ has tame logarithmic singularities along $D$.

If all $x_i$ collide on $D$, we simply pass to their common parent component and apply the same argument.

We conclude with (3). We have shown that the specializations of $\Cor_\Hod^{v_0}$ and its coproduct to $D$ exist at every point and their periods are independent of $v$, and thus the specialized period map $p\circ\Cor_\Hod$ is equal to the period of the degeneration up to adding a constant for each smooth component of the smooth locus of $D$. We must show the constant 0.

It is enough to show this for $D$ a lowest-codimension boundary stratum in $\ol\MMM_{g,n}'$. We are done by the next lemma.

\end{proof}

\begin{lma}
Let $I$ be a proper subset of $\cq{0,1,\dots,n}$ ($n>1$) and $x_0,\dots,x_n\in\CC^*$ with $x_i\neq x_j$ if $i\neq j$ and either $i,j\in I$ or $i,j\notin I$. Let \[x_i(t)=\begin{cases}tx_i&i\in I,\\x_i&i\notin I\end{cases}.\] Then \[\Cor_\HHH(x_0(t),x_1(t),\dots,x_n(t))\] is continuous at $t=0$.

\label{lma:circ_spec}
\end{lma}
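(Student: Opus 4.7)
The plan is to apply Theorem~\ref{thm:degen_nodal}(c) to a one-parameter family parametrized by $t$.

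I would take $\ol B$ to be a small disk around $t = 0$ in $\CC$, with $B = \ol B \sm \cq 0$ and $D = \cq 0$. Over $B$ I consider the family of smooth curves $(X_t, S_t, v_\infty)$ with $X_t = \PP^1$, $S_t = \cq{x_0(t), \dots, x_n(t), \infty}$, and tangent vector $v_\infty$ at $\infty$. The distinctness hypotheses guarantee that the sections $x_i(t)$ are pairwise distinct and nonzero on $B$. To extend to $\ol B$, I would blow up the point $(0,0) \in \PP^1 \times \ol B$, where the punctures indexed by $I$ collide. The resulting special fiber is a nodal curve $X_0$ with two components: an outer component (proper transform of $\PP^1$) carrying the punctures $\cq{x_i : i \notin I} \cup \cq\infty$, the tangent vector $v_\infty$, and the node at $0$; and an inner component (exceptional $\PP^1$) carrying, in the rescaled coordinate $u = z/t$, the punctures $\cq{x_i : i \in I}$ and a node at $u = \infty$. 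Because $I$ is a proper subset, both components carry at least one original puncture, and the within-$I$ and within-$I^c$ distinctness assumptions ensure that the special points on each component are pairwise distinct. (If $|I| = 1$ the inner component is unstable and is contracted; the family is then already smooth over $\ol B$ and the conclusion is immediate, so the interesting case is $|I| \geq 2$.) This realizes a family $\ol B \to \ol{\MMM_{0,n}'}$ with $D$ a smooth divisor.

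Next I would identify $\pi_D(x)$ for the generator $x = (x_0 \otimes \dots \otimes x_n)(1) \in \CLie^\vee_{X/B, S, v_\infty}$, which has weight $n > 1$. Since sections lie on both components of $X_0$, the common parent in the rooted dual tree is the outer component (which contains the base point $v_\infty$). By the definition of $\pi_i$, the inner-component summand of $\pi_D(x)$ vanishes, while the outer-component summand is obtained by collapsing each $x_i$ with $i \in I$ to the node at $0$:
\[
    \pi_D(x) = (y_0 \otimes \dots \otimes y_n)(1), \qquad y_i = \begin{cases} 0, & i \in I, \\ x_i, & i \notin I. \end{cases}
\]

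Finally, I apply Theorem~\ref{thm:degen_nodal}(c), which gives $\lim_{t \to 0} p(\Cor_\Hod(x_t)) = p(\Cor_\Hod(\pi_D(x)))$. Converting periods to Hodge correlator values via Theorem~\ref{thm:hc_main_point}(a) and rescaling by $(2\pi i)^n$ yields
\[
    \lim_{t \to 0} \Cor_\HHH(x_0(t), \dots, x_n(t)) = \Cor_\HHH(y_0, \dots, y_n) = \Cor_\HHH(x_0(0), \dots, x_n(0)),
\]
which is the desired continuity. No real obstacle remains once Theorem~\ref{thm:degen_nodal} is in hand; the only genuinely substantive step of the plan is the correct identification of the summand of the nodal correlator coalgebra picked out by $\pi_D$, which is made unambiguous by the common-parent rule together with the assumption that $I$ is a proper subset.
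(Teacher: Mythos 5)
Your argument is circular within the logical structure of the paper: you deduce the lemma from Theorem~\ref{thm:degen_nodal}(c), but part (c) of that theorem is not available at this point --- its proof is exactly what this lemma is needed to complete. In the proof of Theorem~\ref{thm:degen_nodal}, steps (1) and (2) only establish that the specialized period exists and that the limit of $p(\Cor_\Hod(x_t))$ differs from the period of the degeneration $\pi_D(x)$ by a \emph{constant} (one constant per boundary component, independent of the generic point of $D$); the proof then says ``We are done by the next lemma,'' i.e., it invokes Lemma~\ref{lma:circ_spec} precisely to show that this constant is zero. So the content of the lemma is the one piece of input that Theorem~\ref{thm:degen_nodal}(c) does not already contain, and quoting (c) to prove the lemma assumes the conclusion. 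Your geometric setup (the blow-up of $(0,0)$, the identification of the nodal special fiber, and the computation of $\pi_D(x)$ as the element with $x_i$ replaced by $0$ for $i\in I$) is fine and agrees with the paper's framework, but it only reduces the lemma to the statement of (c), which is not a reduction at all.

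What is actually required is an independent argument pinning the constant down to $0$. The paper does this analytically: granting (from the inductive part of the theorem's proof) that
$\lim_{t\to0}\Cor_\HHH(x_0(t),\dots,x_n(t))-\Cor_\HHH(x_0(0),\dots,x_n(0))$
is a constant for generic $x_i$, it averages this difference over $(x_0,\dots,x_n)\in(S^1)^{n+1}$ with the rotation-invariant probability measures. Uniformity of the limit lets one exchange limit and integral, and then for every plane trivalent tree in the Feynman expansion one isolates two boundary vertices sharing an internal vertex; the inner integral produces a factor $\LLL_2\bigl(\frac{x_w-x_0(t)}{x_w-x_1(t)}\bigr)$, whose average over the circle vanishes by the antisymmetry of $\LLL_2$ under the inversion $a\mapsto\ol a\,z^2/\aq z^2$, $b\mapsto\ol b\,z^2/\aq z^2$. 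Hence the averaged correlator is identically $0$ for all $t$, so the constant is $0$. Some argument of this kind (or another genuinely independent evaluation of the constant) is the missing step in your proposal.
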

\begin{proof}
For $n=2$, this amounts to continuity of $\LLL_2$ at 1.

In the proof of Theorem~\ref{thm:degen_nodal} it was established that 
\[\lim_{t\to0}\Cor_\HHH(x_0(t),x_1(t),\dots,x_n(t))-\Cor_\HHH(x_0(0),x_1(0),\dots,x_n(0))\]
is independent of the $x_i$, for generic $x_i$. Let us integrate this difference over $(x_0,\dots,x_n)\in(S^1)^{n+1}$, with respect to the standard measures $\mu(x_i)$ of volume 1 on $S^1=\cq{\aq z=1}\subset\CC$. 

The limit is uniform in the directions $x_i$ ($i\in I$), and so
\[\int\lim_{t\to0}\Cor_\HHH(x_0(t),x_1(t),\dots,x_n(t))\,\prod d\mu(x_i)=\lim_{t\to0}\int\Cor_\HHH(x_0(t),x_1(t),\dots,x_n(t))\,\prod d\mu(x_i).\]
To conclude, it suffices to show that
\begin{equation}
    \int\Cor_\HHH(x_0(t),x_1(t),\dots,x_n(t))\,\prod d\mu(x_i)=0.\label{eqn:circ_norm_int}
\end{equation}
for all $t$.

For any tree $T$ entering into the Feynman integral expression for (\ref{eqn:circ_norm_int}), choose a pair of boundary vertices (without loss of generality, labeled $x_0$ and $x_1$) incident to a common internal vertex $v$ with corresponding variable $x_v$, and let $x_w$ be variable corresponding to the third vertex incident to $v$. Then the integral over the $x_i$ contains the term
\[\int_{x_0,x_1}\pq{\int\LLL_2\pq{\f{x_w-x_0(t)}{x_w-x_1(t)}}\wedge(\text{terms independent of $x_0(t),x_1(t)$})}\,d\mu(x_0)\,d\mu(x_1).\]
Exchanging the two integrals and noting that $\LLL_2(\f{z-a}{z-b})$ changes sign under the involution \[a\mapsto\ol a\f{z^2}{\aq z^2},\quad b\mapsto\ol b\f{z^2}{\aq z^2},\]
we conclude that this expression is 0.
\end{proof}

The specialization theorem states is that when the punctures labeling an element of $\CLie^\vee$ collide, only the nearest possible to the base point component of the resulting nodal curve determines the limit Hodge correlator. We obtain as a corollary Theorem~\ref{thm:corrh_cts}:
\begin{thm*}
    The Hodge correlators $\Cor_\HHH(z_0,\dots,z_n)$ are continuous on $\CC^{n+1}\sm\cq{z_0=\dots=z_n}$.
\end{thm*}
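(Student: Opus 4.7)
The plan is to obtain continuity as a direct corollary of the specialization theorem (Theorem~\ref{thm:degen_nodal}), treating the weight-one case separately. For $n=1$ we have $\Cor_\HHH(z_0,z_1) = (2\pi i)^{-1}\log|z_0-z_1|$ by Figure~\ref{fig:w1_hc}, which is manifestly continuous on $\CC^2\sm\cq{z_0=z_1}$. For $n\geq 2$, continuity at a configuration $\mathbf{z}^\circ$ whose coordinates are pairwise distinct is immediate from the integral formula~(\ref{eqn:integral_onetree_p1}): the integrand depends smoothly on the parameters off the diagonals in the integration space, has integrable logarithmic singularities along those diagonals, and decays sufficiently fast at infinity, so a dominated convergence argument applies.

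For a limit configuration $\mathbf{z}^\circ$ in which some but not all coordinates coincide, I would proceed by constructing a local family. Take a small neighborhood $B\subset\CC^{n+1}$ of $\mathbf{z}^\circ$ and view the coordinate projections as sections $z_i:B\to\PP^1$ of the constant family $X=\PP^1\times B\to B$, together with $\infty$ and the tangent vector $v_\infty$. The open locus $B^\circ\subset B$ where all $z_i$ are distinct maps into $\MMM_{0,n+2}'$, and after a resolution $\tilde B\to B$ making the coincidence locus into a normal crossings divisor $D$, the map extends to $\tilde B\to\ol{\MMM}_{0,n+2}'$. The hypothesis that not all $z_i^\circ$ agree ensures the image of $D$ lies in the stable boundary: each bubble of the nodal limit carries at least two colliding sections, and the base-point component retains $\infty$, at least one node, and all non-colliding sections. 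Writing $x_b = (z_0(b)\otimes\cdots\otimes z_n(b))(1)$ for the family element in $\CLie^\vee_{X/\tilde B,S,v_\infty}$, Theorem~\ref{thm:hc_main_point}(a) gives $\Cor_\HHH(\mathbf{z}(b)) = (2\pi i)^{-n}p(\Cor_\Hod(x_b))$, and Theorem~\ref{thm:degen_nodal}(a,c) guarantees that the right-hand side extends continuously across $D$ with limit independent of the normal direction. Pushing this continuity back to $B$ through the resolution map yields continuity of $\Cor_\HHH$ at $\mathbf{z}^\circ$.

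The main obstacle is the combinatorial check that the hypothesis ``not all $z_i^\circ$ are equal'' translates into stability of the limit nodal curve in $\ol{\MMM}_{0,n+2}'$ after adjoining $\infty$. The excluded diagonal $\cq{z_0=\cdots=z_n}$ corresponds precisely to the degeneration in which every finite section collides onto a single bubble attached to the base component by a single node, carrying no other special points; this is the unique unstable configuration. The diagonal behavior is also consistent with the integral~(\ref{eqn:integral_onetree_p1}) directly: along the total diagonal the integrand loses the extra decay at infinity that came from differences $d^\CC\log|x-z_i| - d^\CC\log|x-z_j|$ between collapsing pairs, so the integral diverges and no continuous extension is possible, as already exhibited by the $n=1$ case. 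Once this stability equivalence is verified, Theorem~\ref{thm:degen_nodal} applies directly and the proof is complete.
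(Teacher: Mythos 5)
Your argument is correct and follows the paper's own route: the paper obtains Theorem~\ref{thm:corrh_cts} exactly as a corollary of the specialization theorem (Theorem~\ref{thm:degen_nodal}), the point being that when not all sections collide the degeneration map kills every component of the nodal limit except the one containing the base point, where the colliding sections are replaced by their common limit, so the boundary value is precisely $\Cor_\HHH$ at the limit configuration. One tangential remark of yours is inaccurate but harmless: on the excluded total diagonal the obstruction is not divergence of the integral but direction-dependence of the limit (e.g.\ $\LLL_2\pq{\f{a-c}{b-c}}$ stays bounded yet has no limit as $a,b,c\to0$).
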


For example, $\LLL_2$ is continuous with a tame logarithmic singularity at 1, but $\LLL_2\pq{\f{a-c}{b-c}}$ has no limit as $a,b,c\to0$.

\section{The second shuffle relations}

\subsection{Proofs of Theorems~\ref{thm:hodge_main} and \ref{thm:mot_main}}

In this section we will prove the second shuffle relations for Hodge and motivic correlators.

\subsubsection{Proof for Hodge correlators}

Recall Theorem~\ref{thm:hodge_main}:
\begin{thm*}
    \begin{enumerate}[(a)]
    \item Restricted to the subspace of $\CLie_{X,S,v_\infty}^\vee$ generated by elements $(x_0\otimes\dots\otimes x_n)(1)$ with not all $x_i$ equal, the map $\Cor_\Hod$ factors through $\DDD^\circ(\CC\du)$.
    \item
    Suppose that $r,s>1$ and that not all $n_i=0$ or not all $w_i=1$. Then the Hodge correlators satisfy the relation:
    \begin{align*}
        &\sum_{\sigma\in\ol\Sigma_{r,s}}(-1)^{r+s-M_\sigma}\Cor_\Hod\du(w_{\sigma\inv(1)}|n_{\sigma\inv(1)},\dots,w_{\sigma\inv(M_\sigma)}|n_{\sigma\inv(M_\sigma)},w_0|n_0)\\
        &-\Cor_\Hod\du(w_1|n_1,\dots,w_r|n_r,w_{\cq{r+1,\dots,r+s,0}}|n_{\cq{r+1,\dots,r+s,0}})\\
        &-\Cor_\Hod\du(w_{r+1}|n_{r+1},\dots,w_{r+s}|n_{r+s},w_{\cq{1,\dots,r,0}}|n_{\cq{1,\dots,r,0}})&=0,
    \end{align*}
    where
    \[n_S=\sum_{i\in S}(n_i+1)-1,\quad w_S=\prod_{i\in S}w_i.\]
    \item The Hodge correlators satisfy all specializations of this relation as any subset of the $w_i$ $(1\leq i\leq n)$ approaches 0.
\end{enumerate}
\end{thm*}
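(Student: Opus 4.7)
The plan is to prove part (a) (which directly yields (b)) by a joint induction on weight and depth, exploiting that $\Cor_\Hod$ is a morphism of Lie coalgebras together with the coideal property established in Theorem~\ref{thm:main_qdih_const}. Since Hodge correlators already obey cyclic symmetry, first shuffle, distribution, and scaling relations, the map $\Cor_\Hod$ factors canonically through $\widetilde\DDD(\CC^\times)$. Thus part (a) amounts to showing that the induced morphism $\widetilde\DDD(\CC^\times) \to \Lie_\HT^\vee$ annihilates every second shuffle of weight $>1$, so that it further descends to $\DDD^\circ(\CC^\times)$.

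Fix a second shuffle element $Y \in \widetilde\DDD(\CC^\times)$ of weight $n>1$ and depth $d=r+s$, and assume inductively that $\Cor_\Hod$ vanishes on all second shuffles of strictly smaller $(n,d)$ in the lexicographic order. Since $\Cor_\Hod$ commutes with coproducts,
\[
    \delta_{\Lie_\HT^\vee}\,\Cor_\Hod(Y) = (\Cor_\Hod \wedge \Cor_\Hod)(\delta_{\widetilde\DDD}\,Y).
\]
Now identity~(\ref{eqn:step2}), derived inside the proof of Theorem~\ref{thm:main_qdih_const}, shows that modulo terms already vanishing in $\widetilde\DDD(\CC^\times)$ (lower-depth first-shuffle relations and pure weight-one products), $\delta_{\widetilde\DDD}\,Y$ is a sum of terms $y \wedge z$ with $z$ a strictly-smaller second shuffle. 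By the inductive hypothesis each $\Cor_\Hod(z)=0$, and therefore $\delta_{\Lie_\HT^\vee}\,\Cor_\Hod(Y)=0$; that is, $\Cor_\Hod(Y)$ is primitive.

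To conclude that this primitive element vanishes, I would work in families. Let $B$ be the open subvariety of $\MMM_{0,n+1}'$ parameterizing tuples $(w_0,\ldots,w_k)\in(\CC^\times)^{k+1}$ with $\prod_i w_i=1$ and pairwise distinct positions of the corresponding punctures; then $\Cor_\Hod(Y)$ defines a section of $\Lie^\vee_{\HT/B}$. Since this section is primitive and the weight is $n>1$, the rigidity of $\Ext^1_{\MHTS_\RR}(\RR(0),\RR(n))$ in variations (a consequence of Griffiths transversality) forces the section to be constant on $B$. It therefore suffices to exhibit a single specialization at which it vanishes. I would use the specialization theorem (Theorem~\ref{thm:degen_nodal}) to degenerate along a boundary stratum $D \subset \ol\MMM_{0,n+1}'$ on which a carefully chosen pair of consecutive $w_i$ collide; the limit equals $\Cor_\Hod(\pi_D(Y))$ on the resulting nodal curve. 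A direct combinatorial inspection of the $(r,s)$-quasishuffles in the generating-function form (\ref{eqn:gf_def_simple}) shows that $\pi_D(Y)$ is a $\QQ$-linear combination of second shuffles of strictly smaller depth on the component of the nodal curve carrying the base point $v_\infty$, and is zero on the other components. By induction these vanish.

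The base cases are weight $2$ (the $(1,1)$-shuffle with $n_0=n_1=n_2=0$), for which the coproduct lies entirely in $(\text{weight }1)\wedge(\text{weight }1)$ and the rigidity argument reduces the statement to the Hodge-theoretic lift of the Bloch--Wigner five-term relation, checkable directly at a symmetric point. Part (c) then follows from the continuity of Hodge correlators (Theorem~\ref{thm:corrh_cts}) together with the compatibility of the relations with the specialization map $\pi_D$. The principal obstacle is the verification that $\pi_D$ acts on the generating functions $\GF\pg{w_0,\ldots,w_k}{t_0,\ldots,t_k}$ compatibly with the second-shuffle structure, yielding only strictly-lower-depth second shuffles on the base-point component; this requires a careful combinatorial unwinding, parallel to the bookkeeping already carried out in Step~1 of the proof of Theorem~\ref{thm:main_qdih_const}, but the framework developed there applies directly.
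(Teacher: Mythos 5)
Your overall skeleton is the same as the paper's: factor $\Cor_\Hod$ through $\widetilde\DDD(\CC\du)$, use the coideal property of Theorem~\ref{thm:main_qdih_const} plus induction to conclude that the image of a second shuffle is primitive in $\Lie_{\HT/B}^\vee$, invoke rigidity of $\Ext^1(\RR(0),\RR(n))$ for $n>1$ to get a constant variation, and then compute the constant by a boundary specialization, with part (c) following from the specialization/continuity theorem. Up to that point your argument is sound and is essentially the paper's.

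The gap is in the step where you evaluate the constant. You propose to degenerate along a stratum where one pair of consecutive punctures collides and you assert that $\pi_D(Y)$ becomes a $\QQ$-linear combination of \emph{strictly lower-depth} second shuffles on the base-point component; you yourself flag this as the unverified ``principal obstacle,'' and as stated it is doubtful. Colliding two punctures does not lower depth: the degenerate terms are correlators with a repeated argument or with a segment $w_i$ specialized to $1$, and these are not visibly in the span of lower-depth second shuffles or of the relations already imposed in $\widetilde\DDD$. Already in weight $2$, setting $w_2=1$ in the $(1,1)$-relation produces a degenerate five-term relation whose validity is exactly the kind of statement being proved, not a consequence of depth-$\leq1$ relations; your separate weight-$2$ base case (``checkable at a symmetric point'') is likewise only sketched. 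The paper avoids all of this by specializing \emph{all} $w_i\to 0$: by Theorem~\ref{thm:corrh_cts} the period of each individual term of the relation is continuous there (the point $1$ never collides with the $0$'s unless all $n_i=0$ and all $w_i=1$, which is excluded), and each term tends to $\Cor_\HHH(1,0,\dots,0)=0$, so the constant is $0$ with no combinatorial analysis of the degenerate element and with the same argument covering the weight-$2$ base case. To repair your proof, replace the ``collide one pair'' degeneration by this $w_i\to0$ specialization (or else actually carry out, and likely correct, the combinatorial claim about $\pi_D(Y)$, which is where the real difficulty would lie).
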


\begin{proof}
    For fixed $r$, $s$, and $n_i$, consider the $(r,s)$-second shuffle relation in (b). It is a family of framed mixed Hodge-Tate structures over
    \[S=\cq{(w_0,\dots,w_n)\in(\CC\du)^{n+1}:w_0\dots w_n=1}.\]
    To show (b), it suffices to show the family is trivial as an element of $\Lie_\HT^\vee$ over every point of $S$, except at $(1,\dots,1)$ if all $n_i=0$. This is equivalent to (a) by the definitions, as the Hodge correlators are already known to satisfy the defining relations in $\widetilde\DDD^\circ(\CC\du)$.

    Each term of this relation is an element
    \[\Cor_\Hod(1,z_1,\dots,z_n),\] where each $z_k$ is either 0 or monomial in the $w_i$. By Theorem~\ref{thm:hc_main}, it is a variation $\mathbf V$ of framed mixed Hodge-Tate structures over \[T=\cq{(z_1,\dots,z_n)\in(\CC^*)^n}\sm\pq{\text{\rm diagonals}}.\] We first show by induction on the weight $n$ that all such variations is trivial.

    In the base case $n=1$, there are no second shuffle relations.

    For the induction hypothesis, suppose $n>1$ and (b) holds in weights $1<w<n$. Fix $r$, $s$, and $n_i$ and let $\mathbf V$ be the variation defined above. By the induction hypothesis, $\delta\Cor_\Hod(\mathbf{V})$ vanishes, and thus, by rigidity, $\mathbf V$ is a constant variation, determined pointwise as an element of $\Ext^1(\RR(0),\RR(n))$ by the period. We show the period is 0.

    The specialization theorem (\S\ref{sec:nodal}) implies that the period of $\mathbf{V}$ is continuous away from the main diagonal in $\CC^{n+1}$. Unless all $n_i=0$ or  all $w_i=1$, in no term of the relation (b) do all points collide to the main diagonal. By Corollary~\ref{thm:corrh_cts}, the specialization of the period at $w_1,\dots,w_n=0$ is equal to the substitution $w_i=0$. Under this substitution, the period of each term of the relation becomes \[\Cor_\HHH(1,0,\dots,0)=0.\] Therefore, $\mathbf V$ is trivial over $T$.

    Because $T$ is dense in $\CC^n$, the relation at all points -- except $w_1=\dots=w_n=1$ if all $n_i=0$ -- follows by the specialization theorem. This completes the proof of (b) and (c).
\end{proof}

Applying the period map, we immediately obtain Theorem~\ref{thm:period_main}:
\begin{thm*}
    \begin{enumerate}[(a)]
        \item
        Suppose that $r,s>1$ and that not all $n_i=0$ or not all $w_i=1$. Then the Hodge correlators satisfy the relation:
        \begin{align*}
            &\sum_{\sigma\in\ol\Sigma_{r,s}}(-1)^{r+s-M_\sigma}\Cor_\HHH\du(w_{\sigma\inv(1)}|n_{\sigma\inv(1)},\dots,w_{\sigma\inv(M_\sigma)}|n_{\sigma\inv(M_\sigma)},w_0|n_0)\\
            &-\Cor_\HHH\du(w_1|n_1,\dots,w_r|n_r,w_{\cq{r+1,\dots,r+s,0}}|n_{\cq{r+1,\dots,r+s,0}})\\
            &-\Cor_\HHH\du(w_{r+1}|n_{r+1},\dots,w_{r+s}|n_{r+s},w_{\cq{1,\dots,r,0}}|n_{\cq{1,\dots,r,0}})&=0,
        \end{align*}
        where
        \[n_S=\sum_{i\in S}(n_i+1)-1,\quad w_S=\prod_{i\in S}w_i.\]
        \item The Hodge correlators satisfy all specializations of this relation as any subset of the $w_i$ $(1\leq i\leq n)$ approaches 0.
    \end{enumerate}
\end{thm*}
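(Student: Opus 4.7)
The plan is to deduce Theorem~\ref{thm:period_main} directly from Theorem~\ref{thm:hodge_main} by applying the canonical period map $p\colon\Lie_\HT^\vee\to\RR$. Since Theorem~\ref{thm:hodge_main} has just been proved and asserts the relation at the Hodge-theoretic level (i.e.\ as an equality of framed mixed Hodge-Tate structures in $\Lie_\HT^\vee$), there is essentially nothing left to do except track weights and invoke the comparison between $\Cor_\Hod$ and $\Cor_\HHH$. No further analytic or combinatorial input should be needed.

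More precisely, every term on the left-hand side of the relation in Theorem~\ref{thm:period_main} is a Hodge correlator of the same total weight $n=\sum_{i}(n_i+1)+r+s-1$ (this is the point of using the inhomogeneous notation $\Cor_\HHH\du$: quasishuffles and collapsed segments combine segments but keep the total weight fixed). By Theorem~\ref{thm:hc_main_point}(a), for any homogeneous $x\in\CLie^\vee_{X,S,v_\infty}$ of weight $n$ one has $\Cor_\HHH(x)=(2\pi i)^{-n}\,p(\Cor_\Hod(x))$. Since $(2\pi i)^{-n}p$ is a $\QQ$-linear map, applying it termwise to the identity of Theorem~\ref{thm:hodge_main}(b), which lives in a single weight-$n$ component of $\Lie_\HT^\vee$, yields precisely the desired relation on the functions $\Cor_\HHH\du$.

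For part (b) of Theorem~\ref{thm:period_main}, the specialization statement, I would combine two ingredients already available: Theorem~\ref{thm:hodge_main}(c), which gives the analogous specializations at the Hodge level, and the continuity of the Hodge correlator functions from Theorem~\ref{thm:corrh_cts}. Equivalently, the period of a framed mixed Hodge-Tate structure varying over the base depends continuously on parameters, so passing the limit $w_i\to 0$ through the period map is harmless, and the limit relation among periods is the one obtained by applying $p$ to the limit relation in $\Lie_\HT^\vee$.

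The main obstacle in this proof structure is therefore not this final step, which is purely formal, but rather the preceding Theorem~\ref{thm:hodge_main}, whose proof relies on the rigidity of $\Ext^1(\RR(0),\RR(n))$ for $n>1$, the induction on weight that reduces the claim to checking that coproducts vanish (using the construction of $\DDD^\circ$ in Theorem~\ref{thm:main_qdih_const}), and the specialization theorem of~\S\ref{sec:nodal} used to kill the residual constant. Once that machinery is in place, Theorem~\ref{thm:period_main} is an immediate corollary.
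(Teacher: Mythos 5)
Your proposal is correct and follows the paper's own route: the paper obtains Theorem~\ref{thm:period_main} immediately from Theorem~\ref{thm:hodge_main} by applying the canonical period map (via Theorem~\ref{thm:hc_main_point}(a)), exactly as you describe, with the specializations in part (b) likewise inherited from the Hodge-level statement. Your weight bookkeeping and the remark that the real work lies in Theorem~\ref{thm:hodge_main} match the paper's presentation.
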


\subsubsection{Proof for motivic correlators}

Recall Theorem~\ref{thm:mot_main}:
\begin{thm*}
    Let $F$ be a number field.
    \begin{enumerate}[(a)]
        \item Restricted to the subspace of $\pq{\CLie_{X,S,v_\infty}^\Mot}^\vee$ generated by elements $(x_0\otimes\dots\otimes x_n)(1)$ with not all $x_i$ equal, the map $\Cor_\Mot$ factors through $\DDD^\circ(F^\times)$.
        \item
        Suppose that $r,s>1$ and that not all $n_i=0$ or not all $w_i=1$. Then the motivic correlators satisfy the same relation as in Theorem~\ref{thm:hodge_main}, with $\Cor_\Hod\du$ replaced by $\Cor_\Mot\du$.
        \item The motivic correlators satisfy all specializations of this relation as any subset of the $w_i$ $(1\leq i\leq n)$ approaches 0.
    \end{enumerate}
\end{thm*}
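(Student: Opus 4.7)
The plan is to mirror the proof of Theorem~\ref{thm:hodge_main}, replacing the rigidity-plus-continuity step by an application of Lemma~\ref{lma:d0h0_rational}.

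Fix $r, s$, $(n_i)$, and $(w_i)$ satisfying the hypotheses of (b) or (c), and let $R_\Mot \in \Lie_\MTF^\vee$ denote the left-hand side of the claimed motivic identity. Because $\Cor_\Mot$ already satisfies cyclic symmetry, first shuffle, scaling, and distribution relations by construction, it factors through $\widetilde\DDD(F^\times)$ on the restricted subspace, and $R_\Mot$ is the image of the corresponding second shuffle $R \in \widetilde\DDD(F^\times)$. I induct on the weight $n$ of $R$ and claim $R_\Mot=0$; part (a) then amounts to (b) and (c) together.

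For the inductive step I verify the two hypotheses of Lemma~\ref{lma:d0h0_rational}. First, by Theorem~\ref{thm:main_qdih_const}, the coproduct $\delta R$ in $\wedge^2\widetilde\DDD(F^\times)$ lies in the coideal generated by second shuffles, so it can be written as a sum of wedges in which at least one factor is a second shuffle. Because $\widetilde\DDD(F^\times)$ is weight-graded and the coproduct splits the weight into two strictly positive parts, every second-shuffle factor that appears has weight strictly less than $n$. The induction hypothesis gives $\Cor_\Mot=0$ on all such factors, hence $\delta R_\Mot=0$ in $\wedge^2\Lie_\MTF^\vee$. Second, for every embedding $r\colon F\to\CC$, Theorem~\ref{thm:hodge_main} (already proved) says that the Hodge realization $r_\Hod(R_\Mot)\in\Lie_\HT^\vee$ vanishes, and therefore so does its canonical period. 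Since the smallest weight in which a nontrivial second shuffle exists is $n=2>1$, Lemma~\ref{lma:d0h0_rational} applies and yields $R_\Mot=0$.

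Part (c), where some $w_i$ are set to $0$, introduces no new difficulty: $0$ is an allowed argument of $\Cor_\Mot$, so no limiting procedure is required and Theorem~\ref{thm:hodge_main}(c) supplies the Hodge-period vanishing used above. The main obstacle I foresee is combinatorial bookkeeping: one must verify that every term of $\delta R$ produced by the explicit formulas in Lemmas~\ref{lma:pf_dq}--\ref{lma:pf_log} is either a second shuffle of strictly smaller weight (possibly with some $w_i$ replaced by $0$, covered by the induction hypothesis on (b) and (c)), or else a weight-$1$ wedge weight-$1$ contribution that cancels exactly as in the base-case computation (\ref{eqn:dilog_identity}) from the proof of Theorem~\ref{thm:main_qdih_const}. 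Once this combinatorial check is carried out, no ingredient beyond the Hodge theorem and Lemma~\ref{lma:d0h0_rational} enters the argument.
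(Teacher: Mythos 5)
This is essentially the paper's own argument: induct on the weight, use the coideal property of Theorem~\ref{thm:main_qdih_const} together with the inductive hypothesis to get $\delta\Cor_\Mot(R)=0$, use Theorem~\ref{thm:hodge_main} in every complex embedding to make the Hodge realization (hence the period) vanish, and conclude by Lemma~\ref{lma:d0h0_rational}. The only point you gloss is the claim that $\Cor_\Mot$ factors through $\widetilde\DDD(F^\times)$ ``by construction'': the scaling relations, in particular the weight-one identity $\Cor_\Mot(ab,ac)=\Cor_\Mot(0,a)+\Cor_\Mot(b,c)$ needed for the weight-two base case of your induction, are not part of the definition of $\pq{\CLie^\Mot_{X,S,v_\infty}}^\vee$ but follow from $\Cor_\Mot(a,b)=(a-b)\in F^\times\otimes\QQ$ — exactly the base-case computation the paper performs — while the extra combinatorial bookkeeping you anticipate is unnecessary, since Theorem~\ref{thm:main_qdih_const} already supplies the coideal statement in the form you need.
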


\begin{proof}
    Fix an embedding $F\tto r\CC$. It induces a map $\DDD^\circ(F^\times)\to\DDD^\circ(F^\times)$, which we also denote by $r$.

    Denoting by $\CLie^{\vee\circ}$ the subalgebras generated by elements $(x_1\otimes\dots\otimes x_n)(1)$ where not all $x_i$ are equal, we have the diagram
    \[
    \xymatrix{
    \pq{\CLie_{X,S,v_\infty}^\Mot}^{\vee\circ} 
    \ar[rr]^{\Cor_\Mot}\ar[dd]^r\ar[dr]
    &&\Lie_{\MTF}^\vee\ar[dd]^r
    \\&\DDD^\circ(F^\times)\ar@{-->}[ur]\ar[dd]^{r}\\
    \CLie_{X,S,v_\infty}^{\vee\circ}\ar[dr]
    \ar[rr]^{\Cor_\Hod\quad\quad}
    &&\Lie_{\HT}^\vee\ar[r]^p&\RR
    \\&\DDD^\circ(\CC\du)\ar[ur]\\
    }    
    ,\]
    where the lower half commutes by Theorem~\ref{thm:hodge_main} and the vertical maps are induced by $r$.
    
    It is necessary to show the dashed arrow is well-defined, i.e., that $\Cor_\Mot$ vanishes on the kernel of the map $\pq{\CLie_{X,S,v_\infty}^\Mot}^{\vee\circ}\to\DDD^\circ(F^\times)$. 
    
    Commutativity of the diagram for every embedding $r$ implies the result. Precisely, we argue by induction. 
    
    In weight 1, then there are no first or second shuffles, and the shuffle relations are mapped to 0 by $\Cor_\Mot$. Indeed, we have $\Cor_\Mot(0,0)=0$ and $\Cor_\Mot(ab,ac)=\Cor_\Mot(0,a)+\Cor_\Mot(b,c)$, since \[\Cor_\Mot(a,b)=(a-b)\in(\Lie_\MTF)^\vee_{w=1}\cong F^\times\otimes\QQ.\]
    
    For the inductive step, if $x\in\pq{\CLie_{X,S,v_\infty}^\Mot}^{\vee\circ}$, homogeneous of weight $>1$, vanishes in $\DDD^\circ(F^\times)$, then $\Cor_\Hod(r(x))=0\in\Lie_\HT^\vee$ under every embedding $r$, and $\del\Cor_\Mot(x)=0$ by the inductive hypothesis. By Lemma~\ref{lma:d0h0_rational}, $\Cor_\Mot(x)=0$.
\end{proof}

\subsection{Applications}

\subsubsection{Additive shuffle relation}

Specializing all $w_i$ to 1 in the second shuffle relation, where all $n_i=0$, we extract an \emph{additive} second shuffle relation, which does not have lower-depth terms:
\begin{cly}
Let $m,n>0$. The additive shuffle
\[\sum_{\sigma\in\Sigma_{m,n}}\Cor_\HHH(\eps_{\sigma\inv(1)},\eps_{\sigma\inv(1)}+\eps_{\sigma\inv(2)},\dots,\eps_{\sigma\inv(1)}+\dots+\eps_{\sigma\inv(m+n)},0).\]
is a constant independent of $\eps_1,\dots,\eps_n\in\CC^n\sm0$.
\end{cly}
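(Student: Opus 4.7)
The plan is to derive the corollary from Theorem~\ref{thm:period_main}(a) by specialization. For $\eps=(\eps_1,\dots,\eps_{m+n})\in\CC^{m+n}\sm\cq{0}$ and $t>0$ small, set
\[ w_i(t)=1+t\eps_i\quad(1\le i\le m+n),\qquad w_0(t)=\prod_{i=1}^{m+n}w_i(t)\inv, \]
so $\prod_{i=0}^{m+n}w_i(t)=1$ and, as $\eps\neq0$, not all $w_i$ equal $1$. Apply the $(m,n)$-second shuffle relation with these $w_i$ and all $n_i=0$, and take $t\to0^+$.

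For a shuffle $\sigma\in\Sigma_{m,n}$ (no collisions), the summand equals, up to cyclic symmetry and multiplicative invariance,
\[ \Cor_\HHH\bigl(1,\,w_{\sigma\inv(1)},\,w_{\sigma\inv(1)}w_{\sigma\inv(2)},\,\dots,\,w_{\sigma\inv(1)}\cdots w_{\sigma\inv(m+n)}\bigr). \]
Shift additively by $-1$, then apply multiplicative invariance with $a=1/t$, which is valid because the weight $m+n\ge 2$. Since $\prod_{j=1}^{k}w_{\sigma\inv(j)}-1=t(\eps_{\sigma\inv(1)}+\dots+\eps_{\sigma\inv(k)})+O(t^2)$, the summand becomes
\[ \Cor_\HHH\bigl(0,\,\eps_{\sigma\inv(1)}+O(t),\,\dots,\,\eps_{\sigma\inv(1)}+\dots+\eps_{\sigma\inv(m+n)}+O(t)\bigr). \]
Because $\eps\neq0$ the limiting list of arguments is not all-equal, so by Theorem~\ref{thm:corrh_cts} the limit exists and equals $\Cor_\HHH(0,\eps_{\sigma\inv(1)},\dots,\eps_{\sigma\inv(1)}+\dots+\eps_{\sigma\inv(m+n)})$; cyclic symmetry identifies this with the $\sigma$-th summand in the corollary.

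Each of the remaining summands --- quasishuffles $\sigma\in\ol\Sigma_{m,n}$ with at least one collision, and the two residual terms --- unfolds, via the definition of $\Cor_\HHH\du$, to $\Cor_\HHH$ of a list of arguments each of which is either exactly $0$ or a product of some $w_i(t)$'s (possibly including $w_0$). As $t\to0^+$ every such product tends to $1$, so each of these summands tends to $\Cor_\HHH$ of a fixed list of $0$'s and $1$'s in which both values appear. By Theorem~\ref{thm:corrh_cts} this limit is a finite real number, independent of $\eps$. Summing, the $t\to0^+$ limit of the second shuffle relation reads
\[ \sum_{\sigma\in\Sigma_{m,n}}\Cor_\HHH\bigl(\eps_{\sigma\inv(1)},\,\dots,\,\eps_{\sigma\inv(1)}+\dots+\eps_{\sigma\inv(m+n)},\,0\bigr)=C(m,n), \]
where $C(m,n)$ is the $\eps$-independent constant gathered from the collision and residual summands.

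The main subtlety is the termwise passage to the limit, which requires Theorem~\ref{thm:corrh_cts} together with the observation that in each summand the limiting argument list avoids the main diagonal --- either because at least one partial sum $\eps_{\sigma\inv(1)}+\dots+\eps_{\sigma\inv(k)}$ is nonzero, or because the list contains both $0$'s and $1$'s.
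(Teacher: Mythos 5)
Your proposal is correct and takes essentially the paper's route: the corollary is extracted by specializing the $(m,n)$-second shuffle relation with all $n_i=0$ along $w_i=1+t\eps_i$ and letting $t\to0$, the non-shuffle terms tending to $\eps$-independent correlators at tuples of $0$'s and $1$'s. Your handling of the all-colliding shuffle terms --- additive shift, multiplicative rescaling by $1/t$ (valid in weight $m+n\geq2$), then continuity via Theorem~\ref{thm:corrh_cts} --- is a clean explicit substitute for the paper's implicit appeal to the specialization theorem of \S\ref{sec:nodal}.
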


It is easy to see that this constant is 0 if $m+n$ is even. If $m+n$ is odd, it is equal, in particular, to a sum of Hodge correlators at roots of unity.

\subsubsection{Proofs of Corollaries \ref{cly:gr28} and \ref{cly:gr2729}}

Recall Corollary~\ref{cly:gr28}
\begin{cly*}[\cite{goncharov-rudenko}, Proposition 2.8]
    For $n>2$, every Hodge correlator of weight $n$ is a linear combination of Hodge correlators of weight $n$ and depth at most $n-2$.

    Precisely, for $z_1,\dots,z_n\in\CC\du$, we have
    \begin{align}
        \Cor_\HHH(z_1,\dots,z_n,0)
        &=\sum_{i=1}^n\Cor_\HHH\pq{z_1,\dots,z_{i-1},z_i,z_i\f{z_1}{z_n},\dots,z_{n-1}\f{z_1}{z_n},z_n\f{z_1}{z_n}}\nonumber\\
        &\quad-\sum_{i=2}^n\Cor_\HHH\pq{z_1,\dots,z_{i-1},0,z_i\f{z_1}{z_n},\dots,z_{n-1}\f{z_1}{z_n},z_n\f{z_1}{z_n}}\nonumber\\
        &\quad-\Cor_\HHH\pq{z_1,z_1\cdot\f{z_1}{z_n},0,\dots,0}.\label{eqn:lower_depth_reduction_app}
    \end{align}
\end{cly*}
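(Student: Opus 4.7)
My plan is to obtain~(\ref{eqn:lower_depth_reduction_app}) as a single instance of the $(1, n-1)$-second shuffle relation of Theorem~\ref{thm:period_main}. Writing $\lambda = z_1/z_n$, I instantiate the relation with $r = 1$, $s = n-1$, distinguished element $c = 1$, single ``$a$''-element $a_1 = \lambda$, and ``$b$''-elements $b_j = z_{j+1}/z_j$ for $j = 1, \ldots, n-1$, all with slot specification $0$. The required constraint $a_1 b_1 \cdots b_{n-1} c = 1$ is immediate, and the distinguished segment $c=1$ is what, after cyclic rotation, accounts for the doubling of $z_1$ visible in the RHS of~(\ref{eqn:lower_depth_reduction_app}).

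The second shuffle identity then breaks into four kinds of contributions, each of which I claim matches a piece of~(\ref{eqn:lower_depth_reduction_app}) after telescoping the cumulative products $b_i b_{i+1} \cdots b_k = z_{k+1}/z_i$, applying multiplicative invariance (valid since $n > 1$), and cyclically shifting the arguments. The $n$ proper shuffle terms, indexed by the position of $a_1$ inserted among the $b_j$'s, yield the first sum in~(\ref{eqn:lower_depth_reduction_app}); the key identity is $z_n \lambda = z_1$, which closes up the wrap-around after the cyclic shift. The $n-1$ single-collision terms (each carrying sign $(-1)^{r+s-M_\sigma} = -1$) yield the second sum by the same reduction, with $i = j+1$. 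The extra term $-\Cor_\HHH\du(b_1|0, \ldots, b_{n-1}|0,\, a_1 c\,|\,1)$ reduces to $-\Cor_\HHH(z_1, \ldots, z_n, 0)$, the negative of the LHS of~(\ref{eqn:lower_depth_reduction_app}). The other extra term $-\Cor_\HHH\du(a_1|0,\, b_1 \cdots b_{n-1} c\,|\,n-1)$ equals $-\Cor_\HHH(1, 0, \ldots, 0, z_n/z_1)$, which becomes $-\Cor_\HHH(z_1, z_1\lambda, 0, \ldots, 0)$ after multiplicative rescaling by $z_1^2/z_n$ and a cyclic shift. Solving the shuffle identity for the extra term identified with $-$LHS recovers~(\ref{eqn:lower_depth_reduction_app}) exactly.

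The main obstacle is the combinatorial bookkeeping in steps above: each term from the shuffle relation arises in the inhomogeneous $\Cor_\HHH\du$ form, and must be unwound into the $\Cor_\HHH$ form appearing in~(\ref{eqn:lower_depth_reduction_app}) by computing cumulative products and then applying cyclic symmetry together with multiplicative invariance. The match is as clean as it is because the choice $c = 1$ makes the last two cumulative products $w_1 \cdots w_{n-1}$ and $w_1 \cdots w_n$ coincide, so a cyclic rotation brings this doubled value to the front and produces precisely the $z_1, z_1, z_2, \ldots$ pattern obtained from the RHS of~(\ref{eqn:lower_depth_reduction_app}) via cyclic symmetry; injecting $a_1 = \lambda$ at a chosen position then gives the $z_i, z_i\lambda, z_{i+1}\lambda, \ldots$ structure characteristic of the first sum.
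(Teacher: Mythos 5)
Your proposal is correct and is essentially the paper's own argument: the paper also derives~(\ref{eqn:lower_depth_reduction_app}) from a single second shuffle relation with segments $\pqg{z_{i+1}/z_i}{0}$ and $\pqg{z_1/z_n}{0}$ and fixed segment $\pqg{1}{0}$, the only cosmetic differences being that you take $(r,s)=(1,n-1)$ rather than $(n-1,1)$ (immaterial by symmetry) and rescale by $z_1$ at the end instead of normalizing $z_1=1$ at the start. For completeness you would only add the paper's one-line observation that every term on the right has two coinciding arguments, hence after an additive shift two zero arguments, which gives the depth $\le n-2$ claim.
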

\begin{proof}
By multiplicative invariance, we may assume $z_1=1$. Then this is precisely the $(n-1,1)$-second shuffle relation applied to the segments
\newcommand{\ffff}[2]{#1/#2}
\[\pqg{\ffff{z_2}{z_1}}{0},\pqg{\ffff{z_3}{z_2}}{0},\dots,\pqg{\ffff{z_n}{z_{n-1}}}{0}\]
and
\[\pqg{\ffff{z_1}{z_n}}{0},\]
where the segment $(1|0)$ is left fixed. Indeed, the two summations come from the $n$ shuffles and the $n-1$ additional quasishuffles, with the remaining terms giving the left side and the last summand. 

All terms on the right side have at least two coinciding arguments. After an additive shift, they have at least two arguments equal to 0, so they are equal to those of depth at most $n-2$.
\end{proof}

Recall Corollary~\ref{cly:gr2729}:
\begin{cly*}
    The Hodge correlators in weight 3 satisfy the relations:
\begin{align}
    \Cor_\HHH(1,0,0,x)&+\Cor_\HHH(1,0,0,1-x)+\Cor_\HHH(1,0,0,1-x\inv)=\Cor_\HHH(1,0,0,1),\label{eqn:corr_27_app}\\
\Cor_\HHH(0,x,1,y)&=
-\Cor_\HHH(1,0,0,1-x\inv)
-\Cor_\HHH(1,0,0,1-y\inv)
-\Cor_\HHH\pq{1,0,0,\f yx}\nonumber\\\label{eqn:corr_29_app}
&\quad-\Cor_\HHH\pq{1,0,0,\f{1-y}{1-x}}
+\Cor_\HHH\pq{1,0,0,\f{1-y\inv}{1-x\inv}}
+\Cor_\HHH(1,0,0,1).
\end{align}
\end{cly*}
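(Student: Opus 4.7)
The plan is to derive both identities from Corollary~\ref{cly:gr28}, which itself is a consequence of the $(n-1,1)$-second shuffle relation proved via Theorem~\ref{thm:period_main}. In weight 3, Corollary~\ref{cly:gr28} expresses any weight-3 correlator as a combination of depth-1 weight-3 correlators. After multiplicative and additive invariance, each depth-1 weight-3 correlator can be normalized to the canonical form $\Cor_\HHH(1,0,0,z)$ for some $z\in\CC\du$, which (up to constants) is the single-valued trilogarithm $\LLL_3(z)$.

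First I will establish (\ref{eqn:corr_29_app}). Using cyclic symmetry, rewrite \[\Cor_\HHH(0,x,1,y) = \Cor_\HHH(x,1,y,0),\] which is already in the form $\Cor_\HHH(z_1,z_2,z_3,0)$ required by (\ref{eqn:lower_depth_reduction}). Apply Corollary~\ref{cly:gr28} with $n=3$ and $(z_1,z_2,z_3)=(x,1,y)$, so that $z_1/z_n = x/y$. The right-hand side of (\ref{eqn:lower_depth_reduction}) produces three depth-1 terms (from the first sum over $i=1,2,3$), two terms with an inserted zero (from the second sum), and one boundary term. For each term, use the multiplicative invariance (weight $>1$), additive invariance, and dihedral symmetry to reduce it to $\Cor_\HHH(1,0,0,w)$ for a specific cross-ratio $w$. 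The six arguments $1-x\inv,1-y\inv,y/x,(1-y)/(1-x),(1-y\inv)/(1-x\inv),1$ on the right of (\ref{eqn:corr_29_app}) arise precisely as these normalized cross-ratios, with signs dictated by the signs in (\ref{eqn:lower_depth_reduction}).

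Next I will deduce (\ref{eqn:corr_27_app}). The approach is to compute $\Cor_\HHH(0,x,1,y)$ in a second way by applying Corollary~\ref{cly:gr28} to a different cyclic representative, for instance starting from $\Cor_\HHH(1,y,0,x)$ instead of $\Cor_\HHH(x,1,y,0)$, or equivalently exploiting the dihedral involution $\Cor_\HHH(0,x,1,y) = -\Cor_\HHH(y,1,x,0)$. The resulting depth-1 expansion differs from the one in (\ref{eqn:corr_29_app}) by a permutation of the roles of $x,y$ and $1$. Equating the two depth-1 expressions, the correlator $\Cor_\HHH(0,x,1,y)$ cancels and we obtain a pure identity among the six $\Cor_\HHH(1,0,0,\cdot)$ terms. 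Specializing one variable (e.g.\ $y\to 1$, where Theorem~\ref{thm:corrh_cts} guarantees continuity so that several of the cross-ratios collapse to $1$ or $\Cor_\HHH(1,0,0,\cdot)$ terms vanish by dihedral symmetry) reduces this six-term identity to the three-term relation (\ref{eqn:corr_27_app}).

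The main obstacle is the combinatorial bookkeeping: identifying, for each term produced by (\ref{eqn:lower_depth_reduction}), the normalization under multiplicative and additive shifts that puts it in the form $\Cor_\HHH(1,0,0,w)$ with $w$ matching one of the six prescribed cross-ratios. A secondary subtlety is handling terms in (\ref{eqn:lower_depth_reduction}) where arguments coincide or vanish: these either drop out by a further application of the $(1,1)$-second shuffle specialized at $w_i\to 1$, or they produce the constant term $\Cor_\HHH(1,0,0,1)$. Continuity from Theorem~\ref{thm:corrh_cts} is used to justify the specialization step in the deduction of (\ref{eqn:corr_27_app}).
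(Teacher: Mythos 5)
You have a genuine gap, and it makes the argument circular. Applying Corollary~\ref{cly:gr28} to $\Cor_\HHH(x,1,y,0)$ and normalizing the six resulting terms by additive and multiplicative invariance and dihedral symmetry does \emph{not} produce the six arguments of (\ref{eqn:corr_29_app}). Carrying the normalization out, one gets (up to the free inversion relation $\Cor_\HHH(1,0,0,z)=\Cor_\HHH(1,0,0,z\inv)$) the combination $\Cor_\HHH\pq{1,0,0,\f{1-y}{x-y}}+\Cor_\HHH\pq{1,0,0,\f{1-y\inv}{1-x\inv}}+\Cor_\HHH\pq{1,0,0,\f{x-1}{x-y}}-\Cor_\HHH(1,0,0,1-y\inv)-\Cor_\HHH(1,0,0,1-x\inv)-\Cor_\HHH\pq{1,0,0,\f xy}$; in particular the constant $\Cor_\HHH(1,0,0,1)$ and the argument $\f{1-y}{1-x}$ never arise. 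Trading the pair $\Cor_\HHH\pq{1,0,0,\f{1-y}{x-y}}+\Cor_\HHH\pq{1,0,0,\f{x-1}{x-y}}$ for $\Cor_\HHH(1,0,0,1)-\Cor_\HHH\pq{1,0,0,\f{1-y}{1-x}}$ is exactly relation (\ref{eqn:corr_27_app}) at $w=\f{1-y}{x-y}$ (note $1-w=\f{x-1}{x-y}$, $1-w\inv=\f{1-x}{1-y}$). So your derivation of (\ref{eqn:corr_29_app}) silently uses (\ref{eqn:corr_27_app}), which you then propose to deduce from (\ref{eqn:corr_29_app}).

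Your independent route to (\ref{eqn:corr_27_app}) also fails. In weight $3$ the reversal sign is $(-1)^{n+1}=+1$, so $\Cor_\HHH(0,x,1,y)=+\Cor_\HHH(y,1,x,0)$ (your minus sign is wrong), and $(x,1,y,0)$, $(y,1,x,0)$ are the only cyclic or reversed representatives with the zero in the last slot (the representative $\Cor_\HHH(1,y,0,x)$ is not of the form required by (\ref{eqn:lower_depth_reduction}); rotating its zero to the end returns you to $(x,1,y,0)$). Hence the two expansions you want to compare differ only by swapping $x\leftrightarrow y$, and after the inversion relation they agree term by term; equating them gives $0=0$, so there is nothing left to specialize at $y\to1$. (The specialization step itself would be fine: putting $y=1$ in (\ref{eqn:corr_29_app}), using continuity, $\Cor_\HHH(0,x,1,1)=\Cor_\HHH(1,0,0,1-x)$ and $\Cor_\HHH(1,0,0,0)=0$, does recover (\ref{eqn:corr_27_app}) — but only once (\ref{eqn:corr_29_app}) is already known.) The missing ingredient is an input beyond Corollary~\ref{cly:gr28}: the paper proves (\ref{eqn:corr_27_app}) first and directly, by applying the $(1,1)$-second shuffle relation to the segments $\pqg{x}{0}$ and $\pqg{x\inv}{1}$ with $\pqg{1}{1}$ fixed, then using the first shuffle relation and the invariances; with (\ref{eqn:corr_27_app}) in hand, your computation for (\ref{eqn:corr_29_app}) goes through exactly as in the paper. (A genuinely different second expansion of $\Cor_\HHH(0,x,1,y)$ could instead be produced by an additive shift making a different point equal to $0$, but that is not what you proposed.)
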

\begin{proof}
Apply the $(1,1)$-second shuffle relation to the segments $\pqg{x}{0}$ and $\pqg{x\inv}{1}$, keeping the segment $\pqg{1}{1}$ fixed:
\begin{align*}
\Cor_\HHH(1,x,0,0)&+\Cor_\HHH(1,0,x\inv,1)-\Cor_\HHH(1,0,0,1)\\&-\Cor_\HHH(1,x,0,0)-\Cor_\HHH(1,0,x\inv,0)&=0.
\end{align*}
Multiplicative invariance and the first shuffle relation imply
\[-\Cor_\HHH(1,x,0,0)-\Cor_\HHH(1,0,x\inv,0)=\Cor(1,0,0,x).\]
Rearranging terms and applying additive invariance gives (\ref{eqn:corr_27_app}).

Now apply (\ref{eqn:lower_depth_reduction_app}) to $\Cor_\HHH(x,1,y,0)$ and apply the dihedral symmetry and additive invariance to change all terms to the form $\Cor_\HHH(1,0,0,z)$:
\begin{align*}
\Cor_\HHH(0,x,1,y)&=
\Cor_\HHH\pq{1,0,0,\f{1-y}{x-y}}
+\Cor_\HHH\pq{1,0,0,\f{1-x\inv}{1-y\inv}}
+\Cor_\HHH\pq{1,0,0,\f{x-1}{x-y}}\\
&\quad-\Cor_\HHH\pq{1,0,0,1-y\inv}
-\Cor_\HHH\pq{1,0,0,x\inv}
-\Cor_\HHH\pq{1,0,0,\f xy}.
\end{align*}
Finally, by (\ref{eqn:corr_27_app}),
\[\Cor_\HHH\pq{1,0,0,\f{1-y}{x-y}}+\Cor_\HHH\pq{1,0,0,\f{x-1}{x-y}}=\Cor_\HHH\pq{1,0,0,1}-\Cor_\HHH\pq{1,0,0,\f{1-x}{1-y}},\]
which gives the result.
\end{proof}

\section{Appendix: Multiple polylogarithms}

We review the properties of multiple polylogarithms (\cite{goncharov-polylogs-modular}).

It is well known that these functions obey a family of double shuffle relations similar to our relations for the Hodge correlators. However, they do not enjoy some of their other properties. They are multi-valued and do not satisfy dihedral symmetry relations. The shuffle relations between multiple polylogarithms involve products, while for Hodge correlators they are linear.

\label{sec:zeta_shuffle}

\subsubsection{Multiple polylogarithms}

The multiple polylogarithms are defined by
\begin{equation}
\Li_{n_1,\dots,n_r}(z_1,\dots,z_r)=\sum_{0<k_1<\dots<k_r}\f{z_1^{k_1}\dots z_r^{k_r}}{k_1^{n_1}\dots k_r^{n_r}},\quad{n_1,\dots,n_r>0}.
\label{eqn:def_mult_plog}
\end{equation}
(The \emph{depth} of this formal expression is $r$ and the \emph{weight} is $w:=n_1+\dots+n_r$.) These series converge for $\aq{z_i}<1$ and have analytic continuations to multivalued functions with singularities on $\CC^r$. The multivalued structure is encoded by a smooth variation of mixed Hodge-Tate structures of weight $w$ over a dense open subset of $\CC^r$.

When $r=1$, the multiple polylogarithms are the classical polylogarithms $\Li_n(z)$. Their monodromy and associated mixed Hodge-Tate structures are well understood (\cite{hain-polylogs}).

We can form an algebra $L$ generated over $\QQ$ by the multiple polylogarithms, filtered by the weight and the depth. The expression (\ref{eqn:def_mult_plog}) yields expansions for products of polylogarithms, which shows that $L$ has a well-defined multiplication. For example,
\begin{align*}
    \Li_{n_1}(z_1)\Li_{n_2}(z_2)
    &=\pq{\sum_{0<k_1}\f{z_1^{k_1}}{k_1^{n_1}}}\pq{\sum_{0<k_2}\f{z_2^{k_2}}{k_2^{n_2}}}
    =\bq{\sum_{0<k_1<k_2}+\sum_{0<k_2<k_1}+\sum_{0<k_1=k_2}}\f{z_1^{k_1}z_1^{k_2}}{k_1^{n_1}k_2^{n_2}}\\
    \vphantom{\sum}
    &=\Li_{n_1,n_2}(z_1,z_2)+\Li_{n_2,n_1}(z_2,z_1)+\Li_{n_1+n_2}(z_1z_2).
\end{align*}
Notice that the left side and all terms on the right side have weight $n_1+n_2$; however, the left side and the first two terms on the right side have depth 2, while $\Li_{n_1+n_2}(z_1z_2)$ has depth 1. 

The general relation is:
\begin{align}
    &\Li_{n_1,\dots,n_r}(z_1,\dots,z_r)\Li_{n_{r+1},\dots,n_{r+s}}(z_{r+1},\dots,z_{r+s})\nonumber\\
    &=
    \sum_{\sigma\in\Sigma_{r,s}}\Li_{n_{\sigma\inv(1)},\dots,n_{\sigma\inv(r+s)}}(z_{\sigma\inv(1)},\dots,z_{\sigma\inv(r+s)})+\text{lower-depth terms},
\label{eqn:first_shuffle_polylog}
\end{align}
Expressions (\ref{eqn:first_shuffle_polylog}) are called \emph{first shuffle relations} for multiple polylogarithms. It is convenient to express them with generating functions. Let
\begin{equation*}
L\pg{z_1,\dots,z_r}{t_1:\dots:t_r}\sum_{n_r>0}\Li_{n_1,\dots,n_r}(z_1,\dots,z_r)\prod_it_i^{n_i-1};
\end{equation*}
then
\begin{align}
&L\pg{z_1,\dots,z_r}{t_1:\dots:t_r}L\pg{z_{r+1},\dots,z_{r+s}}{t_{r+1},\dots,t_{r+s}}
=\nonumber\\&=
\sum_{\sigma\in\Sigma_{r,s}}L\pg{z_{\sigma\inv(1)},\dots,z_{\sigma\inv(r+s)}}{t_{\sigma\inv(1)}:\dots:t_{\sigma\inv(r+s)}}+\text{lower-depth terms}.\label{eqn:l_shuf_inhom}
\end{align}

To describe the lower-depth terms in the right side of (\ref{eqn:first_shuffle_polylog}), one needs to work with the set of \emph{quasishuffles} $\widetilde\Sigma_{r,s}$.
Then
\begin{align}
    \Li_{n_1,\dots,n_r}(z_1,\dots,z_r)\Li_{n_{r+1},\dots,n_{r+s}}(z_{r+1},\dots,z_{r+s})
    =\nonumber\\=
    \sum_{\sigma\in\widetilde\Sigma_{r,s}}\Li_{\tilde n_{\sigma\inv(1)},\dots,\tilde n_{\sigma\inv(M_\sigma)}}(\tilde z_{\sigma\inv(1)},\dots,\tilde z_{\sigma\inv(M_\sigma)}),
    \label{eqn:polylog_quasishuf}
\end{align}
where
\[\tilde n_{\sigma\inv(i)}=\sum_{\sigma(j)=i}n_j,\quad\tilde z_{\sigma\inv(i)}=\prod_{\sigma(j)=i}z_j.\]
Such relations are easily proved by interpreting the terms as the simplicial decomposition of the product of an $r$-simplex and an $s$-simplex.

\subsubsection{Iterated integrals}

The analytic continuation of the multiple polylogarithms has a presentation in terms of iterated integrals. Let
\begin{equation*}
I_{n_1,\dots,n_r}(z_1:z_2:\dots:z_{r+1})=\int_{\gamma}\underbrace{\f{dt}{z_1-t}\circ\f{dt}{t}\circ\dots\circ\f{dt}{t}}_{n_1}\circ\dots\circ\underbrace{\f{dt}{z_r-t}\circ\f{dt}{t}\circ\dots\circ\f{dt}{t}}_{n_r},
\end{equation*}
where $\gamma:\bq{0,1}\to\CC$ is a path from 0 to $z_{r+1}$. Here, for 1-forms $\omega_1,\dots,\omega_r$,
\begin{equation*}
\int_\gamma\omega_1\circ\dots\circ\omega_r:=\int_{0\leq t_1\leq\dots\leq t_r\leq 1}\bigwedge_{i=1}^m\gamma^*\omega_i(t_i)
\end{equation*}
is Chen's iterated path integral (\cite{chen-iterated-integrals}). Then (\cite{goncharov-polylogs-modular}, Theorem 2.1)
\begin{equation}
\Li_{n_1,\dots,n_r}(z_1,\dots,z_r)=I_{n_1,\dots,n_r}(1:z_1:z_1z_2:\dots:z_1\dots z_r).
\label{eqn:li_iterint}
\end{equation}
Iterated path integrals also satisfy a shuffle product formula, whose terms correspond to the top-dimensional cells of a decomposition of the product of two simplices:
\begin{equation*}
\int_\gamma\omega_1\circ\dots\circ\omega_r\int_\gamma\omega_{m+1}\circ\dots\circ\omega_{m+n}=\sum_{\sigma\in\Sigma_{m,n}}\omega_{\sigma\inv(1)}\circ\dots\circ\omega_{\sigma\inv(m+n)}.
\end{equation*}
This gives a different kind of shuffle relations (\emph{second shuffle relations}) on the iterated integrals $I_{n_1,\dots,n_r}$, which can also be expressed in terms of generating functions. Let 
\begin{align}
&L'\pg{z_1:\dots:z_{r+1}}{t_1,\dots,t_r}=\nonumber\\=&\sum_{n_i>0}I_{n_1,\dots,n_r}\pq{z_1:\dots:z_{r+1}}t_1^{n_1-1}(t_1+t_2)^{n_2-1}\dots(t_1+\dots+t_r)^{n_r-1},
\label{eqn:def_hom_polylog}
\end{align}
so
\begin{equation}
L\pg{z_1,\dots,z_r}{t_1:\dots:t_r}=L'\pg{1:z_1:\dots:z_1\dots z_r}{t_1,t_2-t_1,\dots,t_r-t_{r-1}}.
\label{eqn:gf_duality_polylog}
\end{equation}
Then
\begin{align}
L'\pg{z_1:\dots:z_r:1}{t_1,\dots,t_r}L'\pg{z_{r+1}:\dots:z_{r+s}:1}{t_{r+1},\dots,t_{r+s}}
=\nonumber\\=
\sum_{\sigma\in\Sigma_{r,s}}L'\pg{z_{\sigma\inv(1)}:\dots:z_{\sigma\inv(r+s)}:1}{t_{\sigma\inv(1)},\dots,t_{\sigma\inv(r+s)}}.\label{eqn:l_shuf_hom}
\end{align}

\subsubsection{Double shuffle relations}

Note the similarity between (\ref{eqn:l_shuf_inhom}) and (\ref{eqn:l_shuf_hom}). There is a duality between the relations with homogeneous and inhomogeneous $z_i$ and $t_i$ arguments. Together, they form systems of \emph{double shuffle relations}. 

The combinatorics of such relations are studied by \cite{goncharov-polylogs-modular,goncharov-polylogs-tate}, allowing them to describe a connection between an algebra of values of the multiple polylogarithms at roots of unity and the geometry of some locally symmetric spaces for $\GL_n(\ZZ)$ ($n=2,3$; and recently for $n=4$ in \cite{goncharov-motivic-modular}). 

\subsubsection{Relation to Hodge correlators}

\label{sec:rel_hodge_polylog}

In depth 1, the Hodge correlators are related to the multiple polylogarithms. We have seen this in weights 1 and 2. In higher weight, define a single-valued version of the polylogarithm by
\begin{equation*}
\LLL_n(z)=\begin{cases}\Re&\text{$n$ odd}\\\Im&\text{$n$ even}\end{cases}\pq{\sum_{k=0}^{n-1}\beta_k\log^k\aq z\cdot\Li_{n-k}(z)}\quad(n\geq2),
\end{equation*}
where $\beta_k$, close relatives of the Bernoulli numbers, are the coefficients of the Taylor expansion $\f{2x}{e^{2x}-1}=\sum\beta_kx^k$. Then
\begin{equation}
\Cor_\HHH(1,\underbrace{0,\dots,0}_{n-1},z)=-(2\pi i)^{-n}\binom{2n-2}{n-1}\inv\sum_{\stackrel{0\leq k\leq n-2}{\text{$k$ even}}}\binom{2n-k-3}{n-1}\f{2^{k+1}}{(k+1)!}\LLL_{n-k}(z)\log^k\aq z.
\label{eqn:corr_polylog}
\end{equation}
The precise relationship between the multiple polylogarithms and Hodge correlators in depth $>1$ is unknown.

\bibliographystyle{alpham}
\bibliography{bibliography}

\end{document}